\newfont{\sdbl}{msbm9}
\newfont{\dbl}{msbm10 at 12pt}
\theoremstyle{definition}
\newcommand{\dz}{{\mbox{\dbl Z}}}
\newcommand{\dn}{{\mbox{\dbl N}}}
\newcommand{\dc}{{\mbox{\dbl C}}}
\newcommand{\dq}{{\mbox{\dbl Q}}}
\newcommand{\ord}{\mathop{\rm ord}\nolimits}
\newcommand{\End}{\mathop {\rm End}}
\newcommand{\Aut}{\mathop {\rm Aut}}
\newcommand{\Sup}{\mathop {\rm Supp}}
\newcommand{\rk}{\mathop {\rm rk}}
\newcommand{\idm}{\mathfrak{m}}
\newcommand{\Ord}{\mathop {\rm \bf ord}}
\newtheorem{Def}{Definition}[section]
\newtheorem{rem}{Remark}[section]
\newtheorem{ex}{Example}[section]
\theoremstyle{plain}
\newtheorem{Prop}{Proposition}[section]
\newtheorem{theorem}{Theorem}[section]
\newtheorem{lemma}{Lemma}[section]
\newtheorem{cor}{Corollary}[section]
\numberwithin{equation}{section}
\newcommand{\co}{{{\cal O}}}
\newcommand{\crr}{{{\cal R}}}
\newcommand{\cf}{{{\cal F}}}
\newcommand{\cm}{{{\cal M}}}
\begin{document} 
\title{The Conjecture of Dixmier for the first Weyl algebra is true}
\author{Alexander Zheglov}
\date{}
%\udk{517.538}
\maketitle

%\tableofcontents

\begin{abstract}
Let $K$ be a field of characteristic zero, let $A_1=K[x][\partial ]$ be the first Weyl algebra. In this paper we prove that the Dixmier conjecture for the first Weyl algebra is true, i.e. each
algebra endomorphism of the algebra $A_1$ is an automorphism. 
\end{abstract}

\tableofcontents

\section{Introduction}
\label{S:introduction}

In 1968, Jacques Dixmier \cite{Dixmier} posed six problems for the Weyl algebra $A_{1}=K [x][\partial ]$, where $K$ is a field of characterictic zero\footnote{In Dixmier's paper this algebra was defined as  $A_{1}=K[p,q]$ with the relation $[p,q]=1$. We prefer to present the Weyl algebras as rings of differential operators with polynomial coefficients, as it is essential in our paper. When comparing, we should take $\partial$ instead of $p$, and $x$ instead of $q$. Though Dixmier used another form of elements (he put $p$ on the left of $q$), the difference is not essential for applications of his results in our paper.}. 

The first problem from this list is the most famous of them,  and is known as the Dixmier conjecture for the first Weyl algebra. The Dixmier conjecture claims that any non-zero endomorphism of $A_{1}=K [x][\partial ]$ is  an automorphism. It's easy to see that it's sufficient to prove this only for $K=\dc$. 

Analogous conjectures exist for Weyl algebras $A_n=K[x_1,\ldots ,x_n][\partial_1, \ldots ,\partial_n]$ of any dimension\footnote{These conjectures were not formulated in the original paper by Dixmier \cite{Dixmier}, and are known as a folklore for a long time. Perhaps the first mention of the connection between these conjectures and the Jacobian conjectures can be found in a paper \cite{BCR}, where the authors refer to communications with L. Vaserstein and V. Kac, see p.297 of loc. cit.} 
and are stably equivalent to the famous Jacobian conjectures, namely, the Dixmier conjecture for $A_n$ ($DC_n$ for short) implies the Jacobian conjecture for $K[x_1,\ldots ,x_n]$ ($JC_n$ for short), and the Jacobian conjecture for $K[x_1,\ldots ,x_{2n}]$ implies the Dixmier conjecture for $A_n$ (all these conjectures are still open in any dimension). These equivalences were obtained by  Yoshifumi Tsuchimoto \cite{Ts1}, \cite{Ts2}, and independently by Alexei Belov-Kanel and Maxim Kontsevich  \cite{BK}, see also the paper by Vladimir Bavula \cite{Bavula2}. As one can see, the original Dixmier conjecture for the first Weyl algebra  occupies a special place among other conjectures; in particular, it is unknown whether any of the Jacobian conjectures can be derived from it.  

%For this reason, we will limit ourselves  to only the above references, and will not mention in this work the extensive literature devoted to the Jacobian conjectures.

There are really many papers around the Dixmier problems, and it is difficult to mention them all. Let me try to mention the most interesting and the most recent  papers relevant to the Dixmier conjecture. There are many interesting papers by Vladimir Bavula devoted to the problems of Dixmier. In particular, in \cite{Bavula} he proved that the Dixmier conjecture is true for the algebra of polynomial integro-differential operators; in \cite{Bavula3} he proved the fifth problem.  
The history of other Dixmier problems and further references can be found also in \cite{Bavula}. There is  one important paper by Joseph \cite{Joseph}, where the third and sixth problems were solved; besides this paper contains important results about Newton polygons used later in many papers relevant to the Dixmier or Jacobian conjectures. At last, the most recent papers devoted to the Dixmier conjecture, are \cite{Han}, \cite{GGV3}. Further relevant references can be found in these papers.

The goal of this paper is to prove that the Dixmier conjecture for the first Weyl algebra is true: 

\begin{theorem}
\label{T:main}
The Dixmier conjecture for the first Weyl algebra is true, i.e. $\End_K (A_1)=\Aut_K (A_1)$.
\end{theorem}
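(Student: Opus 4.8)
The plan is to fix a nonzero $\phi\in\End(A_1)$, put $P=\phi(x)$, $Q=\phi(\partial)$, so that $[Q,P]=1$, and to prove that the subalgebra of $A_1$ generated by such a \emph{Weyl pair} $(P,Q)$ is all of $A_1$. Since $A_1$ is simple, $\phi$ is automatically injective, so surjectivity is the entire content; by the remark in the introduction we may take $K=\dc$. The first ingredient is the Newton polygon / valuation machinery: for a weight $\gamma=(\gamma_1,\gamma_2)$ one filters $A_1$ by the $\gamma$-degree of the monomials $x^i\partial^j$, and the associated graded ring is the commutative polynomial ring $\dc[\bar x,\bar\partial]$ carrying the Poisson bracket induced by the commutator. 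Writing $\bar P,\bar Q$ for the leading $\gamma$-symbols, the relation $[Q,P]=1$ forces $\{\bar Q,\bar P\}=0$ as soon as the relevant $\gamma$-degrees are positive; hence $\bar P$ and $\bar Q$ are algebraically dependent, and the classical lemma on homogeneous elements of a polynomial ring in two variables shows that, up to scalars, both are powers of one common $\gamma$-homogeneous polynomial $\bar g$. Running this along the edges of the Newton polygons of $P$ and $Q$ one deduces that the two polygons are homothetic; in particular the numerical invariant $(\deg P,\deg Q)$ in a generic weight satisfies $\deg P\mid\deg Q$ or $\deg Q\mid\deg P$, which is Dixmier's divisibility.

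Next comes the descent. I would invoke Dixmier's description of $\Aut(A_1)$ as generated by $GL_2$ together with the elementary (tame) automorphisms $x\mapsto x,\ \partial\mapsto\partial+f(x)$ and $\partial\mapsto\partial,\ x\mapsto x+g(\partial)$, and use that replacing $Q$ by $Q-h(P)$ for $h\in K[t]$ (or $P$ by $P-h(Q)$) leaves the generated subalgebra unchanged while the relation $[Q,P]=1$ is preserved. The aim is to show that whenever $(\deg P,\deg Q)\ne(1,1)$ one can strictly decrease a well-chosen numerical invariant — say, lexicographically, $(\deg P+\deg Q,\ \text{lattice area under the joint Newton polygon})$ — by subtracting the power/monomial combination that cancels the common leading symbol $\bar g$, possibly after a $GL_2$-change of coordinates that straightens the relevant edge. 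The base case $\deg P=\deg Q=1$ is immediate: then $P$ and $Q$ are affine-linear and $[Q,P]=1$ makes $x\mapsto P,\ \partial\mapsto Q$ an affine symplectic substitution, manifestly invertible, so $P,Q$ generate $A_1$. Induction on the invariant then yields Theorem~\ref{T:main}.

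The main obstacle is proving that this descent never stalls, i.e. the reduction lemma in full generality. When the common leading symbol $\bar g$ is, after a $GL_2$-substitution, a coordinate, an elementary automorphism visibly lowers the invariant and there is nothing to do; the genuinely hard configurations are those in which $\bar g$ is a true power, with exponent $d=\gcd(\deg P,\deg Q)>1$, and the Newton polygons are two-dimensional with several edges that must be rectified simultaneously, so that no tame automorphism visibly helps. To treat these I would pass to a completion $\hat A_1$ of $A_1$ (pseudo-differential operators with respect to the relevant $\gamma$-valuation), in which the $d$-th root $P^{1/d}$ exists as an operator $\xi$ of ``order'' $\deg P/d$; then analyze $Q$ relative to $\xi$, using that $[Q,P]=1$ constrains $Q$, modulo the commutant of $\xi$ in $\hat A_1$, to be a first-order shift in $\xi$ — forcing the effective ``rank'' of the configuration to be $1$ — and finally show that the conjugation in $\hat A_1$ that puts $\xi$ into standard form can be approximated, step by step, by genuine automorphisms of $A_1$ finely enough to trigger a decrease of the invariant. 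Controlling this approximation — ensuring that the infinite normalization process in $\hat A_1$ descends to honest polynomial differential operators and actually lowers the numerical data at a finite stage — is where the bulk of the work lies, and is the step I expect to be the crux of the proof.
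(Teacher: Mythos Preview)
Your plan contains a genuine gap at the very point you flag as ``Dixmier's divisibility.'' From $\{\bar Q,\bar P\}=0$ you correctly get that $\bar P$ and $\bar Q$ are powers of a common $\gamma$-homogeneous $\bar g$, but this does \emph{not} give $\deg P\mid\deg Q$ or $\deg Q\mid\deg P$: if $\deg P=dn$, $\deg Q=dm$ with $(n,m)=1$ and $d>1$, then $\bar P\sim\bar g^{\,n}$, $\bar Q\sim\bar g^{\,m}$ and neither degree divides the other. This is exactly the configuration at which the Jung--van der Kulk style descent you describe stalls: subtracting $h(P)$ from $Q$ (or vice versa) cannot cancel a leading symbol that is a genuine $m$-th power of something whose $n$-th power you have, and no tame automorphism fixes it. The reduction to such ``subrectangular'' minimal pairs is already in the literature (it is the content of \cite{GGV}, which the paper invokes), and everything hard happens \emph{after} that reduction.

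The paper's actual argument is not a descent at all; it is a divisibility contradiction extracted from the normal-form theory. One fixes a minimal subrectangular DC-pair $(P,Q)$ with the invariant $d_2=\mathrm{EssGCD}(P,Q)>1$, conjugates $P$ to $\partial^{p}$ by a Schur operator $S$ in the completed ring $\hat D_1^{sym}$, and uses the HCPC calculus (the $A_{p;i}$, $B_j$, $Sdeg_A$, condition $A_p(0)$ machinery of \cite{GZ1}) to control the normal form of $Q$ precisely enough to build, recursively, polynomials $F_i(X,Y)$ with $\ord(F_i(P,Q))\equiv 0\pmod{d_2}$ for $i<I$ but $\ord(F_I(P,Q))\equiv 1\pmod{d_2}$ (Theorem~\ref{T:DC-pairs}). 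One then manufactures a second DC-pair $(\hat P,\hat Q)=\varphi\circ\Phi_{N,\lambda}(P,Q)$ with the \emph{same} $d_2$, applies the theorem again, and shows via the monomial-preservation Lemma~\ref{L:9} and Corollary~\ref{C:9} that the resulting $\ord(\hat F_{\hat I}(\hat P,\hat Q))$ is simultaneously $\equiv 1$ and $\equiv 0\pmod{d_2}$. There is no attempt to approximate the Schur conjugation by honest automorphisms of $A_1$; the completion is used only as a computational device to read off the orders of certain polynomial combinations of $P$ and $Q$ that already live in $A_1$. Your proposed ``approximation'' step is therefore neither what the paper does nor something for which a mechanism is visible; the crux you identify is real, but the resolution goes in a different direction.
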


The proof is based on the generalised Schur theory offered in the paper \cite{A.Z} and its extension for ordinary differential operators -- the theory of normal forms, developed in a recent paper \cite{GZ1} (we recall all necessary results of this theory in section \ref{S:prelim}), as well as on some known results about the shape of possible counterexamples to the Dixmier conjecture from the work \cite{GGV} (one approach, used in \cite{GGV}, to try to solve the conjecture for $A_1$ is the  "minimal" counterexample strategy - assume that the Dixmier conjecture is false and find properties that a  counterexample must satisfy). 
Surprisingly, the proof we offer in this paper was  obtained as a byproduct of the theory of normal forms for commuting ordinary differential operators, which was originally developed for other aims, the well known classification theory of commutative subrings of ordinary differential operators,
and of some observations (see lemma \ref{L:9}) obtained in \cite{GZ}  in connection with the Berest conjecture (however, most of the results around the Berest conjecture, discussed first in papers \cite{MZh}, \cite{SMZh} and which remains interesting in itself, are not used in the proof). It seems to us that it can serve as evidence of the existence of a deep connection between the Dixmier and Jacobian  conjectures and the theory of commuting differential operators. We hope that further development of the normal form technique together with effective methods of positive characteristic from papers \cite{BK}, \cite{BK1}, \cite{Ts1}, \cite{Ts2}, used in proving the equivalence of the Dixmier conjectures and the Jacobian conjectures, will allow people to come closer to solving at least the $JC_2$ conjecture.

\smallskip

The scheme of the proof is as follows. Each endomorphism of the first Weyl algebra is determined by a pair of operators $Q,P\in A_1$ of orders $q,p$ with $[Q,P]=1$. The known results about the shape of possible counterexamples claim that if there is a counterexample $Q,P$ to the Dixmier conjecture (which we'll call, in spirit of \cite{GGV}, as a DC-pair), then there are (many) counterexamples of special (subrectangular) type (see section \ref{S:DC-pairs}).

A natural question is to study  counterexamples $Q,P$ of subrectangular type with the help of the Schur-Sato theory from \cite{A.Z} and the theory of normal forms from \cite{GZ1}. Let us briefly recall the main points of these theories. 

First, it appears to be very useful to consider the first Weyl algebra $A_1$ as a subring of a more general ring of ordinary differential operators $D_1=K[[x]][\partial ]$, and the  ring 
$D_1$ -- as a subring of a more general ring $\hat{D}_1^{sym}$ (appeared first in \cite{BurbanZheglov2017} and then extensively studied in \cite{A.Z} and \cite{GZ1}). 
According to a generalised Schur theorem \cite[Th.7.1]{A.Z} there is an invertible operator 
$S\in \hat{D}_1^{sym}$ of order zero (a Schur operator) such that $SPS^{-1}=\partial^{\Ord (P)}$ (without loss of generality the operator $P$ can be assumed to have invertible highest coefficient). The operator $S$ is determined up to multiplication with an operator from the centralizer $C(\partial^{\Ord (P)})\subset \hat{D}_1^{sym}$, and, according to a centralizer theorem \cite[Prop.7.1]{A.Z} the centraliser $C(\partial^{\Ord (P)})$ consists of polynomials with constant coefficients in the differentiation, integration and shift operators, which have a restricted finite order in each variable. 

It is well known that any ordinary differential operator with invertible highest coefficient can be normalised by means of some change of variables and conjugation by invertible function (see section \ref{S:prelim}). In \cite{GZ1} we studied in details the shape of the operator $S$ for a {\it normalised} operator $P$ and the structure of the centralizer $C(\partial^k) \subset \hat{D}_1^{sym}$. We proved that $S$ satisfies some specific properties (condition $A_p(0)$ -- this is a condition on homogeneous components of $S$), and can be normalised by assuming its first homogeneous components to be $S_0=1$, $S_{-1}=0$.  We also proved that the centraliser $C(\partial^k)$ is isomorphic to the matrix ring $M_k(K[D^k])$ (here $D$ is a formal variable), and all operators from $C(\partial^k)$ have a presentation in a so called vector form (see the list notations), which is very convenient for calculations. 

For a pair of operators $P,Q$ we introduced in \cite{GZ1} the notion of a {\it normal form} of $Q$ with respect to $P$: this is the operator $\tilde{Q}=SQS^{-1}$, where $S$ is a Schur operator for $P$ (sometimes we call the pair $(\partial^{\Ord (P)}, \tilde{Q})$ as a normal form for the pair $P,Q$). 

The first natural question about any DC-pair $P,Q$ (which can be made monic after applying  some simple  automorphism) is the following: what is the shape of a normal form of $Q$ with respect to $P$? We proved (step 1 of the proof of theorem \ref{T:DC-pairs} and lemma \ref{L:Q_p}) that this normal form has a very simple structure -- it is a sum of an operator from the centraliser $C(\partial^{\Ord (P)})\subset \hat{D}_1^{sym}$ of order $q=\Ord (Q)$ and a homogeneous operator of order $-p:=-\Ord (P)$, which is uniquely determined and whose shape can be explicitly calculated (let's call it the "tail"):
$$
\tilde{Q}= \partial^q + \tilde{Q}_{q-1}+\ldots + \tilde{Q}_{-p}. 
$$
The second natural question is about a sequence of operators constructed recursively starting with the pair $(\partial^{\Ord (P)}, \tilde{Q})$ as a result of a procedure which we'll call as a "head-chopping process". To explain this process, it is convenient to introduce the following "zoological" terminology. Let's call the sum of homogeneous components $\partial^q + \tilde{Q}_{q-1}+\ldots + \tilde{Q}_{-p+1}$ of the operator $\tilde{Q}$ belonging to the centralizer as a "body", and the highest order homogeneous component ($\partial^q$) as the "head". Let's consider the operator
$$
\tilde{Q}_1:= \tilde{Q}^p-\partial^{pq}. 
$$
The power of $\tilde{Q}$ can be again decomposed as a sum of a "body" from the centraliser $C(\partial^p)$ and a "tail", which is, by definition, a sum of products of the "body" $(\tilde{Q}-\tilde{Q}_{-p})$ of $\tilde{Q}$ and the "tail" $\tilde{Q}_{-p}$ (appearing in the expansion of $\tilde{Q}^p$). The highest order homogeneous component of $\tilde{Q}^p$, $\partial^{pq}$, is the "head". After  subtraction of the operator $\partial^{pq}$ (cutting the "head") we get an operator with a smaller "body", the same "tail" and a new "head". If the order of the new "head" is greater than the order of the beginning of the "tail" (i.e. the highest order of homogeneous components of the "tail"), it can be shown that this new head has the shape $c\partial^k$, $c\in K$, and we can continue the process by considering the operator 
$$
\tilde{Q}_2:= \tilde{Q}_1^{p}-c^{n_1}\partial^{pm_1} 
$$
with appropriate $n_1, m_1$ such that the head of $\tilde{Q}_1^{n_1}$ is cutted after subtraction of the operator $\partial^{pm_1}$. Here the operator $\tilde{Q}_1^{n_1}$ has analogous "body-tail" decomposition: its "body" is a power of the body of $\tilde{Q}_1$, and the "tail" is a sum of products of the "body" of $\tilde{Q}_1$ and its "tail" (appearing in the expansion of $\tilde{Q}_1^{n_1}$).

So, for given operators $\partial^{p}, \tilde{Q}\in \hat{D}_1^{sym}$ we get the following recurrent sequence of polynomials:  $F_{i+1}(X,Y)= F_{i}(X,Y)^{n_i}-\varepsilon_i^{p}X^{m_i}$, where $F_0(X,Y):=Y$, $m_0=q$, and numbers  $\varepsilon_i$ and $m_i$ are defined in such a way that 
$$
\Ord (F_{i+1}(\partial^{p}, \tilde{Q}))<\Ord (F_{i}(\partial^{p}, \tilde{Q})^{n_i}).
$$
And we also get a sequence of operators: $\tilde{Q}_i=F_i(\partial^{p}, \tilde{Q})$. 

Let's say the operators $\partial^{p}, \tilde{Q}$ {\it satisfy a termination condition for the head-chopping process}, if the sequence of operators $\tilde{Q}_i$ satisfies the following condition:
 the sequence of numbers 
$$
SeqGCD_i(\partial^p, \tilde{Q})=\{GCD\{p, \Ord (\tilde{Q}_0), \ldots , \Ord (\tilde{Q}_i)\} \}
$$
ends by 1. Given such a sequence we can define its {\it distance} -- the sum of differences (distances) of orders of the head of $\tilde{Q}_i^{n_i}$ and of the head of $\tilde{Q}_{i+1}$. By definition, it can not be grater than $p+q-1$. 

Going back to differential operators from the Weyl algebra, we can formulate similar condition for the original pair of operators $P,Q$ (see formulation of theorem \ref{T:DC-pairs}). 
The natural question is: does a DC-pair of subrectangular type satisfy the termination condition for the head-chopping process, and if yes, what is the distance?

The first preparatory theorem \ref{T:DC-pairs} gives the affirmative answer on this question and claims that distance is maximal possible, i.e. equal to $p+q-1$. The second preparatory theorem \ref{T:subrectangular_sequence} together with remark \ref{R:DC-pais} clarify the shape of the operators $F_i(P,Q)$ from the sequence of the head-chopping process -- they are all of subrectangular type -- and also they clarify the information about the edges of rectangulars for the first and the last operators in the sequence, i.e. about the polynomials $f_{0,1}(P), f_{0,1}(Q), f_{1,0}(P_I), f_{1,0}(Q_I)$. This information is important for calculations in the last step of the proof of the Dixmier conjecture. 

The subrectangular form of the Newton polygons of such DC pairs appears to be the key property for the proof of theorems \ref{T:DC-pairs}, \ref{T:subrectangular_sequence}, and is used in the key lemma \ref{L:polynomials}. For DC-pairs of non-subrectangular type of general form, the proof of the preparatory theorem \ref{R:DC-pais} generally fails, although in some cases the result remains true. For example, theorem \ref{R:DC-pais} remains true in the case of a pair of operators defining an automorphism of the Weyl algebra (cf. remark \ref{R:spectral_data_for_string}), but the proof in this case must use Dixmier's theorem on the structure of the automorphism group $\Aut_K (A_1)$. However, in this case the distance is not maximal (and it is far from to be maximal). 

A rigorous proof of the preparatory theorems requires verifying a number of technical conditions at each step of the inductive argument. These arguments are  often similar or even repetitive, and could be omitted in some cases. Nevertheless, we have decided to describe them all in detail to avoid errors and facilitate verification.

\smallskip

In the next step of the proof of the Dixmier conjecture we investigate more general solutions to the equation $[Q,P]=1$ (which is called the string equation), when $Q,P$ not necessarily  belong to the Weyl algebra, but may be any ordinary differential operators. These solutions are of independent interest (from the point of view of integrable systems or string theory). This equation was studied earlier (rather, from the point of view of physical applications) by many authors, see e.g. works \cite{Brez}, \cite{Dougl1}, \cite{Dougl2}, \cite{Gross}, \cite{GN1}, \cite{GN2} and references  therein. The approach to non-critical string theory based on the considerations of matrix models in \cite{Brez}, \cite{Dougl1}, \cite{Gross} led to the problem of description of all pairs $P,Q$ satisfying $[Q,P]=1$ (see \cite{Dougl2}), what motivated the study of such operators in Albert Schwarz's work \cite{Schwarz1}. In this paper he established a one-to-one correspondence between the set of solutions $P,Q$ to the string equation of {\it coprime orders} and the set of pairs of commuting differential operators of the same orders. 

This description appears to be very similar to our construction: we show that solutions $P,Q$ to the string equation (of arbitrary order) satisfying the termination condition for the head-chopping process are in one-to-one correspondence with pairs $P',Q'$ of commuting differential operators of the same orders satisfying analogous condition and such that $\rk K[P',Q']=1$\footnote{It is well known that any commutative ring of ordinary differential operators of rank one contains a pair of operators of coprime orders, and these operators generate a subring of rank one "almost" coinciding with the original ring. However, a pair of operators of not coprime orders may generate a ring of of rank one as well.} (see section \ref{S:string}). By definition, the termination condition for the head-chopping process is automatically satisfied for the pair $P,Q$ of coprime orders (and in this case our construction, as it turns out, coincides with the A. Schwarz construction). However, for the proof of the Dixmier conjecture we need to investigate solutions to the string equation of {\it non-coprime} orders, because DC-pairs of subrectangular type are known to be of  non-coprime orders.\footnote{There is an interesting relevant question about a connection of different definitions of {\it quantum curves} appearing in the literature. A. Schwarz in \cite{Schwarz2} defined a quantum curve as a solution to the string equation. On the other hand, various authors defined a quantum curve as a D-module (or a family of D-modules) like e.g. in \cite{DHS}, \cite{MulaseDumitrescu}, see also references therein. A. Schwarz in \cite{Schwarz2} established a correspondence between the $D$-module version of definition and  definition (quantum curve = solution to the string equation), but apparently this correspondence is established only for some specific $D$-modules (i.e. corresponding to solutions with coprime orders). It would be interesting  to extend it  also for solutions of arbitrary orders, cf. also remark \ref{R:spectral_data_for_string}.} 

Our investigations were motivated by the following natural question: how to classify, if possible, all  solutions to the string equation, and how  they are controlled by their normal forms. In more details, the one-to-one correspondence mentioned above can be described by the following diagram:
\begin{equation}
\label{E:diagram}
\begin{array}{c}
\xymatrix{
(P,Q) \ar[rr]_-{\mbox{$Ad(S)$}} & &(\partial^p, \tilde{Q})\ar@/_10pt/[ll]_-{\mbox{$Ad(S^{-1})$}} \ar[rr]_-{\mbox{cutting the tail}} & & (\partial^p, \tilde{Q}')\ar@/_10pt/[ll]_-{\mbox{glueing the tail}} \ar[rr]_-{\mbox{$Ad((S')^{-1})$}} & & (P', Q')\ar@/_10pt/[ll]_-{\mbox{$Ad(S')$}} \\
}
\end{array}
\end{equation}
where $(P,Q)\in D_1$ is a {\it normalized } solution to the string equation, satisfying the  termination condition for the head-chopping process,  $(P',Q')\in D_1$ are commuting {\it normalized} operators of the same orders, satisfying analogous termination condition for the head-chopping process and such that $K[P', Q']\subset D_1$ is a  subring of rank one\footnote{It is sufficient (and more convenient) to establish a one-to-one correspondence between {\it normalized} operators}, and $S,S'$ are Schur operators for $P,P'$ correspondingly. In one way this correspondence is described by theorem \ref{T:string_equation} (from $P,Q$ to $P',Q'$ via cutting the tail), and in another way by theorem \ref{T:inverse_string_equation} (from $P',Q'$ to $P,Q$ via glueing the tail). The operators $S,S'$ are uniquely defined if we fix some normalisations of the normal string pair $(\partial^p, \tilde{Q})$ and the commutative normal pair $(\partial^p, \tilde{Q}')$ correspondingly (theorems \ref{T:one-to-one-correspondence}, \ref{T:one-to-one-correspondence1}), however the the composition of maps in the diagram above does not depend on the choice of these normalisations. 

The the one-to-one correspondences in the diagram above are based not only on the generalized Schur-Sato theory and the theory of normal forms, but also on the well known classification theory of commuting ordinary differential operators. This theory can be illustrated in short by the following commutative diagram (see also theorems \ref{T:classif2}, \ref{T:classif_Schur_pairs} in section  \ref{S:prelim} and explanations there):
\begin{equation}\label{E:classif_diagram}
\begin{array}{c}
\xymatrix{
[B\subset D]  \ar[rr]_-{\mbox{direct construction}}  \ar@/^10pt/[rd]_-{\mbox{$S_{cl}BS_{cl}^{-1}$}} & & \mbox{[Geometric data]} \ar@/_10pt/[ll]_-{\mbox{inverse construction}} \ar@/_10pt/[ld]^-{\mbox{Krichever map}}\\
& \mbox{[Schur pairs $(A,W)$]} \ar@/_20pt/[ru]_-{\mbox{direct construction}} \ar@/^20pt/[lu]^-{\mbox{$S_{cl}^{-1}AS_{cl}$}} & \\
}
\end{array}
\end{equation}
For our proof the most important part of this classification diagram is the left part, i.e. the classification of commutative rings of operators $B\subset D_1$ in terms of Schur pairs. This part of classification is described by the classical Schur-Sato theory, where the classical Schur-Sato  operators $S_{cl}$ play a crucial role. It is well known that for any monic normalised ordinary differential operator $P\in D_1$ there is a monic zeroth order invertible pseudo-differential operator $S_{cl}\in E$ such that $S_{cl}PS_{cl}^{-1}=\partial^{\ord (P)}$. This operator is defined up to multiplication with a pseudo-differential operator with constant coefficients, so the {\it normalised} operator, i.e. such that $S_{cl}(0)=1$, is uniquely defined. Analogously, by the Sato theorem, for any Schur space $W$ there is a uniquely defined monic zeroth order invertible pseudo-differential Sato operator $S_{cl}\in E$ such that $W=F\cdot S_{cl}$, where $F=K[\partial ]$ (for details see the list of notations or section \ref{S:prelim}). In each equivalence class of Schur pairs there is a {\it normalised} space  $W$ (i.e. such that $1\in W$), and the corresponding Sato operator is also normalised. So, essentially, Schur and Sato operators are the same, and we'll call them as the classical Schur-Sato operators. 

Although the rest of the classification diagram does not play so important role for our paper, for the sake of completeness we establish a one-to-one correspondence not only between objects from diagram \ref{E:diagram}, but also between them and corresponding spectral data (corollary \ref{C:one-to-one2}). 

The generalized Schur theory explains  relations between different versions of Schur operators: in \cite{GZ1} we showed that the subring $D_1$,  Schur operators for differential operator of fixed order $p$ and centralizers $C(\partial^p)$ are contained in a smaller subring $ \widehat{Hcpc}_{B}(p)\subset \hat{D}_1^{sym}$, which can be embedded into the subring $ E_p$ of skew pseudo-differential operators:
$$
 E_p=K^{\oplus p}[\Gamma_1]((\tilde{D}^{-1})) 
$$
with the commutation relations 
$$
\tilde{D}^{-1}a=\sigma (a)\tilde{D}^{-1}, \quad a\in K^{\oplus p}[\Gamma_1] \quad \mbox{where \quad }
$$
$$
\sigma (a_0, \ldots ,a_{k-1})=(a_{k-1}, a_0, \ldots , a_{k-2}), \quad \sigma (\Gamma_1)=\Gamma_1-1 
$$
(see section \ref{S:prelim}). The classical ring of pseudo-differential operators $E=K[[x]]((\partial^{-1}))$ can also be embedded into $E_p$. 

There is the following relation between operators $S_{cl}$ and $S$ embedded into $E_p$ (see Step 2 in the proof of theorem \ref{T:one-to-one-correspondence})
\begin{equation}
\label{E:Schur_oper_relation}
 \tilde{S}:= S S_{cl}\in  K^{\oplus p}((\tilde{D}^{-1})) \subset E_p.
\end{equation}
The auxiliary operator $\tilde{S}$ (see lemma \ref{L:invariant_conjugation} and Step 1 in the proof of theorem \ref{T:one-to-one-correspondence}) sends the normal form $\tilde{Q}$ of the operator $Q$ with respect to $P$ (and the normal form $\tilde{B}$ of a commutative ring $B$ generated by $P,Q$) to operators with constant coefficients:
$$
\tilde{S}^{-1}\tilde{B}\tilde{S}\in K((\tilde{D}^{-1})),
$$
$S$ is a Schur operator for $P$ and $S_{cl}$ is the classical Sato operator for the space $W:=F'\cdot\tilde{S}$. 
Moreover,  the operator $\tilde{S}$ is normalised (in the sense of Step 1 in the proof of theorem \ref{T:one-to-one-correspondence}) iff the operator $S_{cl}$ is normalised, and in this case the operators in relation \eqref{E:Schur_oper_relation} are uniquely determined. Relation \eqref{E:Schur_oper_relation} plays a major role in establishing one-to-one correspondences between normalised solutions to the string equation and  normalised normal string pairs (correspondingly, normalised commuting operators and normalised commutative normal pairs) in diagram \eqref{E:diagram} (these are the left and right parts of \eqref{E:diagram}  correspondingly).

\smallskip

The last step of the proof of the Dixmier conjecture is one explicit calculation which leads to a contradiction. This last step is described in section \ref{S:proof}. This section begins with several preliminary observations. First we note that if  there is a DC pair $P,Q$ of subrectangular type, then we can  multiply DC pairs by constructing several new families of such examples (remark \ref{R:another_subrectangular}, lemma \ref{L:Phi_N(P)}).

Namely, assuming that there is a DC pair $P,Q$ of subrectangular type we construct in subsection \ref{S:prelim_lemmas} at the first step another DC-pair 
$P_n, Q_n$ of subrectangular type with additional  properties: the right vertical sides of their rectangulars are monomials, and the upper horizontal sides have specific coefficients preceding the highest coefficients (see formulae \eqref{E:P_n,Q_n}). 

After that we construct a family of  DC-pairs $\Phi_{N,1}(P_n), \Phi_{N,1}(Q_n)$ by applying additional automorphism $\Phi_{N,1}$ (with appropriate $N>1$) to this pair. We show (lemma \ref{L:Phi_N(P)}) that  there are two cases, depending on the original DC-pair (and not on $N$), such that in the first case the new DC pair has a sequence (of polynomials $F_i$, operators $Q_{N,i}$ etc.) of length one more than the length of the sequence  for the original DC pair, and in the second case of the same length. Moreover, in both cases the sequences of polynomials $F_i$ are the same for $i\le I$. In both cases we calculate sequential GCDs, distances and the orders of the last operators in the sequences. It appears that in both cases the distances are almost maximal possible and in both cases this DC-pair satisfy the termination condition for the head-chopping process. 

Lemma \ref{L:Phi_N(P)} relies heavily on the preparatory theorems \ref{T:DC-pairs}, \ref{T:subrectangular_sequence}. One of the most important conclusions of this lemma is that the sequences for the original operators $P,Q$ and the new ones $\Phi_{N,1}(P_n), \Phi_{N,1}(Q_n)$ are similar (for all $N$) in the following sense: if $(\ord (Q_i),\varepsilon_i)$ is the sequence for $P,Q$, then  $(\ord (Q_{N,i}),\varepsilon_i)$ for $i\le I$ is a part of the sequence for $\Phi_{N,1}(P_n)$, $\Phi_{N,1}(Q_n)$ (it is the whole sequence in the second case, and the sequence without the last element in the first case), where we moreover have the following equalities:
$$
\ord (Q_{N,i})=(d_2N+l_2)\ord (Q_i)/d_2 \quad \mbox{for $i<I$};
$$
besides the distance between the "head" of the $I$th operator $\tilde{Q}_{N,I}$ of the sequence and the beginning of its "tail" is relatively small (it is equal to $N+1$). This conclusion is used in the final step of the proof (see below). 

After that in section \ref{S:Schur_adj_DC_pairs} we prove theorem \ref{T:main} with the help of one explicit calculation. Namely, according to diagram \eqref{E:diagram} (see corollary \ref{C:one-to-one2})  the pair of commuting operators $P_{N,n}', Q_{N,n}'$ corresponding to the pair $\Phi_{N,1}(P_n), \Phi_{N,1}(Q_n)$ satisfy an algebraic relation given by some polynomial $f(X,Y)$ of BC-type (see definition \ref{D:BC-distance} and remark \ref{R:classif_ODO}). Taking the normal form $\partial^{\ord P_{N,n}'}, \tilde{Q}_{N,n}'$ of the pair $P_{N,n}', Q_{N,n}'$ and conjugating the operator $\hat{\Phi}(\tilde{Q}_{N,n}')$ by the auxiliary operator $\tilde{S}$, we get a Puiseux series $X_{N,n}$ which gives a solution of the equation $f(\tilde{D}^{\ord P_{N,n}'}, X)=0$. 

This series has a {\it distinguished coefficient} at $\tilde{D}^{-\ord P_{N,n}'}$ which is equal to zero (see  remark \ref{R:distinguished_coefficient}; this is an important property which holds for  series obtained from commuting operators generating the ring of {\it rank one} due to the Newton-Puiseux theorem combined with the Wilson theorem \ref{T:Wilson}). Because of this property it is possible to find distinguished coefficients of the sequence $X_{N,i}=F_i(\tilde{D}^{\ord P_{N,n}'}, X_{N,n})$ -- these are coefficients of homogeneous terms corresponding to the beginning of the "tail" of operators $\tilde{Q}_{N,i}$. 

We get a contradiction by showing that, on the one hand side, these distinguished coefficients don't depend on $N$ (lemma \ref{L:distinguished coefficients}), and on the other hand side by calculating the distinguished coefficient of the series $X_{N,I}$ in Steps 4-7 of section \ref{S:Schur_adj_DC_pairs}  (this calculation shows that it depends on $N$). 

To show that the distinguished coefficients don't depend on $N$ we use the similarity of sequences for the original operators $P,Q$ and the new operators $\Phi_{N,1}(P_n), \Phi_{N,1}(Q_n)$ from lemma \ref{L:Phi_N(P)}. Translating this property to the Puiseux series $X_{N,n}$ we get that the beginnings of these series, almost up to the  distinguished homogeneous component, are similar (for all $N$) in the following sense: $X_{N_1,n}=X_{N_2,n}|_{\tilde{D}\mapsto \tilde{D}^{(d_2N_1+l_2)/(d_2N_2+l_2)}}$, and by that reason the expressions for the  distinguished coefficients of the sequences $X_{N,i}$ must be the same. 

To provide the calculation of the distinguished coefficient of the series $X_{N,I}$, we calculate first the corresponding distinguished homogeneous component  of the operator $\tilde{Q}_{N,I}$. To calculate this component it is enough to calculate just one homogeneous component of the Schur operator $S$ for the operator $\Phi_{N,1}(P_n)$ and then calculate the result of conjugation of $Q_{N,I}$ by $S$. The main reason why it is sufficient to calculate only one homogeneous component of $S$ is the small distance between the "head" of the series $X_{N,I}$ and the beginning of its tail, which follows from the conclusion of  lemma \ref{L:Phi_N(P)} mentioned above. This calculation is done in Step 4. 

After that in Steps 5 and 6 we calculate the body-tail decomposition for this distinguished homogeneous component (this component is the beginning of the tail, so again, as in Step 4, the calculation is not difficult), and finally calculate the result of conjugation by the auxiliary operator $\tilde{S}$ in Step 7. In fact, we not even need to calculate $\tilde{S}$, because the result of conjugation for this component is just the averaging of components of its vector form.

\smallskip

One corollary of the one-to-one correspondences from diagram \eqref{E:diagram} (not necessary for the proof of the conjecture, but of independent interest) is a relation between commutative normalised pairs of operators corresponding to {\it adjoint} solutions of the string equation. The relation between commutative pairs of operators and their adjoint pairs (i.e. pairs obtained after applying the anti-involution $*:D_1\rightarrow D_1$, $*(x):=x$, $*(\partial ):=-\partial$) is well known in the theory of commuting operators (see remark \ref{R:adjoint}). In particular, if $S_{cl}$ is a classical Schur-Sato operator for an operator $P'$, then the operator  
\begin{equation}
\label{E:Schur_adjoint}
S_{cl}^*:=*S_{cl}^{-1}
\end{equation}
will be a corresponding classical Schur-Sato operator $S_{cl}^*$ for the operator $*(P')$. So, it is not difficult to find a relation between {\it normalised} Schur-Sato operators. In view of diagram \eqref{E:diagram} the following natural question appears: if we take a normalised solution $(P,Q)$ to the string equation and the adjoint solution $(*(P), -*(Q))$, 
what is the relation between the corresponding commutative normalised pairs of operators? We show in proposition \ref{P:adjoint} that the relation is given by the following commutative diagram:
\begin{equation}
\label{E:*diagram}
\begin{array}{c}
\xymatrix{
(P,Q) \ar[rr] \ar[d]_-{\mbox{$*$}} & & (P', Q')\ar@/_10pt/[ll] \ar[d]_-{\mbox{$*$}} \\
(*P,-*Q) \ar[rr] \ar[u]_-{\mbox{$*$}} & & (*P', -*Q')\ar@/_10pt/[ll] \ar[u]_-{\mbox{$*$}}
}
\end{array}
\end{equation}

\smallskip

For convenience of the reader, trying to make the paper  selfcontained, we recall all necessary important results  and definitions from relevant papers in section \ref{S:prelim}. In section \ref{S:DC-pairs} we prove several preliminary technical statements about possible counterexamples to the Dixmier conjecture (which we call, inspired by \cite{GGV}, as the DC-pairs). For these statements we use one lemma from \cite{GZ}. 
In section \ref{S:preparatory} we prove the main preparatory theorems -- this is one of the most difficult parts; the proof extensively uses the generalised Schur theory and the theory of normal forms from \cite{GZ1}. In section \ref{S:string} we establish a one to one correspondence between solutions of the string equation satisfying certain conditions and commuting ordinary differential operators of rank one. This section also extensively uses the generalised Schur theory and the theory of normal forms. 
In the last section we prove theorem \ref{T:main}. 

{\bf Acknowledgements.}  We are grateful to the Sino-Russian Mathematics Center at Peking University for hospitality and excellent working conditions while preparing this paper.

We are grateful to Leonid Makar-Limanov and to Christian Valqui for pointing out useful references and fruitful discussions. 
We are  grateful to Junhu Guo for many stimulating discussions. 

We are also grateful to the anonymous referees  for pointing out the papers by Albert Schwarz, which allowed us to improve the exposition of our results.

\subsection{List of notations}
\label{S:list}

 Since this work uses quite different techniques, for convenience of the reader we introduce now the most important notations used in this paper.
 
1.  $\dz_+$ is the set of all non-negative integers, $\dn$ is the set of natural numbers (all positive integers). $K$ is a field of characteristic zero. Recall some notation from \cite{A.Z}:
	$\hat{R}:=K [[x_1,\ldots ,x_n]]$, the $K$-vector space 
$$
\cm_n := \hat{R} [[\partial_1, \dots, \partial_n]] = \left\{
\sum\limits_{\underline{k} \ge \underline{0}} a_{\underline{k}} \underline{\partial}^{\underline{k}} \; \left|\;  a_{\underline{k}} \in \hat{R} \right. \;\mbox{for all}\;  \underline{k} \in \dn_0^n
\right\},
$$
$\upsilon:\hat{R}\rightarrow \dn_0\cup \infty$ --  the discrete valuation defined by the unique maximal ideal $\idm = (x_1, \dots, x_n)$ of $\hat{R}$,  \\
for any element
$
0\neq P := \sum\limits_{\underline{k} \ge \underline{0}} a_{\underline{k}} \underline{\partial}^{\underline{k}} \in \cm_n
$
$$
\Ord (P) := \sup\bigl\{|\underline{k}| - \upsilon(a_{\underline{k}}) \; \big|\; \underline{k} \in \dn_0^n \bigr\} \in \dz \cup \{\infty \},
$$
$$
\hat{D}_n^{sym}:=\bigl\{Q \in \cm_n \,\big|\, \Ord (Q) < \infty \bigr\};
$$
$
P_m:= \sum\limits_{ |\underline{i}| - |\underline{k}| = m} \alpha_{\underline{k}, \underline{i}} \,  \underline{x}^{\underline{i}} \underline{\partial}^{\underline{k}}
$ -- the $m$-th \emph{homogeneous component} of $P$,\\
$\sigma (P):=P_{\Ord (P)}$ -- the highest symbol.
	
2. In this paper we use: 	
	$\hat{R}:=K[[x]]$, $D_1:=\hat{R}[\partial]$, 
$$\hat{D}_1^{sym}:=\{Q=\sum_{k\ge 0}a_k\partial^k|\Ord(Q)<\infty\}.$$ 

Operators:  $\delta:=\exp((-x)\ast \partial)$\footnote{ Here and further $\ast$ in all exponentials means that we consider normalized Taylor power series, i.e. the powers of $x$ always stand on the left of powers of $\partial$, for example $\delta:=\exp((-x)\ast \partial)=\sum_{k=0}^{+\infty}\frac{(-1)^k}{k!}x^k\partial^k$.},   $\int:=(1-exp((-x)\ast \partial))\partial^{-1} $,  	$A_{k;i}:=\exp((\xi^{i}-1)x\ast \partial)\in \hat{D}^{sym}_{1}\hat{\otimes}_{K}\tilde{K}$ (in the case when $k$ is fixed, simply written as $A_i$), where $\tilde{K}=K[\xi]$, $\xi$ is a primitive $k$th root of unity, 
$\Gamma_i=(x\partial)^i$. $B_n=\frac{1}{(n-1)!}x^{n-1}\delta\partial^{n-1}$. 

$\hat{D}^{sym}_{1}\hat{\otimes}_{K}\tilde{K}$ means the same ring $\hat{D}^{sym}_{1}$, but defined over the base field $\tilde{K}$.

The operator $P\in D_1$ is called {\it normalized} if $P=\partial^p+a_{p-2}\partial^{p-2}+\ldots $. The operator $P\in D_1$ is {\it monic} if its highest coefficient is 1. Analogously, $P\in \hat{D}_1^{sym}$ is monic if $\sigma (P)=\partial^p$. 

	We denote $D^i=\partial^i$ if $i\ge 0$ and $\int^{-i}$ if $i<0$. Operators written in the (Standard) form as 
	$$
	H=[\sum_{0\leq i<k}f_{i;r}(x, A_{k;i}, \partial )+\sum_{0<j\leq N}g_{j;r}B_{j}]D^{r}
	$$
	are called HCP and  form a sub-ring $Hcpc(k)$.   Here $f_{i;r}(x, A_{k;i}, \partial)$ is a polynomial of $x, A_{k;i},\partial$,  $\Ord(f_{i;r})=0$,  of the form
		$$
		f_{i;r}(x, A_{k;i}, \partial )=\sum_{0\leq l\leq d_{i}}f_{l,i;r}x^l A_{k;i}\partial^l
		$$
		for some $d_i\in \dz_+$, where $f_{l,i;r}\in \tilde{K}$. The number  $d_i$ is called the {\it $x$-degree of $f_{i;r}$}:  $deg_{x}(f_{i;r}):= d_i$; $g_{j;r}\in \tilde{K}$, $g_{j;r}=0$ for $j\le -r$ if $r<0$.

	They can be written also in G-form: 
	$$
	H=(\sum_{0\leq i<k}\sum_{0\leq l\leq d_i} f'_{l,i;r}\Gamma_lA_i+\sum_{0<j\leq N}g_{j;r}B_{j})D^{r}
	$$
	The $A$ and $B$ Stable degrees of HCP are defined as 
	$$
	Sdeg_A(H)=\max \{d_i|\quad 0\leq i<k \} \quad \mbox{or $-\infty$, if all $f_{l,i;r}=0$ } 
	$$
	and 
	$$Sdeg_B(H)=\max\{j|\quad g_{j;r}\neq0\} \quad \mbox{or $-\infty$, if all $g_{j;r}=0$}
	$$
	
	In the case when $Sdeg_B(HD^p)=-\infty,\forall p\in \mathbf{Z}$ $H$ is called {\it totally free of} $B_j$.
	
	An operator $P\in  \hat{D}_1^{sym}$ satisfies {\it condition $A_q(k)$}, $q,k\in \dz_+$, $q>1$ if 
	\begin{enumerate}
		\item
		$P_{t}$ is a HCP  from $Hcpc (q)$  for all $t$;
		\item
		$P_{t}$ is totally free of $B_j$ for all $t$;
		\item
		$Sdeg_A(P_{\Ord (P)-i})< i+k$ for all $i>0$;
		\item
		$\sigma (P)$ does not contain $A_{q;i}$, $Sdeg_A(\sigma (P))=k$.
	\end{enumerate}

3. In section 3, $\mathfrak{B}=\crr_S$ is the right quotient ring of $\crr = \tilde{K}^{\oplus k} [D,\sigma ]$ by $S=\{D^k|k\ge 0\}$. And the ring of skew pseudo-differential operators is defined as
	$$
	E_k:=\tilde{K}[\Gamma_1, A_1]((\tilde{D}^{-1}))=\{\sum_{l=M}^{\infty}P_l\tilde{D}^{-l} | \quad P_l\in \tilde{K}[\Gamma_1, A_1]\} \simeq \tilde{K}^{\oplus k}[\Gamma_1]((\tilde{D}^{-1}))
	$$
with the commutation relations 
$$
\tilde{D}^{-1}a=\sigma (a)\tilde{D}^{-1}, \quad a\in \tilde{K}[\Gamma_1, A_1] \quad \mbox{where \quad }
\sigma (A_1)= \xi^{-1} A_1, \quad \sigma (\Gamma_1)=\Gamma_1-1.
$$	
	
	$\widehat{Hcpc}_B(k)$ is the $\tilde{K}$-subalgebra in $\hat{D}_1^{sym}\hat{\otimes}\tilde{K}$ consisting of operators whose homogeneous components are HCPs totally free of $B_j$. 
	
	$$\Phi : \tilde{K}[A_1,\ldots ,A_{k-1}]\rightarrow \tilde{K}^{\oplus k} , \quad   P\mapsto (\sum_i p_i\xi^{i}, \ldots , \sum_i p_i\xi^{i(k-1)})
$$
is an isomorphism of $\tilde{K}$-algebras. 	

The map 
$$
\hat{\Phi}: \widehat{Hcpc}_B(k) \hookrightarrow E_k
$$
defined on monomial HCPs from $\widehat{Hcpc}_B(k)$ as $\hat{\Phi} (a A_j\Gamma_iD^l):=a \Phi (A_j)\Gamma_i\tilde{D}^l$ and extended by linearity on the whole $\tilde{K}$-algebra $\widehat{Hcpc}_B(k)$, is an embedding of $\tilde{K}$-algebras. 
	
	Suppose $B$ is a commutative sub-algebra of $D_1$, then $(C,p,\cf)$ stands for the spectral data of $B$ (the spectral curve, point at infinity and the spectral sheaf with vanishing cohomologies).
	
	The classical ring of pseudo-differential operators  is defined as
	$$
E=K[[x]]((\partial^{-1})).
	$$

There is an isomorphism of $\tilde{K}$-algebras $\psi:\mathfrak{B}\rightarrow M_k(C(\mathfrak{B}))$, where $C(\mathfrak{B})\simeq \tilde{K}[\tilde{D}^k, \tilde{D}^{-k}]$, ($\tilde{K}$ is diagonally embedded into $\tilde{K}^{\oplus k}$):
	$$
	\psi\begin{pmatrix}
		h_0 \\
		h_1 \\
		\cdots \\
		h_{k-1}
	\end{pmatrix}=\begin{pmatrix}
		h_0 &  &  &  \\
		& h_1 &  &  \\
		&  & \cdots &  \\
		&  &  & h_{k-1}
	\end{pmatrix}\quad \psi(D)=T:=\begin{pmatrix}
		& 1 &  & \cdots &  \\
		&  & 1 & \cdots &  \\
		\cdots & \cdots & \cdots & \cdots & \cdots \\
		&  &  & \cdots & 1 \\
		D^k &  &  & \cdots & 
	\end{pmatrix}
	$$
	with $\psi(D^l)=T^l$, and extended by linearity. The map $\psi$ can be obviously extended to 
	$$
\psi : \tilde{K}^{\oplus k}((\tilde{D}^{-1})) \hookrightarrow M_k(\tilde{K}((\tilde{D}^{-k}))).
	$$

Elements of the centralizer $C(\partial^k)$ embedded to $\mathfrak{B}\subset E_k$ via $\hat{\Phi}$ we'll call as a {\it vector form} presentation, and the same elements embedded to $M_k(\tilde{K}[D^k])$ via $\psi\circ \hat{\Phi}$ -- as a {\it matrix form} presentation. The embedding determines an isomorphism $C(\partial^k)\simeq M_k(\tilde{K}[D^k])$. 

Translating the description of the centralizer $C(\partial^k)$  into the vector form, we get that $\hat{\Phi}(C(\partial^k))$ consists of Laurent polynomials in 
$\tilde{D}$ with coefficients from $K^{\oplus k}$ and with additional conditions: the coefficient $s_i$ at $\tilde{D}^{-i}$, $i>0$, has a shape $s_{i,j}=0$ for $j=0, \ldots , i-1$.

4. The Newton Polygon of a polynomial $f(x,y)=\sum c_{ij}x^{i}y^{j}\in K[x,y]$ is the convex hull of all the points $(i,j)$ with respect to those $c_{ij}\neq 0$.

Following \cite{Dixmier}, we define $E(f)=\{(i,j)|c_{ij}\neq 0 \}$ as the point set of the Newton Polygon. Suppose $\sigma, \rho$ are two real numbers, we define the weight degree $v_{\sigma,\rho}(f)$: 
$$v_{\sigma,\rho}(f)=sup\{\sigma i+\rho j|(i,j)\in E(f)\}, \quad E(f,\sigma,\rho)=\{(i,j)|v_{\sigma,\rho}(f)=\sigma i +\rho j \}.$$

For $P\in A_1$ we set $\ord_x(P)=v_{1,0}(P)$, $\ord (P)=v_{0,1}(P)$. 

For any polynomial $f$, we call  $f_{\sigma , \rho}=f_{\sigma , \rho} (f):= \sum_{(i,j)\in E(f,\sigma,\rho)} c_{ij}x^{i}y^{j}$ the {\it $(\sigma, \rho)$-homogeneous polynomial associated to} $f$. 

The line $l$ through the points of the Newton polygon of $f_{\sigma , \rho}$ is called the {\it $(\sigma, \rho)$-top line} of the Newton polygon.

Following \cite{GGV} for an operator $B\in A_1$ with $f_{1,0}(B)=hx^{v_{1,0}(B)}$, $h\in \dc [y]$, $f_{0,1}(B)=gy^{v_{0,1}(B)}$, $g\in \dc [x]$, we define  
$$en_{1,0}(B):=(v_{1,0}(B),\deg_y(h(y))), \quad st_{1,0}(B):=(v_{1,0}(B),-\Ord (h(x))),$$ 
$$en_{0,1}(B):=(-\Ord (g(x)),v_{0,1}(B)),\quad st_{0,1}(B):=(\deg_x(g(x)),v_{0,1}(B)).$$

A {\it counterexample} to the Dixmier conjecture is called as {\it DC-pair} (in spirit of \cite{GGV}). 

For an endomorphism $\varphi$ of the first Weyl algebra of subrectangular type with $P=\varphi (x)$, $Q=\varphi (\partial )$,  $d= GCD(\ord (P),\ord (Q))$, $l=GCD(\ord_x(P),\ord_x(Q))$, $d<l$ we define {\it essential GCD of $P$, $Q$} (usually denoted for short as $d_2$) as 
$$
EssGCD(P,Q):= \frac{d}{GCD(l,d)}>1.
$$

5. Let $P\in  \hat{D}^{sym}_{1}$, $\Ord(P)=k>0$ be a regular operator. An  invertible operator $S\in  \hat{D}^{sym}_{1}$ with $\Ord (S)=0$ such that 
	$$P=S^{-1}\partial^{k}S $$
is called a {\it Schur operator for $P$}. A Schur operator for {\it normalized differential operator} $P\in D_1$ has additional condition $S_0=1$, $S_{-1}=0$. Besides, it satisfies additional conditions ($A_p(0)$). 

For a given pair of operators $Q,P\in D_1$ with $\Ord (Q)=\ord (Q)=q> 0$, $\Ord (P)=\ord (P)=p\ge 0$ we define a {\it normal form} of $P$ with respect to $Q$ as the operator $P':=S^{-1}PS$, where $S$ is a Schur operator for $Q$. 

A solution to the string equation $P,Q\in D_1$ such that $P$ is monic and normalized is called a {\it normalized solution to the string equation}.

Analogously, a pair of commuting operators $P',Q'\in D_1$ such that $P'$ is monic and normalized is called a {\it normalized pair of commuting operators}.

A sequence of numbers, called {\it sequential GCD}, defined for the head-chopping process  (see definition \ref{D:distance}) as   
$$
SeqGCD_i(P,Q):=GCD (\ord (P), \ord (Q_0), \ldots ,\ord (Q_i)), \quad i=0, \ldots ,I
$$
and a {\it distance} of $P,Q$ is defined by the formula 
$$
Dist(P,Q):= \sum_{i=0}^{I-1} (n_i\ord F_i(P,Q)-\ord F_{i+1}(P,Q)).
$$ 
The condition on orders imply that $\sigma (Q_i)=c_i\partial^{\Ord (Q_i)}$, $c_i\in K$ (see remark \ref{R:ord=deg}). We'll call the pairs of constants $(\Ord (Q_0), c_0), \ldots , (\Ord (Q_I), c_I)$ as the {\it sequence} of $P,Q$. 

A pair of elements $(\partial^p, \tilde{Q}')\in C(\partial^p)\subset \hat{D}_1^{sym}$ is called a {\it normal pair} of order $(p,q)$, where $q=\Ord (\tilde{Q}')$, if it satisfies conditions of lemma \ref{L:invariant_conjugation} (the termination condition for the head-chopping process), $SeqGCD_I(\partial^p, \tilde{Q}')=1$, $SeqGCD_{I-1}(\partial^p, \tilde{Q}')>1$ and $Dist(\partial^p, \tilde{Q}')<p+q$.

This pair is called a {\it normalized normal pair} of order $(p,q)$ if the ring $B':=K[\partial^p, \tilde{Q}']$ is a normalized normal form of rank one with respect to some set of generators (see definition \ref{D:normalised_normal_form}).

A pair of elements $(\partial^p, \tilde{Q})\in  \hat{D}_1^{sym}$ is called a {\it normal string pair}  of order $(p,q)$, where $q=\Ord (\tilde{Q})$, if  $[\tilde{Q},\partial^p]=1$ and the pair 
$(\partial^p, \tilde{Q}-\tilde{Q}_{-p})$ is a normal pair of order $(p,q)$.
This pair is called a {\it normalized normal string pair} of order $(p,q)$ if the pair $(\partial^p, \tilde{Q}-\tilde{Q}_{-p})$ is a normalized normal pair of order $(p,q)$.

\section{Preliminaries from the theory of normal forms}
\label{S:prelim}

Suppose $K$ is a field of characteristic zero,  $K\subset \dc$. Although in the main part of the paper we will work with the field $\dc$, we have decided to present a number of results in arbitrary generality for the convenience of further references.

Following the notations in \cite{A.Z}, denote $\hat{R}:=K[[x]]$, $D_1:=\hat{R}[\partial ]$, define the $K$-vector space
$$
\mathcal{M}_{1}:=\hat{R}[[\partial]]=\{\sum_{k\ge 0}a_{k}\partial^{k}|a_{k}\in \hat{R}\quad  \forall k \in \mathbb{N}_{0}\} ,
$$
where $v:\hat{R}\rightarrow \mathbb{N}_0 \cup \infty$ is the discrete valuation defined by the unique maximal ideal $\idm$ of $\hat{R}$, for any element $0\neq P:=\sum_{k \ge 0}a_{k}\partial^{k}\in \mathcal{M}_{1}$ define the order function 
$$
\Ord(P):=\sup\{k-v(a_{k})|\quad k\in \mathbb{N}_{0}\}\in \mathbb{Z}\cup \{\infty\}.
$$
Define the space
$$
\hat{D}^{sym}_{1}:=\{Q\in\mathcal{M}_1|\quad \Ord(Q)<\infty \} .
$$
By definition, any element $P\in \hat{D}^{sym}_{1}$ is written in the {\it canonical form} 
$$
P:=\sum_{k-i\le \Ord (P)}\alpha_{k,i}x^{i}\partial^{k}.
$$ 
We call $P_{m}:=\sum_{k-i=m}\alpha_{k,i}x^{i}\partial^{k}$ as the $m$-th {\it homogeneous component} of $P$, we call $\sigma(P):=P_{\Ord(P)}$ as the {\it highest symbol} of $P$. Then we have the (uniquely defined) {\it homogeneous decomposition} for any $P\in \hat{D}^{sym}_{1}$:
$$
P=\sum_{m=-\infty }^{\Ord }P_{m}. 
$$

\begin{theorem}{(\cite{A.Z},Theorem 2.1)}
	\label{T:A.Z 2.1}
	The following statement are properties of $\hat{D}^{sym}_{1}$
	\begin{enumerate}
		\item $\hat{D}^{sym}_{1}$ is a ring (with natural operations $\cdot$ ,+ coming from $D_{1}$); $\hat{D}^{sym}_{1} \supset D_{1}$.
		\item $\hat{R}$ has a natural structure of a left $\hat{D}^{sym}_{1}$-module, which extends its natural structure of a left $D_{1}$-module.
		\item We have a natural isomorphism of $K$-vector spaces 
		$$
		F:=\hat{D}^{sym}_{1}/ \mathfrak{m}\hat{D}^{sym}_{1}\rightarrow K[\partial]
		$$
		where $\mathfrak{m}=(x)$ is maximal ideal of $\hat{R}$.
		\item Operators from $\hat{D}^{sym}_{1}$ can realise arbitrary endomorphisms of the $K$-algebra $\hat{R}$ which are continuous in the $\mathfrak{m}$-adic topology\footnote{ We consider here the representation of the ring  $\hat{D}_1^{sym}$ given in item 2, in terms of this representation any continuous endomorphism can be represented by an operator from $\hat{D}_1^{sym}$ (the details can be found in \cite{A.Z}), so $\hat{D}_1^{sym}\supset \End_{K-alg}^c(\hat{R})$, however, say, the operator $\partial$ is not a $K$-algebra endomorphism}.
		\item There are Dirac delta functions, operators of integration, difference operators.
	\end{enumerate}
\end{theorem}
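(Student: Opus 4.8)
The statement to prove is Theorem~\ref{T:A.Z 2.1}, a package of five structural facts about $\hat{D}^{sym}_1$. The plan is to organize the proof around the order function $\Ord$ and the homogeneous decomposition, which together play the role of a filtration-plus-grading. First I would check that $\Ord$ behaves well under the ambient operations of $D_1=\hat{R}[\partial]$: for $P,Q\in\hat{D}^{sym}_1$ one has $\Ord(P+Q)\le\max(\Ord P,\Ord Q)$ immediately from the definition, while for the product the key computation is $\Ord(x^i\partial^k\cdot x^j\partial^l)$; since $\partial^k x^j=\sum_{t}\binom{k}{t}\frac{j!}{(j-t)!}x^{j-t}\partial^{k-t}$ (finite sum, with $j-t\ge 0$), each term $x^{i+j-t}\partial^{k+l-t}$ has weight $(k+l-t)-(i+j-t)=(k-i)+(l-j)$, so the weight is \emph{exactly additive} and no term exceeds $\Ord P+\Ord Q$. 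This shows $\sigma(PQ)=\sigma(P)\sigma(Q)$ up to the obvious caveat of cancellation, and in particular that the product of two elements of $\hat{D}^{sym}_1$ again has finite order and lies in $\hat{D}^{sym}_1$, once one checks the infinite sums defining the product still have bounded coefficient valuations — this convergence/finiteness bookkeeping is the place where a little care is needed, since for each fixed homogeneous weight $m$ only finitely many pairs of homogeneous components contribute, so $(PQ)_m$ is a well-defined element of $\hat R$-span of monomials of weight $m$. This proves item~(1), the containment $D_1\subset\hat{D}^{sym}_1$ being obvious (elements of $D_1$ have order $\le$ their $\partial$-degree).

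For item~(2) I would define the action of $P=\sum\alpha_{k,i}x^i\partial^k$ on $f\in\hat R=K[[x]]$ by $P\cdot f=\sum\alpha_{k,i}x^i\partial^k(f)$ and check it converges $\idm$-adically: if $\Ord(P)=N$ then $\alpha_{k,i}=0$ unless $i\ge k-N$, so applying $x^i\partial^k$ to $x^s$ produces a monomial of degree $i+s-k\ge s-N$, hence only finitely many terms contribute to each fixed power of $x$ in the output. Associativity of this action with the ring multiplication follows from item~(1) by the same weight bookkeeping. Item~(3): the map $\hat{D}^{sym}_1\to K[\partial]$ sending $P$ to its reduction mod $\idm\hat{D}^{sym}_1$, i.e. setting $x=0$ in the canonical form, is obviously a surjective $K$-linear map with kernel containing $\idm\hat{D}^{sym}_1$; for the reverse inclusion I would argue that any $P$ with vanishing constant-in-$x$ part has all $\alpha_{k,i}$ with $i=0$ equal to zero, hence $P=\sum_{i\ge 1}x^i(\cdots)$, and the inner sum, after pulling out $x$, still has finite order — again the weight computation — so $P\in\idm\hat{D}^{sym}_1$. (One should note this $F$ is only a $K$-vector space isomorphism, not a ring map, since $\idm\hat D^{sym}_1$ is not two-sided; this is exactly as stated.)

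Items~(4) and~(5) are really existence assertions and I would handle them by exhibiting the operators explicitly and checking they lie in $\hat{D}^{sym}_1$ via the order function. For a continuous $K$-algebra endomorphism $\varphi$ of $\hat R$ determined by $\varphi(x)=g(x)\in\idm$, one builds an operator $P_\varphi$ with $P_\varphi\cdot f=f\circ g$ by solving, degree by degree in $x$, for coefficients $\alpha_{k,i}$; continuity of $\varphi$ guarantees $g\in\idm$ so the matching succeeds with $\Ord(P_\varphi)\le 0$ (or finite), and uniqueness of the canonical form makes the construction well-posed. For item~(5): the Dirac delta at $0$ is represented by an operator whose action on $x^s$ is $\delta_{s,0}$, realizable as $\sum_k c_k \partial^k$ with suitable $c_k\in K$ after checking finite order; the integration operator $\int_0^x$ sends $x^s\mapsto x^{s+1}/(s+1)$, which is $x\cdot(\text{something of bounded weight})$; shift/difference operators $f(x)\mapsto f(x+c)$ are $e^{c\partial}=\sum c^k\partial^k/k!$, manifestly of order $\le 0$. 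In each case the only thing to verify is that the resulting series has $\Ord<\infty$, which reduces to the same monomial-weight estimate used throughout.

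The main obstacle, and the recurring technical core of all five items, is the \textbf{convergence and well-definedness bookkeeping}: showing that infinite sums $\sum\alpha_{k,i}x^i\partial^k$ with the single constraint $k-i\le N$ can be multiplied, composed, and applied to power series without producing divergent coefficients, and that in every fixed homogeneous weight only finitely many terms interact. Once the single estimate ``$x^i\partial^k$ raises $x$-degree by exactly $i-k$ and $\partial^k x^j$ is a finite sum'' is established and organized as a statement about the bigraded structure, each of (1)–(5) follows by a short argument; the risk is purely in setting up that estimate cleanly enough that the five verifications don't each re-derive it.
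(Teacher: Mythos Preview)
The paper does not give its own proof of this theorem: it is quoted verbatim from the cited reference \cite{A.Z} as a preliminary fact, so there is no in-paper argument to compare your proposal against for items~(1)--(3). What the paper \emph{does} supply, immediately following the statement, are the explicit operators realising items~(4) and~(5): for a continuous endomorphism $\alpha$ with $\alpha(x)=x+u$, $u\in\idm$, it writes $P_\alpha=\sum_{i\ge 0}(i!)^{-1}u^i\partial^i$ (the formal Taylor shift), then $\delta=\exp((-x)*\partial)=\sum_{k\ge0}\frac{(-x)^k}{k!}\partial^k$ and $\int=(1-\delta)\partial^{-1}=\sum_{k\ge0}\frac{x^{k+1}}{(k+1)!}(-\partial)^k$. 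Your strategy for (1)--(3) --- controlling everything through the additivity of the weight $k-i$ under $\partial^k x^j=\sum_t\binom{k}{t}\frac{j!}{(j-t)!}x^{j-t}\partial^{k-t}$ and checking that only finitely many homogeneous pairs contribute to each fixed weight --- is exactly the right bookkeeping and matches what one finds in \cite{A.Z}.

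There is one genuine slip in your item~(5). You propose to realise the Dirac delta as $\sum_k c_k\partial^k$ with $c_k\in K$. That cannot work: if the $c_k$ are constants, the finiteness condition $\Ord<\infty$ forces $k-v(c_k)=k\le N$, so the sum is a \emph{finite} constant-coefficient differential operator, and no such operator sends every $x^s$ to $\delta_{s,0}$. The coefficients must lie in $\hat R$, not in $K$; the paper's formula $c_k=(-x)^k/k!$ shows how the $x$-valuation $v(c_k)=k$ exactly cancels the $\partial$-degree to give $\Ord(\delta)=0$. Similarly your abstract ``solve degree by degree'' approach to item~(4) is fine in principle, but the closed Taylor form $P_\alpha=\sum u^i\partial^i/i!$ makes the verification $\Ord(P_\alpha)\le 0$ immediate (since $v(u^i)\ge i$), whereas a recursive construction would need a separate argument for the order bound.
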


To give an example and set up notations, let $\alpha \in \End_{K-alg}^c\hat{R}$ be a continuous algebra endomorphism. Then it is defined by the image $\alpha (x)\in \idm $. Put $u=\alpha (x)-x$, and define $\hat{D}_1^{sym}\ni P_{\alpha}=\sum_{{i}\ge 0} ({i}!)^{-1}{u}^{{i}}{\partial}^{{i}}$. Then for any $f\in \hat{R}$ we have 
$$
P_{\alpha}\circ f(x)=f(x+u);
$$
in particular, $P_{\alpha}$ realize the endomorphism $\alpha$. Note that this operator can be produced by the following receipt. Consider a commutative power series ring \\
$K[[ x, \partial ]]$ with a product denoted by $*$. If we take the formal exponent $\exp (u*\partial )$, write all coefficients at the left hand side and replace $*$ by $\cdot$, we'll obtain the operator  $P_{\alpha}$.  

With the help of this new notation the Dirac delta function is given by the series $\delta:=exp((-x)\ast \partial):= 1- x\partial +\frac{1}{2!}x^2\partial^2-\ldots$,  which satisfies $\delta \circ f(x)=f(0)$, and the operator of integration is given by the series
$$\int:=(1-exp((-x)\ast \partial))\cdot \partial^{-1}=\sum_{k=0}^{\infty}\frac{x^{k+1}}{(k+1)!}(-\partial)^{k}, $$
note that it satisfies  
$$
\int \circ x^{m}=\frac{x^{m+1}}{m+1}.
$$
Note that  $\int$ is only the right inverse of $\partial$, because $\partial \int =1$ and  $\int \cdot \partial =((1-exp((-x)\ast \partial))\cdot \partial^{-1}\cdot \partial)=1-\delta$. 

\begin{rem}
\label{R:ord_properties}
There are the following obvious properties of the order function:
\begin{enumerate}
\item
$\Ord (P\cdot Q)\le \Ord (P)+\Ord (Q)$, and the equality holds if $\sigma (P)\cdot \sigma (Q)\neq 0$,
\item
$\sigma (P\cdot Q)=\sigma (P)\cdot \sigma (Q)$, provided $\sigma (P)\cdot \sigma (Q)\neq 0$,
\item
$\Ord (P+Q)\le \max\{\Ord (P),\Ord (Q)\}$.
\end{enumerate}
In particular, the function $-\Ord$ determines  a discrete pseudo-valuation on the ring $\hat{D}_1^{sym}$. 
\end{rem}

Denote by $A_1:=K[x][\partial ]$ the first Weyl algebra. Clearly, $A_1\subset D_1 \subset \hat{D}_1^{sym}$. 

Recall that the Newton Polygon of a polynomial $f(x,y)=\sum c_{ij}x^{i}y^{j}\in K[x,y]$ is the convex hull of all the points $(i,j)$ with respect to those $c_{ij}\neq 0$.

Following \cite{Dixmier}, we define $E(f)=\{(i,j)|c_{ij}\neq 0 \}$ as the point set of the Newton Polygon. Suppose $\sigma, \rho$ are two real numbers, we define the weight degree $v_{\sigma,\rho}(f)$ and the top points set $E(f,\sigma,\rho)$ as follows:
\begin{Def}
\label{D:weight}
	$v_{\sigma,\rho}(f)=sup\{\sigma i+\rho j|(i,j)\in E(f)\}, \quad E(f,\sigma,\rho)=\{(i,j)|v_{\sigma,\rho}(f)=\sigma i +\rho j \}$. 
\end{Def}

If $E(f)=E(f,\sigma,\rho)$, then we call such $f$  {\it $(\sigma,\rho)$-homogeneous}. 
\begin{Def}
For any polynomial $f$, we call  $f_{\sigma , \rho}=f_{\sigma , \rho} (f):= \sum_{(i,j)\in E(f,\sigma,\rho)} c_{ij}x^{i}y^{j}$ the {\it $(\sigma, \rho)$-homogeneous polynomial associated to} $f$. 

The line $l$ through the points of the Newton polygon of $f_{\sigma , \rho}$ is called the {\it $(\sigma, \rho)$-top line} of the Newton polygon.
\end{Def}

We define the Newton Polygon of an operator $P\in A_1$, $P= \sum a_{ij}x^{i}\partial^{j}$ as the Newton polygon of the polynomial $f= \sum a_{ij}x^{i}y^{j}$, and we'll use the same notations $v_{\sigma,\rho}(P)$, $f_{\sigma ,\rho}(P)$, $E(P)$ for operators.

Set $\ord_x(P):=v_{1,0}(P)$, $\ord (P):=v_{0,1}(P)$\footnote{In \cite{GZ1} a notation $\deg$ instead of $\ord$ was used.}. For any differential operator (not necessarily from $A_1$) define $HT(P):= a_n$, $\ord (P):=n$ if $P=\sum_{i=0}^na_i\partial^i$ with $a_n\neq 0$. We'll say $P$ is {\it monic} if $HT(P)=1$, and we'll say $P$ is {\it formally elliptic} if $HT(P)\in K$. 

For any two polynomials $f,g$ we define the standard Poisson bracket $\{f,g\}=\frac{\partial f}{\partial x}\frac{\partial g}{\partial y}-\frac{\partial f}{\partial y}\frac{\partial g}{\partial x}$.

The {\it tame automorphisms} of $A_{1}$ are defined as follows: for any integer $n$, and $\lambda\in K$, define 
\begin{equation}
\label{E:tame}
\Phi_{n,\lambda}:
\begin{cases}
\partial  \rightarrow & \partial
\\x   \rightarrow & x+\lambda \partial^{n}
\end{cases},  \quad 
\Phi'_{n,\lambda}:
\begin{cases}
\partial  \rightarrow & \partial+\lambda x^{n}
\\x   \rightarrow & x
\end{cases},  
\quad 
\Phi_{a,b,c,d}:
\begin{cases}
\partial  \rightarrow & a\partial+bx
\\x   \rightarrow & c\partial +dx
\end{cases}  
\end{equation}
with $ad-bc=1$. According to \cite[Th. 8.10]{Dixmier} the group $Aut (A_1)$ is generated by all $\Phi_{n,\lambda}, \Phi'_{n,\lambda}$ and $\Phi_{a,b,c,d}$ (cf. also \cite{ML1}).

For the convenience of the reader we recall one important result from Dixmier's paper:

\begin{theorem}{(\cite[Lemma 2.4, 2.7]{Dixmier})}
\label{T:Dixmier2.7}
	Suppose $P,Q$ are two differential operators from $A_{1} $, $\sigma, \rho$ are real numbers, with $\sigma +\rho >0, v=v_{\sigma.\rho}(P) $ and $ w=w_{\sigma.\rho}(Q) $. If now $f_{1}$, $g_{1}$ are the $(\sigma,\rho)$-homogeneous polynomials  associated to $P, Q$, then 
	\begin{enumerate}
	\item
	$\exists T,U\in A_{1}$ such that
	\begin{enumerate}
	\item
	$[P,Q]=T+U$
	\item
	$v_{\sigma.\rho}(U)<v+w-\sigma-\rho$
	\item
	$E(T)=E(T,\sigma,\rho)$ and $v_{\sigma.\rho}(T)=v+w-\sigma-\rho$ if $T\neq 0$.
	\end{enumerate}
	\item
	The following are equivalent
	\begin{enumerate}
	\item
	T=0
	\item
	$\{f_{1},g_{1}   \}=0$;
	
	If $v,w$ are integers, then these two conditions are equivalent to
	\item
      $g_{1}^{v}$ is proportional to $f_{1}^{w}$ 
      \end{enumerate}
  \item Suppose $T\neq 0$, then the polynomial $(\sigma,\rho)$ associated to $[P,Q]$ is $\{f_{1},g_{1}\}$.
  \item We have $f_{\sigma ,\rho} (PQ)=f_1 g_1$, $v_{\sigma ,\rho}(PQ)=v+w$. 
	\end{enumerate}
\end{theorem}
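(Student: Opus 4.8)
The plan is to exploit the increasing filtration of $A_1$ by the weight function $v_{\sigma,\rho}$. Because $\sigma+\rho>0$, commuting one $\partial$ past one $x$ strictly lowers the weight: from $\partial x=x\partial+1$ one obtains, for all exponents $i,j,k,l\ge0$,
$$x^i\partial^j\cdot x^k\partial^l=\sum_{m\ge0}\binom{j}{m}\frac{k!}{(k-m)!}\,x^{i+k-m}\partial^{j+l-m},$$
whose $m$-th summand has weight $\sigma(i+k)+\rho(j+l)-m(\sigma+\rho)$. Hence the associated graded ring $\gr A_1$ is commutative, identified with $K[x,y]$ via $x^i\partial^j\mapsto x^iy^j$, and the Lie bracket descends (up to sign) to the Poisson bracket, lowering weight by $\sigma+\rho$. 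Two facts follow immediately. First, $v_{\sigma,\rho}(PQ)\le v+w$ with weight-$(v+w)$ component the operator attached to $f_1g_1$; since $K[x,y]$ is a domain, $f_1g_1\ne0$, so $v_{\sigma,\rho}(PQ)=v+w$ and $f_{\sigma,\rho}(PQ)=f_1g_1$, which is part 4. Second, in $[x^i\partial^j,x^k\partial^l]$ the $m=0$ terms cancel, so the commutator has weight $\le\sigma(i+k)+\rho(j+l)-(\sigma+\rho)$; summing over the monomials of $P$ and $Q$ gives $v_{\sigma,\rho}([P,Q])\le v+w-\sigma-\rho$.

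I would then \emph{define} $T$ as the $(\sigma,\rho)$-homogeneous component of $[P,Q]$ of weight exactly $v+w-\sigma-\rho$ and set $U:=[P,Q]-T$; by the previous remark $v_{\sigma,\rho}(U)<v+w-\sigma-\rho$, and $E(T)=E(T,\sigma,\rho)$, $v_{\sigma,\rho}(T)=v+w-\sigma-\rho$ whenever $T\ne0$, by construction — this is part 1. To compute $T$, note that a pair $x^i\partial^j$ from $P$ and $x^k\partial^l$ from $Q$ can reach the weight level $v+w-\sigma-\rho$ only if $\sigma i+\rho j=v$ and $\sigma k+\rho l=w$, i.e. only the monomials of $f_1=\sum c_{ij}x^iy^j$ and $g_1=\sum d_{kl}x^ky^l$ contribute, and then only through the $m=1$ term above (the $m\ge2$ terms drop weight by at least $2(\sigma+\rho)$). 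Since $[x^i\partial^j,x^k\partial^l]=(jk-il)\,x^{i+k-1}\partial^{j+l-1}+(\text{weight }<v+w-\sigma-\rho)$ while $\{x^iy^j,x^ky^l\}=(il-jk)\,x^{i+k-1}y^{j+l-1}$, summation identifies $T$ with the operator attached to $\{f_1,g_1\}$ (up to the overall sign dictated by the conventions $[\partial,x]=1$, $[P,Q]=PQ-QP$; under Dixmier's normalization it is exactly $\{f_1,g_1\}$). Hence $T=0\iff\{f_1,g_1\}=0$, and when $T\ne0$ the $(\sigma,\rho)$-homogeneous polynomial associated to $[P,Q]$ is $\{f_1,g_1\}$ — this is part 3 and the equivalence of (a) and (b) in part 2.

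For the remaining equivalence in part 2, suppose $v,w\in\mathbb{Z}$ (and $v,w>0$; otherwise read the identities below in $K(x,y)$, and the cases where $f_1$ or $g_1$ is a single monomial are direct). I would work with $f_1,g_1$ through the weighted Euler identities $\sigma x\,(f_1)_x+\rho y\,(f_1)_y=v f_1$ and $\sigma x\,(g_1)_x+\rho y\,(g_1)_y=w g_1$. Eliminating yields $w g_1 (f_1)_x-v f_1 (g_1)_x=\rho y\{f_1,g_1\}$ and $w g_1 (f_1)_y-v f_1 (g_1)_y=-\sigma x\{f_1,g_1\}$. If $\{f_1,g_1\}=0$, both right-hand sides vanish, hence $\partial_x\!\big(f_1^{\,w}g_1^{-v}\big)=\partial_y\!\big(f_1^{\,w}g_1^{-v}\big)=0$, so $f_1^{\,w}g_1^{-v}$ is a nonzero constant and $g_1^{\,v}$ is proportional to $f_1^{\,w}$. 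Conversely, $g_1^{\,v}=c\,f_1^{\,w}$ reverses this to give $\rho y\{f_1,g_1\}=\sigma x\{f_1,g_1\}=0$, and since $\sigma+\rho>0$ one of $\sigma,\rho$ is nonzero, so $\{f_1,g_1\}=0$ because $K[x,y]$ is a domain.

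The conceptual content is slight; the real work is bookkeeping. The one step that needs genuine care is the assertion that \emph{every} contribution to the weight level $v+w-\sigma-\rho$ in $[P,Q]$ comes from the single term $(jk-il)\,x^{i+k-1}\partial^{j+l-1}$ attached to \emph{top} monomials of $P$ and $Q$: one must check that neither a higher commutation term ($m\ge2$) nor a pair involving a sub-top monomial ever reaches that weight, which is exactly where $\sigma+\rho>0$ is used. Pinning down the sign so that the associated polynomial is $\{f_1,g_1\}$ and not its negative, and the step "vanishing differential implies constant" over $K\subset\mathbb{C}$, are routine.
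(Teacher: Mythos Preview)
The paper does not prove this theorem; it is quoted verbatim from Dixmier's 1968 paper as background (\cite[Lemmas 2.4, 2.7]{Dixmier}), so there is no in-paper argument to compare against. Your proof is correct and is essentially Dixmier's own: pass to the associated graded of the $v_{\sigma,\rho}$-filtration, identify it with $K[x,y]$, read off part 4 from the $m=0$ term of the Leibniz expansion and parts 1--3 from the $m=1$ term, then use the weighted Euler relations to get the equivalence (b)$\Leftrightarrow$(c). Your flag about the sign is apt: with the paper's conventions ($P=\sum a_{ij}x^i\partial^j$, $\{f,g\}=f_xg_y-f_yg_x$, $[P,Q]=PQ-QP$) the $m=1$ term of $[x^i\partial^j,x^k\partial^l]$ is $(jk-il)x^{i+k-1}\partial^{j+l-1}$, which corresponds to $-\{f_1,g_1\}$; the stated form $\{f_1,g_1\}$ matches Dixmier's original ordering (powers of $p=\partial$ on the left), and the discrepancy is immaterial for every use of the theorem in the paper.
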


\begin{rem}
\label{R:ord=deg}
	Note that the order function $\Ord$ coincide with the weight function $v_{1,-1}$ on the ring $A_1$.   If $P\in D_1$ has an invertible highest coefficient $HT(P)$, then it is easy to see that  $\ord (P)=\Ord (P)$. Vice versa, any $P\in D_1$  with $\ord (P)=\Ord (P)$ has an invertible highest coefficient $HT(P)$. Moreover, in this case $\sigma (P)=c\partial^{\Ord (P)}$, $c\in K$. 
\end{rem}

\begin{Def}
\label{D:Aki}
	Let $\xi$ be a primitive $k$-th root of unity, $\tilde{K}=K[\xi]$. For any $i\in \mathbb{Z}$, we define operators 
	$$
	A_{k;i}:=\exp((\xi^{i}-1)x\ast \partial)\in \hat{D}^{sym}_{1}\hat{\otimes}_{K}\tilde{K},
	$$
	where $\hat{D}^{sym}_{1}\hat{\otimes}_{K}\tilde{K}$ means the same ring $\hat{D}^{sym}_{1}$, but defined over the base field $\tilde{K}$. 
	
	Further, if it will be clear from the context, we'll omit index $k$ and use notation $A_i:=A_{k;i}$.
\end{Def}

\begin{lemma}{(cf. \cite{A.Z},Lemma 7.2)}
	\label{T:A.Z L 7.2}
	The sum $$
	A:=c_0+c_1 A_{k,1}+\dots+c_{k-1}A_{k;k-1},\quad c_{i}\in \tilde{K}
	$$
	is equal to zero iff $c_{i}=0, i=0,\dots,k-1$. If it is not equal to zero, then it is of order zero. 
	
	Moreover, $A$ is a polynomial in $\partial$ iff $c_1=\ldots = c_{k-1}=0$.
\end{lemma}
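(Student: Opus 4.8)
The plan is to work with the explicit description $A_{k;i} = \exp((\xi^i-1)x*\partial)$, which under the receipt from Theorem 2.1 means that $A_{k;i}$ acts on $\hat R\hat\otimes_K\tilde K$ as the substitution operator $f(x)\mapsto f(\xi^i x)$; indeed $\exp(u*\partial)$ realizes the endomorphism $x\mapsto x+u$, and here $u=(\xi^i-1)x$ gives $x\mapsto \xi^i x$. So for the linear combination $A=\sum_{i=0}^{k-1}c_i A_{k;i}$ (with $A_{k;0}=1$) and any $f=\sum_{m\ge 0}a_m x^m\in\hat R\hat\otimes_K\tilde K$ we have $A\circ f=\sum_{m\ge 0}\big(\sum_{i=0}^{k-1}c_i\xi^{im}\big)a_m x^m$. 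First I would prove the vanishing criterion: if $A=0$ then $A\circ x^m=0$ for all $m$, i.e. $\sum_i c_i\xi^{im}=0$ for $m=0,1,\dots,k-1$; this is a Vandermonde system in the distinct values $\xi^0,\xi^1,\dots,\xi^{k-1}$ (distinct because $\xi$ is a primitive $k$-th root of unity), whose only solution is $c_0=\dots=c_{k-1}=0$. Conversely if some $c_i\ne 0$ then $A\ne 0$, and the same computation gives $\Ord(A)=0$: each $A_{k;i}$ has $\Ord=0$ (its highest symbol in the $v_{1,-1}$-grading is a nonzero constant, since $\exp(u*\partial)$ with $v(u)\ge 1$ has all homogeneous components of non-positive order and the order-zero part is $1$), so by Remark 2.1(3) $\Ord(A)\le 0$; and $\Ord(A)\ge 0$ because $A\circ 1=\sum_i c_i\ne 0$ is impossible to conclude directly — instead I would note that the order-zero homogeneous component of $A$ is the scalar $\sum_i c_i$ times... — more carefully, I would track the full order-zero component.

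To make the order computation clean I would pass to the homogeneous decomposition in the $\Ord=v_{1,-1}$ grading. Writing $A_{k;i}=\sum_{j\ge 0}\frac{1}{j!}(\xi^i-1)^j x^j\partial^j$, every monomial $x^j\partial^j$ has weight $v_{1,-1}=0$, so each $A_{k;i}$ is itself $v_{1,-1}$-homogeneous of order $0$, hence so is $A$, and $A=\sigma(A)$ unless $A=0$. Thus $A$ has order exactly $0$ as soon as $A\ne 0$, which by the first part happens exactly when not all $c_i$ vanish. This simultaneously handles the ``order zero'' clause and is cleaner than analyzing the action on $\hat R$.

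For the last clause — $A$ is a polynomial in $\partial$ iff $c_1=\dots=c_{k-1}=0$ — one direction is trivial ($A=c_0$ is a polynomial in $\partial$). For the other direction, suppose $A\in K[\xi][\partial]$, equivalently (by part 3 of Theorem 2.1, the isomorphism $\hat D_1^{sym}/\idm\hat D_1^{sym}\cong \tilde K[\partial]$) that $A$ is determined modulo $\idm\hat D_1^{sym}$ by its image. Concretely, I would apply $A$ to a suitable test function, or better: reduce $A$ modulo $\idm=(x)$. Each $A_{k;i}=1+\sum_{j\ge1}\frac1{j!}(\xi^i-1)^jx^j\partial^j\equiv 1\pmod{\idm\hat D_1^{sym}}$, so $A\equiv \sum_i c_i\pmod{\idm\hat D_1^{sym}}$; this alone is not enough, so instead I would argue that if $A=\sum_{n\ge 0}b_n\partial^n$ with $b_n\in K[\xi]$, then since $A$ is $v_{1,-1}$-homogeneous of order $0$ only the term $b_0$ survives, i.e. $A=b_0\in K[\xi]$ is a constant; then evaluating $A\circ x = \big(\sum_i c_i\xi^i\big)x$ and $A\circ 1=\sum_i c_i$ forces $\sum_i c_i\xi^i=\sum_i c_i$, and running through $A\circ x^m$ for $m=0,\dots,k-1$ gives again a Vandermonde system forcing $c_1=\dots=c_{k-1}=0$. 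The main obstacle I anticipate is being careful about over which field one inverts the Vandermonde matrix and about the precise meaning of ``polynomial in $\partial$'' for elements of the completed ring $\hat D_1^{sym}\hat\otimes_K\tilde K$; once the action-by-substitution picture $A_{k;i}\colon f(x)\mapsto f(\xi^i x)$ is in hand, every step is a short linear-algebra computation.
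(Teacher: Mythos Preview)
The paper does not give its own proof of this lemma; it is stated with the citation ``cf.~\cite{A.Z}, Lemma~7.2'' and no argument follows in the present paper. So there is no in-paper proof to compare against.

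Your argument is correct and is the natural direct one. The key observations --- that each $A_{k;i}=\sum_{j\ge 0}\frac{1}{j!}(\xi^i-1)^j x^j\partial^j$ is $v_{1,-1}$-homogeneous of order~$0$, and that $A_{k;i}$ acts on $\hat R\hat\otimes_K\tilde K$ as $f(x)\mapsto f(\xi^i x)$ --- immediately reduce the vanishing claim to an invertible Vandermonde system, give the order-zero statement for free, and (after noting that a homogeneous order-zero element of $\tilde K[\partial]$ must be a constant) reduce the last clause to the same Vandermonde system with constant right-hand side, whose unique solution has $c_1=\dots=c_{k-1}=0$. One cosmetic remark: your first paragraph starts one argument for $\Ord(A)=0$, abandons it, and then restarts cleanly in the second paragraph; in a final write-up you should just use the homogeneity argument from the outset and drop the false start.
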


The following proposition is a union of a partial case of \cite[Prop. 7.1]{A.Z} and \cite[L. 2.4]{GZ1}:

\begin{Prop}
	\label{T:A.Z 7.1}
	In the notation of definition \ref{D:Aki} we have 
	\begin{enumerate}
		\item For any $i,j,p$, we have
	$$A_{i} A_{j}=A_{i+j} \quad \partial^{p}A_{i}=\xi^{pi}A_{i}\partial^{p} \quad A_{i}x^{p}=\xi^{pi}x^{p}A_{i}\quad \int^{p}A_{i}=\xi^{-pi}A_{i}\int^{p}.$$
		\item For a given $Q \in  \hat{D}^{sym}_{1} $ assume that $[\partial^{k},Q]=0 $. 
		Then 
		$$Q=\sum_{m=0}^{\Ord (Q)}c_{m}\partial^{m}+c_{-1}\int+\dots+c_{-k+1}\int^{k-1}, $$
		where $c_{i}= c_{i,0}+c_{i,1}A_{1}+\dots+c_{i,k-1}A_{k-1}\in \hat{D}^{sym}_{1}\hat{\otimes}_{K}\tilde{K}$, the coefficients $c_{i,j}\in \tilde{K}$ are uniquely defined, and the coefficients $c_{i,j}$ with $i<0$ satisfy the following conditions:
	$$
	\sum_{j=0}^{k-1}c_j=0, \quad \sum_{j=1}^{k-1}c_j (\xi^j-1)^q=0, \quad 1\le q\le -l-1,
	$$
	or, equivalently, 
	$$
\sum_{j=0}^{k-1}c_j\xi^{j(q-1)}=0 \quad \mbox{for\quad } q=1, \ldots , -l.
	$$
	Vice versa, any such operator $Q$ commutes with $\partial^k$.
	\end{enumerate}
	
\end{Prop}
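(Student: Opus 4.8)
The plan is to prove Proposition~\ref{T:A.Z 7.1} in two stages: first the multiplicative relations in part~(1), which are essentially formal consequences of the definition of $A_i$ via the $*$-exponent, and then the structure theorem in part~(2), which is the substantial claim. For part~(1), recall that $A_i=\exp((\xi^i-1)x*\partial)$ realizes the continuous $K$-algebra endomorphism of $\hat R$ sending $x\mapsto \xi^i x$; composing two such endomorphisms gives $x\mapsto \xi^{i+j}x$, which (by the uniqueness of the operator realizing a given continuous endomorphism, Theorem~\ref{T:A.Z 2.1}(4) together with the explicit $P_\alpha$ formula) forces $A_iA_j=A_{i+j}$. The commutation relations $\partial^p A_i=\xi^{pi}A_i\partial^p$, $A_i x^p=\xi^{pi}x^p A_i$, and $\int^p A_i=\xi^{-pi}A_i\int^p$ can each be checked by evaluating both sides on an arbitrary $f\in\hat R$ (or on monomials $x^m$), using that $A_i$ rescales the variable; the relation for $\int$ follows from the one for $\partial$ together with $\partial\int=1$, modulo keeping track of the kernel term $\delta$, so a direct check on monomials is cleaner.

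For part~(2), I would argue as follows. Write $Q=\sum_{m\ge 0}c_m\partial^m + (\text{possible negative-order tail})$; more precisely, since $[\partial^k,Q]=0$ one first shows the highest homogeneous components are constrained and proceeds by descending induction on $\Ord(Q)$ using the generalized Schur-type decomposition, or — more directly — one conjugates by the formal change of variables so that $\partial^k$ acts by the grading coming from the $\dz/k$-action $\partial\mapsto\xi\partial$. The key observation is that $\hat D_1^{sym}\hat\otimes\tilde K$ is graded by the eigenspaces of the automorphism $\mathrm{ad}$-type action of $A_1$ (equivalently of $\partial^k$ after suitable normalization), and the operators $1,A_1,\dots,A_{k-1}$ span the degree-zero part of $\hat R$ inside $\hat D_1^{sym}\hat\otimes\tilde K$ — this is exactly the content of Lemma~\ref{T:A.Z L 7.2}. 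So an operator commuting with $\partial^k$ must, after collecting powers of $\partial$ and $\int$, have each coefficient lying in the span of the $A_j$'s; the fact that only integral powers $\partial^m$ ($m\ge0$) and $\int^q$ ($1\le q\le k-1$) appear comes from $\partial^k\int^k=1$ (so $\int^k$ is redundant) and from the requirement that $Q\in\mathcal M_1$, i.e.\ no genuinely unbounded negative part survives.

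The delicate point — and the one I expect to be the main obstacle — is the extra constraint on the coefficients $c_{i,j}$ with $i<0$: the conditions $\sum_j c_j=0$ and $\sum_{j=1}^{k-1}c_j(\xi^j-1)^q=0$ for $1\le q\le -i-1$. These encode exactly that the operator $c_i\int^{|i|}$, when multiplied out, does not produce $x^{-s}$-type (i.e.\ non-regular) terms of low order and actually lands in $\hat D_1^{sym}$ with the claimed order; equivalently, $\int^{|i|}$ has a nontrivial left kernel / the relation $\int\cdot\partial=1-\delta$ propagates, so that $A_j\int^{|i|}$ versus $\int^{|i|}A_j$ differ by correction terms of lower order that must cancel. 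I would derive these by writing $A_j = 1 + (\xi^j-1)x\partial + \tfrac{1}{2}(\xi^j-1)^2 x^2\partial^2+\cdots$, pushing it past $\int^{|i|}$ using $\partial\int = 1$ repeatedly, and isolating the finitely many ``defect'' terms; demanding that $\sum_j c_{i,j}(\text{defect of }A_j)=0$ gives precisely the stated linear system, and the equivalence of the two forms of the condition is the elementary identity relating the basis $\{(\xi^j-1)^q\}$ to $\{\xi^{j(q-1)}\}$ via a unitriangular change of basis. The converse (any such $Q$ commutes with $\partial^k$) is then immediate from part~(1): $\partial^k$ commutes with every $\partial^m$, with every $\int^q$ up to the controlled correction, and $\partial^k A_j=\xi^{jk}A_j\partial^k=A_j\partial^k$ since $\xi^k=1$, so the only thing to check is that the correction terms coming from $\int^q$ are killed exactly by the coefficient conditions — which is the same computation run backwards.
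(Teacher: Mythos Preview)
The paper does not prove this proposition: it is quoted in the preliminaries as ``a union of a partial case of \cite[Prop. 7.1]{A.Z} and \cite[L. 2.4]{GZ1}'', so there is no in-paper argument to compare against. I can therefore only assess your sketch on its own merits.

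Part~(1) is fine in outline, though you implicitly use that the action of $\hat D_1^{sym}$ on $\hat R$ is faithful (Theorem~\ref{T:A.Z 2.1}(4) only asserts existence, not uniqueness, of the operator realizing a given endomorphism); this is true and easy, but should be said. For part~(2) your overall plan---decompose $Q$ into homogeneous components $Q_r$, observe each $Q_r$ commutes with $\partial^k$, and classify such $Q_r$---is correct, but your explanation of the constraints on $c_{i,j}$ for $i<0$ is wrong in two places. First, the constraints have nothing to do with $c_i\int^{|i|}$ ``landing in $\hat D_1^{sym}$'': that operator is always a perfectly good element of $\hat D_1^{sym}$, with no $x^{-s}$-type obstructions. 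Second, there are no ``defect terms'' in commuting $A_j$ past $\int^{|i|}$: by the very identity you prove in part~(1), $A_j\int^s=\xi^{-js}\int^sA_j$ exactly. The constraints arise instead from $[\partial^k,\int^s]=\sum_{q=1}^{s}B_q\,\partial^{k-s}$ (via Lemma~\ref{L:1}(3)), combined with $A_jB_q=\xi^{j(q-1)}B_q$ (Lemma~\ref{L:1}(1)); requiring $[\partial^k,(\sum_jc_jA_j)\int^s]=0$ then gives $\sum_jc_j\xi^{j(q-1)}=0$ for $q=1,\dots,s$. You hint at this only in your last paragraph, after two incorrect explanations. Relatedly, your reason for the range $1\le q\le k-1$ on $\int^q$ (``$\partial^k\int^k=1$ so $\int^k$ is redundant'') is wrong: $\int^k$ is not redundant, but for $s\ge k$ the Vandermonde system $\sum_jc_j\xi^{j(q-1)}=0$, $q=1,\dots,s$, forces all $c_j=0$, so $Q_r=0$ for $r\le -k$. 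Finally, the assertion that each order-zero $Q_r$ lies in the span of $A_0,\dots,A_{k-1}$ needs an argument (e.g.\ that such $Q_r$ acts diagonally on monomials with $k$-periodic eigenvalues, and the $A_j$ span all such by a Vandermonde argument); you cite Lemma~\ref{T:A.Z L 7.2}, but that lemma gives linear independence, not spanning.
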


Consider the space $F=K[\partial ]$. It has a natural structure of a right $\hat{D}_1^{sym}$-module via the isomorphism of vector spaces $F\simeq \hat{D}_1^{sym}/\idm \hat{D}_1^{sym}$.

\begin{Def}{(cf. \cite{A.Z}, Def. 6.4)}
\label{D:regular}
An element $P \in \hat{D}_1^{sym}$ is called \emph{regular} if the $K$--linear map $F \xrightarrow{ (-\circ \sigma(P))} F$ is injective, where $\circ$ means the action on the module $F$. In particular, $P$ is regular if and only if its symbol $\sigma(P)$ is regular.
\end{Def}

It's easy to see that an operator $P\in D_1$ with an invertible highest coefficient (cf.remark \ref{R:ord=deg}) is an example of a regular operator. 

Recall that any such operator can be normalised, i.e. reduced to the form $P=\partial^k + c_{k-2}\partial^{k-2}+\ldots + c_0$, with the help of some change of variables and conjugation by invertible function, see e.g. \cite[Prop. 1.3 and Rem. 1.6]{BZ}.

The following proposition is a union of \cite[Prop. 7.2]{A.Z} and \cite[Prop.2.3]{GZ1}:

\begin{Prop}
	\label{T:A.Z 7.2}
	\begin{enumerate}
	\item	
	Let $P\in  \hat{D}^{sym}_{1}$, $\Ord(P)=k>0$ be a regular operator. Then there exists an invertible operator $S\in  \hat{D}^{sym}_{1}$ with $\Ord (S)=0$ such that 
	$$P=S^{-1}\partial^{k}S. $$
 \item
Let $P\in D_1$ be a normalized operator of positive degree, i.e. $P=\partial^k + c_{k-2}\partial^{k-2}+\ldots + c_0$, $k>0$. Then there exists an operator $S$ from item 1 such that $S_0=1$, $S_{-1}=0$. 
	\end{enumerate}
	
We'll call the operator $S$ from item 1 as a Schur operator for $P\in  \hat{D}^{sym}_{1}$, and the operator $S$ from item 2 as a Schur operator for {\it normalized differential operator} $P\in D_1$. 	
\end{Prop}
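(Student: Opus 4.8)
The plan is to build $S$ by successive approximation with respect to the homogeneous ($\Ord$-graded) decomposition, reducing the problem at each step to a commutator equation for $\mathrm{ad}_{\partial^{k}}=[\partial^{k},\,\cdot\,]$. For item 1, write $S=\sum_{n\ge 0}S_{-n}$ with $S_{-n}$ homogeneous of order $-n$, so that $\sigma(S)=S_{0}$. In top order $\Ord=k$ the equation $SP=\partial^{k}S$ reads $S_{0}\,\sigma(P)=\partial^{k}S_{0}$, i.e. $S_{0}$ must conjugate the regular homogeneous operator $\sigma(P)$ of order $k$ to $\partial^{k}$; the existence of such an invertible order-zero $S_{0}$ is the homogeneous normalization of the generalised Schur theory of \cite{A.Z}, and this is the only point where the regularity hypothesis is used. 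Replacing $P$ by $S_{0}PS_{0}^{-1}$, I may henceforth assume $\sigma(P)=\partial^{k}$ and look for $S$ with $S_{0}=1$.

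Collecting the homogeneous component of order $k-n$ in $SP-\partial^{k}S=0$ then yields, for each $n\ge 1$, an equation
\[
[\partial^{k},S_{-n}]=R_{-n},\qquad R_{-n}:=P_{k-n}+\sum_{j=1}^{n-1}S_{-j}\,P_{k-n+j},
\]
where $R_{-n}$ is a known homogeneous operator of order $k-n$ (it involves only $P$ and the already-constructed $S_{-1},\dots,S_{-(n-1)}$) and $S_{-n}$ is the unknown homogeneous operator of order $-n$. Thus the whole construction reduces to solving such an equation, i.e. to showing that $R_{-n}$ lies in the image of $\mathrm{ad}_{\partial^{k}}$ from homogeneous operators of order $-n$ to homogeneous operators of order $k-n$. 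Heuristically, on a term of shape $g(x\partial)\partial^{m}$ one has $[\partial^{k},g(x\partial)\partial^{m}]=\bigl(g(x\partial+k)-g(x\partial)\bigr)\partial^{m+k}$, and the finite-difference map $g\mapsto g(\cdot+k)-g(\cdot)$ is surjective with kernel the constants; but the honest statement, which must also track the $\delta$-function and integration corrections arising when the orders $m$ and $m+k$ straddle $0$, is precisely the explicit description of the centraliser of $\partial^{k}$ in Proposition \ref{T:A.Z 7.1} together with Lemma \ref{T:A.Z L 7.2}. Carrying out this bookkeeping, so that each $R_{-n}$ is seen to be in the image of $\mathrm{ad}_{\partial^{k}}$, is the main obstacle. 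Once all $S_{-n}$ are found, $\tilde S:=1+\sum_{n\ge 1}S_{-n}$ is a well-defined element of $\hat{D}^{sym}_{1}$ (this ring is complete with respect to the pseudo-valuation $-\Ord$ of Remark \ref{R:ord_properties} and $\Ord(S_{-n})\le -n$), and $\tilde S$ is invertible, being $1$ plus an element of strictly negative order, so $\tilde S^{-1}$ is the convergent Neumann series. Then $S:=\tilde S\,S_{0}$ is the required invertible operator with $\Ord(S)=0$ and $P=S^{-1}\partial^{k}S$.

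For item 2 no symbol normalization is needed. Since $P=\partial^{k}+c_{k-2}\partial^{k-2}+\dots+c_{0}$ with $c_{i}\in\hat{R}=K[[x]]$, every monomial $x^{i}\partial^{j}$ occurring in $P$ other than $\partial^{k}$ has $j\le k-2$ and $i\ge 0$, hence $\Ord$ at most $k-2$; therefore $\sigma(P)=\partial^{k}$ and, moreover, the homogeneous component $P_{k-1}$ vanishes. Consequently the top-order equation $[\partial^{k},S_{0}]=0$ is solved by $S_{0}=1$, and the order-$(k-1)$ equation is $[\partial^{k},S_{-1}]=R_{-1}=P_{k-1}=0$, solved by $S_{-1}=0$. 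These choices are legitimate because at each step of the recursion any solution may be taken (it is unique only modulo the centraliser of $\partial^{k}$); continuing the induction of item 1 to determine $S_{-2},S_{-3},\dots$ then produces an operator $S$ as in item 1 with $S_{0}=1$ and $S_{-1}=0$.
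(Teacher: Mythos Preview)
The paper does not prove this proposition; it is quoted from \cite[Prop.~7.2]{A.Z} and \cite[Prop.~2.3]{GZ1}, so there is no in-paper argument to compare against. Your outline follows the standard Schur-type successive-approximation scheme, which is almost certainly the route taken in those references.

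That said, your sketch leaves a genuine gap precisely where you yourself flag ``the main obstacle'': you do not establish that the commutator equation $[\partial^{k},S_{-n}]=R_{-n}$ is solvable. Citing the centraliser description (Proposition~\ref{T:A.Z 7.1}) gives you the \emph{kernel} of $\mathrm{ad}_{\partial^{k}}$, not its image; what is actually needed is the surjectivity of $\mathrm{ad}_{\partial^{k}}$ from homogeneous operators of order $-n$ onto homogeneous operators of order $k-n$, and your $g(x\partial)\partial^{m}$ heuristic does not apply directly since here $m=-n<0$. The surjectivity does hold: writing a homogeneous order-$(-n)$ element as $\sum_{j\ge 0}s_{j}\,x^{n+j}\partial^{j}$ and expanding $[\partial^{k},x^{n+j}\partial^{j}]$ via the Leibniz rule, the resulting infinite linear system for the $s_{j}$ in terms of the target coefficients is lower-triangular with nonzero diagonal when $n\ge k$, and for $1\le n<k$ it becomes a recursion determining $s_{k-n},s_{k-n+1},\dots$ from $k-n$ free initial parameters $s_{0},\dots,s_{k-n-1}$; in either case one solves it term by term. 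This computation is the heart of the proof and you have not carried it out. A second, smaller gap is at the start of item~1: you assert that an invertible order-zero $S_{0}$ conjugating the regular symbol $\sigma(P)$ to $\partial^{k}$ exists ``by the homogeneous normalization of the generalised Schur theory of \cite{A.Z}'', but that is the very reference the proposition comes from, so this step too needs an independent argument exploiting the regularity hypothesis.

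Your handling of item~2 is correct: for a normalised $P$ one has $\sigma(P)=\partial^{k}$ and $P_{k-1}=0$, so the choices $S_{0}=1$ and $S_{-1}=0$ are legitimate initial data for the recursion.
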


We define a series of operators $B_i$:  $B_{1}:=\delta, B_{2}:=x\delta \partial ,\ldots, B_{n}:=\frac{1}{(n-1)!}x^{n-1}\delta \partial^{n-1}$, and  $B_j:=0$ for any $j\leq 0$. Define $\Gamma_i=(x\partial)^i$ for $i\ge 0$, and for $i<0, \Gamma_i=0$. For convenience, we introduce also a new notation: for any integer $n$ we set  $D^n=\partial^n$ if $n\ge 0$ and $D^n=\int^{-n}$  otherwise. Obviously, we have $\partial \delta=\delta x=0$ and $\Ord(B_{j})=0$ for any $j \in \mathbb{N}$. From this observation and definition of $\delta$ we also get $\delta\delta=\delta$. 

\begin{lemma}{(\cite[Lemma 2.5]{GZ1})}
	\label{L:1}
	For a fixed $k\in \dn$ let $A_i=A_{k;i}$, $\xi$ be the $k$-primitive root, $B_j$ are defined as above. Then we have
	\begin{enumerate}
		\item $A_{i} B_{j}=B_{j} A_{i}=\xi^{i(j-1)}B_{j}$ for any $i,j \in \mathbb{N}$;
		\item 
	$$
\int x^m = \frac{m!}{(m+1)!}x^{m+1} + \sum_{i=1}^{\infty}(-1)^i \frac{m!}{(m+i+1)!}x^{m+i+1}\partial^i;
$$
	In particular, $\int x^m\delta = \frac{1}{m+1}x^{m+1}\delta$;
		\item $\int^{m} \partial^{m}=1-\sum_{k=1}^{m}B_{k}$ for any $m \in \mathbb{N}$;
		\item 
		$$\int^u f(x)=f(x)\int^u +
		\sum_{l=1}^{\infty}\binom{-u}{l}f(x)^{(l)}\int^{u+l}$$ for any  $f(x)\in \hat{R}$, $u\in\dn$;
		
		\item $B_i B_j=\delta_i^jB_j$, where $\delta_i^j$ is the Kronecker delta;
		
		\item $A_i\Gamma_j=\Gamma_jA_i$;
		\item $$
		D^i\Gamma_j=\sum_{l=0}^{j}\binom{j}{l}i^{j-l}\Gamma_lD^i, \quad \Gamma_jx^i=x^i(\sum_{l=0}^j\binom{j}{l}i^{j-l}\Gamma_l)
		$$
		\item $\Gamma_iB_j=B_j\Gamma_i=(j-1)^iB_j$;
		\item $D^u B_j=B_{j-u}D^u$. In particular, $D^u B_j=B_{j-u}D^u=0$ if $u>0$ and $j-u\le 0$ or $u<0$ and $j\le 0$.
	\end{enumerate}	
	where we assume in all these formulae that $0^0:=1$.
\end{lemma}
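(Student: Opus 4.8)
The statement to prove is Lemma~\ref{L:1}, a list of nine computational identities in $\hat{D}_1^{sym}\hat{\otimes}_K\tilde{K}$ involving the operators $A_i=A_{k;i}$, $B_j$, $\Gamma_j$, and $D^i$. Since this is cited from \cite{GZ1}, the proof is essentially a direct verification; the plan is to establish each identity by reducing it to the defining relations already collected in Proposition~\ref{T:A.Z 7.1} and to the action on the module $\hat R$.

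Let me sketch how I would prove Lemma~\ref{L:1}.

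\medskip

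The plan is to verify each of the nine identities by direct computation, exploiting the fact that every operator in sight is either determined by its action on monomials $x^m\in\hat R$ or is built from $\partial,\int,x$ and the $A_i$ via the commutation rules of Proposition~\ref{T:A.Z 7.1}. First I would record the two basic facts used throughout: $\partial\delta=\delta x=0$, and $\delta\circ f(x)=f(0)$, so that $B_j=\tfrac{1}{(j-1)!}x^{j-1}\delta\partial^{j-1}$ acts on $x^m$ by $B_j\circ x^m=\delta_{m}^{\,j-1}\,x^{j-1}$; this makes (5) ($B_iB_j=\delta_i^jB_j$) immediate. For (1), I would write $A_i=\exp((\xi^i-1)x*\partial)$ and use $A_ix^p=\xi^{ip}x^pA_i$ and $\partial^pA_i=\xi^{ip}A_i\partial^p$ from Proposition~\ref{T:A.Z 7.1}(1): then $A_i\cdot\tfrac{1}{(j-1)!}x^{j-1}\delta\partial^{j-1}$, moving $A_i$ past $x^{j-1}$ gives a factor $\xi^{i(j-1)}$, and past $\delta=A_{k;0}$-type series and $\partial^{j-1}$ one checks $A_i\delta=\delta$ (since $A_i$ commutes with $\delta$ up to the scalar that turns out to be $1$, as $\delta x=0$), yielding $\xi^{i(j-1)}B_j$; the equality $B_j\cdot A_i$ is symmetric. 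Identity (6), $A_i\Gamma_j=\Gamma_jA_i$, follows since $\Gamma_j=(x\partial)^j$ and $A_i(x\partial)=(x\partial)A_i$ because the $\xi^{ip}$ factors from $x$ and from $\partial$ cancel.

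\medskip

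For the ``integration'' identities I would compute on monomials. Identity (2) is obtained by applying $\int$ (defined as $\sum_{k\ge0}\tfrac{x^{k+1}}{(k+1)!}(-\partial)^k$) term by term, or more cleanly by using $\int\circ x^n=\tfrac{x^{n+1}}{n+1}$ together with the ansatz $\int x^m=\sum_{i\ge0}c_i x^{m+i+1}\partial^i$ and matching the action on $x^n$ for all $n\ge 0$; the specialization $\int x^m\delta=\tfrac1{m+1}x^{m+1}\delta$ then drops out because $\partial^i\delta=0$ for $i\ge1$. Identity (4) is the $\int^u$-analogue: writing $\int^u f(x)$ and commuting $f(x)$ to the left, the coefficients $\binom{-u}{l}$ appear exactly as in the generalized Leibniz rule for $D^{-u}$; I would prove it by induction on $u$ using (2) as the base case, or directly from the formal identity $D^u f = \sum_l \binom{u}{l} f^{(l)} D^{u-l}$ valid for all integers $u$ (negative powers being $\int$), which itself is checked on monomials. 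Identity (3), $\int^m\partial^m = 1-\sum_{k=1}^m B_k$, follows from $\int\partial = 1-\delta = 1-B_1$ by induction: $\int^{m}\partial^{m}=\int^{m-1}(1-B_1)\partial^{m-1}=\int^{m-1}\partial^{m-1}-\int^{m-1}B_1\partial^{m-1}$, and one identifies $\int^{m-1}B_1\partial^{m-1}=B_m$ using (9) (or directly $\int^{m-1}\delta\partial^{m-1}=\tfrac{1}{(m-1)!}x^{m-1}\delta\partial^{m-1}$ via repeated use of $\int x^j\delta=\tfrac1{j+1}x^{j+1}\delta$).

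\medskip

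Finally, for (7), (8), (9) I would use the commutation of $x\partial$ with powers of $\partial$ and $x$. The key elementary fact is $\partial(x\partial)=( x\partial+1)\partial$, hence $\partial^i(x\partial)=(x\partial+i)\partial^i$ and therefore $\partial^i(x\partial)^j=(x\partial+i)^j\partial^i=\sum_{l=0}^j\binom{j}{l}i^{j-l}(x\partial)^l\partial^i$, which is (7) for $i\ge0$; the case $D^i=\int^{-i}$ is obtained by the mirror relation $\int(x\partial)=(x\partial-1)\int$, i.e. $\int^{-i}(x\partial)=(x\partial+i)\int^{-i}$ for $i<0$, so the same binomial formula holds with $D^i$; the companion identity $\Gamma_j x^i=x^i(\sum_l\binom{j}{l}i^{j-l}\Gamma_l)$ comes from $(x\partial)x=x(x\partial+1)$. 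Identity (8), $\Gamma_iB_j=B_j\Gamma_i=(j-1)^iB_j$, is just the eigenvalue computation: $x\partial$ acts on $x^{j-1}$ (the image space of $B_j$) by multiplication by $j-1$, and $B_j$ lands in and is supported on $x^{j-1}$, so $(x\partial)^iB_j=(j-1)^iB_j$; combined with (7) this also re-proves (1)-type consistency. Identity (9), $D^uB_j=B_{j-u}D^u$, follows from $\partial B_j=B_{j-1}\partial$ (check on $x^m$: $\partial B_j x^m=\delta_m^{j-1}\partial x^{j-1}=\delta_m^{j-1}(j-1)x^{j-2}$ and $B_{j-1}\partial x^m=m\,\delta_{m-1}^{\,j-2}x^{j-2}=\delta_m^{j-1}(j-1)x^{j-2}$), iterated for $u>0$, and from $\int B_j=B_{j+1}\int$ for the case $u<0$ (same monomial check using $\int x^{j-1}=\tfrac1j x^j$ on the relevant component); the degeneration statements (``$D^uB_j=0$ if $u>0$ and $j-u\le0$'', etc.) are then immediate since $B_l=0$ for $l\le 0$ and $\int^{-u}$ kills nothing but $B_{j-u}$ with $j-u\le -u$ would force $j\le 0$ hence $B_j=0$ already — I would state this case-check carefully.

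\medskip

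The routine but genuinely fiddly part — the one I'd expect to absorb most of the write-up — is keeping the bookkeeping of the $A_i$-factors and the $\delta$-factors straight in (1), and getting the binomial coefficients in (4) and (7) to match with the correct signs for the negative powers $D^u=\int^{-u}$; everything else is an induction anchored at $\int\partial=1-\delta$ or a one-line computation on monomials $x^m$. No conceptual obstacle arises, since all the structural input (the algebra $\hat D_1^{sym}$, the action on $\hat R$, the relations among $A_i,\partial,\int,x$) is already in place from Theorem~\ref{T:A.Z 2.1} and Proposition~\ref{T:A.Z 7.1}.
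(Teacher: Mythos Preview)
Your approach is correct and is exactly the kind of direct verification one expects; note that the present paper does not actually prove this lemma but merely quotes it from \cite[Lemma~2.5]{GZ1}, so there is no ``paper's own proof'' to compare against beyond the implicit expectation that it is a routine computation. Your sketch covers all nine items with the right inductive anchors ($\int\partial=1-\delta$, $\partial(x\partial)=(x\partial+1)\partial$, $\int(x\partial)=(x\partial-1)\int$) and the right module-theoretic checks on monomials $x^m$.

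One small point worth tightening in your write-up of (1): the step ``$A_i\delta=\delta$'' (and likewise $\delta A_i=\delta$) should not be argued via the action on $\hat R$ alone, since that action is not a priori faithful on $\hat D_1^{sym}$. A clean algebraic argument is to observe first that $A_i$ commutes with every $x^n\partial^n$ (because $A_ix^n=\xi^{in}x^nA_i$ and $A_i\partial^n=\xi^{-in}\partial^nA_i$), hence $A_i\delta=\delta A_i$ as formal series; then from $B_jA_i=\tfrac{1}{(j-1)!}x^{j-1}\delta\,\partial^{j-1}A_i=\tfrac{\xi^{i(j-1)}}{(j-1)!}x^{j-1}\delta A_i\partial^{j-1}$ one still needs $\delta A_i=\delta$ as an operator identity, which follows because $\delta x=0$ implies $\delta\cdot x^n\partial^n=0$ for $n\ge 1$, so $\delta A_i=\delta\cdot\sum_{n\ge0}\tfrac{(\xi^i-1)^n}{n!}x^n\partial^n=\delta$. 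With that in place your computation $B_jA_i=\xi^{i(j-1)}B_j$ goes through, and $A_iB_j=B_jA_i$ follows from the commutation just noted.
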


\begin{Def}{(\cite[Def. 2.3]{GZ1})}
\label{D:HCP}
Let's fix some $k\in \dn$. Assume $\xi$ is the $k$-primitive root, $A_i=A_{k,i}$. Let $\tilde{K}=K[\xi]$. An element $H\in \hat{D}_{1}^{sym}\hat{\otimes}_K\tilde{K}$ is  called {\it homogeneous canonical polynomial  (in short of HCP)}  if $H$ can be written in the form
	\begin{equation}
	\label{E:HCP}
	H=[\sum_{0\leq i<k}f_{i;r}(x, A_{k;i}, \partial )+\sum_{0<j\leq N}g_{j;r}B_{j}]D^{r}
	\end{equation}
	for some $N\in\dn$, $r\in \dz$, where 
	
		1. $f_{i;r}(x, A_{k;i}, \partial)$ is a polynomial of $x, A_{k;i},\partial$,  $\Ord(f_{i;r})=0$,  of the form
		$$
		f_{i;r}(x, A_{k;i}, \partial )=\sum_{0\leq l\leq d_{i}}f_{l,i;r}x^l A_{k;i}\partial^l
		$$
		for some $d_i\in \dz_+$, where $f_{l,i;r}\in \tilde{K}$. The number  $d_i$ is called the {\it $x$-degree of $f_{i;r}$}:  $deg_{x}(f_{i;r}):= d_i$.
		
		2. $g_{j;r}\in \tilde{K}$.
		
		3. $g_{j;r}=0$ for $j\le -r$ if $r<0$.

In particular, $H$ is homogeneous and $\Ord (H)=r$. 
\end{Def}

The form \eqref{E:HCP} of HCP from definition is uniquely defined:

\begin{lemma}{(\cite[Lem. 2.6]{GZ1})}
\label{L:correct_HPC}
Let $H$ be a HCP. Then $H=0$ iff $g_{j;r}=0$ for all $j$ and $f_{l,i;r}=0$ for all $i,l$. 

In particular, any HCP can be uniquely written in the form \eqref{E:HCP}.
\end{lemma}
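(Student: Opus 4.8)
The plan is to deduce everything from the linear independence, over $\tilde{K}$, of the operators $x^l A_{k;i}\partial^l D^r$ (for $0\le i<k$, $l\ge 0$) together with the operators $B_j D^r$ (for $1\le j\le N$). Uniqueness of the form \eqref{E:HCP} is then immediate: $\Ord(H)=r$ already determines $r$, and the difference of two representations of one $H$ is again an HCP of the same shape whose coefficients are the differences of the originals, so if $H=0$ all those differences must vanish. Thus it suffices to prove: if
$$G:=\sum_{0\le i<k}\sum_{0\le l\le d_i}f_{l,i;r}\,x^l A_{k;i}\partial^l+\sum_{0<j\le N}g_{j;r}B_j$$
satisfies $GD^r=0$ in $\hat{D}_1^{sym}\hat{\otimes}_K\tilde{K}$, then all $f_{l,i;r}$ and all $g_{j;r}$ are zero.

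First I would test $GD^r$ against the monomials $x^m$ in the left module $\hat{R}\hat{\otimes}_K\tilde{K}$. Using that $A_{k;i}$ realises $x\mapsto\xi^i x$, that $\partial^l\circ x^s=\tfrac{s!}{(s-l)!}x^{s-l}$ (the falling factorial being the polynomial $s(s-1)\cdots(s-l+1)$, which vanishes for $0\le s<l$), and that $\delta\circ f=f(0)$, one checks that $x^l A_{k;i}\partial^l$ and $B_j$ act diagonally on this basis: $x^l A_{k;i}\partial^l\circ x^s=\tfrac{s!}{(s-l)!}\xi^{i(s-l)}x^s$, while $B_j\circ x^s$ equals $x^s$ for $s=j-1$ and $0$ otherwise. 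Likewise $D^r\circ x^m$ is a \emph{nonzero} multiple of $x^{m-r}$ for all $m\ge\max(0,r)$, using $\int\circ x^m=\tfrac{x^{m+1}}{m+1}$ in the case $r<0$. Hence $GD^r\circ x^m$ is a nonzero scalar times $c(m-r)x^{m-r}$, where $c(s):=\sum_{i,l}f_{l,i;r}\tfrac{s!}{(s-l)!}\xi^{i(s-l)}+g_{s+1;r}$ with the convention $g_{j;r}:=0$ for $j\notin\{1,\dots,N\}$, and therefore $GD^r=0$ forces $c(s)=0$ for all $s\ge\max(0,-r)$, in particular for all large $s$.

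Next I would extract the coefficients. For large $s$ the $B$-term drops out and $c(s)=\sum_{0\le i<k}(\xi^i)^s q_i(s)$ with $q_i(s):=\sum_{0\le l\le d_i}f_{l,i;r}\xi^{-il}\tfrac{s!}{(s-l)!}$ a polynomial in $s$. Since $\xi$ is a primitive $k$-th root of unity, $1,\xi,\dots,\xi^{k-1}$ are pairwise distinct, so the functions $s\mapsto(\xi^i)^s$ are linearly independent over the polynomial ring and each $q_i\equiv 0$; as the polynomials $\tfrac{s!}{(s-l)!}$, $l=0,\dots,d_i$, have pairwise distinct degrees, this yields $f_{l,i;r}=0$ for all $i,l$. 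Then $c(s)=0$ becomes $g_{s+1;r}=0$ for all $s\ge\max(0,-r)$, i.e. $g_{j;r}=0$ for $j>\max(0,-r)$; together with condition 3 of Definition \ref{D:HCP} (which already forces $g_{j;r}=0$ for $j\le -r$ when $r<0$) we get $g_{j;r}=0$ for every $j$, and the lemma follows.

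The one non-routine ingredient is the linear independence of the polynomial-exponential functions $s\mapsto(\xi^i)^s$ attached to distinct $\xi^i$, which I would handle by a standard induction on the number of terms (or by observing that such functions satisfy a linear recurrence whose solution space has the expected dimension). I expect the only real subtlety in the write-up to be the bookkeeping around $D^r$ when $r<0$, where $\int$ is merely a right inverse of $\partial$ and $\int^{-r}\partial^{-r}=1-\sum_{l=1}^{-r}B_l$: this is precisely why the low-order $B_j$'s must be excluded, i.e. why condition 3 of Definition \ref{D:HCP} is exactly what makes the argument close.
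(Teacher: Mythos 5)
The paper does not reproduce a proof of this lemma; it is cited verbatim from the preprint \cite[Lem.~2.6]{GZ1}, so a direct comparison with the source's argument is not possible from the material at hand. Judged on its own terms, your argument is correct. The mechanism is exactly right: each generator $x^lA_{k;i}\partial^l$ acts diagonally on the monomial basis of $\hat R\hat\otimes_K\tilde K$ with eigenvalue the falling factorial $s(s-1)\cdots(s-l+1)$ times $\xi^{i(s-l)}$, and $B_j$ projects onto the line spanned by $x^{j-1}$, so $G\circ x^s=c(s)x^s$ with your quasi-polynomial $c$; and because $D^r\circ x^m$ is a nonzero multiple of $x^{m-r}$ for all $m\ge\max(0,r)$ (using $\int\circ x^m=\frac{x^{m+1}}{m+1}$ when $r<0$), the vanishing of $GD^r$ forces $c(s)=0$ for all $s\ge\max(0,-r)$. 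The extraction step — linear independence of the functions $s\mapsto(\xi^i)^s q(s)$ with $q$ polynomial, attached to the $k$ distinct roots $\xi^0,\dots,\xi^{k-1}$ — is a standard fact about quasi-polynomials (solutions of a constant-coefficient linear recurrence), and the falling factorials $\frac{s!}{(s-l)!}$ have pairwise distinct degrees, so each $f_{l,i;r}=0$; the remaining identity $g_{s+1;r}=0$ for $s\ge\max(0,-r)$, combined with condition~3 of Definition~\ref{D:HCP}, covers all $j$. You correctly flag the two delicate points — the vanishing of the falling factorial for $0\le s<l$, and the role of condition~3 in killing the low $B_j$'s when $r<0$ — which are precisely where a careless version of this argument would break. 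One small stylistic remark: the claim that ``$\Ord(H)=r$ determines $r$'' in your uniqueness paragraph should be read as applying to $H\neq 0$; for $H=0$ the index $r$ is genuinely arbitrary, but then the main claim forces all coefficients to vanish, which is the intended content of uniqueness anyway.
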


\begin{Def}{(\cite[Def. 2.4]{GZ1})}	
\label{D:Sdeg_A}
Let $H$ be a HCP. We define 
$$Sdeg_A(H)=\max \{d_i|\quad 0\leq i<k \} \quad \mbox{or $-\infty$, if all $f_{l,i;r}=0$ } 
$$
and 
$$Sdeg_B(H)=\max\{j|\quad g_{j;r}\neq0\} \quad \mbox{or $-\infty$, if all $g_{j;r}=0$}
$$

We define a  {\it homogeneous canonical polynomial combination} (in short HCPC) as a finite sum of HCP (of different orders).
We extend the functions $Sdeg_A$, $Sdeg_B$ in an obvious way to all HCPCs. 

We'll say that a HCPC $H$ {\it doesn't contain $A_i$} if $f_{l,i;r}=0$ for all $i>0$ and all $r$. We'll say that a HCPC $H$ {\it doesn't contain $B_j$} if $Sdeg_B(H)=-\infty$.
\end{Def}

The following lemmas describe basic properties of $HCPC, Sdeg_A, Sdeg_B$.

\begin{lemma}{(\cite[Lem. 2.7]{GZ1})}
\label{L:obvious}
	Suppose $H$ and $M$ are two HCPCs, $k_1,k_2\in \tilde{K}$ are two arbitrary constant. Then $T=k_1H+k_2M$ is also a HCPC, with 
	$$
	Sdeg_A(T)\leq \max\{Sdeg_A(H), Sdeg_A(M) \}
	$$$$
	Sdeg_B(T)\leq \max\{Sdeg_B(H), Sdeg_B(M) \}
	$$
\end{lemma}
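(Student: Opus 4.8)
The plan is to reduce the statement to the case of HCPs of a single fixed order and then to a straightforward $\tilde K$-linearity check, invoking the uniqueness of the canonical form (Lemma \ref{L:correct_HPC}) to read off the degrees.

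First I would write $H$ and $M$ in their homogeneous decompositions $H=\sum_{r}H_{r}$, $M=\sum_{r}M_{r}$, where each $H_{r}$ (resp. $M_{r}$) is either $0$ or a HCP of order $r$; these sums are finite by the definition of a HCPC. Since multiplying by a constant and adding preserves the property of being homogeneous of a given order, $T=k_1H+k_2M=\sum_{r}(k_1H_{r}+k_2M_{r})$, so it suffices to show that for each fixed $r$ the operator $T_{r}:=k_1H_{r}+k_2M_{r}$ is again a HCP of order $r$ (or zero), with $deg_x$-data bounded by the maxima of those of $H_{r}$ and $M_{r}$.

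Fixing $r$ and writing $H_{r}=[\sum_{0\le i<k}f_{i;r}(x,A_{k;i},\partial)+\sum_{0<j\le N}g_{j;r}B_{j}]D^{r}$ and $M_{r}=[\sum_{0\le i<k}\tilde f_{i;r}(x,A_{k;i},\partial)+\sum_{0<j\le \tilde N}\tilde g_{j;r}B_{j}]D^{r}$ in the canonical form \eqref{E:HCP}, I would simply collect terms:
$$T_{r}=\Big[\sum_{0\le i<k}\big(k_1 f_{i;r}+k_2\tilde f_{i;r}\big)+\sum_{0<j\le \max\{N,\tilde N\}}\big(k_1 g_{j;r}+k_2\tilde g_{j;r}\big)B_{j}\Big]D^{r}.$$
Each $k_1 f_{i;r}+k_2\tilde f_{i;r}$ is again a polynomial in $x,A_{k;i},\partial$ of the prescribed shape $\sum_{l}(\cdot)\,x^l A_{k;i}\partial^l$ with coefficients in $\tilde K$, with $\Ord=0$ or equal to zero, because such polynomials form a $\tilde K$-vector space; its $x$-degree is at most $\max\{deg_x(f_{i;r}),deg_x(\tilde f_{i;r})\}$. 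Likewise $k_1 g_{j;r}+k_2\tilde g_{j;r}\in\tilde K$, and it vanishes for $j\le -r$ whenever $r<0$, since both $g_{j;r}$ and $\tilde g_{j;r}$ do. Hence $T_{r}$ satisfies all requirements of Definition \ref{D:HCP} and is a HCP of order $r$ (or zero), and summing over $r$ shows that $T$ is a HCPC.

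Finally, for the degree estimates, Lemma \ref{L:correct_HPC} guarantees that $Sdeg_A$ and $Sdeg_B$ are computed from the (unique) canonical coefficients, so from the displayed formula $Sdeg_A(T_{r})=\max_{i}deg_x(k_1 f_{i;r}+k_2\tilde f_{i;r})\le\max_{i}\max\{deg_x(f_{i;r}),deg_x(\tilde f_{i;r})\}=\max\{Sdeg_A(H_{r}),Sdeg_A(M_{r})\}$, and similarly $Sdeg_B(T_{r})\le\max\{Sdeg_B(H_{r}),Sdeg_B(M_{r})\}$; taking the maximum over $r$ yields the claimed inequalities. The only point requiring any care — and the nearest thing to an obstacle — is that cancellation between $H$ and $M$ may force some canonical coefficients to vanish, so the inequalities cannot be promoted to equalities; this is precisely why the statement is phrased with $\le$, and the uniqueness lemma is exactly what licenses reading the degrees off the canonical form in the first place.
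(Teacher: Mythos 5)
The paper does not prove this lemma itself; it cites it from \cite{GZ1} without argument (and the internal label given to it in the source signals that it is regarded as routine). Your proof is correct and is the natural one: split into homogeneous orders $r$, observe that $\tilde K$-linear combinations of the canonical-form data $f_{i;r}$ and $g_{j;r}$ again have the shape required by Definition \ref{D:HCP} with the relevant $x$-degrees and $B_j$-ranges not increasing and the vanishing constraint for $j\le -r$ preserved, then invoke the uniqueness from Lemma \ref{L:correct_HPC} to read $Sdeg_A$ and $Sdeg_B$ off the resulting canonical form, with cancellation accounting for the strict use of $\le$.
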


\begin{lemma}{(\cite[Cor. 2.1]{GZ1})}
\label{C:HCPC(k)}
	For a fixed $k$ all $HCPCs$  form a subring in the ring $\hat{D}_1^{sym}\hat{\otimes}_K\tilde{K}$.
\end{lemma}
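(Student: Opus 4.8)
The plan is to show that the set of HCPCs is closed under addition and multiplication, and contains $1$. Closure under addition (and $\tilde K$-scalar multiples) is exactly Lemma \ref{L:obvious}, and $1$ is the HCP with $r=0$, $f_{0,0;0}=1$, all other coefficients zero. So the whole content is closure under multiplication, and since every HCPC is a finite sum of HCPs, by distributivity it suffices to prove that the product $H\cdot M$ of two HCPs is again a HCPC. Writing $H=\bigl[\sum_{0\le i<k} f_{i;r}(x,A_i,\partial)+\sum_{0<j\le N} g_{j;r}B_j\bigr]D^r$ and $M=\bigl[\sum_{0\le i'<k} h_{i';s}(x,A_{i'},\partial)+\sum_{0<j'\le M'} e_{j';s}B_{j'}\bigr]D^s$, I would first commute the factor $D^r$ in $H$ past the bracketed part of $M$: by Lemma \ref{L:1}(7) conjugation of $x^lA_{i'}\partial^l=x^l A_{i'}D^l$ by $D^r$ produces a $\tilde K$-linear combination of terms $x^{l'}A_{i'}\partial^{l'}$ with $l'\le l$ (using $D^r x^l$-type relations and $\partial^r A_{i'}=\xi^{ri'}A_{i'}\partial^r$ from Proposition \ref{T:A.Z 7.1}(1)), and by Lemma \ref{L:1}(9), $D^r B_{j'}=B_{j'-r}D^r$, which either kills the term or shifts the $B$-index. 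After this step $H\cdot M$ is a sum of products of the form (bracket-of-$H$-type expression in $x,A_i,\partial$ and $B_j$) times $D^{r+s}$, so it remains to check that such a bracketed product, applied on the left by the bracket of $H$, is again of the required HCP shape with order $r+s$.

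For that inner computation I would treat the four types of cross-products separately. A product $f_{i;r}(x,A_i,\partial)\cdot h_{i';s}(x,A_{i'},\partial)$: expand $x^l A_i\partial^l\cdot x^{l'}A_{i'}\partial^{l'}$, move the $A_i$ to the right past $x^{l'}$ using $A_ix^{l'}=\xi^{li'}\!\cdot$ wait — using $A_i x^{p}=\xi^{pi}x^pA_i$ and then $A_iA_{i'}=A_{i+i'}$ (Proposition \ref{T:A.Z 7.1}(1)), reducing the $A$-index mod $k$, and expand $\partial^l x^{l'}$ by the Leibniz rule; the result is a $\tilde K$-combination of $x^m A_{i+i'}\partial^m$ with $m\le l+l'$, plus lower-order-in-$\Ord$ garbage that is actually absent here because everything has $\Ord=0$. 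Crucially the top $x$-degree of the product is at most $d_i+d_{i'}$, which is finite, so $Sdeg_A$ of the product is finite. A product involving a $B_j$ on the left, $g_{j;r}B_j\cdot h_{i';s}(\ldots)$: use $A_{i'}B_j=\xi^{i'(j-1)}B_j$ (Lemma \ref{L:1}(1)) and $B_j x^p$, $B_j\partial^p$ relations coming from $\partial\delta=\delta x=0$ together with $B_n=\frac{1}{(n-1)!}x^{n-1}\delta\partial^{n-1}$ — equivalently Lemma \ref{L:1}(8)/(9) and (5) — which collapse the product to a $\tilde K$-multiple of a single $B_{j'}$; similarly $f_{i;r}(\ldots)\cdot e_{j';s}B_{j'}$ collapses onto a multiple of $B_{j'}$; and $B_jB_{j'}=\delta_j^{j'}B_j$ by Lemma \ref{L:1}(5). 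So every cross term lands in the span of the admissible generators.

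The main obstacle — and the place that needs real care rather than mechanical bookkeeping — is verifying that the negativity/vanishing constraints (item 3 of Definition \ref{D:HCP}: $g_{j;r}=0$ for $j\le -r$ when $r<0$) are preserved, since after multiplication the order jumps from $r$ and $s$ to $r+s$, and $B$-indices get shifted by the $D^r B_{j'}=B_{j'-r}D^r$ step. I would argue this by combining two facts: first, $D^rB_{j'}=B_{j'-r}D^r$ vanishes precisely when the shifted index is nonpositive (Lemma \ref{L:1}(9)), so the $B$-terms that survive in $H\cdot M$ automatically have index $>0$ after the shift; and second, tracking the bound $-r-s$ against the surviving indices shows no $B_{j''}$ with $j''\le -(r+s)$ can appear — essentially because a $B$-term in the product arises either from a $B$-term of $M$ (index $j'>-s$, contributing $B_{j'-r}$, and one checks $j'-r>-s-r$... but if $r>0$ this could fail, in which case the term is killed by the $D^r$ shift since $j'-r$ drops) or from a $B$-term of $H$ (index $>-r$, which after absorbing the already-shifted pieces of $M$ and multiplying by $B_{j'}$ via (5) keeps index $>-r\ge-(r+s)$ when $s\ge 0$, and when $s<0$ the relevant constraint on $M$ forces the needed inequality). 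Assembling these case checks, together with the finiteness of all $Sdeg_A$ and $Sdeg_B$ values guaranteed by the displayed bounds in Lemma \ref{L:obvious}, shows $H\cdot M$ is a genuine HCPC, completing the proof that HCPCs form a subring.
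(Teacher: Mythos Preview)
The paper does not give its own proof here; the lemma is quoted as \cite[Cor.~2.1]{GZ1}, and the paper also records Lemma~\ref{L:HCPC} (\cite[Lem.~2.10]{GZ1}), which already asserts that a product of two HCPCs is again a HCPC with explicit bounds on $Sdeg_A$ and $Sdeg_B$. Together with Lemma~\ref{L:obvious} that immediately yields the subring property. Your plan --- reduce to a product of two HCPs and push everything into HCP shape using the commutation relations in Lemma~\ref{L:1} and Proposition~\ref{T:A.Z 7.1} --- is exactly the content of Lemma~\ref{L:HCPC}, so your route is the expected one; the cross-product computations for $f\cdot f$, $f\cdot B$, $B\cdot f$, $B\cdot B$ you outline are correct.

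There is, however, a genuine gap in your handling of condition~3 of Definition~\ref{D:HCP}. Take $r<0$ and $s<0$. A term $g_{j;r}B_j$ from the bracket of $H$ (so $j>-r$) multiplied against the $f$-part of $M$ survives as a scalar multiple of $B_jD^{r+s}$, and you must show $j>-(r+s)=-r-s$. Since $-r-s>-r$, this does \emph{not} follow from $j>-r$, and your appeal to ``the relevant constraint on $M$'' cannot help: in this case only the $f$-part of $M$ is involved, so condition~3 for $M$ says nothing.

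The fix is short but was missed: condition~3 merely excludes terms that are identically zero. For $r<0$ and $0<j\le -r$ one has
\[
B_jD^r=\frac{1}{(j-1)!}\,x^{j-1}\delta\,\partial^{j-1}\!\int^{-r}=\frac{1}{(j-1)!}\,x^{j-1}\delta\!\int^{-r-j+1}=0,
\]
using $\partial\!\int=1$ and $\delta\!\int=(1-\int\partial)\int=\int-\int(\partial\int)=\int-\int=0$. Hence any $B$-term in $H\cdot M$ that appears to violate condition~3 is in fact zero in $\hat{D}_1^{sym}\hat{\otimes}_K\tilde{K}$ and can simply be dropped. With this observation your case-by-case tracking of shifted indices becomes unnecessary, and the rest of your argument goes through.
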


Denote by
$$
Hcpc(k):=\{\text{all HCPCs assoicated to }k\}
$$
the subring from corollary. 
It's easy to observe that if $a|b$, $b=ra$ and $\xi,\eta$ are respectively  the $a,b$-primitive roots, then $\eta^r=\xi$, i.e. $A_{b,ir}=A_{a,i}$. This means
$$
Hcpc(a)\subseteq Hcpc(b)
$$
Thus for $p,q$, if we assume $r=gcd(p,q)$, then we have 
$$
Hcpc(p)\bigcap Hcpc(q)\supseteq Hcpc(r).
$$

\begin{lemma}{(\cite[Lem. 2.10]{GZ1})}
		\label{L:HCPC}
	Suppose $H,M$ are two  HCPCs. Then $T:=HM$ is also a HCPC, what's more we have 
	\begin{enumerate}
		\item $Sdeg_A(T)\leq Sdeg_A(H)+Sdeg_A(M)$ (here we assume $-\infty +n=-\infty$). 
		\item 
		if $H,M$ are two  HCPs then
		\begin{enumerate}
			\item If $\Ord(H)\geq 0$, then $Sdeg_B(T)\leq \max \{Sdeg_B(H),Sdeg_B(M)\}$
			\item If $\Ord(H)< 0$, then $Sdeg_B(T)\leq \max \{Sdeg_B(H),Sdeg_B(M)-\Ord(H),-\Ord(H)\}$
		\end{enumerate}
	\end{enumerate}
\end{lemma}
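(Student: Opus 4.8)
The plan is to prove Lemma \ref{L:HCPC} by reducing the product $HM$ of two HCPCs to a sum of HCPs and tracking the two degree functions through the reduction. First I would observe, using Lemma \ref{L:obvious} and bilinearity of multiplication, that it suffices to treat the case where $H$ and $M$ are each a single HCP, written in the canonical form \eqref{E:HCP}: $H=\bigl[\sum_{0\le i<k}f_{i;r}(x,A_i,\partial)+\sum_{0<j\le N_H}g_{j;r}B_j\bigr]D^r$ and similarly $M=\bigl[\sum_{0\le i'<k}h_{i';s}(x,A_{i'},\partial)+\sum_{0<j'\le N_M}e_{j';s}B_{j'}\bigr]D^s$. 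Expanding the product, one gets a sum of four types of terms: (AA) products of $f$-blocks, (AB) an $f$-block times a $B$-block, (BA) a $B$-block times an $f$-block, and (BB) products of $B$-blocks; and in each case there is an intervening $D^r$ that must be commuted past the second bracket.

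The core of the argument is a set of commutation identities, all of which are already available in Lemma \ref{L:1} and Proposition \ref{T:A.Z 7.1}: $\partial^p A_i=\xi^{pi}A_i\partial^p$ and $A_ix^p=\xi^{pi}x^pA_i$, $A_iB_j=\xi^{i(j-1)}B_j$, $D^uB_j=B_{j-u}D^u$, $B_iB_j=\delta_i^jB_j$, together with the formulas for $D^i\Gamma_j$, $\int^u f(x)$, and $\int^m\partial^m=1-\sum B_k$. I would handle each term type in turn. For (AA): moving $D^r$ to the right through the second $f$-block produces, via Lemma \ref{L:1}(2) and (7) (or directly the $\int^u f$ formula when $r<0$), new $x$-monomials whose $x$-degree does not exceed the sum $d_i+d_{i'}$ of the contributing $x$-degrees — this is exactly where the bound $Sdeg_A(T)\le Sdeg_A(H)+Sdeg_A(M)$ comes from — and, crucially, when $r<0$ the identity $\int^r\partial^r=1-\sum_{k=1}^{-r}B_k$ is what can create $B$-terms out of two $A$-blocks, with index $j\le -r$, explaining the term $-\Ord(H)$ in case 2(b). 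For (AB) and (BA): an $f$-block multiplied by a $B_{j'}$ collapses (using $x^lA_i\partial^l B_{j'}$ and $\partial\delta=\delta x=0$, i.e. $\partial^l B_{j'}=B_{j'-l}\partial^l$ and $B_m=0$ for $m\le0$) to a scalar multiple of a single $B_{j'}$, hence produces nothing of $Sdeg_B$ larger than what was already present (after accounting for the $D^r$ shift $B_j\mapsto B_{j-r}$, which raises the $B$-index by $-r=-\Ord(H)$ precisely when $r<0$); similarly $B_j$ times an $f$-block. For (BB): $B_jB_{j'}=\delta_j^{j'}B_j$ by Lemma \ref{L:1}(5), so no new $B$-index appears beyond $\max\{Sdeg_B(H),Sdeg_B(M)\}$ up to the $D^r$ shift.

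Having reduced every term to the canonical shape, I would collect terms of equal order and invoke Lemma \ref{L:correct_HPC} to conclude $T$ is a genuine HCPC, then read off the inequalities. The bookkeeping for $Sdeg_B$ splits exactly along $\Ord(H)\ge0$ versus $\Ord(H)<0$: if $r=\Ord(H)\ge0$ then $D^r=\partial^r$ only lowers $B$-indices ($B_j\mapsto B_{j-r}$) and the $\int^m\partial^m$ identity never fires, so one gets case 2(a); if $r<0$ then $D^r=\int^{-r}$ raises $B$-indices by at most $-r$, the $\int^{-r}\partial^{-r}=1-\sum_{k=1}^{-r}B_k$ relation can inject fresh $B_k$ with $k\le -r$, and the $\int^u f$ expansion (Lemma \ref{L:1}(4)) can also turn a function into $B$-type contributions of index up to $-r$ — giving the three competing quantities in case 2(b).

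The main obstacle I expect is purely organizational rather than conceptual: making sure that when $D^r$ with $r<0$ is pushed to the right, every resulting term is re-expressed in the strict canonical form of Definition \ref{D:HCP} (monomials $x^lA_i\partial^l$ with matched powers, plus $B_j$'s, all times a single $D^{r+s}$) and that the constraint $g_{j;r+s}=0$ for $j\le -(r+s)$ when $r+s<0$ is respected — this is what Lemma \ref{L:correct_HPC} guarantees is possible, but verifying that the $\int^u f(x)$ and $\int^m\partial^m$ expansions produce only admissible indices requires care. A secondary subtlety is keeping the $\xi$-powers (which are units in $\tilde K$) from being mistaken for changes in degree; they only rescale coefficients and never affect $Sdeg_A$ or $Sdeg_B$.
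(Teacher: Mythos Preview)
The paper itself does not prove this lemma; it is quoted verbatim from \cite[Lem.~2.10]{GZ1} and stated in the preliminaries without argument, so there is no in-paper proof to compare your attempt against. Your outline is precisely the natural direct computation one would expect the cited source to carry out: reduce by bilinearity to a product of two single HCPs, expand into the four block types (AA), (AB), (BA), (BB), and push $D^r$ through the second factor using the identities of Lemma~\ref{L:1} and Proposition~\ref{T:A.Z 7.1}. Your identification of the mechanism behind each inequality is correct: the $Sdeg_A$ bound follows because commuting $D^r$ past $x^b$ (via Leibniz when $r\ge 0$, or Lemma~\ref{L:1}(4) when $r<0$) never raises the $x$-exponent, and the three terms in case~2(b) arise exactly from (i) the original $B_j$'s of $H$, (ii) the shift $D^rB_{j'}=B_{j'-r}D^r$ raising $M$'s $B$-indices by $-r$, and (iii) fresh $B_k$ with $k\le -r$ created by $\int^{-r}\partial^{-r}=1-\sum_{k=1}^{-r}B_k$ inside an (AA) term. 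The organizational concern you flag --- reassembling everything into the strict canonical form $[\,\cdot\,]D^{r+s}$ and checking condition~3 of Definition~\ref{D:HCP} when $r+s<0$ --- is genuine but routine, handled by Lemma~\ref{L:1}(9) together with $B_j=0$ for $j\le 0$. Your sketch is sound.
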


\begin{Def}{(\cite[Def. 2.6]{GZ1})}
\label{D:total_B}
Let $H$ be an element from $Hcpc(k)$. We'll say $H$ is {\it totally free of $B_j$} if $Sdeg_B(HD^p)=-\infty$ for all $p\in\dz$\footnote{Note that definition of $Sdeg_B$ is sensitive with respect to the multiplication by $D^p$. For example, $H=\int$ is a HCP with $Sdeg_B(H)=-\infty$, but $HD= 1-\delta$ is a HCP with $Sdeg_B(HD)=1$. In this example $H$ doesn't contain $B_j$, but is not totally free of $B_j$.}. 
\end{Def}

\begin{lemma}{(\cite[Lem. 2.12]{GZ1})}
\label{L:total_B}
The subset of totally free of $B_j$ elements from $Hcpc(k)$ for a fixed $k$ form a subring in $\hat{D}_1^{sym}\hat{\otimes}_K\tilde{K}$. 
\end{lemma}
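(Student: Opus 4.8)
The plan is to verify the three subring axioms: that the set is closed under $\tilde{K}$-linear combinations, closed under multiplication, and contains $1$. The containment of $1$ is immediate since $1 = 1\cdot D^0$ is a HCP with no $B_j$ terms, and $D^p = \partial^p$ (or $\int^{-p}$) only introduces $B_j$ terms through the relation $\int^m\partial^m = 1 - \sum B_k$ in a controlled way; more precisely one checks $Sdeg_B(D^{p+q})=-\infty$ directly from Lemma \ref{L:1}(3) and (9), so $1$ is totally free of $B_j$. For closure under linear combinations, suppose $H$ and $M$ are totally free of $B_j$. For each $p\in\dz$ we have $(k_1H+k_2M)D^p = k_1(HD^p)+k_2(MD^p)$, and by Lemma \ref{L:obvious} applied inside $Hcpc(k)$ we get $Sdeg_B((k_1H+k_2M)D^p)\le \max\{Sdeg_B(HD^p),Sdeg_B(MD^p)\}=-\infty$. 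Hence $k_1H+k_2M$ is again totally free of $B_j$.

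The main content is closure under multiplication. Let $H,M\in Hcpc(k)$ both be totally free of $B_j$, and fix $p\in\dz$; we must show $Sdeg_B(HMD^p)=-\infty$. The natural move is to insert a resolution of the identity: writing $HMD^p = H\cdot(MD^p)$ we would like to push the $D$-factors around, but the cleanest approach is to decompose $M$ into its homogeneous components $M=\sum_r M_r$ with $M_r$ a HCP of order $r$, so that $M_r = (\text{stuff})D^r$ with the "stuff" being totally free of $B_j$ as well (this needs a small observation: a homogeneous component of a totally free element is totally free, which follows because $Sdeg_B$ of a HCPC is the max over its homogeneous pieces). Then $HM_rD^p = H\,(\text{stuff})_r\,D^{r+p}$, and the bracketed part $H\cdot(\text{stuff})_r$ is a product of two HCPCs with no $B_j$ whatsoever, hence by Lemma \ref{L:HCPC}(2) — using that when we multiply two elements each literally free of $B_j$ (i.e. $Sdeg_B=-\infty$) and of any order, the bound gives at worst a $B_j$-contribution coming only from the order of the left factor — we reduce to checking that $(\text{element free of }B_j)\cdot D^{r+p}$ has $Sdeg_B=-\infty$. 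But that is exactly the hypothesis that $H\cdot(\text{stuff})_r$, being a product of elements free of $B_j$, is \emph{totally} free of $B_j$ — which is what we are trying to prove, so this must be set up as a single induction or handled by a direct computation with the structure of $D^n\Gamma_j$, $D^n x^l A_i \partial^l$ from Lemma \ref{L:1}(6)–(9) and Proposition \ref{T:A.Z 7.1}(1).

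I expect the main obstacle to be precisely this last point: controlling how $B_j$ terms are \emph{created} when commuting a $D^n$ past an $A_i$-type or $\Gamma_j$-type factor, since $\int^{-n}\partial^{-n}$-type cancellations (Lemma \ref{L:1}(3)) are the only source of $B_j$, and one must argue that when the left factor is already free of $B_j$ and we only ever move nonnegative powers of $\partial$ to the right, no $B_j$ can appear, whereas negative powers $\int^m$ only produce $B_j$ against already-present $B_j$ or against a $\partial$-power that is not there by hypothesis. The careful bookkeeping — ideally phrased as: "if $H$ is free of $B_j$ then for every $n\in\dz$, $HD^n$ expanded in canonical form has $Sdeg_B$ equal to $-\infty$ when $n\ge 0$, and bounded by $-n$ contributions that themselves vanish under the totally-free hypothesis when $n<0$" — is the crux, and it is most safely done by reducing via Lemma \ref{L:1}(7)–(9) to monomials $x^lA_i\partial^l D^n$ and invoking Proposition \ref{T:A.Z 7.1} together with $D^uB_j=B_{j-u}D^u$.
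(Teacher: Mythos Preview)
The paper does not prove this lemma; it is quoted from \cite[Lem.~2.12]{GZ1} without argument, so there is no in-paper proof to compare against. I comment on your proposal on its own merits.

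Your treatment of linear combinations and of the unit is fine. The gap is in the multiplication step, and you yourself flag it: the argument becomes circular (``which is what we are trying to prove'') and is then deferred to ``careful bookkeeping.'' That bookkeeping is the entire content of the lemma, and your final paragraph does not pin it down. In particular, emphasising $D^u B_j = B_{j-u}D^u$ is a red herring: by hypothesis there are no $B_j$'s present to move. The issue is preventing $B_j$'s from being \emph{created} via $\int^m\partial^m = 1-\sum_{k=1}^m B_k$, and your proposal never explains how this is avoided.

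Here is how to close the gap cleanly. Since $M$ is totally free, $MD^p$ has $Sdeg_B=-\infty$; write it in HCPC form as $\sum_r F_r D^r$ with each $F_r$ an order-zero HCP with no $B_j$. Any such $F_r$ is a (noncommutative) polynomial in the $A_{k;i}$ and $\Gamma_1=x\partial$, since $x^l\partial^l$ is a polynomial in $\Gamma_1$. It therefore suffices to show that right multiplication by $A_{k;i}$, by $\Gamma_1$, and by $D^r$ each preserves ``totally free of $B_j$.'' For $D^r$ this is the definition. For the other two, use the commutation rules $A_i D^q=\xi^{-qi}D^qA_i$ (Proposition~\ref{T:A.Z 7.1}(1)) and $\Gamma_1 D^q = D^q\Gamma_1 - qD^q$ (Lemma~\ref{L:1}(7)) to rewrite $HA_iD^q$ and $H\Gamma_1D^q$ as $\tilde K$-linear combinations of terms $(HD^q)\cdot G$ with $G$ an order-zero HCP free of $B_j$. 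Since $Sdeg_B(HD^q)=-\infty$ by hypothesis and $\Ord(G)=0$, Lemma~\ref{L:HCPC}(2)(a) gives $Sdeg_B((HD^q)G)=-\infty$. The key move you were missing is that Lemma~\ref{L:1}(7) lets you push the $D$-power all the way to the left \emph{before} invoking the totally-free hypothesis on $H$, so that an $\int^m\partial^m$ collision never has to be confronted.
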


\begin{Prop}{(\cite[Prop. 2.4]{GZ1})}
\label{P:lower estimate}
Suppose 
$$
0\neq H=\sum_{i=0}^{k-1}\sum_{m=0}^da_{m,i}A_i\Gamma_m\int^u.
$$ 	
is a HCP  from $Hcpc(k)$,  with $\Ord(H)=-u<0$. Then $H$ is totally free of $B_j$ iff the linear system of equations on $a_{m,i}$ holds:
\begin{equation}
\label{E:system_on_B_j}
\sum_{i=0}^{k-1}\sum_{m=0}^d\xi^{i(j-1)}(j-1)^ma_{m,i}=0
\end{equation}
for any $1\leq j\leq u$.

%Moreover, if $H$ is totally free of $B_j$, then $\frac{u}{k}-1< Sdeg_A(H).$
\end{Prop}

\begin{Prop}{(\cite[Prop. 2.5]{GZ1})}
	\label{P:Si}
	Let $Q\in D_1$ be a normalized operator, assume $S\in \hat{D}_{1}^{sym}$ is a Schur operator for $Q$ from proposition \ref{T:A.Z 7.2}, i.e.  $S^{-1}QS=\partial^q$, such that   $S_{0}=1,S_{-1}=0$. Then we have 
	\begin{enumerate}
		\item $S_{-t}$ is a HCP from $Hcpc(q)$ for any $t\ge 0$ (i.e. it can be written as a HCP in $\hat{D}_1^{sym}\hat{\otimes}_K\tilde{K}$).
		\item If $S_{-t}\neq 0$ then $\frac{t}{q}-1< Sdeg_A(S_{-t})<t$ for any $t> 0$.
		\item $S_{-t}$ is totally free of $B_j$ for any $t\ge 0$.
		\end{enumerate}
\end{Prop}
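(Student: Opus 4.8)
The plan is to proceed by strong induction on $t$, extracting the homogeneous components of the equation $QS = S\partial^q$ (equivalently $S^{-1}QS = \partial^q$) to obtain a recursion for $S_{-t}$ in terms of the lower pieces $S_0, S_{-1}, \dots, S_{-t+1}$ together with the coefficients of the normalized operator $Q = \partial^q + c_{q-2}\partial^{q-2} + \dots + c_0$. First I would write $Q$ in homogeneous components with respect to $\Ord$: since $Q$ is normalized, $\sigma(Q) = \partial^q$ and the remaining homogeneous components $Q_m$ for $m < q$ have bounded order, so comparing the degree-$(q - t)$ homogeneous component of $QS = S\partial^q$ isolates $[\partial^q, S_{-t}]$ (up to a nonzero scalar coming from $\mathrm{ad}_{\partial^q}$ acting on a homogeneous operator of order $-t$) equal to a sum of products $Q_m \cdot S_{-s}$ with $s < t$. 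The key structural input is Proposition \ref{T:A.Z 7.1}(2): solving $[\partial^q, -] = (\text{given HCPC})$ produces exactly an operator of the form $\sum c_m \partial^m + c_{-1}\int + \dots + c_{-q+1}\int^{q-1}$ with the $c_m$ lying in $\tilde K[A_1, \dots, A_{q-1}]$, i.e. a HCP from $Hcpc(q)$; this gives item 1 once we check the right-hand side of the recursion is itself a HCPC, which follows from Lemma \ref{C:HCPC(k)} (the HCPCs form a ring) since the $c_i\in K$ are scalars and the $S_{-s}$ are HCPCs by the inductive hypothesis.

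For item 3, I would carry the stronger statement ``$S_{-t}$ is totally free of $B_j$'' through the same induction, using Lemma \ref{L:total_B}: the totally-free-of-$B_j$ elements form a subring, so the right-hand side of the recursion is totally free of $B_j$; then I must check that the operator-theoretic inversion of $\mathrm{ad}_{\partial^q}$ provided by Proposition \ref{T:A.Z 7.1}(2) does not introduce any $B_j$. Here the point is that the $B_j$ only enter through the $\int^s$ terms, and the constraints on the coefficients $c_{i,j}$ with $i<0$ in Proposition \ref{T:A.Z 7.1}(2) — equivalently $\sum_j c_j \xi^{j(q-1)} = 0$ etc. — are exactly the conditions forcing the relevant $\int$-contributions to vanish modulo the $B_j$; combined with Lemma \ref{L:1}(3), which expresses $\int^m \partial^m = 1 - \sum_{k=1}^m B_k$, one sees that the $\int$-part of $S_{-t}$ is constrained so that $S_{-t}D^p$ never produces a surviving $B_j$. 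The normalization $S_0 = 1$, $S_{-1} = 0$ from Proposition \ref{T:A.Z 7.2}(2) is what makes the base of the induction clean and kills the would-be ambiguity in solving the homological equation at each step.

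Item 2 is the quantitative heart and I expect the inequality $\frac{t}{q} - 1 < Sdeg_A(S_{-t}) < t$ to be the main obstacle. The upper bound $Sdeg_A(S_{-t}) < t$ should follow by induction from Lemma \ref{L:HCPC}(1) ($Sdeg_A$ is subadditive under products) together with a careful accounting of how $Sdeg_A$ changes under the inversion of $\mathrm{ad}_{\partial^q}$: multiplying by $\int$ or $\partial$ shifts orders but the commutator relations $A_i x^p = \xi^{pi} x^p A_i$, $\partial^p A_i = \xi^{pi} A_i \partial^p$ of Proposition \ref{T:A.Z 7.1}(1) show that solving $[\partial^q, f_{i}x^l A_i \partial^l] = \dots$ raises the $x$-degree by a controlled amount; the sharp bound will come from tracking that $\mathrm{ad}_{\partial^q}$ applied to $x^l A_i \partial^l$ has $x$-degree exactly $l-1$, so inversion raises $x$-degree by one, matched against the drop in $t$ at each recursion step. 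The lower bound $\frac{t}{q} - 1 < Sdeg_A(S_{-t})$ is more delicate: it requires showing that enough $x$-degree is actually \emph{generated} and not accidentally cancelled, which I would establish by looking at the top $(\sigma,\rho)$-line / highest-$x$-degree part of the recursion and showing the leading term in the forced inhomogeneous term cannot vanish — essentially a non-degeneracy statement about the composition $S_{-s} \mapsto \mathrm{ad}_{\partial^q}^{-1}(Q_m S_{-s})$ at the extremal index, using regularity of $\partial^q$ (Definition \ref{D:regular}) to guarantee $\mathrm{ad}$ is invertible on the relevant graded piece and does not lose degree. If a direct argument for the lower bound proves too fiddly, the fallback is to run the induction with a slightly stronger hypothesis pinning down the exact leading $x$-degree term of $S_{-t}$ in terms of that of $S_{-t+q}$ and the top coefficient of $Q$, making the strict inequality automatic.
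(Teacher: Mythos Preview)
The paper does not contain a proof of this proposition: it is quoted verbatim as \cite[Prop.~2.5]{GZ1} in the preliminaries (Section~\ref{S:prelim}) and is used as a black box thereafter. So there is no ``paper's own proof'' to compare your plan against.

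That said, your skeleton (strong induction on $t$ via the homogeneous components of $QS=S\partial^q$) is the natural one and almost certainly the one in \cite{GZ1}, but there is a real confusion in how you invoke Proposition~\ref{T:A.Z 7.1}(2). That proposition describes the \emph{kernel} of $\mathrm{ad}_{\partial^q}$, not a procedure for inverting it; knowing that the right-hand side $Y=-\sum_{a\le q-2}Q_aS_{q-t-a}$ is a HCPC and that the centralizer consists of HCPs does not by itself force a \emph{particular} solution $S_{-t}$ of $[\partial^q,S_{-t}]=Y$ to be a HCP. A general homogeneous element of $\hat D_1^{sym}$ of order $-t$ is an infinite series $\sum_{l\ge0}a_lx^{l+t}\partial^l$, whereas a HCP is by definition a \emph{finite} combination of $x^lA_i\partial^l D^{-t}$ and $B_jD^{-t}$, so the finiteness of $Sdeg_A(S_{-t})$ is genuine content that must be established. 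The actual argument has to construct a HCP solution explicitly (e.g.\ by writing $S_{-t}$ in the ansatz of Definition~\ref{D:HCP}, using the commutation rules of Proposition~\ref{T:A.Z 7.1}(1) and Lemma~\ref{L:1} to expand $[\partial^q,x^lA_i\partial^lD^{-t}]$, and then solving the resulting finite linear system for the coefficients $f_{l,i}$), after which the upper bound in item~2 drops out of the same computation. Your description of the lower bound in item~2 as a non-cancellation statement is correct in spirit but is not yet an argument; this is where a sharper inductive hypothesis tracking the top coefficient is indeed required, as you suspect in your fallback.
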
 

\begin{cor}{(\cite[Cor. 2.3]{GZ1})}
	\label{C:Si^(-1) no B}
In the notation of proposition \ref{P:Si} set $\tilde{S}=S^{-1}$. Then we have 
	\begin{enumerate}
	\item $\tilde{S}_0=1$, $\tilde{S}_{-1}=0$.
		\item $\tilde{S}_{-t}$ is a HCP from $Hcpc(q)$ for any $t\ge 0$.
		\item If $S_{-t}\neq 0$ then $\frac{t}{u}-1< Sdeg_A(\tilde{S}_{-t})<t$ for any $t> 0$.
		\item $\tilde{S}_{-t}$ is totally free of $B_j$ for any $t\ge 0$. 
	\end{enumerate}
\end{cor}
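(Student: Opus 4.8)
The plan is to derive everything from the defining relation $S^{-1}QS = \partial^q$, which is equivalent to $QS = S\partial^q$, together with the normalization and structural results already established for $S$ in Proposition \ref{P:Si}. First I would record the formal identity $S\tilde S = \tilde S S = 1$ and expand it into homogeneous components with respect to $\Ord$: since $\Ord(S)=\Ord(\tilde S)=0$, the relation $S\tilde S=1$ reads, in degree $-t$,
\begin{equation}
\sum_{a+b=-t} S_{a}\tilde S_{b} = 0 \quad (t>0), \qquad S_0\tilde S_0 = 1,
\end{equation}
which, because $S_0=1$, gives the recursion $\tilde S_{-t} = -\sum_{j=1}^{t} S_{-j}\tilde S_{-t+j}$. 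From $S_0=1,S_{-1}=0$ one gets immediately $\tilde S_0=1$ and $\tilde S_{-1}=0$, which is item 1 (the statement writes $\tilde S_1=0$, meaning the component of $\Ord$ equal to $-1$; I would phrase it consistently with the excerpt).

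For items 2--4 I would argue by induction on $t$ using this recursion. By Proposition \ref{P:Si} each $S_{-j}$ with $j\ge 1$ is a HCP in $Hcpc(q)$, is totally free of $B_j$, and satisfies $Sdeg_A(S_{-j})<j$; inductively assume $\tilde S_{-j}$ for $j<t$ is a HCP in $Hcpc(q)$, totally free of $B_j$, with $Sdeg_A(\tilde S_{-j})<j$. Then each summand $S_{-j}\tilde S_{-t+j}$ is a product of two elements of $Hcpc(q)$, hence lies in $Hcpc(q)$ by Lemma \ref{C:HCPC(k)}, and is a product of two totally-free-of-$B_j$ elements, hence totally free of $B_j$ by Lemma \ref{L:total_B}; by Lemma \ref{L:obvious} the sum $\tilde S_{-t}$ inherits both properties, giving items 2 and 4. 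For item 3, Lemma \ref{L:HCPC}(1) gives $Sdeg_A(S_{-j}\tilde S_{-t+j}) \le Sdeg_A(S_{-j}) + Sdeg_A(\tilde S_{-t+j}) < j + (t-j) = t$ (treating the base cases $j=t$, where $\tilde S_0=1$ has $Sdeg_A=-\infty$, and $j$ with $S_{-j}=0$ separately), and Lemma \ref{L:obvious} propagates the bound $Sdeg_A(\tilde S_{-t})<t$ to the sum; the lower bound $\frac{t}{q}-1 < Sdeg_A(\tilde S_{-t})$ when $\tilde S_{-t}\neq 0$ should come out of the same recursion read in the other direction, or more cleanly by applying the proven upper bound to $S$ itself via the symmetric relation $S = \tilde S^{-1}$ — i.e. once one knows both $S$ and $\tilde S$ are governed by the same type of recursion, the lower bound for one is forced by the upper bound for the other together with the fact that $Sdeg_A$ of a nonzero product of the top-degree pieces cannot drop too far (using the equality case / near-equality in Lemma \ref{L:HCPC}).

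The step I expect to be the main obstacle is the lower bound in item 3, $\frac{t}{q}-1 < Sdeg_A(\tilde S_{-t})$. The upper bounds and the ring-membership / $B_j$-freeness claims follow mechanically from Lemmas \ref{C:HCPC(k)}, \ref{L:total_B}, \ref{L:obvious}, \ref{L:HCPC} once the recursion is set up, but Lemma \ref{L:HCPC}(1) only gives an \emph{inequality} for $Sdeg_A$ of a product, so it cannot by itself force $Sdeg_A(\tilde S_{-t})$ to be large. One needs to exploit that $Q S=S\partial^q$ rigidly links the $\tilde S_{-t}$ to the coefficients of $Q$ (which are genuine, nonzero, of controlled order), essentially the same mechanism that produced the bound for $S_{-t}$ in \cite{GZ1}; I would either quote the relevant computation from the proof of Proposition \ref{P:Si} and run it for $\tilde S$, or reduce to it by noting $S^{-1}\partial^q S = Q \in D_1$ and re-deriving the lower bound with the roles of $S$ and $\tilde S$ interchanged, which is legitimate precisely because $Q$ and $\partial^q$ play symmetric structural roles in a conjugation. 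This cross-application is the one place where a little care is required rather than pure bookkeeping.
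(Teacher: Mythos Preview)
The paper does not prove this corollary at all; it is quoted verbatim from \cite{GZ1} as one of the preliminary results, so there is no ``paper's own proof'' to compare against. Your recursion argument via $S\tilde S=1$ is the natural one and is correct for items 1, 2, 4 and the upper bound in item 3: the ring properties of $Hcpc(q)$ (Lemma~\ref{C:HCPC(k)}), the subring of totally-free-of-$B_j$ elements (Lemma~\ref{L:total_B}), and the sub-additivity of $Sdeg_A$ under products (Lemma~\ref{L:HCPC}) push everything through by induction exactly as you describe. One small slip: $Sdeg_A(\tilde S_0)=Sdeg_A(1)=0$, not $-\infty$; this does not affect the bound since $Sdeg_A(S_{-t}\cdot 1)\le Sdeg_A(S_{-t})+0<t$.

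The genuine gap is the lower bound $\frac{t}{q}-1<Sdeg_A(\tilde S_{-t})$. Your two suggested routes both fail as stated. The ``symmetry'' route --- apply Proposition~\ref{P:Si} with the roles of $S$ and $\tilde S$ swapped --- does not work: Proposition~\ref{P:Si} needs the conjugating operator to send a \emph{normalized $Q\in D_1$} to $\partial^q$, whereas $\tilde S$ sends $\partial^q$ to $Q$, which is the wrong direction (and $\partial^q$ is its own Schur operator, so nothing is gained). The ``near-equality in Lemma~\ref{L:HCPC}'' route also fails, because that lemma gives only the one-sided inequality $Sdeg_A(HM)\le Sdeg_A(H)+Sdeg_A(M)$ with no control from below; products of HCPs can genuinely drop in $Sdeg_A$, so the recursion alone cannot force $Sdeg_A(\tilde S_{-t})$ to be large.

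What actually produces the lower bound is that it is a \emph{structural} consequence of items 2 and 4 together, not of the recursion: any nonzero homogeneous HCP $H\in Hcpc(q)$ of $\Ord(H)=-t$ that is totally free of $B_j$ automatically satisfies $Sdeg_A(H)>t/q-1$. The reason is that ``totally free of $B_j$'' imposes, via $H\partial^t=[\sum_{i,l}f_{l,i}x^lA_i\partial^l](1-\sum_{k=1}^tB_k)$ and the identities $x^l\partial^lB_k=\tfrac{(k-1)!}{(k-l-1)!}B_k$, $A_iB_k=\xi^{i(k-1)}B_k$, a triangular linear system on the coefficients $f_{l,i}$ whose rank forces all of them to vanish once $Sdeg_A(H)\le t/q-1$. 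This is presumably the content of the proof in \cite{GZ1}; once you have established items 2 and 4 by your recursion, item 3's lower bound follows from this general fact without any appeal to symmetry or to the conjugation relation.
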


\begin{Def}
\label{D:normal_form}
For a given pair of operators $Q,P\in D_1$ with $\Ord (Q)=\ord (Q)=q> 0$, $\Ord (P)=\ord (P)=p\ge 0$ we define a {\it normal form} of $P$ with respect to $Q$ as the operator $P':=S^{-1}PS$, where $S$ is a Schur operator for $Q$ from proposition \ref{T:A.Z 7.2}, i.e. $S^{-1}QS=\partial^q$, $\Ord (S)=0$. 
\end{Def} 

\begin{rem}
\label{R:normal_forms}
The Schur operator is not uniquely defined, but up to multiplication by the elements of the centralizer  $C(\partial^q)\subset \hat{D}_1^{sym}$ from proposition \ref{T:A.Z 7.1}. By this reason the normal form of the operator $P$  is  not uniquely defined, but up to conjugation by elements from this centralizer. 

If $\sigma (Q)=\partial^{\Ord (Q)}$, $c\in K$ (this is the case we'll usually deal with), then there exists a monic  Schur operator (i.e. $S_0=1$). Indeed, in this case  $\sigma (S)\partial^q=\partial^q\sigma (S)$, so that $\sigma (S)\in C(\partial^q)$, $\sigma (S)$ is invertible and we can divide on $\sigma (S)$ if necessary. In this case we also can assume $S_{-1}=0$ (see proposition \ref{T:A.Z 7.2}, item 2).
\end{rem}

\begin{Def}{(\cite[Def. 2.8]{GZ1})}
\label{D:conditionA}
We'll say that an operator $P\in  \hat{D}_1^{sym}\hat{\otimes}_K\tilde{K}$ satisfies {\it condition $A_q(k)$}, $q,k\in \dz_+$, $q>1$ if 
\begin{enumerate}
\item
$P_{t}$ is a HCP  from $Hcpc (q)$  for all $t$;
\item
$P_{t}$ is totally free of $B_j$ for all $t$;
\item
$Sdeg_A(P_{\Ord (P)-i})< i+k$ for all $i>0$;
\item
$\sigma (P)$ does not contain $A_{q;i}$, $Sdeg_A(\sigma (P))=k$.
\end{enumerate}
\end{Def}

\begin{ex}
\label{Ex:Schur_op}
From proposition \ref{P:Si} an corollary \ref{C:Si^(-1) no B} we know that $S, S^{-1}$ satisfy condition $A_q(0)$, where $S$ is any monic Schur operator for normalised $Q\in D_1$. Moreover, if $S$ is a monic operator from $\hat{D}_1^{sym}$ of order zero satisfying condition $A_q(0)$ (but not necessary with $S_{-1}=0$), then $S^{-1}$ also satisfies the same properties. The proof is just the same as in \cite[Cor. 2.3]{GZ1}. 
\end{ex}

\begin{lemma}{(\cite[Lem. 2.15]{GZ1})}
\label{L:conditionA}
Suppose $P,Q\in \hat{D}_1^{sym}\hat{\otimes}_K\tilde{K}$ satisfy conditions $A_q(k_1)$, $A_q(k_2)$ correspondingly. Then 
$PQ$ satisfies condition $A_q(k_1+k_2)$.
\end{lemma}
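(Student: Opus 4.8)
The plan is to verify the four defining conditions of $A_q(k_1+k_2)$ for the product $PQ$ one at a time, using the machinery of HCPCs already assembled in the excerpt. The first two conditions are essentially bookkeeping: by Corollary \ref{C:HCPC(k)} the set $Hcpc(q)$ is a subring, so each homogeneous component $(PQ)_t = \sum_{a+b=t} P_a Q_b$ is again a HCPC from $Hcpc(q)$, and by Lemma \ref{L:total_B} the totally-free-of-$B_j$ elements of $Hcpc(q)$ form a subring, so each $(PQ)_t$ is totally free of $B_j$ as well. (One should note the product of two HCPs of definite orders is a HCP of the summed order by Lemma \ref{L:HCPC}, and a finite sum of HCPs of possibly coinciding orders collapses to a HCP again, so $PQ$ is genuinely a HCPC — I would phrase this carefully to make sure $(PQ)_t$ really is presentable in the canonical form \eqref{E:HCP}.)

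Next I would pin down the top-order behavior, which gives condition 4. Since $P$ and $Q$ are regular-type operators with $\sigma(P), \sigma(Q)$ not containing any $A_{q;i}$, the product $\sigma(P)\sigma(Q)$ is nonzero (it is a genuine polynomial in $x,\partial$ times $D^{\,\text{power}}$, and by Remark \ref{R:ord_properties} the symbol is multiplicative when the product of symbols is nonzero), hence $\Ord(PQ)=\Ord(P)+\Ord(Q)$ and $\sigma(PQ)=\sigma(P)\sigma(Q)$. This product still contains no $A_{q;i}$ because neither factor does and multiplying $A_{q;i}$-free HCPs of the pure form $\sum f_{l}x^lA_{q;i}\partial^l$ with $i=0$ (i.e. ordinary differential-operator monomials) stays $A_{q;i}$-free; and its $x$-degree adds, so $Sdeg_A(\sigma(PQ)) = Sdeg_A(\sigma(P)) + Sdeg_A(\sigma(Q)) = k_1 + k_2$. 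Here I should invoke the $Sdeg_A$-additivity for products from Lemma \ref{L:HCPC}(1), but I need the reverse inequality too — that comes from the fact that the top $x$-degree term in $\sigma(P)\sigma(Q)$ cannot cancel, since $\sigma(P),\sigma(Q)$ are honest polynomials in $x,\partial$ with no $A$'s to interfere.

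Condition 3 — the inequality $Sdeg_A\big((PQ)_{\Ord(PQ)-i}\big) < i + (k_1+k_2)$ for all $i>0$ — is the heart of the matter and the step I expect to be the main obstacle. I would write $(PQ)_{\Ord(PQ)-i} = \sum_{a+b=-i} P_{\Ord(P)+a}\, Q_{\Ord(Q)+b}$ with $a,b\le 0$, set $a=-i_1$, $b=-i_2$ with $i_1+i_2=i$, and bound each summand by Lemma \ref{L:HCPC}(1): $Sdeg_A(P_{\Ord(P)-i_1}Q_{\Ord(Q)-i_2}) \le Sdeg_A(P_{\Ord(P)-i_1}) + Sdeg_A(Q_{\Ord(Q)-i_2})$. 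If both $i_1>0$ and $i_2>0$, condition 3 for $P$ and $Q$ gives $< (i_1+k_1)+(i_2+k_2) = i+k_1+k_2$. If say $i_2=0$ (so $i_1=i>0$), the bound becomes $Sdeg_A(P_{\Ord(P)-i}) + Sdeg_A(\sigma(Q)) < (i+k_1) + k_2$, again as needed; symmetrically for $i_1=0$. Then Lemma \ref{L:obvious} propagates the strict bound through the finite sum. The delicate point I want to be careful about is whether strictness genuinely survives — each individual summand is strictly below $i+k_1+k_2$, and since there are finitely many, the maximum is still strictly below, so Lemma \ref{L:obvious} suffices; I just need to make sure no edge case (e.g. a summand being zero, giving $Sdeg_A=-\infty$) breaks the argument, but $-\infty$ only helps. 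Assembling these four verified conditions yields that $PQ$ satisfies $A_q(k_1+k_2)$.
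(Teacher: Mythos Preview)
The paper does not supply its own proof of this lemma; it is quoted from \cite[Lem.~2.15]{GZ1}. Your approach---verifying the four conditions of Definition~\ref{D:conditionA} directly via the subring properties (Corollary~\ref{C:HCPC(k)}, Lemma~\ref{L:total_B}) and the $Sdeg_A$ estimates of Lemmas~\ref{L:obvious} and \ref{L:HCPC}---is the natural one and is correct; it is exactly what one expects the cited proof to do.

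The only spot that merits a line of extra care is condition~4, where you need both $\sigma(P)\sigma(Q)\neq 0$ and the \emph{exact} equality $Sdeg_A(\sigma(PQ))=k_1+k_2$. Your intuition is right, and here is a clean way to make it precise: since $\sigma(P),\sigma(Q)$ contain no $A_{q;i}$ and are totally free of $B_j$, each has the form $F(\Gamma_1)D^{\Ord(P)}$, $G(\Gamma_1)D^{\Ord(Q)}$ with $F,G$ honest polynomials of degrees $k_1,k_2$. Using Lemma~\ref{L:1}(7) one gets $\sigma(P)\sigma(Q)=F(\Gamma_1)G(\Gamma_1+\Ord(P))\,D^{\Ord(P)}D^{\Ord(Q)}$; the polynomial $F(\Gamma_1)G(\Gamma_1+\Ord(P))$ has degree exactly $k_1+k_2$, and any $B_j$ contributions from $D^{\Ord(P)}D^{\Ord(Q)}$ are killed by Lemma~\ref{L:total_B}. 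This simultaneously gives non-vanishing and the exact $Sdeg_A$. Your argument for condition~3 is already complete.
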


\begin{Prop}{(\cite[Cor. 2.4]{GZ1})}
	\label{C:P'}
	Suppose $Q\in D_1$ is a normalized operator with  $\Ord (Q)=\ord (Q)=q> 0$. 
	Suppose $P\in \hat{D}_1^{sym}\hat{\otimes}_K\tilde{K}$  satisfies condition $A_q(0)$, $\Ord (P)=p$. 
	 Put $P'=S^{-1}PS$, where $S$ is a Schur operator for $Q$ from proposition \ref{P:Si}. 
	 
	 Then $P'$ satisfies condition $A_q(0)$.
\end{Prop}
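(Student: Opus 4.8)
The plan is to show that each of the four defining conditions of $A_q(0)$ is preserved when passing from $P$ to $P' = S^{-1} P S$, and the natural tool is the homogeneous decomposition together with the ring-theoretic closure properties already collected in the excerpt. First I would write $S = \sum_{t \ge 0} S_{-t}$ and $\tilde S = S^{-1} = \sum_{t\ge 0} \tilde S_{-t}$ with $S_0 = \tilde S_0 = 1$, $S_{-1} = \tilde S_1 = 0$, invoking Proposition \ref{P:Si} and Corollary \ref{C:Si^(-1) no B}: both $S$ and $\tilde S$ have all homogeneous components that are HCPs in $Hcpc(q)$, totally free of $B_j$, with the sharp bound $Sdeg_A(S_{-t}), Sdeg_A(\tilde S_{-t}) < t$ for $t > 0$. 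The key observation is that, reading the degree bounds correctly, $S$ and $\tilde S$ each satisfy condition $A_q(0)$ themselves: their highest symbol is $1 \in \tilde K$ (so it contains no $A_{q;i}$ and has $Sdeg_A = 0 = k$), they are totally free of $B_j$, their components are HCPs, and $Sdeg_A(S_{-t}) < t = t + 0$ gives condition (3). So the statement is really the assertion that condition $A_q(0)$-operators are closed under the product $\tilde S \cdot P \cdot S$, which is almost immediate from Lemma \ref{L:conditionA}.

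Concretely, the main step is: apply Lemma \ref{L:conditionA} to $\tilde S$ (satisfying $A_q(0)$) and $P$ (satisfying $A_q(0)$) to conclude $\tilde S P$ satisfies $A_q(0)$; then apply it again to $\tilde S P$ and $S$ to conclude $P' = \tilde S P S$ satisfies $A_q(0)$. The exponents add as $0 + 0 + 0 = 0$, which is exactly what is wanted. One has to be slightly careful that Lemma \ref{L:conditionA} requires $q > 1$, which is part of the hypothesis $q > 0$ together with the convention; in the boundary case $q = 1$ the root of unity is trivial, $\tilde K = K$, no $A_{q;i}$ with $i \ne 0$ occur, and the statement degenerates to a triviality that can be checked directly (or the case $q=1$ may simply be excluded as in Definition \ref{D:conditionA}).

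The one genuine obstacle is verifying rigorously that $S$ and $\tilde S$ satisfy condition $A_q(0)$ — in particular clauses (3) and (4). Clause (3) asks $Sdeg_A(S_{\Ord(S) - i}) < i + 0$ for $i > 0$; since $\Ord(S) = 0$ and $S_{-i} = S_{\Ord(S)-i}$, this is precisely $Sdeg_A(S_{-i}) < i$, which is the upper bound in Proposition \ref{P:Si}(2). Clause (4) asks that $\sigma(S) = S_0 = 1$ contain no $A_{q;i}$ and have $Sdeg_A = 0$; both hold since $1 = f_{0,0;0}\, x^0 A_{q;0} \partial^0$ has $d_0 = 0$ and $f_{l,i;0} = 0$ for $i > 0$. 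The analogous check for $\tilde S$ uses Corollary \ref{C:Si^(-1) no B}. Once these two verifications are in place, the two invocations of Lemma \ref{L:conditionA} finish the proof with no further computation. I would therefore structure the write-up as: (i) recall the decompositions and cite Proposition \ref{P:Si}, Corollary \ref{C:Si^(-1) no B}; (ii) the short lemma-style claim that $S, S^{-1}$ satisfy $A_q(0)$; (iii) two applications of Lemma \ref{L:conditionA}.
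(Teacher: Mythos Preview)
Your proposal is correct. The paper itself does not prove this proposition --- it is quoted verbatim as \cite[Cor.~2.4]{GZ1} and used as a black box --- so there is no in-paper argument to compare against. That said, your approach (verify from Proposition~\ref{P:Si} and Corollary~\ref{C:Si^(-1) no B} that $S$ and $S^{-1}$ themselves satisfy condition $A_q(0)$, then apply Lemma~\ref{L:conditionA} twice) is the natural route given the surrounding machinery, and since the cited result in \cite{GZ1} is precisely the corollary following Lemma~\ref{L:conditionA} there, it is almost certainly the intended argument. Your handling of the $q=1$ edge case and of clause~(4) for $\sigma(S)=1$ is correct; the only cosmetic point is that the symbol computation $\sigma(S^{-1}PS)=\sigma(P)$ is immediate here because $\sigma(S)=\sigma(S^{-1})=1$, so you need not worry about the general caveat $\sigma(P)\sigma(Q)\neq 0$ in Remark~\ref{R:ord_properties}.
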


\begin{theorem}{(\cite[Th. 2.2]{GZ1})}
\label{T: P in hat(D) is a dif_op}
	The following statements are equivalent:
	\begin{enumerate}
		\item $P\in \hat{D}_1^{sym}\hat{\otimes}_K\tilde{K}$ is a {\it differential operator} (i.e. $P\in D_1\otimes_K\tilde{K}$)  with constant highest symbol.
		\item $\forall p>1$, $P\in \hat{D}_1^{sym}\hat{\otimes}_K\tilde{K}$ satisfies condition $A_p(0)$ with an extra property: all homogeneous components $P_j$ don't contain  $A_i$.
		\item $\exists p>1$, $P\in \hat{D}_1^{sym}\hat{\otimes}_K\tilde{K}$ satisfies condition $A_p(0)$ with an extra property: all homogeneous components $P_j$ don't contain  $A_i$.
	\end{enumerate}
\end{theorem}

By definition, the normal form of an operator $P$ with respect to $Q$ belongs to the centralizer $C(\partial^q)$ if $[P,Q]=0$. The centralizer has another convenient description alternative to one given in proposition \ref{T:A.Z 7.1}, which will be used later. Let's recall this description from \cite[\S 3]{GZ1}. 

For a fixed $k$ we have an isomorphism of $\tilde{K}$-algebras 
$$
\Phi : \tilde{K}[A_1,\ldots ,A_{k-1}]\rightarrow \tilde{K}^{\oplus k} , \quad P\mapsto (1\circ P, \ldots , (\partial^{k-1}\circ P)\partial^{-k+1})
$$
(here $\partial^l\circ P$ is the notation from definition \ref{D:regular}, i.e. if $P=\sum_{i=0}^{k-1}p_iA_i$, then $\partial^l\circ P=(\sum p_i\xi^{il})\partial^l$, and $\tilde{K}^{\oplus k}$ denotes the semisimple algebra - the direct sum of algebras $\tilde{K}$).

\begin{rem}
\label{R:Phi_over_K}
Note that, by definition of $\Phi$, if $P\in \tilde{K}[A_1]$ belong to $\hat{D}_1^{sym}$ (i.e. all coefficients of the series representing $P$ in  $\hat{D}_1^{sym}$ belong to $K$), then $\Phi (P)\in K^{\oplus k}$, i.e. $\Phi$ is compatible with the extension of scalars of $\hat{D}_1^{sym}$.
\end{rem}

Consider the skew polynomial ring $\crr = \tilde{K}^{\oplus k} [D,\sigma ]$, where 
$$\sigma (a_0,\ldots ,a_{k-1})= (a_{k-1}, a_0, \ldots , a_{k-2}).$$ 
It is a right and left noetherian ring. The multiplicatively closed subset $S=\{ D^k, k\ge 0\}$ obviously satisfies the right Ore condition, consists of regular elements, and ${\bf ass} (S)=0$. So, the right quotient ring $\mathfrak{B}=\crr_S$ exists. Clearly, 
$$
\mathfrak{B}\simeq\tilde{K}^{\oplus k}[D,D^{-1}]=\{\sum_{l=M}^NP_lD^l | \quad P_l\in \tilde{K}^{\oplus k}\}\simeq \tilde{K}[A_1][D,D^{-1}],
$$
i.e. any element from $\mathfrak{B}$ can be written as a Laurent polynomial, and the commutativity relations of polynomials are given above: $D^{-1}a=\sigma (a) D^{-1}$, $a\in \tilde{K}^{\oplus k}$.

Let $C(\mathfrak{B})$ be the center of $\mathfrak{B}$. Then 
$$
C(\mathfrak{B})\cong \tilde{K}[D^k,D^{-k}],
$$
where $\tilde{K}$ is diagonally embedded into $\tilde{K}^{\oplus k}$, and there is an isomorphism of $\tilde{K}$-algebras $\psi: \mathfrak{B}\cong M_k(C(\mathfrak{B}))$ (see \cite[Lem 3.1]{GZ1} for details). 

Now consider the ring of skew pseudo-differential operators 
$$
E_k:=\tilde{K}[\Gamma_1, A_1]((\tilde{D}^{-1}))=\{\sum_{l=M}^{\infty}P_l\tilde{D}^{-l} | \quad P_l\in \tilde{K}[\Gamma_1, A_1]\} \simeq \tilde{K}^{\oplus k}[\Gamma_1]((\tilde{D}^{-1}))
$$
with the commutation relation as above (here $\tilde{K}[\Gamma_1, A_1]$ is a commutative subring in $\hat{D}_1^{sym}$):
$$
\tilde{D}^{-1}a=\sigma (a)\tilde{D}^{-1}, \quad a\in \tilde{K}[\Gamma_1, A_1] \quad \mbox{where \quad }
\sigma (A_1)= \xi^{-1} A_1, \quad \sigma (\Gamma_1)=\Gamma_1-1.
$$
The ring $E_k$ is endowed with a natural discrete pseudo-valuation, which we will denote as $-\ord_{\tilde{D}}$ (i.e. $\ord_{\tilde{D}}(\sum_{l=M}^{\infty}P_l\tilde{D}^{-l})= M$).  We extend the usual terminology used in this paper also for operators from $E_k$ (such as the notion of the highest coefficient, monic operators, etc.)

Denote $\widehat{Hcpc}_B(k)$ as the $\tilde{K}$-subalgebra in $\hat{D}_1^{sym}\hat{\otimes}\tilde{K}$ consisting of operators whose homogeneous components are HCPs totally free of $B_j$ (cf. lemma \ref{L:total_B}). 

\begin{lemma}{(\cite[Lem. 3.2]{GZ1})}
\label{L:embedding_Phi}
The map 
$$
\hat{\Phi}: \widehat{Hcpc}_B(k) \longrightarrow E_k,
$$
defined on monomial HCPs from $\widehat{Hcpc}_B(k)$ as $\hat{\Phi} (a A_j\Gamma_iD^l):=a \Phi (A_j)\Gamma_i\tilde{D}^l$ and extended by linearity on the whole $\tilde{K}$-algebra $\widehat{Hcpc}_B(k)$, is an embedding of $\tilde{K}$-algebras.
\end{lemma}

\begin{rem}
\label{R:hatPhi_over_K}
Again as in remark \ref{R:Phi_over_K}, if $P\in \hat{D}_1^{sym}\cap \widehat{Hcpc}_B(k)$, then $\hat{\Phi} (P)$ will be an operator with coefficients from $K$. 
\end{rem}

Note that the ring $\mathfrak{B}$ is naturally embedded into the ring $E_k$. Any operator from the centralizer $C(\partial^k)$, can be embedded in $E_k$ via $\hat{\Phi}$, because clearly $C(\partial^k)\subset \widehat{Hcpc}_B(k)$. From proposition \ref{T:A.Z 7.1} we immediately get that $\hat{\Phi}(C(\partial^k))\subset \mathfrak{B}$. 

Restricting $\psi$ on $\hat{\Phi}(C(\partial^k))$, we get the equality 
\begin{equation}
\label{E:matrix_isom}
\psi (\hat{\Phi}(C(\partial^k)))= M_k(\tilde{K}[D^k])
\end{equation}
(see \cite[Rem 3.3]{GZ1}). Elements of the centralizer $C(\partial^k)$ embedded to $\mathfrak{B}$ we'll call as a {\it vector form} presentation, and elements embedded to $M_k(\tilde{K}[D^k])$ -- as a {\it matrix form} presentation. 

\begin{rem}
\label{R:vector_form}
Translating the description of the centralizer $C(\partial^k)$ given in proposition \ref{T:A.Z 7.1}, item 2, into the vector form, we get that $\hat{\Phi}(C(\partial^k))$ consists of Laurent polynomials in 
$\tilde{D}$ with coefficients from $K^{\oplus k}$ and with additional conditions: the coefficient $s_i$ at $\tilde{D}^{-i}$, $i>0$, has a shape $s_{i,j}=0$ for $j=0, \ldots , i-1$. 
\end{rem}

\begin{rem}
\label{R:S_over_K}
If $\tilde{P}\in K^{\oplus k}((\tilde{D}^{-1}))$, then the operator $S$ from lemma \ref{L:conjugation in E_q} also belongs to $K^{\oplus k}((\tilde{D}^{-1}))$, cf. remark \ref{R:hatPhi_over_K}.
\end{rem}

Embedding normal forms of differential operators to the ring $E_k$, we can transform them to a simpler form by conjugation:
\begin{lemma}{(\cite[Lem. 3.3]{GZ1})}
\label{L:conjugation in E_q}
Assume $\tilde{P}\in \tilde{K}^{\oplus k}((\tilde{D}^{-1}))\subset E_k$ is a monic operator with $\ord_{\tilde{D}}(\tilde{P})=p$. Let $p=dn$, $q=dm$, where $d=GCD (p,q)$. 

Then there exists a monic invertible operator $S\in \tilde{K}^{\oplus k}((\tilde{D}^{-1}))\subset E_k$ with $\ord_{\tilde{D}}(S)=0$ such that all coefficients (i.e. elements from $\tilde{K}^{\oplus k}$) of the operator $S^{-1}\tilde{P}S$ commute with $\tilde{D}^d$ (we'll call such elements {\it $\tilde{D}^d$-invariant}).
\end{lemma}

\begin{cor}
\label{C:Schur_subring}
Let $B'\subset \tilde{K}^{\oplus k}((\tilde{D}^{-1}))\subset E_q$ be a commutative subring containing $\tilde{D}^q$ and a monic operator $P'$ of order $p$. Suppose $GCD(p,q)=1$. 

Then $S^{-1}B'S\subset \tilde{K}((\tilde{D}^{-1}))$ for an operator $S$ from lemma \ref{L:conjugation in E_q}.
\end{cor} 

Finally, for our proof we'll need a classification of rank one commutative subrings of ordinary differential operators in terms of spectral data and Schur pairs, going back to the works by Burchnall-Chaundy \cite{BC1}-\cite{BC3}, Krichever \cite{Kr1}, Drinfeld \cite{Dr}, Mumford \cite{Mumford_article}, Verdier \cite{Verdier}, Segal-Wilson \cite{SW} and Mulase \cite{Mu}. We'll recall briefly some major results we'll need further following the exposition in \cite{GZ1}. 

Let $B\subset D_1$ be an elliptic commutative subring (i.e. it contains a formally elliptic differential operator; in this case by one Verdier's lemma, all operators from $B$ are formally elliptic). Then by Schur theory from \cite{Schur},  there exists an invertible operator $S=s_0+s_1\partial^{-1} +\ldots $ in the usual (Schur's) ring of pseudo-differential operators $E=K[[x]]((\partial^{-1}))$ such that $A:=S^{-1}BS \subset K((\partial^{-1}))$. Consider the homomorphism (of vector spaces)
\begin{equation}
\label{E:Sato_hom}
E\rightarrow E/xE \simeq K((\partial^{-1}))
\end{equation}
(sometimes it is called {\it the Sato homomorphism}). It defines a structure of a right $E$-module on the space  $K((\partial^{-1}))$: for any $P\in K((\partial^{-1}))$, $Q\in E$ we put  $P\cdot Q= PQ$ (mod $xE$). 

Analogously, the homomorphism 
$$
1\circ : E_q\rightarrow  \tilde{K}((\tilde{D}^{-1})), \quad \sum_l (\sum_{i=0}^{n_l} p_{l_i}\Gamma_1^i)\tilde{D}^l \mapsto  \sum_l p_{l_0,0}\tilde{D}^l
$$
(cf. lemma \ref{L:embedding_Phi}) defines a structure of a right $E_q$-module on the space  $\tilde{K}((\tilde{D}^{-1}))$: for any $P\in \tilde{K}((\tilde{D}^{-1}))$ and $Q\in E_q$ we put $P\cdot Q= 1\circ (PQ)$.

Now define the space $W:= F\cdot S \subset K((\partial^{-1}))$ (here $F$ is the same as in theorem \ref{T:A.Z 2.1}). Note that $W$ is an $A$-module, where the module structure is defined via the multiplication in the {\it field} $K((\partial^{-1}))$ and this module structure is induced by the $E$-module structure on $K((\partial^{-1}))$, because $K((\partial^{-1}))\subset E$ and $W\cdot A=(F\cdot S)\cdot (S^{-1}BS)=F\cdot (BS)=(F\cdot B)\cdot S= F\cdot S= W$. 
Note also that the modules $W$ and $F$ are isomorphic ($W$ is an $A$-module, $F$ as a $B$-module, and clearly $A\simeq B$). For convenience of notation, we will replace $\partial^{-1}$ by $z$ in the field $K((\partial^{-1}))$, i.e. $A,W\subset K((z))\simeq K((\partial^{-1}))$. 

Analogously, we can define the space $W':=F'\cdot S\subset \tilde{K}((\tilde{D}^{-1}))$, where $F'=\tilde{K}[\tilde{D}]$ and $S\in \tilde{K}[A_1]((\tilde{D}^{-1}))$ is an operator from lemma 
\ref{L:conjugation in E_q} or $W'':=F''\cdot S\subset K((\tilde{D}^{-1}))$, where $F''=K[\tilde{D}]$ if $S\in K^{\oplus q}((\tilde{D}^{-1}))$ (for algebraically closed $K$ $F'=F''$). 

If $B'=\hat{\Phi}(S^{-1}BS)$, where $S$ is Schur operator from proposition \ref{T:A.Z 7.2} (constructed for some monic differential operator $Q$ from $B$), and $A':=S^{-1}B'S$, where $S$ is an operator from lemma \ref{L:conjugation in E_q} (constructed for some  operator $P'\in B'$), and, by remark \ref{R:S_over_K} $S\in K^{\oplus q}((\tilde{D}^{-1}))$, so that $A'\subset K((\tilde{D}^{-1}))$, 
then, clearly, the modules $F''$ and $F$ are isomorphic ($F''$ as a $B'$-module, $F$ as a $B$-module), and  
$W''$ and $F$ are isomorphic ($W''$ is a $A'$-module, $F$ as a $B$-module), so also $W''\simeq W$. Moreover, all these modules are isomorphic as filtered modules (with respect to the order filtration).

\begin{rem}
\label{R:left-module_structure}
If we consider another presentation of differential operators from the ring $D_1$, such that the coefficients stand on the right from derivations (i.e. $P=\sum \partial^ia_i$), and analogous presentation of pseudo-differential operators from $E$, then there is a symmetric description of all modules described above as {\it left} modules (i.e. $K((\partial^{-1}))$ will be a left $E$-module, $F$ - a left $B$-module, $W$ - a left $A$-module, etc.). Such a description is used e.g. in \cite{Mu}. Clearly, both descriptions are equivalent. This description, however, could be useful to describe the corresponding Schur pairs and spectral data of {\it adjoint} rings of commuting differential operators in section \ref{S:string}. 
\end{rem}

The {\it rank} of the ring $B$ is, by definition, $\rk B:=GCD\{\ord P, P\in B \}$. For subrings in $K((z))$ we can introduce the same notion of rank as for subrings in $D_1$\footnote{here $z$ will play a role of $\partial^{-1}$ or $\tilde{D}^{-1}$}:

\begin{Def}
\label{D:Q5.1} 
Let $A$ be a $K$-subalgebra of $K((z))$, and $r\in \dn$. $A$ is said to be an algebra of rank $r$ if 
$r= \gcd (\ord(a)|\quad  a\in A)$, where the order is defined in the same way as the usual order  in $D_1$ (cf. remark \ref{R:ord=deg}).
\end{Def}

\begin{Def}
\label{D:support} 
Let $W$ be a $K$-subspace in $K((z))$. The {\it support} of an element $w\in W$ is its highest symbol, i.e. $\sup (w):=HT(w)z^{-\ord (w)}$. The {\it support} of the space is  $\Sup W:=\langle \sup (w)|\quad w\in W\rangle$. 
\end{Def}

\begin{Def}
\label{D:Schur_pair}
An embedded Schur pair of rank $r$  is a pair $(A,W)$ consisting of 
\begin{itemize}
\item
$A\subset K((z))$ a $K$-subalgebra of rank $r$ satisfying $A\cap K[[z]]=K$;
\item 
$W\subset K((z))$ a $K$-subspace with $\Sup W= K[z^{-1}]$
\end{itemize}
such that $W\cdot A\subseteq W$.
\end{Def} 

So, to any elliptic subring $B\subset D_1$ we can associate an embedded Schur pair. 

\begin{Def}
\label{D:equiv_Schue_pair}
Two embedded Schur pairs   $(A_i, W_i)$, $i=1,2$ of rank $r$ are {\it equivalent} if there exists an admissible operator $T$ such that $A_1=T^{-1}A_2T$, $W_1=W_2\cdot T$. An operator $T=t_0+t_1\partial^{-1}+\ldots $ is called {\it admissible} if $T^{-1}\partial T\in K((\partial^{-1}))$. 
\end{Def}

\begin{theorem}
\label{T:classif2}
There is a one-to-one correspondence 
$$
[B\subset {D_1}\mbox{\quad of rank $1$}]/\thicksim  \longleftrightarrow  [(C,p,\cf  )\mbox{\quad of rank $1$}]/\simeq 
$$
where 
\begin{itemize}
\item
$[B]$ means a class of equivalent commutative elliptic subrings (i.e. $B$ containing a monic differential operator),  where   $B\cong B'$ iff $B=f^{-1}B'f$, $f\in D_1^*=K[[x]]^*$. 
\item
$\thicksim$ means "up to a scale automorphism $x\mapsto c^{-1}x$, $\partial\mapsto c\partial$".
\item $C$ is a (irreducible and reduced) projective curve over $K$, $p$ is a regular $K$-point and $\cf$ is a torsion free sheaf of rank one with $H^0(C, \cf)=H^1(C,\cf )=0$ (a spectral sheaf). 
\item  $[]/\simeq$ on the right hand side means an equivalence class of triples up to a natural isomorphism.
\end{itemize}
\end{theorem}

\begin{theorem}
\label{T:classif_Schur_pairs}
There are one-to-one correspondences  $[B]\longleftrightarrow [(A,W)]$ and $[B]/\thicksim\longleftrightarrow [(A,W)]/\thicksim$, where $\thicksim$ means the equivalence from theorem \ref{T:classif2} for rank one data.
\end{theorem}

\begin{rem}
\label{R:classif_ODO}
Each equivalence class $[B]$ contains a {\it normalised representative}, which determines the whole class uniquely. For example, we can choose a monic differential operator of minimal positive order in $B$, and normalise it (conjugating by an appropriate function $f\in \hat{R}$) -- then the ring containing such a normalized operator will be a normalized representative. 

Analogously, in a class $[(A,W)]$ we can consider pairs with normalised space $W$. We'll call a subspace $W$ form a Schur pairs  {\it normalised} if $1\in W$. In this case clearly $A\subset W$. 

Let $C$ be a projective curve over $K$ and $p\in C$ be a regular $K$-point. Then $C_0:=C\backslash p$ is an affine curve. According to theorem \ref{T:classif2} for any torsion free rank one sheaf $\cf$ on $C$ there exists a normalised commutative elliptic ring of differential operators $B\subset D_1$ defined uniquely up to a scale transform, such that $B$ is isomorphic to the ring of regular functions on $C_0$, $B\simeq \co_{C_0}(C_0)$. Vice versa, any such ring $B$ is isomorphic to the ring of regular functions on some affine curve $C_0$, which can be compactified with the help of one regular $K$-point. 

In the case when $B$ is generated by two commuting operators, $B=K[P,Q]$, the affine spectral curve is a plane curve determined by a Burchnall-Chaundy polynomial. Recall the famous Burchnall-Chaundy lemma (\cite{BC1}) says  that any two commuting differential operators $P,Q\in D_1:=K[[x]][\partial ]$ are algebraically dependent. More precisely, if the orders $n,m$ of operators $P,Q$ are coprime (i.e. the rank of the ring $K[P,Q]$ is 1), then there exists an irreducible polynomial $f(X,Y)$ of weighted degree $v_{n,m}(f)=mn$ of {\it special form} (here the weighted degree is defined as in Dixmier's paper \cite{Dixmier}): $f(X,Y) = \alpha X^m\pm Y^n+\ldots $ (here $\ldots$ mean terms of lower weighted degree, $0\neq\alpha\in K$; in particular, for coprime $n$ and $m$ the polynomial $f$ is automatically irreducible), such that $f(P,Q)=0$. A similar result for commuting operators of rank $r$ was established in \cite[Appendix]{Wilson}, in this case $m=\ord (Q)/r$, $n=\ord (P)/r$, and again $GCD (m,n)=1$. More precisely, by theorem in {\it loc. cit.} the polynomial given by the differential resultant $f(X,Y)=\partial Res(P-X,Q-Y)$ gives an algebraic relation between $P$ and $Q$, and $f=h^r$, where $h$ is irreducible and $r=\rk B$. In particular, if $r=1$, then $f$ is irreducible; besides, as it follows from definition of differential resultant, $f(X,Y) = (\alpha X^{\ord (Q)/d}\pm Y^{\ord (P)/d})^d+\ldots $, where $d=GCD \{\ord (Q), \ord (P)\}$, even if the orders of $P,Q$ are not coprime. 
\end{rem}

For the convenience of the reader we formulate the Wilson theorem here, following the exposition of \cite[Th. 2.11]{Previato2019}:

\begin{theorem}
\label{T:Wilson}
 Let $K$ be the field of fractions of the ring $\dc [[x]]$ or $\dc \{x\}$. Given $L, M$ commuting differential operators in $K[\partial]$. Let $r$ be the rank of the algebra $\dc [L, M]$, $f=\partial Res(L-X,M-Y)$ the $BC$-polynomial of $L$ and $M$. 
 
 Then 
 $f=h^r$, where $h$ is the unique (up to a constant multiple) irreducible polynomial satisfied by $L$ and $M$. 
\end{theorem}

\begin{Def}
\label{D:affine_spec_curve}
We'll call an affine curve $C_0$ over $K$ as {\it affine spectral curve} if it can be compactified with the help of one regular $K$-point, i.e. if there exists a projective curve $C$ over $K$ and a regular $K$-point $p$ such that $C_0\simeq C\backslash p$ (note that such $C$ is uniquely defined up to an isomorphism).

We'll call an irreducible polynomial $f(X,Y)\in K[X,Y]$ as a polynomial of Burchnall-Chaundy type if $f$ has the shape 
$$
f(X,Y) = (\alpha X^{n}+ Y^{m})^d+\ldots
$$
as above with coprime $m,n$ and the equation $f(X,Y)=0$ determines an affine spectral curve.\footnote{This is equivalent to a condition that the pull-back of the singular point "at infinity" to the normalisation consists of one point. By \cite[Th. 8.3.1]{BrK} this holds if $f$ is irreducible in the ring $K\{X,Y\}$.} 
\end{Def}

The correspondences in theorem \ref{T:classif_Schur_pairs} are constructed as follows: in one way (from $B$ to $(A,W)$) with the help of the classical Schur operator mentioned above, and in another way via Sato theorems below. 
The following statements are due to M. Sato \cite{SN}, cf. \cite[Appendix]{Mu}. 

\begin{Prop}
\label{P:Sato}
If $G\subset E$ is a subring that stabilises $F=K[\partial ]$, i.e. for each operator $P\in G$ holds $F\cdot P\subseteq F$ (cf. \eqref{E:Sato_hom}), then $G\subseteq D_1$.
\end{Prop}

\begin{theorem}
\label{T:Sato}
Let $W$ be a subspace in the space $K((z))\simeq K((\partial^{-1}))$ with $\Sup W=K[z^{-1} ]$. Then there exists a unique Sato operator, i.e. a zero-th order invertible operator $S=1+s_1\partial^{-1}+\ldots \in E$, such that $W=F\cdot S$. 
\end{theorem}

Normal forms of differential operators of coprime orders can be normalized (in their conjugacy class), e.g. as in the following lemma (such normalisations are of technical nature, they will be used in section \ref{S:string}):

\begin{lemma}{(\cite[Lem. 3.4]{GZ1})}
\label{L:normalisation}
Let $Q\in D_1$ be a normalized operator, let $P\in D_1$ be  a monic differential operator of positive order $p$ and $[P,Q]=0$. Fix $k=q=\Ord (Q)$, suppose $\tilde{K}$ is an algebraic closure of $K$.  Assume $GCD(p,q)=1$. 
For $i=1, \ldots , q-1$ define the sets $N_i:=\{n\in \dn |\quad \mbox{($np$ mod $q$)$\ge i$}\}$. 
  
Then there is a normalised normal form $P'$ of $P$ with respect to $Q$ uniquely defined in the conjugacy class  by elements from the centraliser $C(\partial^q)$. Namely, we can explicitly describe its image under the embedding $\hat{\Phi}$ as follows:
$$
\hat{\Phi}(P')=\tilde{D}^p +\sum_{l=-q+1}^{p-1} p_l \tilde{D}^l, \quad p_l=(p_{l,0},\ldots ,p_{l,q-1})\in \tilde{K}^{\oplus q},
$$
where $p_{l,j}=0$ for indices $j$ satisfying the following rules (all indices are taken mod $q$):
$$
\begin{cases}
\mbox{If $p\ge q$ then} 
\begin{cases}
p_{l,(j-1)p}=\mbox{$0$ for $j\in N_{p-l}$}\quad l>p-q \\
p_{l,j}=\mbox{$0$ for $j=0,\ldots ,-l-1$} \quad l<0
\end{cases}
\\
\mbox{If $p< q$ then}
\begin{cases}
p_{l,(j-1)p}=\mbox{$0$ for $j\in N_{p-l}$}\quad l\ge 0 \\
p_{l,j}=\mbox{$0$ for $j=0,\ldots ,-l-1$ or if $j=(k-1)p$ and $k\in N_{p-l}$}\quad p-q<l<0 \\
p_{l,j}=\mbox{$0$ for $j=0,\ldots ,-l-1$} \quad l\le p-q
\end{cases}
\end{cases}
$$
Let's call coefficients $p_{l,j}$ supplementary to the list above as {\it coordinates} of  $P'$. If $P'$ and $P''$ are two operators with different values of coordinates, then there are no invertible operators $S\in C(\partial^q)$ such that $P'=S^{-1}P''S$, and there are no invertible $1\neq S\in C(\partial^p)$ such that $P'=S^{-1}P'S$.  

\end{lemma}

This normalisation can be used to normalize subrings of rank one in the centralizer:

\begin{Def}
\label{D:normalised_normal_form}
We'll call a commutative ring $B'\in C(\partial^q)\subset \hat{D}_1^{sym}$ generated over $K$ by monic operators $P_1'=\partial^q, P_2', \ldots, P_m'$, where $P_2'$ is normalised and $q$ is coprime with the order of $P_2'$,  as a {\it normalised normal form} of rank one with respect to a (ordered) set of generators $P_1',\ldots , P_m'$. 
\end{Def}

\section{DC-pairs}
\label{S:DC-pairs}

In this section we will work with the first Weyl algebra $A_1$ over the field $\dc$. First recall one definition, cf. \cite[P. 620]{GGV}. 

\begin{Def}
\label{D:subrectangular}
	The Newton Polygon $N$ is called of subrectangular type, if $N$ is contained in the rectangle constructed by $\{(0,0),(0,k),(l,0),(l,k)\}$, and $(l,k)$ is the vertex of $N$ with $l,k \ge 1$. 
	
	Let $\varphi$ be an endomorphism of $A_1$. If the Newton Polygons of $\varphi(\partial),\varphi(x)$ are of subrectangular type, then we call such $\varphi$ of subrectangular type. An operator $P\in A_1$ whose Newton Polygon $N$ is of subrectangular type we'll call  of subrectangular type too.
	
	For an operator $P\in A_1$  of subrectangular type as above, we'll call the monomial $Hm(P):=x^ly^k$ as {\it the highest monomial} of $P$. 
\end{Def}

 It's easy to see that  for any positive pair $(\sigma,\rho)$, $f_{\sigma,\rho}(\varphi(\partial ,x))$ are  monomials, if $\varphi$ is of subrectangular type.
 
\begin{rem}
\label{R:subrectangular}
Note that for two operators $P_1, P_2$ of subrectangular type the operator $P_1P_2$ is also of subrectangular type. Indeed, by theorem \ref{T:Dixmier2.7} we have $f_{1,0}(P_1P_2)= f_{1,0}(P_1)f_{1,0}(P_2)$ and 
$f_{0,1}(P_1P_2)= f_{0,1}(P_1)f_{0,1}(P_2)$, so the Newton polygon contains a monomial $Hm(P_1)Hm(P_2)$, and clearly this monomial is the highest monomial with respect to the weights $v_{1,0}$ and $v_{0,1}$, i.e. $v_{1,0}(P_1P_2)=v_{1,0}(Hm(P_1)Hm(P_2))$ and $v_{0,1}(P_1P_2)=v_{0,1}(Hm(P_1)Hm(P_2))$. So, 
$P_1P_2$ is of subrectangular type with the highest monomial $Hm(P_1)Hm(P_2)$.

\end{rem}
 
 Recall one definition from \cite{GZ}:
 
 \begin{Def}
 \label{D:rate}
Suppose $\varphi$ is an endomorphism of subrectangular type. Let $(\sigma,\rho)$ be any  two positive integer numbers.

Assume $f_{\sigma,\rho}(\varphi(\partial))=c_1 x^{l}y^{k}$, $f_{\sigma,\rho}(\varphi(x))=c_2 x^{l'}y^{k'}$, where $l,k,l',k'$ are non-zero integers, $c_1, c_2\in \dc$. By \cite[L. 2.7]{Dixmier}, $f_{\sigma,\rho}(\varphi(\partial))$ is proportional to $f_{\sigma,\rho}(\varphi(x))$. Define $\epsilon(\varphi)\in \mathbf{Q}$ as
$$
(l',k')=\epsilon(\varphi)(l,k)
$$

We'll call this  $\epsilon(\varphi)$ as the rate of $\varphi(\partial)$ with $\varphi(x)$. Note that $\epsilon(\varphi)$ does not depend on $(\sigma,\rho)$. 
\end{Def}

A pair of operators $P,Q\in A_1$ with the property $[Q,P]=1$ giving a {\it counterexample} to the Dixmier conjecture will be called as a {\it DC-pair} (in spirit of \cite{GGV}). That is, $P=\varphi (x)$, $Q=\varphi (\partial )$ for some $\varphi\in \End (A_1)\backslash \Aut (A_1)$. 

Assume $(P,Q)$ is a DC-pair. Then according to \cite[Prop. 6.9]{GGV} we can assume without loss of generality that $P,Q$ and the corresponding endomorphism $\varphi$ are of subrectangular type and that $P,Q$ is a minimal such pair. Then $Hm(P)=x^ay^p$, for some $a,p\in \dn$. By \cite[Th. 4.1 (3)]{GGV} $a\neq p$. Without loss of generality (applying the Fourier transform if necessary) we can assume $a<p$. Besides, applying if necessary a generic linear change of variables $x\mapsto x+\varepsilon$, $\partial \mapsto \partial$ we can  assume without loss of generality that $\ord (P)=\Ord (P)=p$ and $\ord (Q)= \Ord (Q)=q$. In particular, $HT(P)\in \dc [x]$ is a polynomial with non-zero free term: $HT(P)(0)\neq 0$.

Put  $p=dn$, $q=dm$, where $(n,m)=1$\footnote{Here and further we assume that $(n,m)=1$ means $GCD(m,n)=1$ and $m,n>1$.}, $d=GCD(p,q)$.  By \cite[L. 2.7]{Dixmier} the highest terms of $P$ and $Q$ are proportional: $f_{0,1}(P)^m=\tilde{c} f_{0,1}(Q)^n$ for some $\tilde{c}\in \dc^*$. In particular, $n| \deg_x(f_{0,1}(P))$. Assume  $\deg_x(f_{0,1}(P))=nl$. Denote by $h\in \dc [x]$ the coefficient at $y^p$ in $f_{0,1}(P)$: $f_{0,1}(P)=hy^p$. Note that $h=g^n$ for some $g\in \dc [x]$ with $\deg_x g=l$. 
Since $nl=a<p=nd$, we must have $d>1$ and $l<d$. Analogously, $Hm(Q)=x^{ml}y^{md}$ and the rate $\epsilon (\varphi )=n/m$. Since $P,Q$ is minimal, $\epsilon \notin \dn$. 

\begin{Def}
\label{D:essGCD}
Let $\varphi$ be an endomorphism of subrectangular type, $P=\varphi (x)$, $Q=\varphi (\partial )$, with highest monomials $Hm(P)=x^{ln}y^{dn}$, $Hm(Q)=x^{lm}y^{dm}$, where $d>1$, $l<d$, $(n,m)=1$ (so, $d= GCD(\ord (P),\ord (Q))$, $l=GCD(\ord_x(P),\ord_x(Q))$). 

Define {\it essential GCD of $P$, $Q$} as 
$$
EssGCD(P,Q):= \frac{d}{GCD(l,d)}>1.
$$
\end{Def}

Recall one lemma from \cite{GZ}:

\begin{lemma}{(\cite[Lem. 3.2]{GZ})}
\label{L:9}
	Suppose $\varphi$ is an endomorphism of subrectangular type, $\epsilon = \epsilon (\varphi )$. 
	
	Assume $P\in A_{1}$, $P\notin \dc$ is  such that $f_{\epsilon,1}(P)$ is a monomial. Then  for any $\sigma,\rho >0$ the polynomial $f_{\sigma,\rho}(\varphi(P))$ is a monomial with 
\begin{equation} \label{E:vi}
	v_{\sigma,\rho}(\varphi(P))=(\rho k+\sigma l)v_{\epsilon,1}(P), 
\end{equation}
where $f_{\sigma,\rho}(\varphi(\partial))=c_1x^{l}y^{k}$, $f_{\sigma,\rho}(\varphi(x))=c_2x^{l\epsilon}y^{k\epsilon}$.
 \end{lemma}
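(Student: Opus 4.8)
The statement to prove is Lemma~\ref{L:9}: for an endomorphism $\varphi$ of subrectangular type with rate $\epsilon=\epsilon(\varphi)$, if $P\in A_1$ is non-constant with $f_{\epsilon,1}(P)$ a monomial, then for all $\sigma,\rho>0$ the polynomial $f_{\sigma,\rho}(\varphi(P))$ is a monomial with $v_{\sigma,\rho}(\varphi(P))=(\rho k+\sigma l)v_{\epsilon,1}(P)$.

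Here is the proof plan.

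\textbf{Setup and strategy.} The plan is to reduce the statement about a general $P$ to the known multiplicativity of leading forms under products (Theorem~\ref{T:Dixmier2.7}(4)) together with the hypothesis that $\varphi(\partial)$ and $\varphi(x)$ have monomial leading forms. First I would fix positive reals $\sigma,\rho$ and write $f_{\sigma,\rho}(\varphi(\partial))=c_1 x^l y^k$, $f_{\sigma,\rho}(\varphi(x))=c_2 x^{l'}y^{k'}$; since $\varphi$ is of subrectangular type these are indeed monomials, and by Definition~\ref{D:rate} we have $(l',k')=\epsilon(l,k)$, so $f_{\sigma,\rho}(\varphi(x))=c_2 x^{l\epsilon}y^{k\epsilon}$ as in the statement. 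Write $P=\sum_{(i,j)} a_{ij} x^i\partial^j$. Then $\varphi(P)=\sum a_{ij}\varphi(x)^i\varphi(\partial)^j$. The key computational input is that for each monomial $x^i\partial^j$ occurring in $P$, repeated use of Theorem~\ref{T:Dixmier2.7}(4) gives $f_{\sigma,\rho}(\varphi(x)^i\varphi(\partial)^j)=f_{\sigma,\rho}(\varphi(x))^i f_{\sigma,\rho}(\varphi(\partial))^j = c_2^i c_1^j\, x^{l\epsilon i+l j}\, y^{k\epsilon i + k j}$, a single monomial, and the weight $v_{\sigma,\rho}$ of this contribution equals $v_{\sigma,\rho}(\varphi(x))\,i + v_{\sigma,\rho}(\varphi(\partial))\,j = (l\epsilon\sigma + k\epsilon\rho)\, i + (l\sigma + k\rho)\, j = (\sigma l+\rho k)(\epsilon i + j)$.

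\textbf{Selecting the dominant monomials.} The quantity $\epsilon i + j$ is, up to reading $(i,j)$ as the exponent of $x^i y^j$, precisely the $(\epsilon,1)$-weight of the monomial $x^i y^j$. So the contributions $\varphi(x^i\partial^j)$ for $(i,j)\in E(P)$ that maximize $v_{\sigma,\rho}$ are exactly those with $(i,j)\in E(P,\epsilon,1)$, and their common maximal $v_{\sigma,\rho}$-weight is $(\sigma l+\rho k)v_{\epsilon,1}(P)$. By hypothesis $f_{\epsilon,1}(P)$ is a monomial, i.e. $E(P,\epsilon,1)$ is a single point $(i_0,j_0)$; hence among all the monomials appearing in $\varphi(P)$ there is a \emph{unique} one, namely $c_{2}^{i_0}c_1^{j_0} a_{i_0 j_0} x^{l\epsilon i_0 + l j_0} y^{k\epsilon i_0 + k j_0}$, of top $v_{\sigma,\rho}$-weight, and it cannot be cancelled by lower-weight contributions from other $(i,j)$. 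Therefore $f_{\sigma,\rho}(\varphi(P))$ equals this monomial and $v_{\sigma,\rho}(\varphi(P))=(\sigma l+\rho k)(\epsilon i_0 + j_0)=(\rho k+\sigma l)v_{\epsilon,1}(P)$, which is \eqref{E:vi}.

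\textbf{Main obstacle.} The only genuinely delicate point is the claim that the top-$v_{\sigma,\rho}$-weight monomial coming from the single point $(i_0,j_0)$ survives in $\varphi(P)$ without cancellation: a priori a monomial $x^a y^b$ of the same $(\sigma,\rho)$-weight could also arise, with opposite sign, from lower-order terms of $\varphi(x)^i\varphi(\partial)^j$ for other indices $(i,j)$, or from the subleading part $U$ in Theorem~\ref{T:Dixmier2.7}(1) applied to reorderings. To handle this I would argue by strict weight separation: every contribution from $(i,j)\neq(i_0,j_0)$ with $(i,j)\in E(P)$ has $v_{\sigma,\rho}$-weight $(\sigma l+\rho k)(\epsilon i+j)$ \emph{strictly} less than $(\sigma l+\rho k)(\epsilon i_0+j_0)$ because $\sigma l + \rho k>0$ (as $\sigma,\rho>0$ and $l,k$ are positive — this uses subrectangular type, so $l,k\ge 1$) and $(i_0,j_0)$ is the unique maximizer of the $(\epsilon,1)$-weight; and every subleading term inside $\varphi(x^i\partial^j)$ has $v_{\sigma,\rho}$-weight no larger than that of its leading monomial. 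Hence all competing monomials lie strictly below weight $(\sigma l+\rho k)v_{\epsilon,1}(P)$, so no cancellation at the top is possible. This finishes the proof; the remaining steps are the routine bookkeeping of the weight identities indicated above.
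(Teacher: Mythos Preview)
Your proposal is correct and follows the natural argument. The paper itself does not give a proof of Lemma~\ref{L:9} (it is quoted from \cite{GZ}), but the same computation appears verbatim in the proof of the immediately following Corollary~\ref{C:9}: one writes $\varphi(x^{u'}\partial^{v'})=\varphi(x)^{u'}\varphi(\partial)^{v'}$, applies Theorem~\ref{T:Dixmier2.7}(4) to get $f_{\sigma,\rho}(\varphi(x^{u'}\partial^{v'}))=\tilde{c}\,x^{l(u'\epsilon+v')}y^{k(u'\epsilon+v')}$, and then observes that the $(\epsilon,1)$-weight $u'\epsilon+v'$ is uniquely maximized at the single point of $E(P,\epsilon,1)$, so no cancellation occurs at the top $(\sigma,\rho)$-weight. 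Your handling of the ``main obstacle'' (strict weight separation, using $\sigma l+\rho k>0$ from $l,k\ge 1$) is exactly the right justification and matches the paper's reasoning.
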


\begin{cor}
\label{C:9}
Under assumptions of lemma let $f_{\epsilon,1}(P)=cx^uy^v$, $c\in \dc^*$. Then the Newton polygon of $\varphi(P)$ is of subrectangular type and $Hm(\varphi(P))=x^{l(u\epsilon +v)}y^{k(u\epsilon +v)}$.
\end{cor}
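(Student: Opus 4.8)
The plan is to deduce Corollary~\ref{C:9} directly from Lemma~\ref{L:9} by two specializations of the weight $(\sigma,\rho)$, exactly as one does to recover a Newton polygon's subrectangular structure from knowledge of all its $(\sigma,\rho)$-top lines. First I would observe that Lemma~\ref{L:9} already gives, for every positive pair $(\sigma,\rho)$, that $f_{\sigma,\rho}(\varphi(P))$ is a monomial; since for a subrectangular polygon this is precisely the characterization (all top lines degenerate to a single vertex), the polygon of $\varphi(P)$ is of subrectangular type, and its highest monomial $Hm(\varphi(P)) = x^{a'}y^{p'}$ is the common vertex of all these top lines. So it remains only to compute the exponents $a'$ and $p'$.

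Next I would extract $p' = \ord(\varphi(P)) = v_{0,1}(\varphi(P))$ and $a' = \ord_x(\varphi(P)) = v_{1,0}(\varphi(P))$ by plugging $(\sigma,\rho) = (0,1)$ and $(\sigma,\rho)=(1,0)$ into formula~\eqref{E:vi} — strictly these are boundary cases of ``$\sigma,\rho>0$'', but one recovers them by continuity/limit from positive pairs, or equivalently by applying the lemma to pairs $(\varepsilon,1)$ and $(1,\varepsilon)$ and letting $\varepsilon\to 0$, using that the monomial exponents are locally constant. With $f_{\sigma,\rho}(\varphi(\partial)) = c_1 x^l y^k$ we have in the limit $(\sigma,\rho)=(0,1)$ that $k$ is the relevant exponent, while $v_{\epsilon,1}(P) = u\epsilon + v$ since $f_{\epsilon,1}(P) = cx^uy^v$. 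Formula~\eqref{E:vi} then reads $v_{0,1}(\varphi(P)) = k\,(u\epsilon+v)$, giving $p' = k(u\epsilon+v)$; symmetrically with $(\sigma,\rho)=(1,0)$ we get $a' = l(u\epsilon+v)$. Hence $Hm(\varphi(P)) = x^{l(u\epsilon+v)}y^{k(u\epsilon+v)}$, as claimed.

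I would also remark that consistency with Lemma~\ref{L:9} can be double-checked: for a general positive $(\sigma,\rho)$ the top line of the (already known to be subrectangular) polygon with vertex $x^{a'}y^{p'}$ has weight $\sigma a' + \rho p' = \sigma\, l(u\epsilon+v) + \rho\, k(u\epsilon+v) = (\sigma l + \rho k)(u\epsilon+v)$, which matches $(\rho k + \sigma l)v_{\epsilon,1}(P)$ from~\eqref{E:vi} exactly. This confirms there is no arithmetic slip in identifying which of $l,k$ goes with which axis.

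The only genuinely delicate point is the passage to the degenerate weights $(0,1)$ and $(1,0)$, since Lemma~\ref{L:9} is stated for $\sigma,\rho>0$; the main obstacle is to argue cleanly that $v_{0,1}(\varphi(P))$ and $v_{1,0}(\varphi(P))$ (i.e. the $y$- and $x$-extents of the Newton polygon of $\varphi(P)$) are obtained as the limits $\lim_{\varepsilon\to 0^+} v_{\varepsilon,1}(\varphi(P))$ and $\lim_{\varepsilon\to 0^+} v_{1,\varepsilon}(\varphi(P))$, together with the fact that the vertex $Hm(\varphi(P))$ does not move as $\varepsilon$ ranges over a small interval (finiteness of the polygon's vertex set). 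Once this standard semicontinuity remark is in place, everything else is a direct substitution into~\eqref{E:vi}, so the corollary follows with essentially no further computation.
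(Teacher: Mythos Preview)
Your argument is correct and reaches the same conclusion, but the route differs from the paper's. The paper proceeds constructively: it writes $P$ as a sum of monomials $c'x^{u'}\partial^{v'}$, observes that $f_{\sigma,\rho}(\varphi(x^{u'}\partial^{v'}))=\tilde c\,x^{l(u'\epsilon+v')}y^{k(u'\epsilon+v')}$ by multiplicativity (Theorem~\ref{T:Dixmier2.7}, item~4), and uses the hypothesis $u'\epsilon+v'\le u\epsilon+v$ (with equality only for $(u,v)$) to conclude that the monomial $x^{l(u\epsilon+v)}y^{k(u\epsilon+v)}$ actually \emph{lies} in the Newton polygon of $\varphi(P)$. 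Then \eqref{E:vi} gives the upper bounds $\ord_x(\varphi(P))\le l\,v_{\epsilon,1}(P)$ and $\ord(\varphi(P))\le k\,v_{\epsilon,1}(P)$, so the polygon sits in the required rectangle with that point as its corner.

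You instead invoke the abstract characterization ``all positive $(\sigma,\rho)$-top lines are monomials $\Rightarrow$ subrectangular'' and then recover the corner coordinates by a limit in \eqref{E:vi}. This is fine, but note two things. First, the characterization itself is not the paper's definition and needs the short convexity argument you gesture at (a common vertex for all positive directions, then $i\le a'$ and $j\le p'$ by letting one weight tend to $0$). Second, your limit step is exactly the step the paper also uses, just phrased differently: the paper's inequality $\ord_x(\varphi(P))\le l\,v_{\epsilon,1}(P)$ is nothing other than $v_{1,0}\le \inf_{\varepsilon>0}v_{1,\varepsilon}$ combined with \eqref{E:vi}. So the ``delicate point'' you flag is present in both proofs; the paper's version is simply more explicit about why equality holds, since it has already exhibited the vertex inside the polygon rather than inferring its position from the top-line data alone. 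Either approach is acceptable; the paper's is marginally more direct because it avoids appealing to the characterization of subrectangular polygons.
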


\begin{proof}
Just note that for any monomial $x^{u'}\partial^{v'}$ we have $\varphi(x^{u'}\partial^{v'})=\varphi(x)^{u'}\varphi(\partial)^{v'}$ and by  \cite[L. 2.7]{Dixmier} we have therefore 
 $$
 f_{\sigma ,\rho}(\varphi(x^{u'}\partial^{v'}))=\tilde{c}x^{l(u'\epsilon+v')}y^{k(u'\epsilon +v')}
 $$
for any positive $\sigma ,\rho$ and some $\tilde{c}\in \dc^*$. 
For all non-zero monomials $c'x^{u'}\partial^{v'}$ from $P$ we have, by assumption, 
$$
v_{\epsilon,1}(c'x^{u'}\partial^{v'})=u'\epsilon+v'\le v_{\epsilon,1}(cx^{u}\partial^{v})=u\epsilon+v,
$$
and the equality holds only for the monomial $cx^{u}\partial^{v}$, 
i.e. $v_{\epsilon,1}(P)= u\epsilon+v$ and moreover, the monomial $x^{l(u\epsilon+v)}y^{k(u\epsilon +v)}$ belongs to the Newton polygon of $\varphi(P)$. 

From \eqref{E:vi} we have $\ord_x(\varphi(P))\le lv_{\epsilon,1}(P)$ and $\ord (\varphi(P))\le kv_{\epsilon,1}(P)$, i.e. the Newton Polygon of $\varphi(P)$ is contained in the rectangle constructed by $\{(0,0),(0,k(u\epsilon +v)),(l(u\epsilon+v),0),(l(u\epsilon+v),k(u\epsilon +v))\}$. Since the vertex $(l(u\epsilon+v),k(u\epsilon +v))$ belongs to the Newton Polygon of $\varphi(P)$, we are done.
\end{proof}

\begin{lemma}
\label{L:order_of_polynomial}
Let $\varphi$ be an endomorphism of subrectangular type, $\epsilon =\epsilon (\varphi)$, $P=\varphi (x)$, $Q=\varphi (\partial )$, with highest monomials $Hm(P)=x^{ln}y^{dn}$, $Hm(Q)=x^{lm}y^{dm}$, where $d>1$, $l<d$, $(n,m)=1$.

Let $\{p_1,\ldots , p_I\}$, $\{q_1,\ldots , q_J\}$ be the set of non-zero coefficients at the monomials of $P,Q$ respectively. Let $F(X,Y)=\sum_{i,j}\alpha_{ij}X^iY^j\in \dq (p_1,\ldots ,p_I,q_1,\ldots ,q_J)[X,Y]$ be a non-zero polynomial with coefficients in the field $K:=\dq (p_1,\ldots ,p_I,q_1,\ldots ,q_J)\subset \dc$. 

Let $\Phi_{N,\lambda}$ be a tame automorphism of $A_1$ such that $\lambda\in \dc$ is not algebraic over the field $K$ and $N> \epsilon$. 

Then the operator $\varphi\circ \Phi_{N,\lambda }(F(P,Q))\in A_1$ is of subrectangular type\footnote{Here $F(P,Q):=\sum_{i,j}\alpha_{ij}X^iY^j|_{X\mapsto P, Y\mapsto Q}$.} and 
$$
\ord (\varphi\circ \Phi_{N,\lambda }(F(P,Q)))=dm v_{\epsilon ,1}(\Phi_{N,\lambda }(F(P,Q)))=dm \ord (\Phi_{N,\lambda }(F(P,Q))).
$$
\end{lemma}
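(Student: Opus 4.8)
The plan is to reduce everything to Corollary~\ref{C:9} by exhibiting $\varphi\circ\Phi_{N,\lambda}$ as the action of $\varphi$ on $\Phi_{N,\lambda}(F(P,Q))$ and controlling the $f_{\epsilon,1}$-leading behaviour of the latter. First I would observe that since $\Phi_{N,\lambda}$ is an automorphism of $A_1$ with $\Phi_{N,\lambda}(\partial)=\partial$, $\Phi_{N,\lambda}(x)=x+\lambda\partial^N$, the operator $G:=\Phi_{N,\lambda}(F(P,Q))$ lies in $A_1$ and we have the tautology $\varphi\circ\Phi_{N,\lambda}(F(P,Q))=\varphi(G)$; so by Corollary~\ref{C:9} it suffices to show that $f_{\epsilon,1}(G)$ is a \emph{monomial} and to compute $v_{\epsilon,1}(G)$, after which subrectangularity and the formula $\ord(\varphi(G))=dm\,v_{\epsilon,1}(G)$ fall out of \eqref{E:vi} (with $k=dm$ since $Hm(Q)=x^{lm}y^{dm}$), together with the identity $v_{\epsilon,1}(G)=\ord(G)$ that I address below.

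The key computation is the $v_{\epsilon,1}$-weight filtration of $A_1$. Recall $f_{\epsilon,1}(P),f_{\epsilon,1}(Q)$ are monomials (subrectangular type), with $v_{\epsilon,1}(P)=l\epsilon\cdot(n/?)$... more precisely $Hm(P)=x^{ln}y^{dn}$ gives $v_{\epsilon,1}(P)=ln\epsilon+dn=n(l\epsilon+d)$ and similarly $v_{\epsilon,1}(Q)=m(l\epsilon+d)$, while $\epsilon=n/m$, so $v_{\epsilon,1}(P)/n=v_{\epsilon,1}(Q)/m$; thus for a monomial $X^iY^j$ the value $v_{\epsilon,1}(P^iQ^j)=(in+jm)(l\epsilon+d)/?$—the point is that it depends only on $in\epsilon+jm$ hmm; rather, I would note $v_{\epsilon,1}(P^iQ^j)=i\,v_{\epsilon,1}(P)+j\,v_{\epsilon,1}(Q)$ by Theorem~\ref{T:Dixmier2.7}(4), so the $v_{\epsilon,1}$-degree of $F(P,Q)$ is $\max_{(i,j):\alpha_{ij}\ne0}\big(i\,v_{\epsilon,1}(P)+j\,v_{\epsilon,1}(Q)\big)$, and since $v_{\epsilon,1}(P),v_{\epsilon,1}(Q)$ are rationally independent of nothing—they satisfy $m\,v_{\epsilon,1}(P)=n\,v_{\epsilon,1}(Q)$—the max may be attained on several $(i,j)$, but their $(\sigma,\rho)$-leading monomials are all proportional monomials $x^{?}y^{?}$, so $f_{\epsilon,1}(F(P,Q))$ is again a monomial. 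Hence $f_{\epsilon,1}(F(P,Q))=c\,x^{u_0}y^{v_0}$ for some $c\in\dc^*$.

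Next I would track what $\Phi_{N,\lambda}$ does. Since $\Phi_{N,\lambda}(\partial)=\partial$ has $v_{\epsilon,1}=1$ and $\Phi_{N,\lambda}(x)=x+\lambda\partial^N$ has, because $N>\epsilon$, Newton polygon with leading $v_{\epsilon,1}$-term $\lambda\partial^N$ (weight $N$, strictly bigger than the weight $\epsilon$ of $x$), the automorphism $\Phi_{N,\lambda}$ strictly increases weights in a controlled way; the crucial input is that $\lambda$ is transcendental over $K=\dq(p_1,\dots,p_I,q_1,\dots,q_J)$, which guarantees there is no cancellation among the $\lambda$-powers when expanding $F(\Phi_{N,\lambda}(x),\dots)$... but wait—$F$ is applied to $P,Q$, not to $x,\partial$, so I must instead expand $\Phi_{N,\lambda}(F(P,Q))$; here $\Phi_{N,\lambda}$ acts on the polynomial entries. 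The cleanest route: the coefficients of $F$, of $P$ and of $Q$ all lie in $K$, so $F(P,Q)\in A_1$ has all coefficients in $K$; its image under $\Phi_{N,\lambda}$ has coefficients that are polynomials in $\lambda$ over $K$, and transcendence of $\lambda$ ensures a monomial survives whenever its $\lambda$-coefficient is a non-zero element of $K$—in particular the top $v_{\epsilon,1}$-part does not vanish. I would argue that $f_{\epsilon,1}(\Phi_{N,\lambda}(F(P,Q)))$ is a monomial supported on the $y$-axis (a pure power of $\partial$), because replacing $x$ by $x+\lambda\partial^N$ with $N$ large pushes the leading weight onto the $\partial$-direction, and therefore $v_{\epsilon,1}(G)=\ord(G)$: this is exactly the second equality claimed. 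Finally, combine: $f_{\epsilon,1}(G)=c'\,y^{v_0'}$ with $v_0'=\ord(G)$, apply Corollary~\ref{C:9} with $u=0$, $v=v_0'$ to conclude $\varphi(G)$ is subrectangular with $Hm(\varphi(G))=x^{lv_0'}y^{dm v_0'}$, hence $\ord(\varphi(G))=dm v_0'=dm\,v_{\epsilon,1}(G)=dm\,\ord(G)$.

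The main obstacle I anticipate is the bookkeeping in the previous paragraph: proving rigorously that no cancellation occurs so that $f_{\epsilon,1}(\Phi_{N,\lambda}(F(P,Q)))$ is a non-zero monomial living on the $\partial$-axis. This splits into two sub-points: (i) transcendence of $\lambda$ over the field generated by all coefficients of $P$ and $Q$ (hence of $F(P,Q)$) forces the highest-$\lambda$-degree contribution to be non-zero, and one must check that the polynomial in $\lambda$ one gets for the leading weight coefficient is genuinely non-zero over $K$ — this uses that $F\ne 0$ and that $\ord_x$ is strictly positive on the highest $y^p$-part of $P$ (so $\Phi_{N,\lambda}$ genuinely moves it); (ii) the condition $N>\epsilon$ is what makes the substitution $x\mapsto x+\lambda\partial^N$ weight-increasing in the $v_{\epsilon,1}$-grading, so the new leading monomial is a pure $\partial$-power. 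Everything else is an application of Theorem~\ref{T:Dixmier2.7}, Lemma~\ref{L:9} and Corollary~\ref{C:9}, essentially formal once this transcendence/no-cancellation lemma is in place.
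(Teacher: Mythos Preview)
Your overall strategy matches the paper's: reduce to Corollary~\ref{C:9} by showing that $f_{\epsilon,1}\big(\Phi_{N,\lambda}(F(P,Q))\big)$ is a single monomial $c_1y^M$ on the $y$-axis, whence $v_{\epsilon,1}=\ord$ and the conclusion follows. However, the crucial step---identifying that $y$-axis monomial and proving it survives---is left vague in your outline, and your second paragraph actually points in the wrong direction.

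The issue is this: you compute $f_{\epsilon,1}(F(P,Q))$, but that is \emph{not} the part of $F(P,Q)$ that determines the $(\epsilon,1)$-leading part of $\Phi_{N,\lambda}(F(P,Q))$. At the symbol level $\Phi_{N,\lambda}$ sends $x^ay^b\mapsto (x+\lambda y^N)^ay^b$, whose $(\epsilon,1)$-top monomial is $\lambda^a y^{Na+b}$ of weight $Na+b$; so the monomials of $F(P,Q)$ that contribute to the $(\epsilon,1)$-top of $G$ are those maximizing $Na+b$, i.e.\ those on the $(N,1)$-top line of $F(P,Q)$, not the $(\epsilon,1)$-top line. The paper exploits this directly: $\Phi_{N,\lambda}$ preserves the $(N,1)$-symbol, so the $(N,1)$-top line of $G$ coincides with that of $F(P,Q)$ after the shear $x\mapsto x+\lambda y^N$; the distinct monomials on that line have distinct $x$-exponents, hence contribute distinct powers of $\lambda$ to the $y$-axis point, and transcendence of $\lambda$ over $K$ prevents cancellation, yielding $c_1y^M\neq 0$. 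Then $N>\epsilon$ gives $v_{\epsilon,1}(x^{l'}y^{k'})<Nl'+k'\le M$ for every other monomial of $G$, so $f_{\epsilon,1}(G)=c_1y^M$. Your sub-points (i) and (ii) are the right ingredients, but to make (i) precise you must first pass through the $(N,1)$-grading; once you add that intermediate step and drop the $(\epsilon,1)$-analysis of $F(P,Q)$, your argument becomes the paper's.
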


\begin{proof}
First note that for any monomial $x^l\partial^k$ the operator $\Phi_{N,\lambda }(x^l\partial^k)$ has the $(N,1)$-top line trough the point $(l,k)$, and this line contains a non-zero monomial on the vertical $y$-axis, whose coefficient is a polynomial in $\lambda$ (maybe of zero degree) with integer coefficients. 

Let $l_{N,1}(F(P,Q))$ be the $(N,1)$-top line of the operator $F(P,Q)$. Since all coefficients of monomials belonging to $l_{N,1}(F(P,Q))$ are from $K$, the $(N,1)$-top line $l_{N,1}(\Phi_{N,\lambda }(F(P,Q)))$ of the operator $\Phi_{N,\lambda }(F(P,Q))$ contains a non-zero monomial on the vertical $y$-axis, say $c_1y^M$ (because all monomial on $l_{N,1}(F(P,Q))$ have different degrees in the variable $x$ and $\lambda$ is not algebraic over $K$).

Then we have $v_{N,1}(c_1\partial^M)=M=v_{\epsilon ,1}(c_1\partial^M)$, but since $N>\epsilon$, for all other monomials of the line $l_{N,1}(\Phi_{N,\lambda }(F(P,Q)))$ we have 
$$
M=v_{N,1}(x^l\partial^k)=Nl+k> \epsilon l+k=v_{\epsilon ,1}(x^l\partial^k),
$$
and for all other monomials of $\Phi_{N,\lambda }(F(P,Q))$ away from the line $l_{N,1}(\Phi_{N,\lambda }(F(P,Q)))$ we have 
$$
M>v_{N,1}(x^{l'}\partial^{k'})=Nl'+k'\ge \epsilon l'+k'=v_{\epsilon,1}(x^{l'}\partial^{k'}).
$$
So, the conditions of lemma \ref{L:9} are satisfied ($f_{\epsilon ,1}(\Phi_{N,\lambda }(F(P,Q)))=c_1y^M$ is a monomial) and therefore by corollary \ref{C:9} the operator $\varphi\circ \Phi_{N,\lambda }(F(P,Q))$ is of subrectangular type and 
$$
\ord (\varphi\circ \Phi_{N,\lambda }(F(P,Q)))=dm v_{\epsilon ,1}(\Phi_{N,\lambda }(F(P,Q)))=dm \ord (\Phi_{N,\lambda }(F(P,Q))).
$$
\end{proof}

\begin{rem}
\label{R:order_of_polynomial}
Note that the assertion of lemma \ref{L:order_of_polynomial} is true also for $F(X,Y)\in \dc [X,Y]$ and any $\lambda \neq 0$ if it is known that $F(P,Q)$ is of subrectangular type. Indeed, in this case the $(N,1)$-top line of $F(P,Q)$ has a unique vertex, and therefore the $(N,1)$-top line of the operator $\Phi_{N,\lambda }(F(P,Q))$ contains a non-zero monomial on the vertical $y$-axis. The rest of the proof is the same. 

In fact, we'll use lemma \ref{L:order_of_polynomial} exactly for this situation. However, we decided to formulate this lemma in a more general form for the convenience of further references in future works.
\end{rem}

\begin{cor}
\label{C:order_of_polynomial}
In the assumptions of remark \ref{R:order_of_polynomial} (i.e. $\varphi$ satisfies assumptions of lemma \ref{L:order_of_polynomial}, $N>\epsilon$ and $\lambda\in \dc^*$) put $\hat{P}:=\varphi\circ \Phi_{N,\lambda}(P)$, $\hat{Q}:=\varphi\circ \Phi_{N,\lambda}(Q)$. Then 
$$
\ord (\hat{P})=dm(nd+Nln), \quad \ord (\hat{Q})=dm(md+Nlm)
$$
and the operators $\hat{P}, \hat{Q}$ are of subrectangular type with $EssGCD(\hat{P},\hat{Q})=EssGCD(P,Q)$.
\end{cor}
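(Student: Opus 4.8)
The plan is to apply Lemma~\ref{L:order_of_polynomial} (in the form of Remark~\ref{R:order_of_polynomial}) to the two specific polynomials $F(X,Y)=X$ and $F(X,Y)=Y$, after first checking that the intermediate operators $\Phi_{N,\lambda}(P)$ and $\Phi_{N,\lambda}(Q)$ — equivalently $P$ and $Q$ themselves composed with the tame automorphism — are of subrectangular type, so that the hypothesis of the remark is met. Indeed $P,Q$ are of subrectangular type by assumption, with $Hm(P)=x^{ln}y^{dn}$, $Hm(Q)=x^{lm}y^{dm}$, so $f_{\epsilon,1}(P)$ and $f_{\epsilon,1}(Q)$ are monomials (the Newton polygons have a single vertex away from the axes); this is exactly what Remark~\ref{R:order_of_polynomial} needs.

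First I would compute $\ord(\Phi_{N,\lambda}(P))$ and $\ord(\Phi_{N,\lambda}(Q))$. Since $\Phi_{N,\lambda}$ sends $x\mapsto x+\lambda\partial^N$, $\partial\mapsto\partial$, it replaces each monomial $x^i\partial^j$ by $(x+\lambda\partial^N)^i\partial^j$, whose $(N,1)$-weight equals $Ni+j$; so $v_{N,1}(\Phi_{N,\lambda}(P))=v_{N,1}(P)$ and the top line of $\Phi_{N,\lambda}(P)$ passes through the old vertex $(ln,dn)$, contributing a pure power of $\partial$ of degree $Nln+dn$. Because $N>\epsilon=n/m$ and all monomials on the $(N,1)$-top line of $P$ have distinct $x$-degrees, this pure power of $\partial$ is the unique monomial on the vertical axis, hence $f_{\epsilon,1}(\Phi_{N,\lambda}(P))$ is the monomial $y^{Nln+dn}$ and $\ord(\Phi_{N,\lambda}(P))=v_{\epsilon,1}(\Phi_{N,\lambda}(P))=Nln+dn=n(Nl+d)$; likewise $\ord(\Phi_{N,\lambda}(Q))=m(Nl+d)$. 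Then Lemma~\ref{L:order_of_polynomial} (via Remark~\ref{R:order_of_polynomial}, applied with $F=X$ and $F=Y$) gives that $\hat P$, $\hat Q$ are of subrectangular type and
$$
\ord(\hat P)=dm\,\ord(\Phi_{N,\lambda}(P))=dm(nd+Nln),\qquad \ord(\hat Q)=dm\,\ord(\Phi_{N,\lambda}(Q))=dm(md+Nlm),
$$
which is the first assertion.

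For the claim on highest monomials and essential GCD I would use Corollary~\ref{C:9}: writing $f_{\epsilon,1}(\Phi_{N,\lambda}(P))=cx^0y^{n(Nl+d)}$ (so $u=0$, $v=n(Nl+d)$ in the notation there), and recalling $f_{\sigma,\rho}(\varphi(\partial))=c_1x^{lm}y^{dm}$, $f_{\sigma,\rho}(\varphi(x))=c_2x^{lm\epsilon}y^{dm\epsilon}$ with $\epsilon=n/m$ — i.e. in Corollary~\ref{C:9}'s notation $l_{\mathrm{there}}=lm$, $k_{\mathrm{there}}=dm$ — one gets $Hm(\hat P)=x^{lm\cdot n(Nl+d)}y^{dm\cdot n(Nl+d)}$ and similarly $Hm(\hat Q)=x^{lm\cdot m(Nl+d)}y^{dm\cdot m(Nl+d)}$. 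Thus the $x$-exponents are $l'n'$ and $l'm'$ and the $y$-exponents $d'n'$ and $d'm'$ with, after extracting the common factor and using $(n,m)=1$, $n'=n$, $m'=m$, $d'=dm(Nl+d)$, $l'=lm(Nl+d)$. Since $GCD(l',d')=m(Nl+d)\cdot GCD(l,d)$, we obtain
$$
EssGCD(\hat P,\hat Q)=\frac{d'}{GCD(l',d')}=\frac{dm(Nl+d)}{m(Nl+d)\,GCD(l,d)}=\frac{d}{GCD(l,d)}=EssGCD(P,Q).
$$
The only genuinely delicate point is the bookkeeping of exponents and verifying that $(n',m')=(n,m)$ really are coprime so that Definition~\ref{D:essGCD} applies verbatim to $\hat P,\hat Q$; this is where I would be most careful, but it reduces to the coprimality $(n,m)=1$ already in force together with the fact that the scaling factor $m(Nl+d)$ is common to both exponent pairs.
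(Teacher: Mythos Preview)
Your proof is correct and follows essentially the same approach as the paper's: apply Remark~\ref{R:order_of_polynomial} (with $F=X$ and $F=Y$) to get subrectangularity and the orders, then read off the highest monomials via Corollary~\ref{C:9} and compute $EssGCD$ directly from Definition~\ref{D:essGCD}. The paper is slightly terser---it jumps straight to the formulae for $Hm(\hat P)$, $Hm(\hat Q)$ and then computes $GCD(\ord(\hat P),\ord(\hat Q))=dm(d+Nl)$ and $GCD(\ord_x(\hat P),\ord_x(\hat Q))=lm(d+Nl)$ without introducing auxiliary $n',m',d',l'$---but your bookkeeping is equivalent and arrives at the identical final ratio $\tfrac{dm(Nl+d)}{m(Nl+d)\,GCD(l,d)}$.
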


\begin{proof}
By remark \ref{R:order_of_polynomial} and lemma we know that $\hat{P}, \hat{Q}$ are of subrectangular type with $$Hm(\hat{P})=x^{lm(nd+Nln)}y^{dm(nd+Nln)}, \quad Hm(\hat{Q})=x^{lm(md+Nlm)}y^{dm(md+Nlm)}.$$
Hence 
$$
GCD(\ord (\hat{P}),\ord (\hat{Q}))=dm (d+Nl), \quad GCD(\ord_x (\hat{P}),\ord_x (\hat{Q}))=lm (d+Nl)
$$
and therefore 
$$
EssGCD(\hat{P},\hat{Q})=\frac{dm(d+Nl)}{m(d+Nl)GCD(l,d)}=\frac{d}{GCD(l,d)}=EssGCD(P,Q).
$$
\end{proof}

\section{The main preparatory theorems}
\label{S:preparatory}

The aim of this section is to prove the following theorem and a corollary -- theorem \ref{T:subrectangular_sequence}. In this section we will work with the first Weyl algebra $A_1$ over the field $\dc$ (it may be assumed to be any other algebraically closed field of characteristic zero).

\begin{theorem}
\label{T:DC-pairs}
Assume $(P,Q)$ is a subrectangular DC-pair (i.e. $P=\varphi (x)$, $Q=\varphi (\partial )$ for some endomorphism $\varphi$ of $A_1$ of subrectangular type) with $\ord (P)=p$ and $\ord (Q)= q$. Assume $p=dn$, $q=dm$, $d>1$, $(n,m)=1$, 
$Hm(P)=x^{ln}y^{dn}$, $l<d$, and $\ord (P)=\Ord (P)$, $\ord (Q)=\Ord (Q)$. Then $Hm(Q)=x^{ml}y^{md}$ and the rate $\epsilon (\varphi )=n/m$ (see section \ref{S:DC-pairs}). 

Let $\{p_1,\ldots , p_I\}$, $\{q_1,\ldots , q_J\}$ be the set of non-zero coefficients at the monomials of $P,Q$ respectively. Put $K:=\dq (p_1,\ldots ,p_I,q_1,\ldots ,q_J)\subset \dc$. 

Then there exist sequences of natural numbers $n_0,\ldots ,n_I$, $m_0,\ldots ,m_I$, a sequence of constants $\varepsilon_0, \ldots ,\varepsilon_I\in K$\footnote{This statement about the field of definition of constants is not essential for the proof of theorem \ref{T:main}, however, we decided to formulate this theorem in a more general form for the convenience of further references in future works.

The numbers $n_i,m_i,\epsilon_i$ are not uniquely defined.}, and a sequence of polynomials $F_i(X,Y)\in K[X,Y]$, $i=0, \ldots ,I$ defined by recursion as $F_{i+1}(X,Y)= F_{i}(X,Y)^{n_i}-\varepsilon_i^{n_i}X^{m_i}$, where $F_0(X,Y):=Y$\footnote{These polynomials determine also a sequence of operators in $A_1$ after substitution $X\mapsto P$, $Y\mapsto Q$. We'll denote this sequence also as $F_i(P,Q)$.}, such that 
$$
d_2:=EssGCD(P,Q) \quad | \quad GCD\{\ord (P), \ord (F_0(P,Q)), \ldots , \ord (F_i(P,Q))\} \quad | \quad d, \quad \mbox{for $i<I$},
$$
$$
\ord (F_I(P,Q))=1 \mbox{\quad mod\quad } GCD\{\ord (P), \ord (F_0(P,Q)), \ldots , \ord (F_{I-1}(P,Q))\}.
$$
Moreover, we have the equality 
\begin{equation}
\label{E:distance}
\sum_{i=0}^{I-1} (n_i\ord F_i(P,Q)-\ord F_{i+1}(P,Q))=p+q-1,
\end{equation}
and for all $i=0, \ldots ,I$ we have $\ord F_i(P,Q)=\Ord F_i(P,Q)$. 
\end{theorem} 

\begin{rem}
\label{R:DC-pais}
In fact, the proof of this theorem will provide also an alternative proof of the following result, which is already known:\footnote{I learned from Leonid Makar-Limanov that it can be proved with the help of the same technique that was used in particular in papers \cite{Joseph}, \cite{GGV2}  to show that the Newton polygon $N (f)$ of a JC-pair $(f,g)$ (giving a counterexample to the JC conjecture for $K[x,y]$) can be assumed to be included in a trapezium with
one vertex $v$, two edges parallel to the $y$-axis and to the bisectrix of the first quadrant adjacent to $v$, and two edges belonging to the coordinate axes, cf. \cite{ML2} and references therein. I'd like to thank Christian Valqui who kindly informed me  how this result can be deduced from  their work \cite{GGV2}.} let $f_{0,1}(P)=g^ny^p$, where $g\in K[x]$ is a polynomial with $g(0)\neq 0$. 

Then $g$ has only one root, i.e. $g=(ax+b)^l$ for some $a,b\in K$\footnote{Besides, the proof of our theorem works also in the case $l=d$, thus we get an alternative proof of \cite[Th. 4.1 (3)]{GGV}.}. Moreover, from the proof we'll see that for any $i=0, \ldots ,I$ $f_{0,1}(F_i(P,Q))=(ax+b)^{l_i}y^{d_i}$ for some $l_i,d_i\in \dn$. 

Besides, the theorem holds also in the case $l>d$, if $d$ does not divide $l$. The last assumption is necessary for the condition $d_2>1$; it is also essential in lemma \ref{L:polynomials}. 
\end{rem}

\begin{proof}
Because of our assumptions on $P,Q$ (namely, that $g(0)\neq 0$) there exists an automorphism $\Phi_u$ of the ring $D_1=K[[x]][\partial ]$  of the form 
\begin{equation}
\label{E:Phi_u}
\Phi_u (x)=u\in K[[x]], \quad \Phi_u (\partial )=\frac{1}{u'}\partial +v,
\end{equation} 
where $u(0)\neq 0$, $u'(0)\neq 0$ and $v\in K[[x]]$, such that $P':=\Phi_u (P)\in D_1$\footnote{Starting from this point onwards we use a different (from section \ref{S:prelim}) notation for a normal form of $Q$ with respect to $P$: instead of $Q'$ we'll use $\tilde{Q}$.} is normalized, i.e. $P'=\partial^p+b_{p-2}\partial^{p-2}+\ldots +b_0$, see e.g. \cite[Prop. 1.3]{BZ}. Then, obviously, $Q':=\Phi_u (Q)= c\partial^q +l.o.t.$ for some $c\in K$. Since $[\varepsilon Q',\varepsilon^{-1} P']=1$ for any $\varepsilon\in \dc^*$, composing, if necessary, $\Phi_u$ with an appropriate scale transform $\partial \mapsto \varepsilon \partial$, $x \mapsto \varepsilon^{-1}x$, such that $c\varepsilon^{q+p}=1$,  taking the DC-pair $\varepsilon^{-p}P, \varepsilon^{p}Q$ instead of $(P,Q)$ (and extending $K$ by $c^{-1/(p+q)}$), we can assume without loss of generality that $P',Q'$ are both monic differential operators. Observe that 
\begin{equation}
\label{E:Phi_u^{-1}}
\Phi^{-1}_u (\partial)= g^{1/d}\partial +v'
\end{equation} 
for an appropriate $v'\in K[[x]]$, and $\Phi^{-1}_u (x)=f\in K[[x]]$, where $f':=\partial (f)=g^{-1/d}$, as $[\Phi^{-1}(\partial ),f]=1$, and $f(0)=0$. Note that $\Phi_u$ preserves the order filtration on $D_1$, i.e. $\ord (\Phi_u (Y))=\ord (Y)$ for any $Y\in D_1$. 

{\it Idea of proof:} we use the theory of normal forms from section \ref{S:prelim} and calculate first a  normal form (any normal form) of the operator $Q'$ with respect to $P'$. This normal form has a very simple structure -- it is a sum of an operator from the centraliser $C(\partial^p)\subset \hat{D}_1^{sym}$ of $\Ord$-order $q$ and a homogeneous operator of $\Ord$-order $-p$, which is uniquely determined and whose shape can be explicitly calculated (let's call it the "tail"). Constructing by induction the operators $F_i(P,Q)$ and using specific properties of the Schur operator for $P'$ (the condition $A_p(0)$ and conditions on the first two homogeneous components $S_{0}, S_{-1}$) as well as the Dixmier lemma \cite[L. 2.7]{Dixmier} (theorem \ref{T:Dixmier2.7}), we show that the orders of operators $F_i(P,Q)$ satisfy the property claimed in the theorem. For the recursive construction of operators it is important that original  operators, $P,Q$, are of subrectangular type: first, this is necessary for application of the Dixmier lemma, second, this condition is used to show that the inductive process stops. The crucial step here is a specific differential equation on polynomials (see lemma \ref{L:polynomials}). 

The proof is divided into 3 steps: in step 1 we calculate the "tail" of the normal form of $Q'$ with respect to $P'$, in step 2 we construct $F_1$ and prove its properties needed for the induction procedure, and in step 3 we continue the inductive procedure and construct other operators $F_i$. 

More precisely, to construct $F_i$, we first consider a normal form $\tilde{Q}$ of $Q'$ with respect to $P'$ (i.e. we choose some Schur operator $S$ for $P'$) and then  construct in steps 2 and 3, by the recursive procedure of "head chopping", the operators $\tilde{U}_i$, which are the normal forms  of the operators $F_i(P',Q')$ with respect to $P'$. Using the specific properties of the Schur operator for $P'$ (which hold because $P'$ is normalized), we can check, by induction, the corresponding properties of orders of the operators $\tilde{U}_i$. Again due to the specific properties of the Schur operator, by conjugating back the operators $\tilde{U}_i$ and applying the automorphism $\Phi_u^{-1}$, we obtain the needed properties of orders of the operators $F_i(P,Q)$ (as such a conjugation and the automorphism preserve orders) at the end of step 3. 

\smallskip

{\it Step 1.} By proposition \ref{T:A.Z 7.2} there exists a Schur operator $S\in \hat{D}_1^{sym}$ with $\Ord (S)=0$ such that $P'=S^{-1}\partial^pS$ and $S_0=1$, $S_{-1}=0$. Then by corollary \ref{C:Si^(-1) no B} we have also $S^{-1}_0=1$, $S_{-1}^{-1}=0$. Put  $\tilde{P}:=SP'S^{-1}=\partial^p$, $\tilde{Q}:= SQ'S^{-1}$. Then by proposition \ref{T:A.Z 7.1}, item 2  the homogeneous decomposition of $\tilde{Q}$ can be written as 
$$
\tilde{Q}= \partial^q + \tilde{Q}_{q-1}+\ldots + \tilde{Q}_{-p},
$$
where $[\tilde{Q}_j, \partial^p]=0$ for $-p+1\le j\le q-1$ and therefore $\tilde{Q}_j=(\sum_{i=0}^{p-1}c_{ij}A_{p;i})D^j$, where $c_{ij}\in \tilde{K}=K[\xi]$, and $[\tilde{Q}_{-p}, \partial^p]=1$. 

\begin{lemma}
\label{L:Q_p}
In the notation above we have 
$$
\tilde{Q}_{-p}=(-\frac{1}{p} x\partial +\sum_{i=0}^{p-1} e_iA_{p;i})\int^p,
$$
where $e_k=\frac{1}{p(\xi^{-k}-1)}$ for $k=1, \ldots , p-1$, $e_0=\frac{p-1}{2 p}$. Here $\xi$ is a primitive $p$-th root of unity.

The vector form $\hat{\Phi}(\tilde{Q}_{-p})$ has the shape
$$
\hat{\Phi}(\tilde{Q}_{-p})=(-\frac{1}{p} \Gamma_1 +(0,\frac{1}{p}, \frac{2}{p}, \ldots , \frac{p-1}{p}))\tilde{D}^{-p}.
$$
\end{lemma}

\begin{proof}
First note that by proposition \ref{T:A.Z 7.1}  the operator $\tilde{Q}_{-p}$ is uniquely defined by the property $[\tilde{Q}_{-p}, \partial^p]=1$ if it exists. Let's show that the operator from formulation satisfies this property. We have by lemma \ref{L:1} 
\begin{multline*}
[\tilde{Q}_{-p}, \partial^p]=1+\frac{1}{p} x\partial (B_1+\ldots + B_p)-(\sum_{i=0}^{p-1} e_iA_{p;i}) (B_1+\ldots + B_p)=\\
1+\frac{1}{p}(B_2+2B_3+\ldots + (p-1)B_p) - (\sum_{i=0}^{p-1} e_i) B_1- (\sum_{i=0}^{p-1} e_i\xi^i) B_2-\ldots - (\sum_{i=0}^{p-1} e_i\xi^{(p-1)i}) B_p=\\
1- (\sum_{i=0}^{p-1} e_i) B_1 -(\sum_{i=0}^{p-1} e_i\xi^i-\frac{1}{p}) B_2-\ldots - (\sum_{i=0}^{p-1} e_i\xi^{(p-1)i} -\frac{p-1}{p}) B_p
\end{multline*}
Hence we need to show that $(e_0,\ldots , e_{p-1})$  is a solution of the linear system 
$$
\left (
\begin{array}{cccc}
1&1&\ldots &1\\
1&\xi& \ldots & \xi^{p-1}\\
\vdots & \vdots &\ddots & \vdots\\
1&\xi^{p-1}& \ldots & \xi^{(p-1)^2}
\end{array}
\right ) \left (
\begin{array}{c}
e_0\\
e_1\\
\vdots\\
e_{p-1}
\end{array}
\right ) = 
\left (
\begin{array}{c}
0\\
\frac{1}{p}\\
\vdots\\
\frac{p-1}{p}
\end{array}
\right )
$$
with the Vandermonde matrix $W$. It is well known that the inverse of the Vandermonde matrix $W^{-1}=(b_{ij})$, where $b_{ij}=\frac{1}{p}\xi^{(1-i)(j-1)}$, $i,j =1,\ldots ,p$. Therefore for $k=1,\ldots , p$ we have
$$
e_{k-1}=\sum_{j=1}^{p}b_{kj}\frac{j-1}{p}= \frac{1}{p^2}\sum_{j=1}^{p}\xi^{(1-k)(j-1)}(j-1)=\frac{1}{p^2} (1+x+\ldots + x^p)'|_{x=\xi^{1-k}}=\frac{1}{p(\xi^{-k}-1)},
$$
and $e_0=\frac{p-1}{2 p}$ as stated.

The last claim is straightforward.
\end{proof}

{\it Step 2.} (The first induction step) Recall the notation: $\hat{R}=K[[x]]$.

\begin{lemma}
\label{L:induction_step}
In the notation above put $\tilde{U}_1:=\tilde{Q}^n-\tilde{P}^m$. Then we have
\begin{enumerate}
\item
$U_1':=S^{-1}\tilde{U}_{1}S\in D_1$ and $\Phi_u^{-1}(U_1')\in A_1$;
\item
the operator $[\tilde{P},\tilde{U}_1]$ satisfies condition $A_p(0)$;
\item
$\Ord (\tilde{U}_1)> q(n-1)-p$;
\item
$\Ord (\tilde{U}_1)> q(n-1)-p+1$ if the polynomial $g$ has more than one root. 

Moreover, if $g$ has only one root and $\Ord (\tilde{U}_1)= q(n-1)-p+1$, then $\sigma (\tilde{U}_1)=\varepsilon_1 \partial^{\Ord (\tilde{U}_1)}$ with 
$$
\varepsilon_1 = \frac{\alpha^{-1}}{d}(1-\frac{l}{d}),
$$  
where $g=(1+\alpha x)^l$, and
$$
U_1'=a_{q(n-1)-p+1}\partial^{q(n-1)-p+1} +a_{q(n-1)-p}\partial^{q(n-1)-p}+ \ldots +a_0
$$
with $a_{q(n-1)-p+1}=\varepsilon_1 +\frac{1}{d} x$ and $a_i\in \hat{R}$ for $i<q(n-1)-p+1$.
\item
If $\Ord (\tilde{U}_1)> q(n-1)-p+1$, then the operator $\tilde{U}_1$ satisfies condition $A_p(0)$;
\item
if $\Ord (\tilde{U}_1)> q(n-1)-p+1$, then the number $\Ord (\tilde{U}_1)$ is divisible by $d_2:=\frac{d}{GCD(l,d)} >1$; in particular, $d_2$ divides $GCD(\Ord (\tilde{P}), \Ord (\tilde{Q}), \Ord (\tilde{U}_1))$ (and this GCD divides $d$).
\item
$Sdeg_A((\tilde{U}_1)_{j})\le 0$ for $\Ord (\tilde{U}_1)\ge j>q(n-1)-p$ and $Sdeg_A((\tilde{U}_1)_{q(n-1)-p})=1$. Besides, 
$$
(\tilde{U}_1)_{q(n-1)-p}=c_1 x\partial^{q(n-1)-p+1}+c_{0} \partial^{q(n-1)-p},
$$
where $c_1\in K$ and $c_{0}$ is a HCP from $Hcpc (p)$ with $Sdeg_A(c_{0})\le 0$ and $Sdeg_B(c_0)=-\infty$.
\item
$M_1:=\Ord (\tilde{U}_1)-(q(n-1)-p)< q+p =: M_0$.
\end{enumerate}

\end{lemma}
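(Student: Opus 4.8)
The plan is to transport every assertion, via the Schur operator $S$ (normalized by $S_0=1$, $S_{-1}=0$) and the normalizing automorphism $\Phi$, into statements about $\tilde{U}_1=\tilde{Q}^n-\partial^{qn}$ and the commutator $[\tilde{P},\tilde{U}_1]=[\partial^p,\tilde{Q}^n]$, and then to feed in the explicit tale $\tilde{Q}_{-p}$ from Lemma~\ref{L:Q_p} together with the HCP-calculus recalled in Section~\ref{S:prelim}. Item~(1) is immediate: since $\tilde{P}=\partial^p$ and $pm=dnm=qn$ we get $U_1'=S^{-1}\tilde{U}_1S=Q'^n-(S^{-1}\partial^pS)^m=Q'^n-P'^m\in D_1$, and applying $\Phi^{-1}$ (which sends $P'\mapsto P$, $Q'\mapsto Q$) gives $\Phi^{-1}(U_1')=Q^n-P^m\in A_1$; this is $F_1(P,Q)$ for $n_0=n$, $m_0=m$, $\varepsilon_0=1$. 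For item~(2), the identity $[\partial^p,\tilde{Q}]=-1$ (a central scalar), which follows from $[\tilde{Q},\tilde{P}]=1$ and hence $[\tilde{Q}_{-p},\partial^p]=1$, $[\tilde{Q}_j,\partial^p]=0$ for $j\ne-p$, yields $[\tilde{P},\tilde{U}_1]=-n\tilde{Q}^{n-1}$; so by Lemma~\ref{L:conditionA} and the scaling-invariance of condition $A_p(0)$ it suffices to check that $\tilde{Q}$ itself satisfies $A_p(0)$. This is done component by component: the $\tilde{Q}_j=(\sum_i c_{ij}A_{p;i})D^j$ with $-p<j\le q$ are $B_j$-free HCPs from $Hcpc(p)$ of $Sdeg_A=0$, while $\tilde{Q}_{-p}=(-\tfrac1p x\partial+\sum_i e_iA_{p;i})\int^p$ is a HCP with $Sdeg_A=1<q+p$ which is totally free of $B_j$ — the $B_j$-terms a priori occurring in $\tilde{Q}_{-p}D^s$ cancel identically precisely because the $e_i$ solve the Vandermonde system $\sum_i e_i\xi^{(k-1)i}=\tfrac{k-1}{p}$; together with $\sigma(\tilde{Q})=\partial^q$ ($A_{p;i}$-free, $Sdeg_A=0$) this is condition $A_p(0)$.

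Items~(3)–(4) are the heart of the matter, and I expect them to be the main obstacle. Conjugation by $S$ preserves $\Ord$, so $\Ord(\tilde{U}_1)=\Ord(U_1')$. Writing $r:=q(n-1)-p$ and using that $[\partial^p,(\tilde{U}_1)_j]$ is homogeneous of order $p+j$, the identity $[\partial^p,\tilde{U}_1]=-n\tilde{Q}^{n-1}$ of order $q(n-1)$ forces $(\tilde{U}_1)_j$ to commute with $\partial^p$ for all $j>r$ and $(\tilde{U}_1)_r\ne0$, so $\Ord(\tilde{U}_1)\ge r$. To upgrade this to strict inequality one must exclude $\Ord(\tilde{U}_1)=r$: in that case $\sigma(\tilde{U}_1)$ would not commute with $\partial^p$ and $[\sigma(\tilde{U}_1),\partial^p]=n\partial^{q(n-1)}$; but $\sigma(\tilde{U}_1)=\sigma(U_1')$ is the top symbol of an element of $D_1$ of $\Ord$-order $r\ge0$ (here $n\ge2$, $GCD(n,m)=1$, $m\ge2$ give $r=d(mn-m-n)\ge0$; the case $n=1$ is simpler, since then $S^{-1}\partial^qS=(P')^m\in D_1$ and one checks directly that the tale $\tilde{Q}_{-p}$ is not the symbol of a $D_1$-element, so $\tilde{U}_1$ must have a component above order $-p$), hence of the form $\psi(x\partial)\partial^r$ with $\psi$ a polynomial, and $[\psi(x\partial)\partial^r,\partial^p]=n\partial^{q(n-1)}$ forces $\psi(t)-\psi(t+p)=n$, i.e. $\psi(t)=-\tfrac np t+\gamma$. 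Comparing this with $\sigma(Q'^n-P'^m)$ and with the Newton polygon of $Q^n-P^m=\Phi^{-1}(U_1')\in A_1$ — the delicate step, where the hypothesis $HT(P)=g^n$ with $g(0)\ne0$, and ultimately the one-root property of $g$, is used — should rule it out, giving (3). Running the same mechanism one order higher gives (4): whenever $\Ord(\tilde{U}_1)\ge r+1$ the would-be top component $[\partial^p,\sigma(\tilde{U}_1)]$ of $[\partial^p,\tilde{U}_1]$ has order $p+\Ord(\tilde{U}_1)>q(n-1)$ and so must vanish; thus $\sigma(\tilde{U}_1)$ commutes with $\partial^p$, and being $\sigma$ of a $D_1$-element it equals $\psi(x\partial)\partial^{\Ord(\tilde{U}_1)}$ with $\psi$ now a periodic polynomial, hence constant, so $\sigma(\tilde{U}_1)=\varepsilon_1\partial^{\Ord(\tilde{U}_1)}$. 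In the boundary case $\Ord(\tilde{U}_1)=r+1$ the value $\varepsilon_1$ is pinned down by computing $\tilde{Q}^n-\partial^{qn}$ modulo order $\le r$: the multi-factor and lower terms drop out, leaving the single contribution $n\partial^{q(n-1)}\tilde{Q}_{-p+1}$, and expressing $\tilde{Q}_{-p+1}$ through the normalization data of $P'$ and the explicit tale $\tilde{Q}_{-p}$, with $g=(1+\alpha x)^l$, yields $\varepsilon_1=\tfrac{\alpha^{-1}}{d}(1-\tfrac ld)$; conjugating back by $S$ (whose components are $B_j$-free HCPs) gives the stated form $U_1'=(\varepsilon_1+\tfrac1d x)\partial^{r+1}+\sum_{i\le r}a_i\partial^i$, $a_i\in\hat{R}$. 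If $g$ has more than one root, an additional cancellation in this last computation forces $\Ord(\tilde{U}_1)>r+1$.

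The remaining items follow as consequences. For~(5), assuming $\Ord(\tilde{U}_1)>r+1$, the argument just given shows $\sigma(\tilde{U}_1)=\varepsilon_1\partial^{\Ord(\tilde{U}_1)}$ ($A_{p;i}$-free, $Sdeg_A=0$, $B_j$-free); applying the same analysis to the successively lower chunks of $\tilde{U}_1$ — each such chunk lies in $C(\partial^p)$ and is $\sigma$ of a piece of $U_1'\in D_1$, hence a polynomial in $x\partial$ times a power of $\partial$ of controlled $Sdeg_A$ — and combining with the fact (item~(2)) that $[\tilde{P},\tilde{U}_1]=-n\tilde{Q}^{n-1}$ satisfies $A_p(0)$, shows $\tilde{U}_1$ itself satisfies $A_p(0)$. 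For~(6), from $\sigma(U_1')=\varepsilon_1\partial^{\Ord(\tilde{U}_1)}$ one passes to the $(0,1)$-homogeneous forms of $Q^n-P^m$ in $A_1$ and applies Dixmier's Lemma~\ref{T:Dixmier2.7} to the proportionality $f_{0,1}(P)^m\propto f_{0,1}(Q)^n$ (both a power of $g=(1+\alpha x)^l$): this forces $\Ord(\tilde{U}_1)$ to be divisible by $d/GCD(l,d)=d_2$. Item~(7) is the explicit shape of $(\tilde{U}_1)_r$, read off from the three top components $\partial^q$, $\tilde{Q}_{-p+1}$, $\tilde{Q}_{-p}$ of $\tilde{Q}$: it is $c_1x\partial^{r+1}+c_0\partial^r$ with $c_1\in K$ and $c_0$ a $B_j$-free HCP from $Hcpc(p)$ with $Sdeg_A(c_0)\le0$, while $Sdeg_A((\tilde{U}_1)_j)\le0$ for $j>r$ is exactly the $A_p(0)$-estimate (condition~3) in that range. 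Finally~(8), $M_1=\Ord(\tilde{U}_1)-r<q+p=M_0$, is just the inequality $\Ord(\tilde{U}_1)<qn$, which holds because the top symbols $\sigma(\tilde{Q}^n)=\partial^{qn}=\sigma(\partial^{qn})$ cancel in $\tilde{U}_1$.
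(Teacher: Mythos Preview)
Your overall architecture is right, and items (1), (2), (7), (8) are essentially correct (your argument for (8) via cancellation of top symbols is in fact cleaner than the paper's terse reference to earlier items). Items (5) and (6) are sketched along the same lines the paper uses. The genuine gap is in items (3)--(4), and it is not just incompleteness but a wrong turn.

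Your proposed computation of $\varepsilon_1$ is incorrect: the order-$(r+1)$ component of $\tilde{Q}^n-\partial^{qn}$ is \emph{not} just $n\,\partial^{q(n-1)}\tilde{Q}_{-p+1}$. All products $\tilde{Q}_{i_1}\cdots\tilde{Q}_{i_n}$ with $i_1+\cdots+i_n=r+1$ and $-p<i_k\le q$ contribute, and there is no reason for the ``multi-factor'' terms to drop out (e.g.\ $\tilde{Q}_{q-1}$ need not vanish, since only $P'$, not $Q'$, is normalized). Even if you could isolate that term, $\tilde{Q}_{-p+1}$ depends on the unknown coefficients of $Q'$ and cannot be expressed through the normalization data of $P'$ alone. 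The paper never tries to compute $(\tilde{U}_1)_{r+1}$ from the inside; it only uses the $Sdeg_A$-estimates (your HCP calculus) to conclude that this component has the form $\varepsilon_1\partial^{r+1}$ for \emph{some} $\varepsilon_1\in K$.

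What you are missing entirely is the mechanism that actually excludes $\Ord(\tilde{U}_1)\in\{r,r+1\}$ when $g$ has several roots and that pins down $\varepsilon_1$ when $g=(1+\alpha x)^l$. The paper pulls $U_1'$ back to $A_1$ via $\Phi^{-1}$: since $\Phi^{-1}(\partial)=g^{1/d}\partial+v'$ and $\Phi^{-1}(x)=f$ with $f'=g^{-1/d}$, $f(0)=0$, the leading coefficient of $\Phi^{-1}(U_1')$ must be a \emph{polynomial} $H\in K[x]$ equal to $(\varepsilon_1+\tfrac{1}{d}f)g^{(r+1)/d}$ (resp.\ $\tfrac{1}{d}f\,g^{(r+1)/d}$ in the case $\Ord=r$). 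Differentiating yields the first-order ODE
\[
\tfrac{1}{d}\,g^{(m-1)(n-1)}=H'g-\tfrac{r+1}{d}\,Hg',
\]
and a separate lemma (Lemma~\ref{L:polynomials} in the paper) shows that such an equation admits a polynomial solution $H$ of the required degree only when $g$ has a single root; in the one-root case one reads off $f=\alpha^{-1}(1-\tfrac{l}{d})\bigl((1+\alpha x)^{1-l/d}-1\bigr)$ and hence $\varepsilon_1=-\tfrac{1}{d}f(0)'$-constant $=\tfrac{\alpha^{-1}}{d}(1-\tfrac{l}{d})$, while the case $\Ord=r$ is killed by $H(0)=0$. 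Your ``Newton polygon'' and ``additional cancellation'' gestures do not substitute for this ODE argument, which is the actual content of (3)--(4).
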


\begin{proof} {\it Item} 1. is obvious ($U_1'=(Q')^n-(P')^m$). 

{\it Item} 2. We have 
$$
[\tilde{P}, \tilde{U}_1]=[\tilde{P}, \tilde{Q}]\tilde{Q}^{n-1}+\tilde{Q}[\tilde{P}, \tilde{Q}]\tilde{Q}^{n-2}+\ldots + \tilde{Q}^{n-1}[\tilde{P}, \tilde{Q}]=-n \tilde{Q}^{n-1}.
$$
Therefore, $\Ord ([\tilde{P}, \tilde{U}_1])= q(n-1)$. Since $\tilde{Q}_q=\partial^q$, we have $(\tilde{Q}^{n-1})_{q(n-1)}=\partial^{q(n-1)}$ and therefore $Sdeg_A([\tilde{P}, \tilde{U}_1]_{q(n-1)})=0$. By lemma \ref{L:total_B} all homogeneous components of $[\tilde{P}, \tilde{U}_1]$ are totally free of $B_j$, because $\tilde{Q}$ satisfies condition $A_p(0)$ by proposition \ref{C:P'}. By lemma \ref{L:conditionA}  $\tilde{Q}^{n-1}$ satisfies condition $A_p(0)$, i.e. the operator $[\tilde{P},\tilde{U}_1]$ satisfies condition $A_p(0)$.

{\it Items} 3., 4.  From item 2. we immediately get $\Ord (\tilde{U}_1)\ge q(n-1)-p$. Assume that $\Ord (\tilde{U}_1)\le q(n-1)-p+1$, i.e. $\Ord (\tilde{U}_1)= q(n-1)-p+1$ or $\Ord (\tilde{U}_1)= q(n-1)-p$. 

1). By item 2. we know that $Sdeg_A([\tilde{P}, \tilde{U}_1]_{q(n-1)-t})<t$ for all $t>0$. Therefore,
$$
Sdeg_A(\tilde{U}_1)_{q(n-1)-p-t}< t+1
$$
for all $t>0$.

2). Note that $\Ord (U_1')=\Ord (\tilde{U}_1)$ (by remark \ref{R:ord_properties}). In the first case (i.e. $\Ord (\tilde{U}_1)= q(n-1)-p+1$) let's prove that 
$$(U_1')_{q(n-1)-p+1}=(\tilde{U}_1)_{q(n-1)-p+1}=\varepsilon_1 \partial^{q(n-1)-p+1},$$ 
where $0\neq \varepsilon_1\in K$. Indeed, for all $j>q(n-1)-p$ we have
\begin{equation}
\label{E:@@}
(\tilde{Q}^n)_j= \sum_{i_1+\ldots +i_n=j}\tilde{Q}_{i_1}\cdots \tilde{Q}_{i_n},
\end{equation}
where $i_k\le q$. Since $j>q(n-1)-p$, we have $i_k>-p$ for all $k$. Therefore, $Sdeg_A(\tilde{Q}_{i_k})\le 0$ for all $k$ and by lemma \ref{L:HCPC} we get $Sdeg_A((\tilde{Q}^n)_j)\le 0$ for all $j>q(n-1)-p$.
Therefore $Sdeg_A((\tilde{U}_1)_j)\le 0$ for all $j>q(n-1)-p$ too. Since $\Ord (\tilde{U}_1)= q(n-1)-p+1$, we must have $Sdeg_A((\tilde{U}_1)_{q(n-1)-p+1})=0$. 

Now note that 
$(\tilde{U}_1)_{q(n-1)-p+1}=(\tilde{Q}^n)_{q(n-1)-p+1}=(U_1')_{q(n-1)-p+1}\in D_1$  because $S^{-1}_{0}=S_{0}=1$, i.e. it is a differential operator, and by lemmas \ref{T:A.Z L 7.2}, \ref{L:correct_HPC} this HCP does not depend on $A_{p;i}$, i.e. it looks like $\varepsilon_1 \partial^{q(n-1)-p+1}$ for $\varepsilon_1\in K$. 

3). By lemma \ref{L:Q_p} we have
\begin{multline}
\label{E:@@@}
(\tilde{U}_1)_{q(n-1)-p}=(\tilde{Q}^n)_{q(n-1)-p}=\tilde{Q}_{-p}\partial^{q(n-1)}+\partial^q \tilde{Q}_{-p}\partial^{q(n-2)}+\ldots + \partial^{q(n-1)}\tilde{Q}_{-p}+ \\
\sum_{i_1+\ldots +i_n=q(n-1)-p \atop i_1,\ldots ,i_n>-p}\tilde{Q}_{i_1}\cdots \tilde{Q}_{i_n}=
\frac{1}{d} x\partial^{q(n-1)+1-p}+\\
(n\sum_{j=0}^{d-1}e_{nj}A_{p;nj}) \partial^{q(n-1)-p} +
\sum_{i_1+\ldots +i_n=q(n-1)-p \atop i_1,\ldots ,i_n>-p}\tilde{Q}_{i_1}\cdots \tilde{Q}_{i_n},
\end{multline}
where the last equality holds because the HCP $(\tilde{Q}^n)_{q(n-1)-p}$ is totally free of $B_j$. Note that all terms in the last raw of this formula have $Sdeg_A\le 0$ by lemma \ref{L:HCPC}. 

4). Put $\tilde{\tilde{U}}_1:=\tilde{U}_1-\frac{1}{d} x\partial^{q(n-1)+1-p}$. In the first case (i.e. $\Ord (\tilde{U}_1)= q(n-1)-p+1$) this operator satisfies condition $A_p(0)$. Indeed, conditions 1,2 from definition \ref{D:conditionA} are obvious, the symbol $\sigma (\tilde{\tilde{U}}_1)$ does not contain $A_{p;i}$ and $Sdeg_A(\sigma (\tilde{\tilde{U}}_1))=0$ by 2), and $Sdeg_A((\tilde{\tilde{U}}_1)_{q(n-1)-p+1-t})<t$ for all $t>0$ by 3), 1).

Then 
$$
U_1'=S^{-1}\tilde{\tilde{U}}_1S+ \frac{1}{d} S^{-1} x\partial^{q(n-1)+1-p} S.
$$
By lemma \ref{L:conditionA} the operator $S^{-1}\tilde{\tilde{U}}_1S$ satisfies condition $A_p(0)$ and 
$S^{-1} x\partial^{q(n-1)+1-p} S$ satisfies condition $A_p(1)$. Therefore, by lemma \ref{L:obvious} 
$
Sdeg_A(U_1')_{q(n-1)-p-t}<t+1
$
for all $t>0$, and therefore 
$$
\ord (U_1')_{q(n-1)-p-t}= q(n-1)-p-t +Sdeg_A(U_1')_{q(n-1)-p-t}< q(n-1)-p +1
$$
for all $t>0$. On the other hand, since $(S^{-1})_{-1}=S_{-1}=0$ and $(S^{-1})_{0}=S_{0}=1$, we have 
$$
(U_1')_{q(n-1)-p}=\frac{1}{d} x\partial^{q(n-1)+1-p} + (\tilde{\tilde{U}}_1)_{q(n-1)-p},
$$
and therefore 
$$
U_1'=a_{q(n-1)-p+1}\partial^{q(n-1)-p+1} +a_{q(n-1)-p}\partial^{q(n-1)-p}+ \ldots +a_0
$$
with $a_{q(n-1)-p+1}=\varepsilon_1 +\frac{1}{d} x$ and $a_i\in \hat{R}$ for $i<q(n-1)-p+1$.

5). In the second case (i.e. $\Ord (\tilde{U}_1)= q(n-1)-p$) by steps 1), 3) we get for all $t\ge 0$
$$
Sdeg_A(\tilde{\tilde{U}}_1)_{q(n-1)-p-t}< t+1.
$$
Consider $U_1^*:=\tilde{\tilde{U}}_1 + \partial^{q(n-1)-p+1}$. Then $\Ord (U_1^*)=q(n-1)-p+1$, and $U_1^*$ satisfies condition $A_p(0)$. We have $\tilde{U}_1=U_1^*- \partial^{q(n-1)-p+1}+\frac{1}{d} x\partial^{q(n-1)-p+1}$, hence 
$$
U_1'=S^{-1}U_1^*S+ \frac{1}{d}S^{-1}x\partial^{q(n-1)-p+1}S - S^{-1}\partial^{q(n-1)-p+1}S. 
$$
As in step 4), by lemma \ref{L:conditionA} the operators $S^{-1}U_1^*S$, $S^{-1}\partial^{q(n-1)-p+1}S$ satisfy condition $A_p(0)$, and the operator $S^{-1}x\partial^{q(n-1)-p+1}S$ satisfies condition $A_p(1)$.    
Therefore, by lemma \ref{L:obvious}  $Sdeg_A(U_1')_{q(n-1)-p-t}< t+1$ for any $t>0$, and therefore for any $t>0$
$$
\ord (U_1')_{q(n-1)-p-t} = q(n-1)-p-t+ Sdeg_A (U_1')_{q(n-1)-p-t}<q(n-1)-p+1. 
$$
On the other hand, since $(S^{-1})_{-1}=S_{-1}=0$ and $(S^{-1})_{0}=S_{0}=1$, we have
$$
(U_1')_{q(n-1)-p}= \frac{1}{d} x\partial^{q(n-1)-p+1} + (\tilde{\tilde{U}}_1)_{q(n-1)-p}
$$
and therefore $\ord (U_1')_{q(n-1)-p} = q(n-1)-p+1$ (as $Sdeg_A (\tilde{\tilde{U}}_1)_{q(n-1)-p}\le 0$) and 
$$
U_1'=\frac{1}{d} x\partial^{q(n-1)-p+1} +a_{q(n-1)-p}\partial^{q(n-1)-p}+ \ldots +a_0
$$
with $a_i\in \hat{R}$ for $i<q(n-1)-p+1$. 

6). Recall that $\Phi_u^{-1} (U_1')\in A_1$ (as $\Phi_u^{-1}(U_1')=Q^n-P^m$). For convenience denote  $z:=q(n-1)-p$. Then in the second case (i.e. $\Ord (\tilde{U}_1)= q(n-1)-p$) the highest coefficient of the differential operator $\Phi_u^{-1} (U_1')$ will be equal to $H:=\frac{1}{d} f g^{(z+1)/d}\in x\cdot K[x]$, and in the first case (i.e. $\Ord (\tilde{U}_1)= q(n-1)-p+1$) it will be equal to $\tilde{H}:=(\varepsilon_1 + \frac{1}{d} f) g^{(z+1)/d}\in K[x]$. So, either $\frac{1}{d}f=H g^{-(z+1)/d}$ or 
$\varepsilon_1 + \frac{1}{d} f= \tilde{H} g^{-(z+1)/d}$. In both cases, differentiating these equalities with respect to $x$ we get (we'll use the same notation $H$ both for $H$ and $\tilde{H}$ below)
$$
\frac{1}{d} f'=\frac{1}{d} g^{-1/d}=H'g^{-(z+1)/d} - \frac{z+1}{d} H g^{-1-(z+1)/d}g',
$$ 
whence 
\begin{equation}
\label{E:difeq*}
\frac{1}{d} g^{(m-1)(n-1)}= H'g- \frac{z+1}{d} Hg'.
\end{equation}
Claim: $\deg_x (H)= (m-1)(n-1)l - (l-1)$. Proof: since $\deg_x g^{(m-1)(n-1)}=l(m-1)(n-1)$, we have 
$\deg_x (H) \ge (m-1)(n-1)l - (l-1)$. Obviously, $\deg_x H'g= \deg_x Hg'$, and the highest coefficients of $H'g$ and $Hg'$ are $\deg_x(H)c_Hc_g$ and $lc_Hc_g$ correspondingly, but $GCD(z+1,d)=1$ and $d\nmid l$, therefore $\deg_x(H)c_Hc_g -\frac{z+1}{d}lc_Hc_g \neq 0$ whence 
$\deg_x(H'g-\frac{z+1}{d}Hg')=\deg_x Hg'$ and thus $\deg_x (H)= (m-1)(n-1)l - (l-1)$.\footnote{Note that if $l=d$, these arguments imply that \eqref{E:difeq*} is impossible, because $(m-1)(n-1)d - (d-1)=z+1$,
i.e. $\Ord (\tilde{U}_1)>q(n-1)-p+1$.}

7). Let's show that the identity \eqref{E:difeq*} is possible only if $g$ has one root. This follows from the auxiliary lemma:

\begin{lemma}
\label{L:polynomials}
Suppose $\tilde{g}\in \dc [x]$ is a polynomial with more than one root. Suppose $\deg_x(\tilde{g})=\tilde{l}$, and $\tilde{d}, \tilde{z}, A\in \dn$,   satisfy the following relation: $\tilde{z}=(A-1)\tilde{d}$. 

Then there exists a solution $H\in \dc [x]$ of the differential equation
\begin{equation}
\label{E:difeq*0}
\tilde{c} \tilde{g}^{A}= H'\tilde{g}- \frac{\tilde{z}+1}{\tilde{d}} H\tilde{g}', \quad \tilde{c}\in \dc^*
\end{equation}
only if $\tilde{l}/\tilde{d}\in \dn$, $\tilde{l}/\tilde{d}>1$. In this case $\deg_x (H)=\tilde{l}(\tilde{z}+1)/\tilde{d}$. 
\end{lemma}

\begin{proof}
If $H$ exists, clearly we have 
$\deg_x (H) \ge A \tilde{l} - (\tilde{l}-1)= \tilde{l}(A-1)+1$.

Let, say 
$$
\tilde{g}=c (x-\alpha_1)^{j_1}\ldots (x-\alpha_k)^{j_k}(x-\beta_1)\ldots (x-\beta_p),
$$
where $k>1$ or  $p> 1$ or $k=p=1$ (if $p<1$ or $k<1$ we assume there are no factors with $\beta_j$ or $\alpha_j$) and all $j_q>1$. Then 
$$
g_1:=GCD(\tilde{g},\tilde{g}')=\begin{cases}
c(x-\alpha_1)^{j_1-1}\ldots (x-\alpha_k)^{j_k-1} & k\ge 1 \\
1 & k<1 .
\end{cases} 
$$
Assume $\tilde{g}=g_1r$, $\tilde{g}'=g_1s$, where 
$$
r=(x-\alpha_1)\ldots (x-\alpha_k)(x-\beta_1)\ldots (x-\beta_p),
$$
\begin{multline*}
 s=\sum_{q=1}^kj_q(x-\alpha_1)\ldots \widehat{(x-\alpha_q)}\ldots (x-\alpha_k)(x-\beta_1)\ldots (x-\beta_p) +\\
 \sum_{q=1}^p(x-\alpha_1)\ldots (x-\alpha_k)(x-\beta_1)\ldots \widehat{(x-\beta_q)}\ldots (x-\beta_p).
\end{multline*}
By our assumptions $\deg_x s>0$, and, obviously, $GCD(r,s)=1$ and $\deg_x(r)=\deg_x(s)+1$. Dividing equation \eqref{E:difeq*0} by $g_1$, we get the equation
$$
\tilde{c} g_1^{A-1}r^{A}=H'r-\frac{\tilde{z}+1}{\tilde{d}} Hs.
$$ 
Since $GCD(r,s)=1$ and the left hand side has the same roots as $r$, $H$ is divisible by all linear factors of $r$. Let's show that $H$ must be divisible by the whole left part, i.e. 
$$\deg_x(H)\ge \tilde{l}A-\deg_x(g_1)>\tilde{l}(A-1)+1.$$ 
Note that this is possible only if  $\deg_x(H'\tilde{g}-\frac{\tilde{z}+1}{\tilde{d}}H\tilde{g}')<\deg_x H\tilde{g}'$, i.e. only if $\deg_x (H)=\tilde{l}\frac{\tilde{z}+1}{\tilde{d}}$ and $\dn \ni\tilde{l}/\tilde{d}>1$. 

The proof is by induction on the degree: let, say, $H=(x-\alpha_1)H_1$ (so that $\deg_xH_1=\deg_x H-1$). Then, dividing the equation above by $(x-\alpha_1)$ we get the new equation on $H_1$:
$$
G=H_1'r-\frac{\tilde{z}+1}{\tilde{d}} Hs_1, \quad s_1=s-\frac{\tilde{d}}{\tilde{z}+1}r/(x-\alpha_1),
$$ 
where the set of roots of $G$ belong to the set of roots of $r$ again. Note that $s_1$ has the same shape as $s$:
\begin{multline*}
 s_1=(j_1-\frac{\tilde{d}}{\tilde{z}+1})(x-\alpha_2)\ldots (x-\alpha_k)(x-\beta_1)\ldots (x-\beta_p)+\\
 \sum_{q=2}^kj_q(x-\alpha_1)\ldots \widehat{(x-\alpha_q)}\ldots (x-\alpha_k)(x-\beta_1)\ldots (x-\beta_p) +\\
 \sum_{q=1}^p(x-\alpha_1)\ldots (x-\alpha_k)(x-\beta_1)\ldots \widehat{(x-\beta_q)}\ldots (x-\beta_p).
\end{multline*}
Since $\frac{\tilde{d}}{\tilde{z}+1}<1$, we have $GCD(r,s_1)=1$ and therefore we can repeat these arguments for other linear divisors of $G$. 

Continuing this line of reasoning, we'll get generic equations of the following type: 
$$
G_i=H_i'r-\frac{\tilde{z}+1}{\tilde{d}} H_is_i, 
$$ 
where 
\begin{multline*}
 s_i=\sum_{q=1}^k(j_q-\frac{\tilde{d}}{\tilde{z}+1}n_q)(x-\alpha_1)\ldots \widehat{(x-\alpha_q)}\ldots (x-\alpha_k)(x-\beta_1)\ldots (x-\beta_p) +\\
 \sum_{q=1}^p(1-\frac{\tilde{d}}{\tilde{z}+1}m_q)(x-\alpha_1)\ldots (x-\alpha_k)(x-\beta_1)\ldots \widehat{(x-\beta_q)}\ldots (x-\beta_p),
\end{multline*}
with $n_q\le j_qA-j_q$ (note that the multiplicity of $\alpha_q$ in $g_1^{A-1}r^{A}$ is $(j_q-1)(A-1)+A=j_qA-j_q+1$), $m_q\le A-1$, and 
$$
G_i=\tilde{c} \frac{g_1^{A-1}r^{A}}{(x-\alpha_1)^{n_1}\ldots (x-\alpha_k)^{n_k}(x-\beta_1)^{m_1}\ldots (x-\beta_p)^{m_p}},
$$
so that the set of roots of $G_i$ always belong to the set of roots of $r$.  Then, 
$$
j_q-\frac{\tilde{d}}{\tilde{z}+1}n_q = \frac{j_qd(A-1) +j_q -dn_q}{\tilde{z}+1}\ge \frac{j_q}{\tilde{z}+1}>0,
$$ 
$$
1-\frac{\tilde{d}}{\tilde{z}+1}m_q=\frac{\tilde{d}(A-1)+1-\tilde{d}m_q}{\tilde{z}+1}\ge \frac{1}{\tilde{z}+1}>0,
$$
whence $GCD(r,s_i)=1$. At the last step we get the equation 
$$
c (x-\alpha_1)\ldots (x-\alpha_k)(x-\beta_1)\ldots (x-\beta_p)=H_N'r-\frac{\tilde{z}+1}{\tilde{d}} H_Ns_N
$$
whence again $H_N$ must be divisible by the polynomial on the left hand side, and we are done.

\end{proof}

8). If $g$ has only one root, then $g=(1+\alpha x)^l$ for some $\alpha\in K$. In this case a solution of equation \eqref{E:difeq*} necessarily has the form $H=\tilde{c}(1+\alpha x)^{l(m-1)(n-1)-l+1}$, $\tilde{c}\in K$. But in the second case (i.e. $\Ord (\tilde{U}_1)= q(n-1)-p$) $H(0)=0$ - a contradiction, so item 2. holds. In the first case (i.e. $\Ord (\tilde{U}_1)= q(n-1)-p+1$) it maybe possible. Besides, in this case 
$$
f=\alpha^{-1}(1-\frac{l}{d})(1+\alpha x)^{1-l/d} + c_0,
$$
where $c_0=-\alpha^{-1}(1-\frac{l}{d})$ (as $f(0)=0$). Then necessarily 
$$
\varepsilon_1 = -\frac{c_0}{d}=\frac{\alpha^{-1}}{d}(1-\frac{l}{d}).
$$

{\it Item} 5. By the same arguments as in item 3. 2) we get $(U_1')_{\Ord (U_1')}=(\tilde{Q}^n)_{\Ord (U_1')}=(\tilde{U}_1)_{\Ord (\tilde{U}_1)}\in D_1$ and therefore $(U_1')_{\Ord (U_1')}=\varepsilon_1 \partial^{\Ord (\tilde{U}_1)}$ with $0\neq \varepsilon_1\in K$, because $Sdeg_A((\tilde{U}_1)_j)\le 0$ for all $j>q(n-1)-p$ (see \eqref{E:@@} and explanations in 3. 2)). 

By item 2. we know that $Sdeg_A([\tilde{P}, \tilde{U}_1]_{q(n-1)-t})<t$ for all $t>0$. Therefore,
$$
Sdeg_A(\tilde{U}_1)_{q(n-1)-p-t}< t+1
$$
for all $t>0$. Since $\chi := \Ord (\tilde{U}_1)>q(n-1)-p+1$, we get 
$$
Sdeg_A(\tilde{U}_1)_{\chi -t}< t
$$
for all $t>0$. In particular, the operator $\tilde{U}_1$ satisfies condition $A_p(0)$. 

{\it Item} 6. Since the operator $\tilde{U}_1$ satisfies condition $A_p(0)$, the operator $U_1'$ also satisfies condition $A_p(0)$ by proposition \ref{C:P'}. Therefore, $U_1'$ is a formally elliptic differential operator (cf. similar arguments in 3., 4),5)), i.e. $U_1'=\varepsilon_1 \partial^{\Ord (\tilde{U}_1)}+\ldots$, and therefore, $\ord (U_1')=\Ord (U_1')$. 

Analogously,  
$$
[P',U_1']=S^{-1}[\tilde{P},\tilde{U}_1]S\in D_1\quad\mbox{and}\quad \Ord ([P',U_1'])=q(n-1).
$$ 
By proposition \ref{C:P'} $[P',U_1']$ satisfies condition $A_p(0)$. Since $U_1', [P',U_1']$ are differential operators, their homogeneous components are also differential operators, i.e. they don't depend on $A_{p,i}$, cf. lemma \ref{L:correct_HPC}. Then, for any $t>0$ we have 
$$
\ord ([P',U_1']_{q(n-1)-t})=Sdeg_A([P',U_1']_{q(n-1)-t})+q(n-1)-t<q(n-1),
$$ 
and $\ord ([P',U_1']_{q(n-1)})=q(n-1)=\Ord ([P',U_1'])$, i.e. the highest term of the {\it differential operator}  $[P',U_1']$ coincides with $[P',U_1']_{q(n-1)}=-n \partial^{q(n-1)}$.

 Put $Q_1:=\Phi_u^{-1}(U_1')$. Note that $Q_1\in A_1$ and  $\ord (Q_1)=\Ord (Q_1)= \ord (U_1')=\Ord (U_1')= \Ord(\tilde{U}_1)$, because $\Phi_u^{-1}$ preserves the degree filtration on $D_1$ and $g(0)\neq 0$. Now we have 
$$
\ord ([P,Q_1])= \ord \Phi_u^{-1}([P',U_1'])=\ord ([P',U_1'])=\Ord ([P',U_1'])= \Ord ([\tilde{P},\tilde{U}_1]).
$$
Since by items 3., 2. we have $\ord (P)+ \ord (Q_1)-1 > q(n-1)= \Ord ([\tilde{P},\tilde{U}_1])$, it follows by \cite[L. 2.7]{Dixmier} that $f_{0,1}(P)$ is proportional with $f_{0,1}(Q_1)$, i.e. $f_{0,1}(Q_1)^p=\tilde{c}f_{0,1}(P)^{\ord (Q_1)}=\tilde{c}f_{0,1}(P)^{\Ord(\tilde{U}_1)}$. Therefore, $l\Ord(\tilde{U}_1)$ is divisible by $d$, i.e. $\Ord (\tilde{U}_1)$ is divisible by $d_2$.

{\it Items} 7., 8. immediately follow from item 3., 4. (cf. formulae \eqref{E:@@}, \eqref{E:@@@}). 

\end{proof}

{\it Step 3.} The proof of our theorem will follow  by induction from lemma below.

\begin{lemma}
\label{L:induction_step1}
In the notation above suppose $\tilde{U}_{k}\in \hat{D}_1^{sym}$ ($k\ge 1$) is an operator which satisfies the following properties (below we assume $n_0:=n$):
\begin{enumerate}
\item
$U_k':=S^{-1}\tilde{U}_{k}S\in D_1$ and $\Phi_u^{-1}(U_k')\in A_1$;
\item
the operator $\tilde{U}_k$ satisfies condition $A_p(0)$, i.e. in particular $(\tilde{U}_k)_{\Ord \tilde{U}_k}=\varepsilon_k \partial^{\Ord \tilde{U}_k}$ for some $\varepsilon_k\in K^*$;
\item
the operator $[\tilde{P},\tilde{U}_k]$ satisfies condition $A_p(0)$;
\item
$\Ord (\tilde{U}_k)> \Ord (\tilde{U}_{k-1})n_{k-1}-M_{k-1}+1$;
\item
The number $\Ord (\tilde{U}_k)$ is divisible by $d_2=EssGCD(P,Q)$;
\item
$Sdeg_A(\tilde{U}_k)_j\le 0$ for $\Ord (\tilde{U}_k)\ge j > \Ord (\tilde{U}_{k-1})n_{k-1}-M_{k-1}$, \\
and $Sdeg_A(\tilde{U}_k)_{\Ord (\tilde{U}_{k-1})n_{k-1}-M_{k-1}}=1$. Besides, 
$$
(\tilde{U}_k)_{\Ord (\tilde{U}_{k-1})n_{k-1}-M_{k-1}}=c_kx\partial^{\Ord (\tilde{U}_{k-1})n_{k-1}-M_{k-1}+1}+c_{k-1} \partial^{\Ord (\tilde{U}_{k-1})n_{k-1}-M_{k-1}},
$$
where $c_k\in K$ and $c_{k-1}$ is a HCP from $Hcpc (p)$ with $Sdeg_A(c_{k-1})\le 0$. 
\item
$M_k:=\Ord (\tilde{U}_k)- (\Ord (\tilde{U}_{k-1})n_{k-1} - M_{k-1})< M_{k-1}$.
\end{enumerate}

In the notation above put $n_k:=\frac{p}{GCD(p,\Ord \tilde{U}_k)}$, $m_k:=\frac{\Ord \tilde{U}_k}{GCD(p,\Ord \tilde{U}_k)}$,
$$
\tilde{U}_{k+1}:=\tilde{U}_k^{n_k}-
\varepsilon_k^{n_k}\tilde{P}^{m_k}.
$$ 
Then we have
\begin{enumerate}
\item
$U_{k+1}':=S^{-1}\tilde{U}_{k+1}S\in D_1$ and $\Phi_u^{-1}(U_{k+1}')\in A_1$;
\item
the operator $[\tilde{P},\tilde{U}_{k+1}]$ satisfies condition $A_p(0)$;
\item
$\Ord (\tilde{U}_{k+1})> \Ord (\tilde{U}_k)n_k - M_k$;
\item
$\Ord (\tilde{U}_{k+1})> \Ord (\tilde{U}_k)n_k - M_k+1$ if the polynomial $g$ has more than one root;

Moreover, if $g$ has only one root and $\Ord (\tilde{U}_{k+1})= \Ord (\tilde{U}_k)n_k - M_k+1$, then 
$\sigma (\tilde{U}_{k+1})=\varepsilon_{k+1} \partial^{\Ord (\tilde{U}_{k+1})}$, and 
$$
U_{k+1}'=a_{\Ord (\tilde{U}_k)n_k - M_k+1}\partial^{\Ord (\tilde{U}_k)n_k - M_k+1} +a_{\Ord (\tilde{U}_k)n_k - M_k}\partial^{\Ord (\tilde{U}_k)n_k - M_k}+ \ldots +a_0
$$
with $a_{\Ord (\tilde{U}_k)n_k - M_k+1}=\varepsilon_{k+1} + c_{k+1} x$ and $a_i\in \hat{R}$ for $i<\Ord (\tilde{U}_k)n_k - M_k+1$.
\item
if $\Ord (\tilde{U}_{k+1})> \Ord (\tilde{U}_k)n_k - M_k+1$, then the operator $\tilde{U}_{k+1}$ satisfies condition $A_p(0)$;
\item
if $\Ord (\tilde{U}_{k+1})> \Ord (\tilde{U}_k)n_k - M_k+1$, then the number $\Ord (\tilde{U}_{k+1})$ is divisible by $d_2$; in particular, $d_2$ divides $GCD(\Ord (\tilde{P}), \Ord (\tilde{Q}), \Ord (\tilde{U}_1), \ldots , \Ord (\tilde{U}_{k+1}))$, and this GCD divides $d$.
\item
$Sdeg_A(\tilde{U}_{k+1})_j\le 0$ for $\Ord (\tilde{U}_{k+1})\ge j > \Ord (\tilde{U}_{k})n_k-M_{k}$, \\
and $Sdeg_A(\tilde{U}_{k+1})_{\Ord (\tilde{U}_{k})n_k-M_{k}}=1$. Besides,
$$
(\tilde{U}_{k+1})_{\Ord (\tilde{U}_{k})n_k-M_{k}}=c_{k+1}x\partial^{\Ord (\tilde{U}_{k})n_k-M_{k}+1}+\tilde{c}_{k} \partial^{\Ord (\tilde{U}_{k})n_k-M_{k}},
$$
where $c_{k+1}\in K$ and $\tilde{c}_{k}$ is a HCP from $Hcpc (p)$ with $Sdeg_A(\tilde{c}_{k})\le 0$ and $Sdeg_B(\tilde{c}_{k})=-\infty$.
\item
$M_{k+1}:=\Ord (\tilde{U}_{k+1}) - (\Ord (\tilde{U}_k) n_k -M_k)< M_k$. 
\end{enumerate}

\end{lemma}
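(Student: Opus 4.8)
The plan is to replay the proof of Lemma~\ref{L:induction_step} essentially verbatim, under the dictionary $\tilde Q\rightsquigarrow\tilde U_k$, $n\rightsquigarrow n_k$, $m\rightsquigarrow m_k$, $q\rightsquigarrow\Ord\tilde U_k$, $q+p\rightsquigarrow M_k$, $q(n-1)-p\rightsquigarrow z:=\Ord(\tilde U_k)n_k-M_k$: hypotheses 1--7 on $\tilde U_k$ are exactly what was used about $\tilde Q$ in that argument, hypothesis 6 taking over the role of Lemma~\ref{L:Q_p} (the explicit ``tail''). First I would fix notation: write $w:=\Ord\tilde U_k-M_k$ ($=\Ord(\tilde U_{k-1})n_{k-1}-M_{k-1}$ by hypothesis 7) for the critical order of $\tilde U_k$, so that by hypotheses 6 and 2 every $\Ord$-component of $\tilde U_k$ of order $>w$ is free of $x$ and of $B_j$ (a $\tilde K$-linear combination of operators $A_{p;i}\partial^{s}$), while $(\tilde U_k)_w=c_kx\partial^{w+1}+c_{k-1}\partial^{w}$. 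Because $n_k\Ord\tilde U_k=m_kp$ by the choice of $n_k,m_k$, the operators $\tilde U_k^{n_k}$ and $\varepsilon_k^{n_k}\tilde P^{m_k}$ have the same leading symbol $\varepsilon_k^{n_k}\partial^{m_kp}$, which cancels in $\tilde U_{k+1}$; hence $\Ord\tilde U_{k+1}<n_k\Ord\tilde U_k$, which is assertion 8 ($M_{k+1}=\Ord\tilde U_{k+1}-z<M_k$). Assertion 1 is just $U_{k+1}'=(U_k')^{n_k}-\varepsilon_k^{n_k}(P')^{m_k}$ followed by $\Phi^{-1}$, using hypothesis 1 and $\Phi^{-1}(P')=P$.

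The key computation is assertion 2. Since $[\tilde P,\tilde P^{m_k}]=0$ we get $[\tilde P,\tilde U_{k+1}]=[\partial^p,\tilde U_k^{n_k}]=\sum_{j=0}^{n_k-1}\tilde U_k^{\,j}\,[\partial^p,\tilde U_k]\,\tilde U_k^{\,n_k-1-j}$. The components of $\tilde U_k$ of order $>w$ commute with $\partial^p$ since $\xi^p=1$, and $[\partial^p,c_{k-1}\partial^{w}]=0$ for the same reason, so $\sigma([\partial^p,\tilde U_k])$ is produced solely by the term $c_kx\partial^{w+1}$ --- it is the \emph{pure power} $c_kp\,\partial^{p+w}$. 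Consequently every one of the $n_k$ summands has the \emph{same} leading term $\varepsilon_k^{\,n_k-1}c_kp\,\partial^{(n_k-1)\Ord\tilde U_k+p+w}$, so they add rather than cancel, $[\tilde P,\tilde U_{k+1}]$ has $\Ord=n_k\Ord\tilde U_k+p-M_k$ and a pure-power leading symbol, and it satisfies $A_p(0)$ by hypotheses 2--3 together with Lemmas~\ref{L:conditionA}, \ref{L:total_B}, \ref{L:obvious}, exactly as in Lemma~\ref{L:induction_step}. Assertion 7 comes out in parallel with formulae \eqref{E:@@}--\eqref{E:@@@}: Lemma~\ref{L:HCPC} gives $Sdeg_A((\tilde U_{k+1})_j)\le0$ for $z<j<n_k\Ord\tilde U_k$, while the order-$z$ component picks up an $x\partial^{z+1}$-term from the $n_k$ ways of choosing the term $c_kx\partial^{w+1}$ of a single factor, so $(\tilde U_{k+1})_z=c_{k+1}x\partial^{z+1}+\tilde c_k\partial^{z}$ with $0\neq c_{k+1}\in K$ and $\tilde c_k$ a HCP from $Hcpc(p)$ with $Sdeg_A(\tilde c_k)\le0$, $Sdeg_B(\tilde c_k)=-\infty$.

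For assertions 3--4, assertion 2 and Remark~\ref{R:ord_properties} give $\Ord\tilde U_{k+1}\ge z$, and it remains to exclude $\Ord\tilde U_{k+1}\in\{z,z+1\}$. Here I would repeat the argument in items 3 and 4 of the proof of Lemma~\ref{L:induction_step} literally: in those borderline cases, conjugating $\tilde U_{k+1}$ (a pure $\partial^{z+1}$-power plus the $x\partial^{z+1}$-term plus pieces of $Sdeg_A\le0$) by $S$ and using corollary~\ref{C:P'} and Lemmas~\ref{L:conditionA}, \ref{L:obvious} (the $x$-carrying piece becoming an $A_p(1)$-operator) shows $U_{k+1}'=(\varepsilon_{k+1}+c_{k+1}x)\partial^{z+1}+(\text{order }<z+1\text{, coefficients in }\hat{R})$ with $\varepsilon_{k+1}\in K$ and $\varepsilon_{k+1}=0$ if $\Ord\tilde U_{k+1}=z$; then, since $\Phi^{-1}(U_{k+1}')=F_{k+1}(P,Q)\in A_1$ while $\Phi^{-1}(\partial)=g^{1/d}\partial+\cdots$ and $\Phi^{-1}(x)=f$ with $f'=g^{-1/d}$, $f(0)=0$, the coefficient of $\partial^{z+1}$ in $F_{k+1}(P,Q)$ is $(\varepsilon_{k+1}+c_{k+1}f)g^{(z+1)/d}\in K[x]$, and differentiating plus clearing denominators yields an identity of exactly the type~\eqref{E:difeq*} (the divisibility bookkeeping being as in the base case). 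Lemma~\ref{L:polynomials} then forbids this when $g$ has more than one root (cf. Remark~\ref{R:DC-pais}), while if $g$ has exactly one root the subcase $\Ord\tilde U_{k+1}=z$ is impossible, since it forces that coefficient to vanish at $0$ whereas the explicit solution does not; $\varepsilon_{k+1}$ is read off from $f$ as in the base case. Finally, for assertions 5--6: if $\Ord\tilde U_{k+1}>z+1$, assertion 2 gives $Sdeg_A((\tilde U_{k+1})_{\Ord\tilde U_{k+1}-t})<t$ for all $t>0$ and, the leading symbol being a pure power, $\tilde U_{k+1}$ satisfies $A_p(0)$; putting $Q_{k+1}:=\Phi^{-1}(U_{k+1}')\in A_1$ --- which has $\ord Q_{k+1}=\Ord Q_{k+1}=\Ord\tilde U_{k+1}$ since $g(0)\neq0$, and $\ord[P,Q_{k+1}]=\Ord[\tilde P,\tilde U_{k+1}]=n_k\Ord\tilde U_k+p-M_k$ via corollary~\ref{C:P'} (which makes $[P',U_{k+1}']$ formally elliptic) --- the inequality $\ord P+\ord Q_{k+1}-1=p+\Ord\tilde U_{k+1}-1>n_k\Ord\tilde U_k+p-M_k$ and \cite[L. 2.7]{Dixmier} force $f_{0,1}(Q_{k+1})$ proportional to $f_{0,1}(P)=g^ny^p$, whence $d_2\mid\Ord\tilde U_{k+1}$.

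The hard part will be assertion 2: the clean central identity $[\tilde P,\tilde Q]=-1$ of the base case is no longer available --- one only knows $[\tilde P,\tilde U_k]$ satisfies $A_p(0)$ --- so the crux is to pin down $\sigma([\tilde P,\tilde U_k])$ \emph{exactly}. Hypothesis 6 is what makes this possible: it forces $\sigma([\tilde P,\tilde U_k])$ to be a pure power of $\partial$, so that the $n_k$ leading terms of $\sum_j\tilde U_k^{\,j}[\tilde P,\tilde U_k]\tilde U_k^{\,n_k-1-j}$ coincide rather than risk cancelling, and it is precisely this that keeps the descent $M_{k+1}<M_k$ alive. The remaining effort is bookkeeping: propagating the $Sdeg_A$-bounds and the total freeness of $B_j$ through the conjugations by $S$ and $S^{-1}$, which is handled uniformly by condition $A_p$ and Lemmas~\ref{L:conditionA}, \ref{L:obvious}, \ref{L:total_B}, \ref{L:HCPC} and corollary~\ref{C:P'}.
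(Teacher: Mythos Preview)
Your proposal is correct and follows essentially the same route as the paper's proof, which explicitly states that it repeats Lemma~\ref{L:induction_step} ``with minimal modifications''; your dictionary and your identification of hypothesis~6 as the replacement for Lemma~\ref{L:Q_p} are exactly right, and your computation of $\sigma([\tilde P,\tilde U_k])=c_kp\,\partial^{p+w}$ directly from hypothesis~6 is in fact slightly more explicit than the paper's appeal to hypothesis~3.

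One small caveat: your phrase ``the divisibility bookkeeping being as in the base case'' in item~4 is not quite accurate. In the base case $z=q(n-1)-p=d(mn-m-n)$ is automatically divisible by $d$, so $g^{1+z/d}$ is a genuine polynomial and \eqref{E:difeq*} applies directly. In the inductive step one only knows $d_2\mid\tilde M_k$, not $d\mid\tilde M_k$, so the analogue \eqref{E:difeq*1} has $g^{1+\tilde M_k/d}$ with a possibly fractional exponent; the paper handles this by observing that polynomiality of the right side forces $g=\tilde g^{\,d_1}$ for a divisor $d_1\mid\gcd(d,l)$, then dividing through by $\tilde g^{\,d_1-1}$ before invoking Lemma~\ref{L:polynomials} with adjusted parameters. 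This is a genuine extra wrinkle (not present in the base case) that you should spell out rather than fold into ``as before''.
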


\begin{proof}
The proof is similar to the proof of lemma \ref{L:induction_step}, with minimal modifications. For convenience of the reader we describe details below. 

{\it Item} 1. follows from the property 1. for operators $\tilde{U}_k$, $\tilde{P}$. 

{\it Item} 2.   We have 
$$
[\tilde{P}, \tilde{U}_{k+1}]=[\tilde{P}, \tilde{U}_k]\tilde{U}_k^{n_k-1}+\tilde{U}_k[\tilde{P}, \tilde{U}_k]\tilde{U}_k^{n_k-2}+\ldots + \tilde{U}_k^{n_k-1}[\tilde{P}, \tilde{U}_k].
$$
Since $[\tilde{P}, \tilde{U}_k]$ and $\tilde{U}_k$ satisfy condition $A_p(0)$, all terms  in this sum satisfy condition $A_p(0)$ by lemma \ref{L:conditionA}. Besides, all terms have the same order, because the product of symbols of $\tilde{U}_k$ and $[\tilde{P}, \tilde{U}_k]$ is not zero, as $\sigma (\tilde{U}_k)=\varepsilon_k \partial^{\Ord (\tilde{U}_k)}$ and $\sigma ([\tilde{P}, \tilde{U}_k])=\tilde{\varepsilon}_1\partial^{\Ord [\tilde{P}, \tilde{U}_k]}$ (for some $\tilde{\varepsilon}_1\in K^*$).  Therefore, first three conditions from definition \ref{D:conditionA} are satisfied. Obviously, $\sigma ([\tilde{P}, \tilde{U}_{k+1}])=n_k\tilde{\varepsilon}_1\varepsilon_k^{n_k-1}\partial^{(n_k-1)\Ord (\tilde{U}_k)+\Ord [\tilde{P}, \tilde{U}_k]}$, so the last condition holds, and $[\tilde{P}, \tilde{U}_{k+1}]$ satisfies condition $A_p(0)$. 

Besides, by the property 6 of the operator $[\tilde{P}, \tilde{U}_k]$ we get 
$$
\Ord ([\tilde{P}, \tilde{U}_k])=\Ord (\tilde{U}_{k-1})n_{k-1}-M_{k-1}+p,
$$
whence 
$$
\Ord ([\tilde{P}, \tilde{U}_{k+1}])=\Ord (\tilde{U}_k)(n_k-1)+\Ord (\tilde{U}_{k-1})n_{k-1}-M_{k-1}+p=
\Ord (\tilde{U}_k) n_k-M_k+p.
$$

{\it Items} 3.,4. From item 1. we immediately get $\Ord (\tilde{U}_{k+1})\ge \Ord (\tilde{U}_k) n_k-M_k$. Assume that $\Ord (\tilde{U}_{k+1})\le \Ord (\tilde{U}_k) n_k-M_k+1$, i.e. $\Ord (\tilde{U}_{k+1})= \Ord (\tilde{U}_k) n_k-M_k+1$ or $\Ord (\tilde{U}_{k+1})= \Ord (\tilde{U}_k) n_k-M_k$. 

1). By item 1. we know that $Sdeg_A([\tilde{P}, \tilde{U}_{k+1}]_{\Ord (\tilde{U}_k) n_k-M_k+p-t})<t$ for all $t>0$. Therefore,
$$
Sdeg_A(\tilde{U}_{k+1})_{\Ord (\tilde{U}_k) n_k-M_k-t}< t+1
$$
for all $t>0$.

2). Put $U_{k+1}':=S^{-1}\tilde{U}_{k+1}S$. Note that  $\Ord (U_{k+1}')=\Ord (\tilde{U}_{k+1})$. In the first case (i.e. $\Ord (\tilde{U}_{k+1})= \Ord (\tilde{U}_k) n_k-M_k+1$) let's prove that 
$$
(U_{k+1}')_{\Ord (\tilde{U}_k) n_k-M_k+1}=(\tilde{U}_{k+1})_{\Ord (\tilde{U}_k) n_k-M_k+1}=\varepsilon_{k+1} \partial^{\Ord (\tilde{U}_k) n_k-M_k+1},
$$ 
where $0\neq \varepsilon_{k+1}\in K$. Indeed, for all $j> \Ord (\tilde{U}_k) n_k-M_k$ we have
\begin{equation}
\label{E:@@1}
(\tilde{U}_k^{n_k})_j= \sum_{i_1+\ldots +i_{n_k}=j}(\tilde{U}_k)_{i_1}\cdots (\tilde{U}_k)_{i_{n_k}},
\end{equation}
where $i_q\le \Ord (\tilde{U}_k)$. Since $j> \Ord (\tilde{U}_k) n_k-M_k$, we have $i_q>\Ord (\tilde{U}_{k-1})n_{k-1}-M_{k-1}$ for all $q$. Therefore, $Sdeg_A((\tilde{U}_k)_{i_q})\le 0$ for all $q$ by property 6, and by lemma \ref{L:HCPC} we get $Sdeg_A((\tilde{U}_k^{n_k})_j)\le 0$ for all $j> \Ord (\tilde{U}_k) n_k-M_k$.
Therefore $Sdeg_A((\tilde{U}_{k+1})_j)\le 0$ for all $j>\Ord (\tilde{U}_k) n_k-M_k$ too. Since $\Ord (\tilde{U}_{k+1})= \Ord (\tilde{U}_k) n_k-M_k+1$, we must have $Sdeg_A((\tilde{U}_{k+1})_{\Ord (\tilde{U}_k) n_k-M_k+1})=0$. 

Now note that 
$(\tilde{U}_{k+1})_{\Ord (\tilde{U}_k) n_k-M_k+1}=(\tilde{U}_k^{n_k})_{\Ord (\tilde{U}_k) n_k-M_k+1}=(U_{k+1}')_{\Ord (\tilde{U}_k) n_k-M_k+1}\in D_1$  (because $S^{-1}_{0}=S_{0}=1$), i.e. it is a differential operator, and by lemmas \ref{T:A.Z L 7.2}, \ref{L:correct_HPC} this HCP does not depend on $A_{p;i}$, i.e. it looks like $\varepsilon_{k+1} \partial^{\Ord (\tilde{U}_k) n_k-M_k+1}$ for $\varepsilon_{k+1}\in K^*$. 

3). For convenience denote $\tilde{M}_k:=\Ord (\tilde{U}_k) n_k-M_k$.  By lemma \ref{L:Q_p} we have
\begin{multline}
\label{E:@@@1}
(\tilde{U}_{k+1})_{\tilde{M}_k}=(\tilde{U}_k^{n_k})_{\tilde{M}_k}=\varepsilon^{\Ord (\tilde{U}_k) (n_k-1)} ((\tilde{U}_k)_{\tilde{M}_{k-1}}\partial^{\Ord (\tilde{U}_k) (n_k-1)}+ \\
\partial^{\Ord (\tilde{U}_k)} (\tilde{U}_k)_{\tilde{M}_{k-1}}\partial^{\Ord (\tilde{U}_k) (n_k-2)}+\ldots + \partial^{\Ord (\tilde{U}_k) (n_k-1)}(\tilde{U}_k)_{\tilde{M}_{k-1}}) + \\
\sum_{i_1+\ldots +i_{n_k}=\tilde{M}_k  \atop i_1,\ldots ,i_{n_k}>\tilde{M}_{k-1} }(\tilde{U}_k)_{i_1}\cdots (\tilde{U}_k)_{i_{n_k}}=
n_k \varepsilon^{\Ord (\tilde{U}_k) (n_k-1)} c_{k} x\partial^{\tilde{M}_k +1}+\tilde{c}_{k} \partial^{\tilde{M}_k} +\\
\sum_{i_1+\ldots +i_{n_k}=\tilde{M}_k  \atop i_1,\ldots ,i_{n_k}>\tilde{M}_{k-1} }(\tilde{U}_k)_{i_1}\cdots (\tilde{U}_k)_{i_{n_k}},
\end{multline}
where  $\tilde{c}_{k}$ is a HCP from $Hcpc (p)$ with $Sdeg_A(\tilde{c}_{k})\le 0$ and $Sdeg_B(\tilde{c}_{k})=-\infty$, because the property 6 for $\tilde{U}_k$ holds and the HCP $(\tilde{U}_k^{n_k})_{\tilde{M}_k}$ is totally free of $B_j$. Note that all terms in the last raw of this formula have $Sdeg_A\le 0$ by lemma \ref{L:HCPC}.

4). Denote $c_{k+1}:= n_k \varepsilon_k^{\Ord (\tilde{U}_k) (n_k-1)} c_{k}$. Put $\tilde{\tilde{U}}_{k+1}:=\tilde{U}_{k+1}-c_{k+1} x\partial^{\tilde{M}_k +1}$. In the first case (i.e. $\Ord (\tilde{U}_{k+1})= \tilde{M}_k+1$) this operator satisfies condition $A_p(0)$. Indeed, conditions 1,2 from definition \ref{D:conditionA} are obvious, the symbol $\sigma (\tilde{\tilde{U}}_{k+1})$ does not contain $A_{p;i}$ and $Sdeg_A(\sigma (\tilde{\tilde{U}}_{k+1}))=0$ by 2), and $Sdeg_A((\tilde{\tilde{U}}_{k+1})_{\tilde{M}_k+1-t})<t$ for all $t>0$ by 3), 1).

Then 
$$
U_{k+1}'=S^{-1}\tilde{\tilde{U}}_{k+1}S+ c_{k+1} S^{-1} x\partial^{\tilde{M}_k +1} S.
$$
By lemma \ref{L:conditionA} the operator $S^{-1}\tilde{\tilde{U}}_{k+1}S$ satisfies condition $A_p(0)$ and the operator $S^{-1} x\partial^{\tilde{M}_k +1} S$ satisfies condition $A_p(1)$. Therefore, by lemma \ref{L:obvious} 
$
Sdeg_A(U_{k+1}')_{\tilde{M}_k-t}<t+1
$
for all $t>0$, and therefore 
$$
\ord (U_{k+1}')_{\tilde{M}_k-t}= \tilde{M}_k-t +Sdeg_A(U_{k+1}')_{\tilde{M}_k-t}< \tilde{M}_k +1
$$
for all $t>0$. On the other hand, since $(S^{-1})_{-1}=S_{-1}=0$ and $(S^{-1})_{0}=S_{0}=1$, we have 
$$
(U_{k+1}')_{\tilde{M}_k}=c_{k+1} x\partial^{\tilde{M}_k+1} + (\tilde{\tilde{U}}_{k+1})_{\tilde{M}_k},
$$
and therefore 
$$
U_{k+1}'=a_{\tilde{M}_k+1}\partial^{\tilde{M}_k+1} +a_{\tilde{M}_k}\partial^{\tilde{M}_k}+ \ldots +a_0
$$
with $a_{\tilde{M}_k+1}=\varepsilon_{k+1} + c_{k+1} x$ and $a_i\in \hat{R}$ for $i<\tilde{M}_k+1$.

5). In the second case (i.e. $\Ord (\tilde{U}_{k+1})= \tilde{M}_k$) by steps 1), 3) we get for all $t\ge 0$
$$
Sdeg_A(\tilde{\tilde{U}}_{k+1})_{\tilde{M}_k-t}< t+1.
$$
Consider $U_{k+1}^*:=\tilde{\tilde{U}}_{k+1} + \partial^{\tilde{M}_k+1}$. Then $\Ord (U_{k+1}^*)=\tilde{M}_k+1$, and $U_{k+1}^*$ satisfies condition $A_p(0)$. We have $\tilde{U}_{k+1}=U_{k+1}^*- \partial^{\tilde{M}_k+1}+c_{k+1} x\partial^{\tilde{M}_k+1}$, hence 
$$
U_{k+1}'=S^{-1}U_{k+1}^*S+ c_{k+1}S^{-1}x\partial^{\tilde{M}_k+1}S - S^{-1}\partial^{\tilde{M}_k+1}S. 
$$
As in step 4), by lemma \ref{L:conditionA} the operators $S^{-1}U_{k+1}^*S$, $S^{-1}\partial^{\tilde{M}_k+1}S$ satisfy condition $A_p(0)$, and the operator $S^{-1}x\partial^{\tilde{M}_k+1}S$ satisfies condition $A_p(1)$.    
Therefore, by lemma \ref{L:obvious}  $Sdeg_A(U_{k+1}')_{\tilde{M}_k-t}< t+1$ for any $t>0$, and therefore for any $t>0$
$$
\ord (U_{k+1}')_{\tilde{M}_k-t} = \tilde{M}_k-t+ Sdeg_A (U_{k+1}')_{\tilde{M}_k-t}<\tilde{M}_k+1. 
$$
On the other hand, since $(S^{-1})_{-1}=S_{-1}=0$ and $(S^{-1})_{0}=S_{0}=1$, we have
$$
(U_{k+1}')_{\tilde{M}_k}= c_{k+1} x\partial^{\tilde{M}_k+1} + (\tilde{\tilde{U}}_{k+1})_{\tilde{M}_k}
$$
and therefore $\ord (U_{k+1}')_{\tilde{M}_k} = \tilde{M}_k+1$ (as $Sdeg_A (\tilde{\tilde{U}}_{k+1})_{\tilde{M}_k}\le 0$) and 
$$
U_{k+1}'=c_{k+1} x\partial^{\tilde{M}_k+1} +a_{\tilde{M}_k}\partial^{\tilde{M}_k}+ \ldots +a_0
$$
with $a_i\in \hat{R}$ for $i<\tilde{M}_k+1$. 

6). Note that $\Phi_u^{-1} (U_{k+1}')\in A_1$.  Then in the second case (i.e. $\Ord (\tilde{U}_{k+1})= \tilde{M}_k$) the highest coefficient of the differential operator $\Phi_u^{-1} (U_{k+1}')$ will be equal to $H:=c_{k+1} f g^{(\tilde{M}_k+1)/d}\in x\cdot K[x]$, and in the first case (i.e. $\Ord (\tilde{U}_{k+1})= \tilde{M}_k +1$) it will be equal to $\tilde{H}:=(\varepsilon_{k+1} + c_{k+1} f) g^{(\tilde{M}_k+1)/d}\in K[x]$. So, either $c_{k+1}f=H g^{-(\tilde{M}_k+1)/d}$ or 
$\varepsilon_{k+1} + c_{k+1} f= \tilde{H} g^{-(\tilde{M}_k+1)/d}$. In both cases, differentiating these equalities with respect to $x$ we get (we'll use again the same notation $H$ both for $H$ and $\tilde{H}$ below)
$$
c_{k+1} f'=c_{k+1} g^{-1/d}=H'g^{-(\tilde{M}_k+1)/d} - \frac{\tilde{M}_k+1}{d} H g^{-1-(\tilde{M}_k+1)/d}g',
$$ 
whence 
\begin{equation}
\label{E:difeq*1}
c_{k+1} g^{1+ \frac{\tilde{M}_k}{d}}= H'g- \frac{\tilde{M}_k+1}{d} Hg'
\end{equation}
(obviously, $d_2 | \tilde{M}_k$, but $d$ may not divide $\tilde{M}_k$; so, in particular, in this case all multiplicities of $g$ are divisible by a divisor $d_1$ of $GCD(d,l)$, i.e. $g=\tilde{g}^{d_1}$, $\tilde{g}\in \dc [x]$, $d=d_1d_3$ and $d_3 | \tilde{M}_k$). Just the same arguments as in lemma \ref{L:induction_step}, 3. 6) show that  $\deg_x (H)= (1+ \frac{\tilde{M}_k}{d})l - (l-1)$\footnote{Note again that if $l=d$, these arguments imply that \eqref{E:difeq*1} is impossible, $\Ord (\tilde{U}_{k+1})> \Ord (\tilde{U}_k)n_k - M_k+1$.}.

 Dividing the equation \eqref{E:difeq*1} by $\tilde{g}^{d_1-1}$, we get the equation
$$
c_{k+1} \tilde{g}^{1+ \frac{\tilde{M}_k}{d_3}}= H'\tilde{g}- \frac{\tilde{M}_k+1}{d_3} H\tilde{g}',
$$
and by lemma \ref{L:polynomials} (for $A=1+ \frac{\tilde{M}_k}{d_3}$, $\tilde{d}=d_3$, $\tilde{l}=l/d_1$, $\tilde{z}=\tilde{M}_k$) this equation has a solution only if $\tilde{g}$ has one root, because $l/d<1$.

7).  If $g$ has only one root, then $g=(1+\alpha x)^l$ for some $\alpha\in K$. In this case necessarily $H=\tilde{c}(1+\alpha x)^{(1+ \frac{\tilde{M}_k}{d})l-l+1}$, $\tilde{c}\in K$. But in the second case $H(0)=0$ - a contradiction, so item 2) holds. So, only the first case is possible. 

{\it Item} 5. By the same arguments as in item 2. 2) we get $(U_{k+1}')_{\Ord (U_{k+1}')}=(\tilde{U}_k^n)_{\Ord (U_{k+1}')}=(\tilde{U}_{k+1})_{\Ord (\tilde{U}_{k+1})}\in D_1$ and therefore $(U_{k+1}')_{\Ord (U_{k+1}')}=\varepsilon_{k+1} \partial^{\Ord (\tilde{U}_{k+1})}$ with $0\neq \varepsilon_{k+1}\in K$, because $Sdeg_A((\tilde{U}_{k+1})_j)\le 0$ for all $j>\tilde{M}_k$ (see \eqref{E:@@1} and explanations in 3. 2).). 

By item 2. we know that $Sdeg_A([\tilde{P}, \tilde{U}_{k+1}]_{\tilde{M}_k+p-t})<t$ for all $t>0$. Therefore,
$$
Sdeg_A(\tilde{U}_{k+1})_{\tilde{M}_k-t}< t+1
$$
for all $t>0$. Since $\chi := \Ord (\tilde{U}_{k+1})>\tilde{M}_k+1$, we get 
$$
Sdeg_A(\tilde{U}_{k+1})_{\chi -t}< t
$$
for all $t>0$. In particular, the operator $\tilde{U}_{k+1}$ satisfies condition $A_p(0)$. 

{\it Item} 6. Since the operator $\tilde{U}_{k+1}$ satisfies condition $A_p(0)$, the operator $U_{k+1}'$ also satisfies condition $A_p(0)$ by proposition \ref{C:P'}. Therefore, $U_{k+1}'$ is a formally elliptic differential operator (cf. similar arguments in 3., 4),5)), i.e. $U_{k+1}'=\varepsilon_{k+1} \partial^{\Ord (\tilde{U}_{k+1})}+\ldots$, and therefore, $\ord (U_{k+1}')=\Ord (U_{k+1}')$. 

Analogously,  
$$
[P',U_{k+1}']=S^{-1}[\tilde{P},\tilde{U}_{k+1}]S\in D_1\quad\mbox{and}\quad \Ord ([P',U_{k+1}'])=\tilde{M}_k+p.
$$ 
By proposition \ref{C:P'} $[P',U_{k+1}']$ satisfies condition $A_p(0)$. Since $U_{k+1}', [P',U_{k+1}']$ are differential operators, their homogeneous components are also differential operators, i.e. they don't depend on $A_{p,i}$, cf. lemma \ref{L:correct_HPC}. Then, for any $t>0$ we have 
$$
\ord ([P',U_{k+1}']_{\tilde{M}_k+p-t})=Sdeg_A([P',U_{k+1}']_{\tilde{M}_k+p-t})+\tilde{M}_k+p-t<\tilde{M}_k+p,
$$ 
and $\ord ([P',U_{k+1}']_{\tilde{M}_k+p})=\tilde{M}_k+p=\Ord ([P',U_{k+1}'])$, i.e. the highest term of the {\it differential operator}  $[P',U_{k+1}']$ coincides with $[P',U_{k+1}']_{\tilde{M}_k+p}=n_k\tilde{\varepsilon}_1\varepsilon_k^{n_k-1}\partial^{(n_k-1)\Ord (\tilde{U}_k)+\Ord [\tilde{P}, \tilde{U}_k]}$, see item 2.

 Put $Q_{k+1}:=\Phi_u^{-1}(U_{k+1}')$. Note that $\ord (Q_{k+1})=\Ord (Q_{k+1})= \ord (U_{k+1}')=\Ord (U_{k+1}')= \Ord(\tilde{U}_{k+1})$, because $\Phi_u^{-1}$ preserves the order filtration on $D_1$ and $g(0)\neq 0$. Now we have 
$$
\ord ([P,Q_{k+1}])= \ord \Phi_u^{-1}([P',U_{k+1}'])=\ord ([P',U_{k+1}'])=\Ord ([P',U_{k+1}'])= \Ord ([\tilde{P},\tilde{U}_{k+1}]).
$$
Since by items 3., 2. we have $\ord (P)+ \ord (Q_{k+1})-1 > \tilde{M}_k+p= \Ord ([\tilde{P},\tilde{U}_{k+1}])$, it follows by \cite[L. 2.7]{Dixmier} that $f_{0,1}(P)$ is proportional with $f_{0,1}(Q_{k+1})$, i.e. $f_{0,1}(Q_{k+1})^p=\tilde{c}f_{0,1}(P)^{\ord (Q_{k+1})}=\tilde{c}f_{0,1}(P)^{\Ord(\tilde{U}_{k+1})}$. Therefore, $l\Ord(\tilde{U}_{k+1})$ is divisible by $d$, i.e. $\Ord (\tilde{Q}_{k+1})$ is divisible by $d_2$.

{\it Items} 7., 8. immediately follow from item 3., 4. (cf. formulae \eqref{E:@@1}, \eqref{E:@@@1}).

\end{proof}

If $g$ has more than one root, then by induction lemma \ref{L:induction_step1} we get a sequence of operators $\tilde{U}_k$ satisfying properties 1-8 from lemma. But the sequence $M_k$ strictly decreases, and at the same time $M_k>0$ by property 3, a contradiction.\footnote{The same contradiction proves that $l\neq d$ (i.e. $l<d$). Thus we get as a byproduct an alternative proof of \cite[Th. 4.1 (3)]{GGV}.} 

If $g$ has one root, then the sequence of operators $\tilde{U}_k$ can stop, by item 4., on an operator $\tilde{U}_I$ with $\Ord (\tilde{U}_{I})= \Ord (\tilde{U}_{I-1})n_{I-1} - M_{I-1}+1$. Besides, we have the sequence of differential operators $U_k'\in D_1$ and $Q_k\in A_1$. From the proof of item 6. we know for $k=1, \ldots ,I-1$ that 
$$
\ord (Q_{k})=\Ord (Q_{k})= \ord (U_{k}')=\Ord (U_{k}')= \Ord(\tilde{U}_{k})
$$
and therefore  $\Ord (\tilde{U}_{I-1})n_{I-1} - M_{I-1}$ is divisible by $d_2$, so $\Ord (\tilde{U}_{I})=1$ mod $d_2$. Moreover, since $d|M_0$, by induction we get that 
$GCD\{\Ord (\tilde{P}), \Ord (\tilde{Q}), \Ord (\tilde{U}_1), \ldots , \Ord (\tilde{U}_{I-1})\}$ divides $M_{I-1}$, hence $\Ord (\tilde{U}_{I-1})n_{I-1} - M_{I-1}$ is divisible by this GCD, and therefore
$$
\Ord (\tilde{U}_{I})=1 \quad \mbox{mod} \quad GCD\{\Ord (\tilde{P}), \Ord (\tilde{Q}), \Ord (\tilde{U}_1), \ldots , \Ord (\tilde{U}_{I-1})\}.
$$

By item 4. we also know that 
$$
U_{I}'=a_{\Ord (\tilde{U}_{I-1})n_{I-1} - M_{I-1}+1}\partial^{\Ord (\tilde{U}_{I-1})n_{I-1} - M_{I-1}+1} +a_{\Ord (\tilde{U}_{I-1})n_{I-1} - M_{I-1}}\partial^{\Ord (\tilde{U}_{I-1})n_{I-1} - M_{I-1}}+ \ldots +a_0
$$
with $a_{\Ord (\tilde{U}_{I-1})n_{I-1} - M_{I-1}+1}=\varepsilon_{k+1} + c_{k+1} x$ and $a_i\in \hat{R}$ for $i<\Ord (\tilde{U}_{I-1})n_{I-1} - M_{I-1}+1$. Then $Q_I:=\Phi_u^{-1}(U_I')\in A_1$ and 
$$
\ord (Q_I)=\Ord (\tilde{U}_{I-1})n_{I-1} - M_{I-1}+1,
$$
because $\Phi_u^{-1}$ preserves the order filtration on $D_1$ and $g(0)\neq 0$. 

So, the sequence of operators $Q_i$ is the needed sequence $F_i(P,Q)$ from theorem. 

The equality \eqref{E:distance} follows from
$$
\sum_{i=0}^{I-1} (n_i\ord F_i(P,Q)-\ord F_{i+1}(P,Q))=\sum_{i=0}^{I-1} (M_i-M_{i+1})=M_0-M_{I}=
p+q-1.
$$
\end{proof} 

We can say more about the operators $F_i(P,Q)$ from theorem \ref{T:DC-pairs}. Namely, we'll show below that they are of subrectangular type. Before we prove it, let's make several preliminary comments.  

Assume there exists a  DC-pair $P,Q\in A_1$ that provides a counterexample to the Dixmier conjecture and $\varphi$ is the corresponding endomorphism with $\varphi (x)=P$, $\varphi (\partial )=Q$. Then, as it was noted in section \ref{S:DC-pairs} there exists a DC-pair of subrectangular type satisfying conditions of theorem \ref{T:DC-pairs} and conditions of lemma \ref{L:order_of_polynomial} (remark \ref{R:order_of_polynomial}).  Moreover, without loss of generality and by remark \ref{R:DC-pais} we can assume 
$$f_{0,1}(P)=(1+\alpha x)^{ln}y^{dn}, \quad f_{0,1}(Q)=(1+\alpha x)^{lm}y^{dm}, \quad 1<l<d, \quad (m,n)=1, \quad \epsilon (\varphi)\notin \dn .
$$ 
Let $f_{1,0}(P)=\tilde{h}x^{ln}$, $\tilde{h}\in \dc [y]$, then $\tilde{h}=\tilde{g}^n$ for some $\tilde{g}\in \dc [y]$ of degree $d$ and $f_{1,0}(Q)=\tilde{g}^mx^{lm}$ (cf. arguments in section \ref{S:DC-pairs}).

For convenience, let's introduce one notation (from \cite{GGV}): for an operator $B\in A_1$ with $f_{1,0}(B)=hx^{v_{1,0}(B)}$, $h\in \dc [y]$, $f_{0,1}(B)=gy^{v_{0,1}(B)}$, $g\in \dc [x]$, we define  
$$en_{1,0}(B):=(v_{1,0}(B),\deg_y(h(y))), \quad st_{1,0}(B):=(v_{1,0}(B),-\Ord (h(x))),$$ 
$$en_{0,1}(B):=(-\Ord (g(x)),v_{0,1}(B)),\quad st_{0,1}(B):=(\deg_x(g(x)),v_{0,1}(B)).$$

\begin{theorem}
\label{T:subrectangular_sequence}
Under assumptions of theorem \ref{T:DC-pairs},  the operators  $F_i(P,Q)$, $i=0, \ldots ,I$ are of subrectangular type.

Let $f_{1,0}(Q)=\tilde{g}^mx^{lm}$, $f_{1,0}(Q_I)=H_1x^{v_{1,0}(Q_I)}$, where $\tilde{g}=\tilde{g}_1^{d_1}$, $d=d_1d_3$, $l=d_1l_3$, $H_1\in K[y]$. If  the polynomial $\tilde{g}$ has one root, say $\tilde{g}=(b+\tilde{\alpha} y)^d$,\footnote{$\tilde{\alpha}^d=\alpha^l$} then $H_1=c(b+\tilde{\alpha} y)^{v_{0,1}(Q_I)}$ for some $c\in K$. 

If  the polynomial $\tilde{g}$ has more than one root, then $H_1$ satisfies the differential equation
\begin{equation}
\label{E:dif_eq_H_1}
\frac{\partial H_1}{\partial y}\tilde{g}_1- \frac{(l(M-n)+1)n_{I-1}}{l_3}H_1\frac{\partial \tilde{g}_1}{\partial y} =\tilde{c}_1H^{n_{I-1}-1}\tilde{g}_1^{d_1(M-n)+1}, \quad \tilde{c}_1\in K, 
\end{equation}
where $H\in K[y]$, $f_{1,0}(Q_{I-1})=Hx^{v_{1,0}(Q_{I-1})}$, satisfies the differential equation
\begin{equation}
\label{E:dif_eq_H}
\frac{\partial H}{\partial y}\tilde{g}_1- \frac{l(M-n)+1}{l_3}H\frac{\partial \tilde{g}_1}{\partial y} =\tilde{c}\tilde{g}_1^{d_1(M-n)+1}, \quad \tilde{c}\in K,
\end{equation}
and 
$$
M=  \frac{(n_{I-2}-1)\ord (Q_{I-2})+(n_{I-3}-1)\ord (Q_{I-3})+\ldots +(n-1)dm}{d}
$$
(note that $H_1, H$ are uniquely determined by equations \eqref{E:dif_eq_H_1}, \eqref{E:dif_eq_H}).

Moreover, the polynomial $\tilde{g}$ has more that one root only if $l|d$, and in this case 
$$\deg H_1=((l(M-n)+1)n_{I-1}-1)d_2+1, \quad \deg H=(l(M-n)+1)d_2.$$ 
\end{theorem}

\begin{proof}

The proof is by induction. For $F_0(P,Q)$ the claim is trivial. 

For $i=1$ we have the following alternative: 

{\it Case} 1. We have $\{f_{1,0}(Q_1), f_{1,0}(P)\}= 0$. 

Then by theorem \ref{T:Dixmier2.7} we have $f_{1,0}(Q_1)^{ln}=cf_{1,0}(P)^{v_{1,0}(Q_1)}$, i.e. $f_{1,0}(Q_1)=c\tilde{g}^Nx^{v_{1,0}(Q_1)}$, where $N=v_{1,0}(Q_1) /l$. 
So, $en_{1,0}(Q_1)=(l,d)v_{1,0}(Q_1)/l$.
By lemma  \ref{L:induction_step} we have two possible cases:

a) if $v_{0,1}(Q_1)=\ord (Q_1)> dm(n-1)-dn+1$, then $\ord (Q_1)$ is divisible by $d_2$ and $f_{0,1}(Q_1)^{dn}=\tilde{c}f_{0,1}(P)^{\ord (Q_1)}$ (see the proof of item 6 in lemma).  So, in particular, 
$st_{0,1}(Q_1)dn=st_{0,1}(P)\ord (Q_1)$, i.e. $st_{0,1}(Q_1)=(l,d) \ord (Q_1)/d$ and $st_{0,1}(Q_1)$, $en_{1,0}(Q_1)$ are proportional. Since $v_{1,0}(Q_1)\ge l\ord (Q_1)/d$ and $dv_{1,0}(Q_1)/l\le \ord (Q_1)$, we must have $v_{1,0}(Q_1)= l\ord (Q_1)/d$ and therefore $st_{0,1}(Q_1)=en_{1,0}(Q_1)$, i.e. $Q_1$ is of subrectangular type. Moreover, in this case $f_{0,1}(Q_1)$ is a (fractional) power of $f_{0,1}(P)$ and 
$f_{1,0}(Q_1)$ is a (fractional) power of $f_{1,0}(P)$, i.e. $st_{0,1}(Q_1)$ is proportional with $st_{0,1}(P)$ and $en_{1,0}(Q_1)$ is proportional with $en_{1,0}(P)=st_{0,1}(P)$. 

b) if $v_{0,1}(Q_1)=\ord (Q_1)= dm(n-1)-dn+1$, then $st_{0,1}(Q_1)=(l(mn-m-n)+1,d(mn-m-n)+1)$ (see the proof of item 8 in lemma \ref{L:induction_step}). But we have $v_{1,0}(Q_1)\ge l(mn-m-n)+1$ and $d(mn-m-n)+1\ge d v_{1,0}(Q_1)/l$, whence $l/d\ge 1$ - a contradiction, as $l<d$. So, this case is impossible. 

{\it Case} 2. We have $\{f_{1,0}(Q_1), f_{1,0}(P)\}\neq 0$. Consider the following two cases:

a) $\tilde{g}$ has more than one root. 

Since $[Q,P]=1$, by theorem \ref{T:Dixmier2.7} we have $\{f_{1,0}(Q), f_{1,0}(P)\}= 0$. Recall that in step 2 of the proof of lemma \ref{L:induction_step} we get 
$[P,Q_1]=-nQ^{n-1}$. So, the operator $[P,Q_1]$ is of subrectangular type, and by theorem \ref{T:Dixmier2.7} we have $f_{1,0}([Q_1,P])=n f_{1,0}(Q)^{n-1}=n \tilde{g}^{m(n-1)}x^{lm(n-1)}$. 

Then by theorem \ref{T:Dixmier2.7} we have 
$$
\{f_{1,0}(Q_1), f_{1,0}(P)\}=f_{1,0}([Q_1,P]), \quad v_{1,0}(Q_1)+v_{1,0}(P)-1=v_{1,0}([Q_1,P])=(n-1)lm.  
$$
Let $f_{1,0}(Q_1)=Hx^{v_{1,0}(Q_1)}$, $H\in K[y]$. Then 
$$
\{f_{1,0}(Q_1), f_{1,0}(P)\}=-ln\frac{\partial H}{\partial y}\tilde{g}^nx^{v_{1,0}(Q_1)+ln-1}+
n v_{1,0}(Q_1)H\tilde{g}^{n-1}\frac{\partial \tilde{g}}{\partial y}x^{v_{1,0}(Q_1)+ln-1},
$$
so we get the equation
$$
-l\frac{\partial H}{\partial y}\tilde{g}^nx^{v_{1,0}(Q_1)+ln-1}+
 v_{1,0}(Q_1)H\tilde{g}^{n-1}\frac{\partial \tilde{g}}{\partial y}x^{v_{1,0}(Q_1)+ln-1}=
\tilde{g}^{m(n-1)}x^{lm(n-1)}, 
$$
whence $v_{1,0}(Q_1)=lm(n-1)-ln+1$ and 
\begin{equation}
\label{E:difeq*2}
-l\frac{\partial H}{\partial y}\tilde{g}+ v_{1,0}(Q_1)H\frac{\partial \tilde{g}}{\partial y} =\tilde{g}^{(m-1)(n-1)}.
\end{equation}

By lemma \ref{L:polynomials} (for $A=(m-1)(n-1)$, $\tilde{d}=l$, $\tilde{l}=d$, $\tilde{z}=l(mn-m-n)$) we have $\deg_y(H)= d(mn-m-n)+\frac{d}{l}$ (note that it is possible only if $d/l=d_2$). Hence, $\ord (Q_1)>d(mn-m-n)+1$ and then by lemma \ref{L:induction_step} $\ord (Q_1)$ is divisible by $d_2$ and $st_{0,1}(Q_1)$ is proportional with $st_{0,1}(P)=(ln,dn)$, i.e. $st_{0,1}(Q_1)=(l\ord (Q_1)/d, \ord (Q_1))$.  On the other hand, we have in this case also
$$
en_{1,0}(Q_1)=(l(mn-m-n)+1, d(mn-m-n)+d_2)
$$
and therefore $en_{1,0}(Q_1)$ is also  proportional with $st_{0,1}(P)=(ln,dn)\sim (1,d_2)$. But then $st_{0,1}(Q_1)=en_{1,0}(Q_1)$ (as $\ord (Q_1)\ge d(mn-m-n)+d_2$ and $l(mn-m-n)+1\ge l\ord (Q_1)/d$), and $Q_1$ is of subrectangular type. 

Now note that $M_1=d_2$ (in the notation of lemma \ref{L:induction_step}). So, $0<M_2<d_2$ and therefore $M_2$ can not be divisible by $d_2$, hence by lemma \ref{L:induction_step1} $M_2=1$. This means that $I=2$, and $Q_2$ is the last operator in the sequence for $P,Q$. By item 3.4.7) of lemma \ref{L:induction_step1} we have in this case $n_1=ln$, $\tilde{M}_1=(d(mn-m-n)+d_2)n_1-d_2$, 
$\ord (Q_2)=\tilde{M}_1+1$ and 
\begin{multline*}
st_{0,1}(Q_2)=((\frac{(d(mn-m-n)+d_2)n_1-d_2}{d})l+1, (d(mn-m-n)+d_2)n_1-d_2+1)=\\
((l(mn-m-n)+1)n_1, ((l(mn-m-n)+1)n_1-1)d_2+1).
\end{multline*}
So, $st_{0,1}(Q_2)$ is not proportional with $st_{0,1}(P)=(ln,dn)\sim (1,d_2)$. Now we have again two cases:

aa) $\{f_{1,0}(Q_2), f_{1,0}(P)\}= 0$. Then by theorem \ref{T:Dixmier2.7} we have $en_{1,0}(Q_2)$ is proportional with $en_{1,0}(P)=(ln,dn)\sim (1,d_2)$. Let, say, $en_{1,0}(Q_2)=(A,Ad_2)$. Then we have 
$$((l(mn-m-n)+1)n_1-1)d_2+1\ge Ad_2),\quad A\ge (l(mn-m-n)+1)n_1,$$ 
whence $-d_2+1\ge 0$ - a contradiction. So, this case is impossible.

ab) $\{f_{1,0}(Q_2), f_{1,0}(P)\}\neq 0$. Then by theorem \ref{T:Dixmier2.7} we have $\{f_{1,0}(Q_2), f_{1,0}(P)\}=f_{1,0}([Q_2,P])$. Note that 
\begin{multline*}
[Q_2,P]=[Q_1,P]Q_1^{n_1-1}+Q_1[Q_1,P]Q_1^{n_1-2}+\ldots +Q_1^{n_1-1}[Q_1,P]= \\
nQ^{n-1}Q_1^{n_1-1}+nQ_1Q^{n-1}Q_1^{n_1-2}+\ldots + nQ_1^{n_1-1}Q^{n-1},
\end{multline*}
and by remark \ref{R:subrectangular} the operator $[Q_2,P]$ is of subrectangular type (as all summands are of subrectangular type with the same highest monomials). In particular, $v_{1,0}([Q_2,P])=(n_1-1)v_{1,0}(Q_1)+(n-1)lm$. 
Moreover, by theorem \ref{T:Dixmier2.7} we also have
$$
v_{1,0}(Q_2)+v_{1,0}(P)-1=v_{1,0}([Q_2,P])=(n_1-1)v_{1,0}(Q_1)+(n-1)lm=(n_1-1)(l(mn-m-n)+1)+(n-1)lm,
$$
hence 
$$
v_{1,0}(Q_2)=(l(mn-m-n)+1)n_1,
$$
and the vertex $st_{0,1}(Q_2)$ belong to the $(1,0)$-top line of $Q_2$ (as it is vertical and passes through the point $((l(mn-m-n)+1)n_1,0)$). But then $en_{1,0}(Q_2)=st_{0,1}(Q_2)$ and $Q_2$ is of subrectangular type. 

b) $\tilde{g}$ has one root. 

Then $\tilde{g}=(b+\tilde{\alpha} y)^d$ for some $b\in K$. In this case the solution of the equation \eqref{E:difeq*2} is of the form $H=c(b+\tilde{\alpha} y)^{d(mn-m-n)+1}$, hence $en_{1,0}(Q_1)=(l(mn-m-n)+1, d(mn-m-n)+1)$. According to lemma \ref{L:induction_step} we have two possibilities: 

ba) $\ord (Q_1)$ is divisible by $d_2$, and therefore $st_{0,1}(Q_1)$ is proportional with $st_{0,1}(P)=(ln,dn)$ (see the proof of item 6 in lemma). Let, say, $st_{0,1}(Q_1)=(lA,dA)$. Then we have 
$l(mn-m-n)+1\ge lA$ and $dA\ge d(mn-m-n)+1$, whence
$$
l(mn-m-n)+1\ge lA\ge l(mn-m-n)+l/d.
$$ 
Since $l<d$, we must have $lA=l(mn-m-n)+1$, hence $dA=d(mn-m-n)+d/l$. Again it is possible only if $d/l=d_2$. But then the vertex  $st_{0,1}(Q_1)$ belongs to the $(1,0)$-top line of $Q_1$, and lies above the vertex  $en_{1,0}(Q_1)$ -- a contradiction. So, this case is impossible. 

bb) $\ord (Q_1)$ is not divisible by $d_2$. Then $\ord (Q_1)=d(mn-m-n)+1$, and $st_{0,1}(Q_1)=(l(mn-m-n)+1, d(mn-m-n)+1)$ (see item 3.4. 8) of lemma). So, $st_{0,1}(Q_1)= en_{1,0}(Q_1)$ and therefore $Q_1$ is of subrectangular type. Besides, in this case $I=1$. 

\bigskip

Summing up, note that only in the case 1 a) we need to continue our research in order to prove the assertion. So, continuing this line of reasoning, by induction we can assume  for  $1<i<I$ that $Q_i$ is an operator such that $[Q_i,P]$ is of subrectangular type and that $f_{0,1}([Q_i,P])$ is a (fractional) power of $f_{0,1}(P)$ and 
$f_{1,0}([Q_i,P])$ is a (fractional) power of $f_{1,0}(P)$ (cf. item 1 a)). Indeed, this follows from the induction hypothesis, remark \ref{R:subrectangular}, theorem \ref{T:Dixmier2.7}, and since 
$$
[Q_i,P]=[Q_{i-1},P]Q_{i-1}^{n_{i-1}-1}+Q_{i-1}[Q_{i-1},P]Q_{i-1}^{n_{i-1}-2}+\ldots +Q_{i-1}^{n_{i-1}-1}[Q_{i-1},P].
$$
In particular, $st_{0,1}([Q_i,P])$ is proportional with $st_{0,1}(P)=(ln,dn)$ and $en_{1,0}([Q_i,P])$ is proportional with $en_{1,0}(P)=st_{0,1}(P)=(ln,dn)$. 

For this induction step we need to repeat all arguments given above for the operator $Q_1$. They are practically the same. For convenience of the reader we give them here. 

{\it Case} 1. We have $\{f_{1,0}(Q_i), f_{1,0}(P)\}= 0$. Then by theorem  \ref{T:Dixmier2.7} we have $f_{1,0}(Q_i)^{ln}=cf_{1,0}(P)^{v_{1,0}(Q_i)}$, i.e. $f_{1,0}(Q_i)=c\tilde{g}^{N_i}x^{v_{1,0}(Q_i)}$, where $N_i=v_{1,0}(Q_i) /l$. 
So, $en_{1,0}(Q_i)=(l,d)v_{1,0}(Q_i)/l$.
By lemma  \ref{L:induction_step1} we have two possible cases:

a) if $v_{0,1}(Q_i)=\ord (Q_i)> \ord (Q_{i-1})n_{i-1}-M_{i-1}+1$, then $\ord (Q_i)$ is divisible by $d_2$ and $f_{0,1}(Q_i)^{dn}=\tilde{c}f_{0,1}(P)^{\ord (Q_i)}$ (see the proof of item 6 in lemma \ref{L:induction_step1}).  So, in particular, 
$st_{0,1}(Q_i)dn=st_{0,1}(P)\ord (Q_i)$, i.e. $st_{0,1}(Q_i)=(l,d) \ord (Q_i)/d$ and $st_{0,1}(Q_i)$, $en_{1,0}(Q_i)$ are proportional. Since $v_{1,0}(Q_i)\ge l\ord (Q_i)/d$ and $dv_{1,0}(Q_i)/l\le \ord (Q_i)$, we must have $v_{1,0}(Q_i)= l\ord (Q_i)/d$ and therefore $st_{0,1}(Q_i)=en_{1,0}(Q_i)$, i.e. $Q_i$ is of subrectangular type. Moreover, in this case $f_{0,1}(Q_i)$ is a (fractional) power of $f_{0,1}(P)$ and 
$f_{1,0}(Q_i)$ is a (fractional) power of $f_{1,0}(P)$. 

b) if $v_{0,1}(Q_i)=\ord (Q_i)= \ord (Q_{i-1})n_{i-1}-M_{i-1}+1=: \tilde{M}_i+1$, then 
$st_{0,1}(Q_i)=(l\tilde{M}_i/d +1,\tilde{M}_i+1)$ (see the proof of item 3.4.7) in lemma \ref{L:induction_step1}). But we have $v_{1,0}(Q_i)\ge l\tilde{M}_i/d +1$ and $\tilde{M}_i+1\ge d v_{1,0}(Q_i)/l$, whence $d/l\le 1$ - a contradiction, as $l<d$. So, this case is impossible. 

{\it Case} 2. We have $\{f_{1,0}(Q_i), f_{1,0}(P)\}\neq 0$. Consider the following two cases:

a) $\tilde{g}$ has more than one root. 

Then by theorem \ref{T:Dixmier2.7} we have 
\begin{multline*}
\{f_{1,0}(Q_i), f_{1,0}(P)\}=f_{1,0}([Q_i,P])=c_0\tilde{g}^Mx^{v_{1,0}([Q_i,P])}, \quad \mbox{where}\\ 
v_{1,0}(Q_i)+v_{1,0}(P)-1=v_{1,0}([Q_i,P])=
(n_{i-1}-1)v_{1,0}(Q_{i-1})+(n_{i-2}-1)v_{1,0}(Q_{i-2})+\ldots +(n-1)lm, \\
dM=  (n_{i-1}-1)\ord (Q_{i-1})+(n_{i-2}-1)\ord (Q_{i-2})+\ldots +(n-1)dm,
\end{multline*}
because $[Q_i,P]$ is of subrectangular type.  Since $en_{1,0}([Q_i,P])$ is proportional to $(l,d)$, we get $v_{1,0}([Q_i,P])=lM$. Obviously, $d_2$ divides the right hand side of the expression for $dM$ (cf. the proof of item 3.4. 6) of lemma \ref{L:induction_step1}), but $d$ may not divide it; so, in particular, in this case all multiplicities of $\tilde{g}$ are divisible by a divisor $d_1$ of $GCD(d,l)$, i.e. $\tilde{g}=\tilde{g}_1^{d_1}$, $\tilde{g}_1\in \dc [x]$, $d=d_1d_3$ and $d_3$ divides the right hand side of the expression for $dM$. Moreover, $d_1$ divides $l$, $l:=d_1l_3$.

Let $f_{1,0}(Q_i)=Hx^{v_{1,0}(Q_i)}$, $H\in K[y]$. Then 
$$
\{f_{1,0}(Q_i), f_{1,0}(P)\}=-ln\frac{\partial H}{\partial y}\tilde{g}^nx^{v_{1,0}(Q_i)+ln-1}+
n v_{1,0}(Q_i)H\tilde{g}^{n-1}\frac{\partial \tilde{g}}{\partial y}x^{v_{1,0}(Q_i)+ln-1},
$$
so we get the equation
$$
-l\frac{\partial H}{\partial y}\tilde{g}^nx^{v_{1,0}(Q_i)+ln-1}+
 v_{1,0}(Q_i)H\tilde{g}^{n-1}\frac{\partial \tilde{g}}{\partial y}x^{v_{1,0}(Q_i)+ln-1}=
c_0\tilde{g}^{M}x^{v_{1,0}([Q_i,P])}, 
$$
whence $v_{1,0}(Q_i)=v_{1,0}([Q_i,P])-ln+1$ (so, $(v_{1,0}(Q_i),l)=1$) and 
\begin{equation}
\label{E:difeq*3}
-l\frac{\partial H}{\partial y}\tilde{g}+ v_{1,0}(Q_i)H\frac{\partial \tilde{g}}{\partial y} =c\tilde{g}^{M-n+1}, \quad c=c_0/n.
\end{equation}
Dividing the equation \eqref{E:difeq*3} by $-l\tilde{g}_1^{d_1-1}$, we get the equation
$$
\frac{\partial H}{\partial y}\tilde{g}_1- \frac{l(M-n)+1}{l_3}H\frac{\partial \tilde{g}_1}{\partial y} =\tilde{c}\tilde{g}_1^{d_1(M-n)+1}, \quad \tilde{c}=-c/l.
$$

By lemma \ref{L:polynomials} (for $A=d_1(M-n)+1$, $\tilde{d}=l_3$, $\tilde{l}=d/d_1$,  $\tilde{z}=l(M-n)$) we have $\deg_y(H)= d(M-n)+\frac{d}{l}$ (note that again it is possible only if $d/l=d_2$). Hence, $\ord (Q_i)>d_1(M-n)+1$ and then by lemma \ref{L:induction_step1} $\ord (Q_i)$ is divisible by $d_2$ and $st_{0,1}(Q_i)$ is proportional with $st_{0,1}(P)=(ln,dn)$, i.e. $st_{0,1}(Q_i)=(l\ord (Q_i)/d, \ord (Q_i))$.  On the other hand, we have in this case also
$$
en_{1,0}(Q_i)=(l(M-n)+1, d(M-n)+d_2)
$$
and therefore $en_{1,0}(Q_i)$ is also  proportional with $st_{0,1}(P)=(ln,dn)\sim (1,d_2)$. But then $st_{0,1}(Q_i)=en_{1,0}(Q_i)$ (as $\ord (Q_i)\ge d(M-n)+d_2$ and $l(M-n)+1\ge l\ord (Q_i)/d$), and $Q_i$ is of subrectangular type. 

Now note that $M_i=d_2$ (in the notation of lemma \ref{L:induction_step1}). So, $0<M_{i+1}<d_2$ and therefore $M_{i+1}$ can not be divisible by $d_2$, hence by lemma \ref{L:induction_step1} $M_{i+1}=1$. This means that $I=i+1$, and $Q_{i+1}$ is the last operator in the sequence for $P,Q$. By item 3.4.7) of lemma \ref{L:induction_step1} we can put in this case $n_i=dn$, $\tilde{M}_i=(d(M-n)+d_2)n_i-d_2$, 
$\ord (Q_{i+1})=\tilde{M}_i+1$ and 
\begin{multline*}
st_{0,1}(Q_{i+1})=((\frac{(d(M-n)+d_2)n_i-d_2}{d})l+1, (d(M-n)+d_2)n_i-d_2+1)=\\
((l(M-n)+1)n_i, ((l(M-n)+1)n_i-1)d_2+1).
\end{multline*}
So, $st_{0,1}(Q_{i+1})$ is not proportional with $st_{0,1}(P)=(ln,dn)\sim (1,d_2)$. Now we have again two cases:

aa) $\{f_{1,0}(Q_{i+1}), f_{1,0}(P)\}= 0$. Then by theorem \ref{T:Dixmier2.7} we have $en_{1,0}(Q_{i+1})$ is proportional with $en_{1,0}(P)=(ln,dn)\sim (1,d_2)$. Let, say, $en_{1,0}(Q_{i+1})=(A,Ad_2)$. Then we have 
$$((l(M-n)+1)n_i-1)d_2+1\ge Ad_2,\quad A\ge (l(M-n)+1)n_i,$$ 
whence $-d_2+1\ge 0$ - a contradiction. So, this case is impossible.

ab) $\{f_{1,0}(Q_{i+1}), f_{1,0}(P)\}\neq 0$. Then by theorem \ref{T:Dixmier2.7} we have 
$\{f_{1,0}(Q_{i+1}), f_{1,0}(P)\}=f_{1,0}([Q_{i+1},P])$. Note that 
$$
[Q_{i+1},P]=[Q_i,P]Q_i^{n_i-1}+Q_i[Q_i,P]Q_i^{n_i-2}+\ldots +Q_i^{n_i-1}[Q_i,P],
$$
and by remark \ref{R:subrectangular} the operator $[Q_{i+1},P]$ is of subrectangular type (as all summands are of subrectangular type with the same highest monomials by the induction hypothesis). In particular, $v_{1,0}([Q_{i+1},P])=(n_i-1)v_{1,0}(Q_i)+v_{1,0}([Q_i,P])$. 
Moreover, by theorem \ref{T:Dixmier2.7} we also have
$$
v_{1,0}(Q_{i+1})+v_{1,0}(P)-1=v_{1,0}([Q_{i+1},P])=(n_i-1)v_{1,0}(Q_i)+v_{1,0}([Q_i,P])=(n_i-1)(l(M-n)+1)+lM,
$$
hence 
$$
v_{1,0}(Q_{i+1})=(l(M-n)+1)n_i,
$$
and the vertex $st_{0,1}(Q_{i+1})$ belongs to the $(1,0)$-top line of $Q_{i+1}$ (as it is vertical and passes through the point $((l(M-n)+1)n_i,0)$). But then $en_{1,0}(Q_{i+1})=st_{0,1}(Q_{i+1})$ and $Q_{i+1}$ is of subrectangular type. 

Put $f_{1,0}(Q_{i+1})=H_1x^{v_{1,0}(Q_{i+1})}$, $H_1\in K[y]$. Repeating calculations as above, we get another differential equation on $H_1$:
$$
\frac{\partial H_1}{\partial y}\tilde{g}_1- \frac{(l(M-n)+1)n_i}{l_3}H_1\frac{\partial \tilde{g}_1}{\partial y} =\tilde{c}_1H^{n_i-1}\tilde{g}_1^{d_1(M-n)+1}, \quad \tilde{c}_1\in K.
$$

b) $\tilde{g}$ has one root. 

Then $\tilde{g}=(b+\tilde{\alpha} y)^d$ for some $b\in K$. In this case the solution of the equation \eqref{E:difeq*3} is of the form $H=c(b+\tilde{\alpha} y)^{d(M-n)+1}$, hence $en_{1,0}(Q_i)=(l(M-n)+1, d(M-n)+1)$. According to lemma \ref{L:induction_step1} we have two possibilities: 

ba) $\ord (Q_i)$ is divisible by $d_2$, and therefore $st_{0,1}(Q_i)$ is proportional with $st_{0,1}(P)=(ln,dn)$ (see the proof of item 6 in lemma). Let, say, $st_{0,1}(Q_i)=(lA,dA)$. Then we have 
$l(M-n)+1\ge lA$ and $dA\ge d(M-n)+1$, whence
$$
l(M-n)+1\ge lA\ge l(M-n)+l/d.
$$ 
Since $l<d$, we must have $lA=l(M-n)+1$, hence $dA=d(M-n)+d/l$. Again it is possible only if $d/l=d_2$. But then the vertex  $st_{0,1}(Q_i)$ belongs to the $(1,0)$-top line of $Q_{i}$, and lies above the vertex  $en_{1,0}(Q_i)$ -- a contradiction. So, this case is impossible. 

bb) $\ord (Q_i)$ is not divisible by $d_2$. Then $\ord (Q_i)=d(M-n)+1$, and $st_{0,1}(Q_i)=(l(M-n)+1, d(M-n)+1)$ (see item 3.4. 7) of lemma). So, $st_{0,1}(Q_i)= en_{1,0}(Q_i)$ and therefore $Q_i$ is of subrectangular type. Besides, in this case $I=i$. 

\end{proof}

\section{DC-pairs and the string equation}
\label{S:string}

In this section we study solutions to the {\it string equation} $[Q,P]=1$, where $Q,P\in D_1$ (the DC-pairs, or more generally, arbitrary endomorphisms of the first Weyl algebra, provide partial solutions to this equation).  The natural question is how to classify, if possible, such solutions, and how  they are controlled by their normal forms. We'll show that solutions of this equation  are connected with pairs of commuting ordinary differential operators generating a ring of rank one, and vice versa, any pair of commuting differential operators generating a ring of rank one, whose spectral curve is of some special type, provide a solution to the string equation. In the last section we'll apply these results to show that there are no DC-pairs, i.e. the Dixmier conjecture is true. In this section we assume $K=\dc$ ($K$ may be assumed to be any other algebraically closed field of characteristic zero).

Theorem \ref{T:DC-pairs} motivates the following definition.

\begin{Def}
\label{D:distance}
Let $P,Q\in D_1$ be two differential operators such that $\ord (P)=\Ord (P)=p$, $\sigma (P)=\partial^{\Ord (P)}$,  $\ord (Q)=\Ord (Q)=q$, and $[Q,P]=1$ or $[Q,P]=0$. Assume that there exists a sequence of operators $Q_i:=F_i(P,Q)$, $i=0, \ldots ,I$ like in theorem \ref{T:DC-pairs} (i.e. defined by recursion in the same way) such that $\ord (Q_i)= \Ord (Q_i)$ for all $i$. In this case we'll define a sequence of numbers, called {\it sequential GCD}, as\footnote{We don't assume here that $SeqGCD_I(P,Q)=1$.}   
$$
SeqGCD_i(P,Q):=GCD (\ord (P), \ord (Q_0), \ldots ,\ord (Q_i)), \quad i=0, \ldots ,I
$$
and a {\it distance} of $P,Q$  by the formula \eqref{E:distance}:
$$
Dist(P,Q):= \sum_{i=0}^{I-1} (n_i\ord F_i(P,Q)-\ord F_{i+1}(P,Q)).
$$ 
The number $SeqGCD_I(P,Q)$ will be called as the {\it end of sequential GCD sequence} if $SeqGCD_I(P,Q)<SeqGCD_{I-1}(P,Q)$ .

The condition on orders imply that $\sigma (Q_i)=c_i\partial^{\Ord (Q_i)}$, $c_i\in K$ (see remark \ref{R:ord=deg}). We'll call the pairs of constants $(\Ord (Q_0), c_0), \ldots , (\Ord (Q_I), c_I)$ as the {\it sequence} of $P,Q$. 
\end{Def}

We formulated definition above in a slightly more general way than it was mentioned in Introduction. In most cases we'll use it to describe the head-chopping process; in this case we'll usually assume that $SeqGCD_I(P,Q)=1$ and $SeqGCD_{I-1}(P,Q)>1$ as in theorem \ref{T:DC-pairs}. As the sequences $F_i$ are not uniquely determined, it is natural to ask if the distance depends on the choice of a sequence with the same end of sequential GCD sequence. It is not difficult to see that distance does not depend on such a choice, namely, if $F_i'$, $i=0, \ldots ,I'$ is another sequence of polynomials such that $SeqGCD_{I'}(P,Q)=SeqGCD_I(P,Q)$ and $SeqGCD_{I'}(P,Q)<SeqGCD_{I'-1}(P,Q)$, $SeqGCD_{I}(P,Q)<SeqGCD_{I-1}(P,Q)$, then both distances coincide. In particular, this follows from lemma below (an improved analogue of lemma \ref{L:conjugation in E_q}):

\begin{lemma}
\label{L:invariant_conjugation}
Let $\tilde{P}=\partial^p, \tilde{Q}\in \hat{D}_1^{sym}$ be two operators such that $\sigma (\tilde{Q})=c\partial^q$, $c\in K$, and $[\tilde{Q},\tilde{P}]=1$ or $[\tilde{Q},\tilde{P}]=0$. Assume that there exists a sequence of operators $\tilde{Q}_i:=F_i(\tilde{P},\tilde{Q})$, $i=0, \ldots ,I$ like in theorem \ref{T:DC-pairs} (i.e. defined by recursion in the same way) such that $\sigma (\tilde{Q}_i)=\tilde{c}_i\partial^{\Ord \tilde{Q}_i}$, $\tilde{c}_i\in K^*$ for all $i$. Let 
$
\tilde{d}:=SeqGCD_I(\tilde{P},\tilde{Q})=GCD (p, \Ord (\tilde{Q}_0), \ldots ,\Ord (\tilde{Q}_I)).
$  
Then 
\begin{enumerate}
\item
 there exists a monic invertible operator $\tilde{S}\in K^{\oplus p}((\tilde{D}^{-1}))\subset E_p$ with $\ord_{\tilde{D}}(\tilde{S})=0$ such that all coefficients  of the operator $\tilde{S}^{-1}\hat{\Phi}(\tilde{Q})\tilde{S}$ 
satisfy the following condition (we'll call such operator as {\it auxiliary  operator}):  
$$
\tilde{S}^{-1}\hat{\Phi}(\tilde{Q})\tilde{S}=c_{q}\tilde{D}^q+ c_{q-k_1}\tilde{D}^{q-k_1} +\ldots + c_{q-k_1-\ldots -k_I}\tilde{D}^{q-k_1-\ldots -k_I}+\sum_{i< q-k_1-\ldots -k_I}q_i\tilde{D}^{i},
$$
where $\quad c_i\in K$\footnote{Recall that $K$ is embedded to $K^{\oplus p}$ as $a\mapsto (a,\ldots ,a)$}, $\quad q_i\in K^{\oplus p}$ for $i\neq -p$ (and for $i=-p$ $q_i$ has a shape $c\Gamma+ \tilde{q}_i$, $c\in K$,  $\tilde{q}_i\in  K^{\oplus p}$), $\quad [q_i,\tilde{D}^{\tilde{d}}]=0$ (and $\quad [\tilde{q}_i,\tilde{D}^{\tilde{d}}]=0$)\footnote{We'll call these coefficients, allowing for some freedom of speech, {\it $\tilde{D}^{\tilde{d}}$-invariant}}, $k_i=n_{i-1}\Ord (\tilde{Q}_{i-1})-\Ord (\tilde{Q}_i)$. 

Moreover, there exists  such an operator $\tilde{S}$ with additional conditions on its coefficients $s_l$: $s_{l,i}=0$ for $i=0, \ldots ,\tilde{d}-1$ for all $l$.

In particular, if $\tilde{d}=1$, then all coefficients $q_i$ are constant (i.e. $q_i$ or $\tilde{q}_i$ belong to $K$, in the same sense as above), so that $\tilde{S}^{-1}\hat{\Phi}(\tilde{Q})\tilde{S}\in K((\tilde{D}^{-1}))$ if $[\tilde{P},\tilde{Q}]=0$ and $\tilde{S}^{-1}\hat{\Phi}(\tilde{Q})\tilde{S}\in K[x]((\tilde{D}^{-1}))\subset K[\Gamma ]((\tilde{D}^{-1}))$ if $[\tilde{P},\tilde{Q}]=1$\footnote{The ring of classical Schur's pseudo-differential operators $E=K[[x]]((\tilde{D}^{-1}))$ is naturally embedded to $E_p$ by the following receipt: any series $P$ from $E$ can be represented as a sum of homogeneous components (HCPs) with respect to the $\Ord$ function. Rewriting them in the G-form, we get a series from $E_p$. Besides, the homogeneous components of this series don't depend on $A_{p,i}$ and $B_j$ and $Sdeg_A(P_{\ord (P)-i})\le i$ for all $i>0$ (if $P$ is monic, then even $Sdeg_A(P_{\ord (P)-i})< i$ and $\ord (P)=\Ord (P)$). 

It's easy to see that the image of the subring $D_1\subset E$ under this embedding is characterised by the properties of theorem \ref{T: P in hat(D) is a dif_op}, where we adopt definition \ref{D:conditionA} to the ring $E_p$ in the following sense: condition 2 from definition (totally freeness) is replaced by an equivalent system of equations \eqref{E:system_on_B_j} on coefficients, all other conditions are the same. In particular, since the homomorphism $\hat{\Phi}$ is an embedding, this image of $D_1$ coincides with the image $\hat{\Phi}(D_1)$ of the subring $D_1\subset \hat{D}_1^{sym}$ }. 

The constants $c_i$ don't depend on the choice of a sequence $F_i$ with the same end of sequential GCD sequence. 
\item
in case $[\tilde{Q},\tilde{P}]=1$ we have 
$$
Dist(\tilde{P},\tilde{Q}):=\sum_{i=0}^{I-1} (n_i\Ord F_i(\tilde{P},\tilde{Q})-\Ord F_{i+1}(\tilde{P},\tilde{Q}))= k_1+\ldots +k_I <p+q,
$$
and if $SeqGCD_{I-1}(\tilde{P},\tilde{Q})>\tilde{d}$, then $c_{q-k_1-\ldots -k_I}=c\tilde{c}_I\neq 0$. 

In particular, the distance does not depend on the choice of a sequence $F_i$ with the same end of sequential GCD sequence.
\end{enumerate}
\end{lemma}

\begin{rem}
\label{R:application_of_lemma_inv_conj}
This lemma can be applied e.g. to a normal form $\tilde{Q}$ of a differential operator $Q$ with respect to $P\in D_1$, satisfying similar conditions. Namely, let $P,Q\in D_1$ be two differential operators such that $\ord (P)=\Ord (P)=p$, $\ord (Q)=\Ord (Q)=q$, and $[Q,P]=1$ or $[Q,P]=0$. Assume that there exists a sequence of operators $Q_i:=F_i(P,Q)$, $i=0, \ldots ,I$ like in theorem \ref{T:DC-pairs} (i.e. defined by recursion in the same way) such that $\ord (Q_i)= \Ord (Q_i)$ for all $i$. Then the normal form of $Q$ with respect to $P$ satisfies conditions of lemma. 

For, by remark \ref{R:ord=deg} all operators $P,Q,Q_i$ have invertible highest coefficients, hence they are regular. Therefore by proposition \ref{T:A.Z 7.2} and by basic properties of the order function from remark \ref{R:ord_properties} normal forms of operators $Q_i$ with respect to $P$ have the same orders as $Q_i$, and their highest symbols have shape $c\partial^{\Ord (Q_i)}$, $c\in K$\footnote{Clearly, without loss of generality $P$ can be assumed to be normalized}. 

For a pair of operators $\tilde{P}, \tilde{Q}$ from lemma  the same notions as in definition \ref{D:distance} (i.e. sequential GCD, distance and sequence) are defined, and we'll use them below.

\end{rem}

\begin{proof}
1. By proposition \ref{T:A.Z 7.1} and lemma \ref{L:Q_p} all homogeneous components of $\tilde{Q}$ except maybe one (if $[\tilde{Q},\tilde{P}]=1$) belong to $C(\partial^p)$, and if $[\tilde{Q},\tilde{P}]=1$, then the only one homogeneous component not from the centralizer is $\tilde{Q}_{-p}$ of $Sdeg_A(\tilde{Q}_{-p})=1$ and it has a shape $-(x\partial )/p +a$, where $a\in  C(\partial^p)$ (its explicit shape, up to a constant, is determined in lemma \ref{L:Q_p}).  Then all coefficients of the series $\hat{\Phi}(\tilde{Q})$ except maybe one belong to $K^{\oplus p}$, and if $[\tilde{Q},\tilde{P}]=1$, then the only one coefficient not from the centralizer has a shape $-(x\partial )/p +a$, where $a\in  K^{\oplus p}$. 

To prove our claim we apply lemma \ref{L:conjugation in E_q} to the operator  $\hat{\Phi}(\tilde{Q})$ several ($I$) times. First, by this lemma and remark \ref{R:S_over_K} there exists an invertible monic operator $\tilde{S}_0$ from $K^{\oplus p}((\tilde{D}^{-1}))$ such that all coefficients  of the operator $\tilde{S}_0^{-1}\hat{\Phi}(\tilde{Q})\tilde{S}_0$ commute with $\tilde{D}^{d_1}$ (in the same sense as it is formulated), where $d_1=SeqGCD_0(\tilde{P},\tilde{Q})$. For, it is claimed there in the case if $[\tilde{Q},\tilde{P}]=0$, and in the case $[\tilde{Q},\tilde{P}]=1$ the proof is the same as in loc. cit. (and we'll repeat it below for generic step), because by relations from lemma \ref{L:1} and analogous relations in the ring $E_p$ we have for any such $\tilde{S}\in K^{\oplus p}((\tilde{D}^{-1}))$ 
\begin{equation}
\label{E:gamma-congugation}
\tilde{S}^{-1}\Gamma \tilde{D}^{-p}\tilde{S}=\Gamma  \tilde{D}^{-p}+ T, \quad T\in K^{\oplus p}((\tilde{D}^{-1})), \quad \Ord (T)<-p
\end{equation}
and therefore all coefficients can be made, step by step, $\tilde{D}^{d_1}$-invariant. Then we claim that 
$$
\tilde{S}_0^{-1}\hat{\Phi}(\tilde{Q})\tilde{S}_0=c_{q}\tilde{D}^q+ c_{q-k_1}\tilde{D}^{q-k_1} + \mbox{terms of lower order}, \quad c_i\in K,
$$
where $k_1= n_0 \Ord (\tilde{Q})-\Ord (\tilde{Q}_1)<p+q$. Indeed, 
$\tilde{Q}_1=\tilde{Q}^{n_0}- \varepsilon_0\partial^{p m_0}$, then $\tilde{S}_0^{-1}\hat{\Phi}(\tilde{Q}_1)\tilde{S}_0=(\tilde{S}_0^{-1}\hat{\Phi}(\tilde{Q})\tilde{S}_0)^{n_0}-\varepsilon_0^{n_0}\tilde{D}^{p m_0}$ has the highest symbol of the shape $c_{q n_0-k_1}\tilde{D}^{q n_0-k_1}$, $c_{q n_0-k_1}=\tilde{c}_1\in K^*$, whence
$$
(\tilde{S}_0^{-1}\hat{\Phi}(\tilde{Q})\tilde{S}_0)^{n_0}=\varepsilon_0^{n_0}\tilde{D}^{q n_0}+ c_{q n_0-k_1}\tilde{D}^{q n_0-k_1} + \mbox{terms of lower order}, \quad c_i\in K.
$$
Since all coefficients of $\tilde{S}_0^{-1}\hat{\Phi}(\tilde{Q})\tilde{S}_0$ are $\tilde{D}^{d_1}$-invariant, it must have the  shape as claimed above, i.e. $c_{q-k_1}=\tilde{c}_1/(\varepsilon_0^{n_0}n_0)\in K$. 

Now we can find a monic invertible operator $\tilde{S}_1\in K^{\oplus p}((\tilde{D}^{-1}))\subset E_p$  whose coefficients are {\it $\tilde{D}^{d_1}$-invariant} and such that all coefficients of the operator $\tilde{S}_1^{-1}\tilde{S}_0^{-1}\hat{\Phi}(\tilde{Q})\tilde{S}_0\tilde{S}_1$ are $\tilde{D}^{d_2}$-invariant, where $d_2=SeqGCD_1(\tilde{P},\tilde{Q})$. The proof is the same as in lemma \ref{L:conjugation in E_q} (in both cases, $[\tilde{Q},\tilde{P}]=0$ or $[\tilde{Q},\tilde{P}]=1$): 

We will find the operator $\tilde{S}_1$ as the limit of a Cauchy sequence\footnote{with respect to the topology defined by the pseudo-valuation $-\ord_{\tilde{D}}$}. Assume 
$$
\tilde{S}_0^{-1}\hat{\Phi}(\tilde{Q})\tilde{S}_0=c_{q}\tilde{D}^q+ c_{q-k_1}\tilde{D}^{q-k_1} +\sum_{i< q-k_1}q_i\tilde{D}^{i}, 
$$
where $\quad q_i\in K^{\oplus p}$ for $i\neq -p$ (and for $i=-p$ $q_i$ has the shape $c\Gamma +\tilde{q}_i$, $\tilde{q}_i \in K^{\oplus p}$), and all coefficients are $\tilde{D}^{d_1}$-invariant. Let $q_l=(q_{l,0}, \ldots , q_{l,p-1})$ be the first coefficient not commuting with $\tilde{D}^{d_2}$\footnote{Here we again consider the coefficients in the sense of formulation above, namely, if $Sdeg_A(q_l)=1$, then the shape of $q_l$ is $c\Gamma +\tilde{q}_l$, $\tilde{q}_l\in K^{\oplus p}$, and we consider $\tilde{q}_l$ instead of $q_l$. The term $c\Gamma$ is inessential for further calculations by the formula \eqref{E:gamma-congugation} given above.}. Consider the operator $\tilde{\tilde{S}}_l:=1+s_l \tilde{D}^{-l}$, where $s_l=(s_{l,0},\ldots , s_{l,p-1})$ and $[s_l, \tilde{D}^{d_1}]=0$\footnote{The last condition just means that $s_{l,i}=s_{l,i+d_1\mbox{mod $p$}}$ for all $i$. }. Then an easy direct calculation shows that
\begin{multline*}
\tilde{\tilde{S}}_l^{-1}\tilde{S}_0^{-1}\hat{\Phi}(\tilde{Q})\tilde{S}_0 \tilde{\tilde{S}}_l=c_{q}\tilde{D}^q+ c_{q-k_1}\tilde{D}^{q-k_1}+\ldots + (q_{q-k_1-l}+c_{q-k_1}(-s_l+\sigma^{q-k_1}(s_l)))\tilde{D}^{q-k_1-l}+ \\
\mbox{terms of lower order},
\end{multline*}
where $\ldots$ are the same terms of $\tilde{S}_0^{-1}\hat{\Phi}(\tilde{Q})\tilde{S}_0$ (as $\tilde{\tilde{S}}_l$ is $\tilde{D}^{d_1}$-invariant). Consider the following $d_2$ systems of linear equations: for $i=0, \ldots , d_2-1$ [mod $d_2$] set $b_i:=\sum_{k=0}^{m-1}\tilde{q}_{l,[(i+(q-k_1)k) \mbox{mod $d_1$}] }$, where $m$ is a minimal natural number such that $(q-k_1)m=0 \mbox{ mod $d_1$}$ and $\tilde{q}_l:=q_{q-k_1-l}/c_{q-k_1}$,  then the $i$-th system is (below all indices are considered modulo $d_1$ )
\begin{equation}
\label{E:invariant_conjugation}
\begin{cases}
\tilde{q}_{l,i}-s_{l,i}+s_{l,i+(q-k_1)}=b_i/m \\
\tilde{q}_{l,i+(q-k_1)}-s_{l,i+(q-k_1)}+s_{l,i+2(q-k_1)}=b_i/m \\
\ldots \\
\tilde{q}_{l,i+(m-1)(q-k_1)}-s_{l,i+(m-1)(q-k_1)}+s_{l,i}=b_i/m
\end{cases}
\end{equation} 
This system is solvable and give explicit formulae for unknown variables $s_{l,j}$: for $r=1,\ldots ,(m-1)$ from the first $(m-1)$ equations we get 
\begin{equation}
\label{E:solution_invar_conjugation}
s_{l,i+r(q-k_1)}=r b_i/m+s_{l,i}-(\sum_{j=0}^{r-1} \tilde{q}_{l,i+j(q-k_1)})
\end{equation}
($s_{l,i}$ is a free parameter), and the last equation becomes an identity under substitution of these values. Taking such $s_l$ we get
$$
\tilde{\tilde{q}}_l:= \tilde{q}_l-s_l+\sigma^{(q-k_1)}(s_l)= (b_0/m, \ldots , b_{d_2-1}/m, b_0/m, \ldots , b_{d_2-1}/m, \ldots )
$$ 
which obviously commutes with $\tilde{D}^{d_2}$. Note that $s_l$ is uniquely determined if we put the free parameters $s_{l,i}=0$ for $i=0, \ldots , d_2-1$. 

It's easy to see that the system $\{\prod_{i=1}^j \tilde{\tilde{S}}_i\}$, $j\ge 1$ is a Cauchy system in $E_p$, and its limit $\tilde{S}_1$ is the needed operator. Note that all coefficients $c_i$ for $i\le q-k_1-k_2$ must be constant, because
$$
\tilde{S}_1^{-1}\tilde{S}_0^{-1}\hat{\Phi}(\tilde{Q}_2)\tilde{S}_0\tilde{S}_1= \tilde{S}_1^{-1}\tilde{S}_0^{-1}(((\hat{\Phi}(\tilde{Q}))^{n_0}-\varepsilon_0^{n_0}\tilde{D}^{p m_0})^{n_1})\tilde{S}_0\tilde{S}_1-\varepsilon_1^{n_1}\tilde{D}^{p m_1}
$$
has the highest symbol of the shape $\tilde{c}_2\tilde{D}^{\Ord \tilde{Q}_2}$ (if there was a non-constant coefficient $c_i$ with $i\le q-k_1-k_2$, this would be impossible). 

\smallskip

Continuing this line of reasoning, after $I$ steps we'll find $I$ operators $\tilde{S}_0, \tilde{S}_1, \ldots , \tilde{S}_{I-1}$ such that the coefficients of the operator $\tilde{S}_{I-1}^{-1}\ldots \tilde{S}_0^{-1}\hat{\Phi}(\tilde{Q})\tilde{S}_0\ldots \tilde{S}_{I-1}$ are $\tilde{D}^{\tilde{d}}$-invariant, and 
\begin{equation}
\label{E:form_of_Q}
\tilde{S}_{I-1}^{-1}\ldots \tilde{S}_0^{-1}\hat{\Phi}(\tilde{Q})\tilde{S}_0\ldots \tilde{S}_{I-1}=c_{q}\tilde{D}^q+ c_{q-k_1}\tilde{D}^{q-k_1} +\ldots + c_{q-k_1-\ldots -k_I}\tilde{D}^{q-k_1-\ldots -k_I}+\sum_{i<q-k_1-\ldots -k_I}q_i\tilde{D}^{i}, 
\end{equation}
where $\quad c_i\in K$, $\quad q_i\in K^{\oplus p}$ for $i\neq -p$ (and for $i=-p$ $q_i$ has a shape $c\Gamma+ \tilde{q}_i$), $\quad [q_i,\tilde{D}^{\tilde{d}}]=0$ (and $\quad [\tilde{q}_i,\tilde{D}^{\tilde{d}}]=0$), and $k_i=n_{i-1}\Ord (\tilde{Q}_{i-1})-\Ord (\tilde{Q}_i)$.

Moreover, we can find these operators $\tilde{S}_k$ such that their coefficients satisfy additional conditions $s_{l,i}=0$ for $i=0, \ldots ,d_k-1$ and all $l$, so the product $\tilde{S}:=\tilde{S}_0\ldots \tilde{S}_{I-1}$ will satisfy the claimed condition on its coefficients from lemma. 

\begin{rem}
\label{R:relation_between_coeff} 

The relation between coefficients $\tilde{c}_i$ of the sequence $(\Ord (\tilde{Q}_i), \tilde{c}_i)$ and coefficients $c_i$ of the series $\tilde{S}^{-1}\hat{\Phi}(\tilde{Q})\tilde{S}$ is non-trivial, but let's point out some observations. 

First, obviously, the coefficients $\tilde{c}_i$ can be uniquely reconstructed by the coefficients $c_i$ if we know the exponents $n_i$ of the sequence $F_i$. Vice versa, the coefficients $c_i$ can be uniquely reconstructed by the coefficients $\tilde{c}_i$ if we know  the sequence $F_i$ by solving recurrent system of equations (by comparing the corresponding highest coefficients of expressions $F_i(\tilde{D}^p, \tilde{S}^{-1}\hat{\Phi}(\tilde{Q})\tilde{S})$ with  $\tilde{c}_i$). 

Second, by solving the system of equations on $c_i$, non-zero values of $c_i$ with $i\le q-k_1-\ldots -k_l$ may appear only at exponents belonging to the lattice $\Lambda \subset \dz$ generated by $p$ and $\Ord (\tilde{Q}_j)$ with $j\le l$. 

Third, the last observation shows that if $l$ is such that $SeqGCD_l(\tilde{P},\tilde{Q})< SeqGCD_{l-1}(\tilde{P},\tilde{Q})$, then 
$$
GCD\{p,q, q-k_1, \ldots, q-k_1-\ldots - k_l\}< GCD\{p,q, q- k_1, \ldots, q-k_1-\ldots -k_{l-1}\},
$$
$q-k_1-\ldots - k_l$ is the first exponent of the series from lemma with such a property, and 
\begin{equation}
\label{E:formula_for_coeff}
c_{q-k_1-\ldots -k_l}=\tilde{c}_{q-k_1-\ldots -k_l}/(n_0\cdots n_{l-1}\varepsilon_0^{n_0-1}\cdots \varepsilon_{l-1}^{n_{l-1}-1}).
\end{equation}
\end{rem}

By construction and remark \ref{R:relation_between_coeff} we see that coefficients $c_i$ don't depend on the choice of a sequence $F_i$ with the same end of sequential GCD sequence. 

2. To prove the last assertion of lemma let's note that the distance is, by definition, a sum of distances between old and new head during the head-chopping process. In case of $[\tilde{Q},\tilde{P}]=1$ this process is restricted by the distance between the head of $\tilde{Q}$ and its tail $\tilde{Q}_{-p}$, because the highest symbol of the tail can not be of the shape $\tilde{c}\partial^k$. 

The last assertion is therefore obvious in view of remark \ref{R:relation_between_coeff}.

\end{proof}

Now we are ready to prove the first theorem relating a solution to the string equation with a pair of commuting ordinary differential operators generating a ring of rank one. We will need a partial analogue of definition \ref{D:distance}

\begin{Def}
\label{D:BC-distance}
Let $f(X,Y)\in K[X,Y]$ be an irreducible polynomial of Burchnall-Chaundy type (cf. remark \ref{R:classif_ODO}),
$$
f(X,Y) = (\alpha X^n + Y^m)^d+\ldots ,
$$
where $\ldots$ mean terms of lower weighted degree, $0\neq\alpha\in K$, $(m,n)=1$, $d\ge 1$. Let $Y=z^{-nd}$, $X=c_{-md}z^{-md}+ c_{n_1}z^{n_1}+c_{n_2}z^{n_2}+\ldots \in K((z))$ be a solution (a Puiseux series) of equation $f(X,Y)=0$\footnote{ Thus, by classification theorem \ref{T:classif2}, there exist a pair of commuting differential operators $P,Q\in D_1$, such that $\rk K[P,Q]=1$ and $K[P,Q]\simeq K[X,Y]/(f)$. By Wilson's theorem (see remark \ref{R:classif_ODO}) their BC-polynomial $\partial Res(P-X, Q-Y)$ is irreducible, therefore is proportional to $f$, whence 
$\ord (P)=p=nd$, $\ord (Q)=q=md$.   

Then, if $S_{cl}$ is a classical Schur operator such that $S_{cl}^{-1}PS_{cl}=\partial^{nd}$, the operator $S_{cl}^{-1}QS_{cl}$ has constant coefficients and has the shape of $X$ (after formal replacement $\partial \mapsto z^{-1}$). Here $z$ is a local parameter at the point "at infinity" of the one-point compactification of the curve $f(X,Y)=0$.}. Assume $n_l$ is the first exponent such that $GCD(d, n_1, \ldots ,n_l)=1$. 

Then we'll call the number $nd-n_l$ as {\it the distance of $f(X,Y)$.} For a solution $Y,X$ other notions from definition \ref{D:distance} are defined (i.e. the sequential GCD and the sequence) in the same way. 
\end{Def}

\begin{rem}
\label{R:distinguished_coefficient}
Note that for fixed $Y=z^{-nd}$ all other solutions of this equation can be obtained by the scale transform $z\mapsto \xi z$, where $\xi$ is a $nd$-th primitive root of unity. Hence $c_{nd}=0$. Indeed, the sum of roots of the polynomial $f(X, z^{-nd})$, $\sum_{i=1}^{nd} X|_{z\mapsto z\xi^i}$, must be a polynomial in $z^{-1}$, hence $nd c_{nd}=0$. 

\end{rem}

\begin{theorem}[cutting the tail]
\label{T:string_equation}
Let $P,Q\in D_1$ be two differential operators such that $\ord (P)=\Ord (P)=p=nd$, $\sigma (P)=\partial^{\Ord (P)}$, $\ord (Q)=\Ord (Q)=q=md$, where $(m,n)=1$, and $[Q,P]=1$. Assume that there exists a sequence of operators $Q_i:=F_i(P,Q)$, $i=0, \ldots ,I$ like in theorem \ref{T:DC-pairs} (i.e. defined by recursion in the same way) such that $\ord (Q_i)= \Ord (Q_i)$ for all $i$. Assume $SeqGCD_I(P,Q)=1$ and $SeqGCD_{I-1}(P,Q)>1$. 
Let $\tilde{Q}=S^{-1}QS$ be a normal form of $Q$ with respect to $P$, where $S$ is a monic Schur operator (see remark \ref{R:normal_forms}). 

Then the pair $(\partial^p, \tilde{Q}-\tilde{Q}_{-p})$ is a normal form of a commuting pair of formally elliptic differential operators $P',Q'$, where $P'$ is monic and normalized, $\ord (P')=p$, $\ord (Q')=q$, such that $rk (K[P',Q'])=1$ (i.e. $\tilde{Q}-\tilde{Q}_{-p}$ is a normal form of $Q'$ with respect to $P'$). 

Moreover, this pair satisfies conditions from lemma \ref{L:invariant_conjugation}, it has the same sequence as $(P,Q)$ and its affine spectral curve is given by a BC-polynomial from definition \ref{D:BC-distance} of distance $\le p+q-1$.
\end{theorem}

\begin{proof}
By lemma \ref{L:invariant_conjugation} there exists  a monic invertible operator $\tilde{S}\in K^{\oplus p}((\tilde{D}^{-1}))\subset E_p$ with $\ord_{\tilde{D}}(\tilde{S})=0$ such that 
$$
\tilde{S}^{-1}\hat{\Phi}(\tilde{Q})\tilde{S}=c_{q}\tilde{D}^q+ c_{q-k_1}\tilde{D}^{q-k_1} +\ldots + c_{q-k_1-\ldots -k_I}\tilde{D}^{q-k_1-\ldots -k_I}+\sum_{i< q-k_1-\ldots -k_I}q_i\tilde{D}^{i}\in K[x]((\tilde{D}^{-1})),
$$
and $GCD(p,q,q-k_1, \ldots ,q-k_1-\ldots -k_I)=1$. 
Since $Dist(P,Q)<p+q$, we have 
\begin{multline*}
\tilde{S}^{-1}\hat{\Phi}(\tilde{Q})\tilde{S}=\tilde{S}^{-1}\hat{\Phi}(\tilde{Q}-\tilde{Q}_{-p})\tilde{S}+\tilde{S}^{-1}\hat{\Phi}(\tilde{Q}_{-p})\tilde{S}=\\
c_{q}\tilde{D}^q+ c_{q-k_1}\tilde{D}^{q-k_1} +\ldots + c_{q-k_1-\ldots -k_I}\tilde{D}^{q-k_1-\ldots -k_I}+\sum_{i< q-k_1-\ldots -k_I}\tilde{q}_i\tilde{D}^{i}+\tilde{S}^{-1}\hat{\Phi}(\tilde{Q}_{-p})\tilde{S},
\end{multline*}
where $\tilde{q}_i\in K^{\oplus p}$. Therefore we can find, following the proof of lemma \ref{L:invariant_conjugation}, another monic  operator $\tilde{S}'\in K^{\oplus p}((\tilde{D}^{-1}))\subset E_p$ with $\ord_{\tilde{D}}(\tilde{S}')=0$ such that 
\begin{equation}
\label{E:final_conjug_form}
(\tilde{S}')^{-1}\hat{\Phi}(\tilde{Q}-\tilde{Q}_{-p})\tilde{S}'= c_{q}\tilde{D}^q+ c_{q-k_1}\tilde{D}^{q-k_1} +\ldots + c_{q-k_1-\ldots -k_I}\tilde{D}^{q-k_1-\ldots -k_I}+\ldots 
\in K((\tilde{D}^{-1})).
\end{equation}
Since both operators $\tilde{S}, \tilde{S}'$ are monic, the  pair $(\partial^p, \tilde{Q}-\tilde{Q}_{-p})$ satisfies conditions from lemma \ref{L:invariant_conjugation} and has the same sequence as $(P,Q)$.

The rest of the proof is similar to \cite[Th.3.3]{GZ1}. We define the space $W':= F'\cdot (\tilde{S}\tilde{S}')\subset K((\tilde{D}^{-1}))$, where $F'=K[\tilde{D}]$. Set $B':=\hat{\Phi}(K[\partial^p, \tilde{Q}-\tilde{Q}_{-p}])$, and $A':=(\tilde{S}')^{-1}\tilde{S}^{-1}B'\tilde{S}\tilde{S}'\subset K((\tilde{D}^{-1}))$. Note that $(W',A')$ is an embedded Schur pair of {\it rank one}\footnote{Since $K[\partial^p, \tilde{Q}-\tilde{Q}_{-p}]\in C(\partial^p)\subset \hat{D}_1^{sym}$, we have $B'\subset \hat{\Phi}(C(\partial^p))$ and consists of Laurent polynomials whose coefficients at negative powers of $\tilde{D}$ are not constants (see remark \ref{R:vector_form}). Since $\tilde{S}',\tilde{S}$ are monic and $A'\subset K((\tilde{D}^{-1}))$, it implies that $A'\cap K[[\tilde{D}^{-1}]]=K$. The rank is one because $SeqGCD_I(P,Q)=1$. The operators $\partial^p$, $\tilde{Q}-\tilde{Q}_{-p}$ are algebraically dependent, and the polynomial $\det (\psi (\hat{\Phi}(\tilde{Q}-\tilde{Q}_{-p}))-X)\in K[X,\tilde{D}^p]$ is a BC polynomial (after replacement $\tilde{D}^p\mapsto Y$ (see \cite[Rem.3.3]{GZ1}).  }. Therefore, by theorem \ref{T:classif_Schur_pairs} it corresponds to a normalized ring $B\subset D_1$ of commuting differential operators of rank one (see remark \ref{R:classif_ODO}), $B\simeq B'$, so there are two formally elliptic commuting operators $P',Q'\in D_1$ of the same orders $p,q$ such that $B=K[P',Q']$. By theorem \ref{T:classif2} the ring $K[P',Q']$ determines the spectral curve $C$ -- the one-point compactification (with the help of the point $p$) of the affine spectral curve $C_0$ determined by the above mentioned equation $f(X,Y)=0$ (by Wilson's theorem the polynomial $f$ is irreducible, see remark \ref{R:classif_ODO}), and also a spectral sheaf $\cf$ - a torsion free sheaf of rank one. By \cite[Th. 3.3]{GZ1} the corresponding (normalized) normal form of $B$ is uniquely determined by this datum, and the correspondence is established via the embedded Schur pair of $B$. Since $B'$ and $B$ have the same embedded Schur pairs, the operator  
$\tilde{Q}-\tilde{Q}_{-p}$ is a normal form of $Q'$ with respect to $P'$ (see also remark below). 

The last assertion follows immediately from formula \eqref{E:final_conjug_form}.

\end{proof}

\begin{rem}
\label{R:spectral_data_for_string}
Note that the conditions of theorem \ref{T:string_equation} are satisfied for a DC pair $P,Q$ of subrectangular pair from theorem \ref{T:DC-pairs}, and by this theorem $Dist(P,Q)=p+q-1$.  

It can be shown that if $\varphi \in \Aut_K (A_1)$, and $P=\varphi (x)$, $Q=\varphi (\partial )$, then these conditions hold as well. Moreover, in this case $Dist(P,Q)<p+q-1$, and the operators $P',Q'\in K[\partial ]$, i.e. the corresponding spectral curve $C$ from theorem \ref{T:classif2} is rational.

However, it is an interesting question whether any solution of the string equation from $D_1$ satisfy these conditions. 

 We'll call the datum $(C,p,\cf ,z)$ as the {\it spectral datum} of a solution $P,Q$ to the string equation. Here $z$ is the local parameter at $p$ mentioned above. The isomorphism class of this datum uniquely determines the equivalence class $[K[P',Q']]$, cf. \cite[Rem. 3.5]{GZ1}. 
\end{rem}

For any solution of the string equation $P,Q\in D_1$ satisfying conditions of theorem \ref{T:string_equation} we can construct another solution $\bar{P}, \bar{Q}\in D_1$ such that $\bar{P}$ is monic and normalized. We'll call such pair a {\it normalized solution}. If the orders of operators $P,Q$ are fixed, such a pair is uniquely determined in the conjugacy class  by an invertible function from $K[[x]]^*$. Analogously, for any pair of commuting operators $P',Q'$ as above we can construct a normalized pair (cf. remark \ref{R:classif_ODO}). 

The subring $B'=K[\partial^p, \tilde{Q}']\subset \hat{D}_1^{sym}$ generated by a normal form $\tilde{Q}'$ of $Q'$ with respect to $P'$, can be also normalized in its conjugacy class (recall that a normal form is defined up to conjugation by an invertible element in $C(\partial^p)$), e.g. by choosing a special set of generators as in definition \ref{D:normalised_normal_form} and applying lemma \ref{L:normalisation}. We'll call the corresponding pair $(\partial^p, \tilde{Q}')$ as a {\it normalized normal pair}. Analogously, since the normal form of the normalized solution $(P,Q)$ is uniquely defined, by lemma \ref{L:Q_p} and theorem \ref{T:string_equation}, by the normal form of the corresponding normalized pair $(P',Q')$, we'll call the normal form $(\partial^p, \tilde{Q})$ of a normalized solution  $(P,Q)$ as {\it normalized normal string pair} if the corresponding pair $(\partial^p, \tilde{Q}-\tilde{Q}_{-p})$ (from theorem \ref{T:string_equation}) is the normalized normal pair. More precisely:

\begin{Def}
\label{D:normalized_normal_pair}
A pair of elements $(\partial^p, \tilde{Q}')\in C(\partial^p)\subset \hat{D}_1^{sym}$ is called a {\it normal pair} of order $(p,q)$, where $q=\Ord (\tilde{Q}')$, if it satisfies conditions of lemma \ref{L:invariant_conjugation}, $SeqGCD_I(\partial^p, \tilde{Q}')=1$, $SeqGCD_{I-1}(\partial^p, \tilde{Q}')>1$ and $Dist(\partial^p, \tilde{Q}')<p+q$.\footnote{ 
In this case the same arguments as in the proof of theorem \ref{T:string_equation} show that the polynomial  $f(X,Y):=\det (\psi (\hat{\Phi}(\tilde{Q}'))-X)|_{\tilde{D}^p \mapsto Y}\in K[X,Y]$ is irreducible of distance $<p+q$ (by Wilson's theorem, because the corresponding ring of commuting differential operators has rank one).} 

This pair is called a {\it normalized normal pair} of order $(p,q)$ if the ring $B':=K[\partial^p, \tilde{Q}']$ is a normalized normal form of rank one with respect to some set of generators (see definition \ref{D:normalised_normal_form}). 
\end{Def}

\begin{Def}
\label{D:normalized_string_pair}
A pair of elements $(\partial^p, \tilde{Q})\in  \hat{D}_1^{sym}$ is called a {\it normal string pair}  of order $(p,q)$, where $q=\Ord (\tilde{Q})$, if  $[\tilde{Q},\partial^p]=1$ and the pair 
$(\partial^p, \tilde{Q}-\tilde{Q}_{-p})$ is a normal pair of order $(p,q)$.

This pair is called a {\it normalized normal string pair} of order $(p,q)$ if the pair $(\partial^p, \tilde{Q}-\tilde{Q}_{-p})$ is a normalized normal pair of order $(p,q)$.
\end{Def}

The normalisation depends on the choice of two specific generators in the ring $B'$. Let's fix such generators  $\partial^p$, $F_I(\partial^p, \tilde{Q}')$, where $F_i$, $i=0, \ldots ,I$ is the sequence of polynomials like in theorem \ref{T:string_equation} (it exists by definition of distance), i.e. $F_I(\partial^p, \tilde{Q}')$ is the first operator in this sequence of coprime order with $p$. 

Let's note that the normalisation can be made with the help of monic zeroth order invertible operator $S$ from the centralizer $C(\partial^p)$, according to the proof of lemma \ref{L:normalisation} in (\cite[Lem 3.4]{GZ1}). So, this is an operator satisfying condition $A_p(0)$ according to the description of $C(\partial^p)$ given in proposition \ref{T:A.Z 7.1}. Note also that any such operator commutes with the tail $\tilde{Q}_{-p}$. For, $[S^{-1}\tilde{Q}_{-p}S, \partial^p]=1$, and $\Ord (S^{-1}\tilde{Q}_{-p}S)=-p$. So, by proposition \ref{T:A.Z 7.1} and lemma \ref{L:Q_p} $S^{-1}\tilde{Q}_{-p}S= \tilde{Q}_{-p}$.

\begin{theorem}
\label{T:one-to-one-correspondence}
There is a one-to-one correspondence between
\begin{enumerate}
\item
Normalized solutions $P, Q\in D_1$ to the string equation such that  $\ord (P)=p=nd$, $\ord (Q)=q=md$, where $(m,n)=1$, and conditions of theorem \ref{T:string_equation} are satisfied;
\item
Normalized  normal  string pairs $(\partial^p, \tilde{Q})\in  \hat{D}_1^{sym}$ of order $p=nd ,q=md$ with respect to the fixed normalisation (e.g. as above);
\end{enumerate}
with the same sequences. This correspondence is established with the help of conjugation by a unique monic invertible Schur operator $S\in \hat{D}_1^{sym}$ of order zero satisfying condition $A_p(0)$. 
\end{theorem}

\begin{proof}
If $P,Q$ is a normalized solution from item 1, then by proposition \ref{P:Si} there exists a monic Schur operator satisfying condition $A_p(0)$ such that $S^{-1}PS=\partial^p$. Then,  
clearly, the normal form $(\partial^p, \tilde{Q})$ with respect to $P$ (and $S$) is a normal string pair with the same sequence by theorem \ref{T:string_equation}. Taking the normalisation, we get a normalized  normal  string pair. Since the normalisation can be made with the help of monic zeroth order invertible operator from the centralizer $C(\partial^p)$, therefore, their product is a monic zeroth order invertible operator  satisfying condition $A_p(0)$ by lemma \ref{L:conditionA}. By lemma \ref{L:normalisation} the resulting Schur operator is uniquely defined. 

Vice versa, if we have a normalized  normal  string pair $(\partial^p, \tilde{Q})\in  \hat{D}_1^{sym}$ of order $p=nd ,q=md$, we can construct a unique normalised solution of order $p,q$ from item 1 as follows. 

{\it Step} 1. {\it First, we claim that there is a {\it unique} invertible operator $\tilde{S}\in K^{\oplus p}((\tilde{D}^{-1}))$ of order zero from lemma \ref{L:invariant_conjugation} such that $\tilde{S}^{-1}(\hat{\Phi}(\tilde{Q}))\tilde{S}\in K[x]((\tilde{D}^{-1}))$ and for all $i$ the coefficients $s_i$ of $\tilde{S}$ satisfy the condition $s_{i,0}=0$.} 

We know from lemma \ref{L:invariant_conjugation} that such an operator exists (as $SeqGCD_I(\partial^p, \tilde{Q})=1$ according to definition), so we need only to prove its uniqueness. 

Assume there are two such operators, $\tilde{S}_1, \tilde{S}_2$, and $\bar{Q}_1:=\tilde{S}_1^{-1}(\hat{\Phi}(\tilde{Q}))\tilde{S}_1$, $\bar{Q}_2:=\tilde{S}_2^{-1}(\hat{\Phi}(\tilde{Q}))\tilde{S}_2$, $\bar{Q}_i\in K[x]((\tilde{D}^{-1}))$. Then $\tilde{S}_2^{-1}\tilde{S}_1\bar{Q}_1\tilde{S}_1^{-1}\tilde{S}_2=\bar{Q}_2$. Then, if $F(X,Y)\in K[X,Y]$ is any polynomial,  we have also
\begin{equation}
\label{E:one-to-one}
\tilde{S}_2^{-1}\tilde{S}_1F(\tilde{D}^p, \bar{Q}_1)\tilde{S}_1^{-1}\tilde{S}_2=F(\tilde{D}^p, \bar{Q}_2). 
\end{equation}
Note that $F(\tilde{D}^p, \bar{Q}_j)\in K[x]((\tilde{D}^{-1}))$ and there exists a polynomial $F$ such that the operator $F(\tilde{D}^p, \bar{Q}_j)$ have order $M$ coprime with $p$. 

An easy direct calculation shows that the operators $\tilde{S}_i^{-1}, \tilde{S}_1^{-1}\tilde{S}_2$ satisfy the same property on coefficients (i.e. the $0$th components are zero). As in the proof of lemma \ref{L:invariant_conjugation} we can represent the operator $\tilde{S}_1^{-1}\tilde{S}_2$ as a product of elementary operators $\tilde{S}_l$. Assume $l$ is a minimal number such that $\tilde{S}_l$ is non-trivial. Then the system of equations on $l$-th coefficients of operators in \eqref{E:one-to-one} will look like system 
\eqref{E:invariant_conjugation}  with $i=0$, $k_1=0$, $m=p$ where $q_{l}$ is a constant, since the operators $\bar{Q}_i$ have constant coefficients (i.e. all coefficients of polynomials in $x$ belong to $K\subset K^{\oplus p}$), namely:
 \begin{equation}
\label{E:invariant_conjugation1}
\begin{cases}
q_{l}-s_{l,0}+s_{l,M}=b_l \\
q_{l}-s_{l,M}+s_{l,2M}=b_l \\
\ldots \\
q_{l}-s_{l,(p-1)M}+s_{l,0}=b_l
\end{cases}
\end{equation} 
(here $q_l$ and $b_l$ are corresponding {\it constant} coefficients of $F(\tilde{D}^p, \bar{Q}_j)$). But, since $s_{l,0}=0$, this is possible only if all $s_{l,i}$ are zero and $q_l=b_l$. Therefore, $\tilde{S}_1=\tilde{S}_2$. 

\begin{rem}
\label{R:Step2,3}
Any monic invertible zeroth order operator $S\in K^{\oplus p}((\tilde{D}^{-1}))$ from lemma \ref{L:invariant_conjugation} can be decomposed into a product $\tilde{S}\bar{S}$, where $\tilde{S}$ is the operator as above, i.e. its coefficients satisfy the condition $s_{i,0}=0$, and $\bar{S}\in K((\tilde{D}^{-1}))$. 

This is almost evident: $\bar{S}$ can be constructed again as a product of elementary operators $\bar{S}_l=1+\bar{s}_l\tilde{D}^{-i}$, $\bar{s}_l\in K$, where each next coefficient $\bar{s}_l$ is determined by the first non-zero component of coefficients of the operator $\tilde{S}\bar{S}_1\ldots \bar{S}_{l-1}$.  

Next, let's note that if we fix the operator $L:=\tilde{S}^{-1}(\hat{\Phi}(\tilde{Q}))\tilde{S}\in K[x]((\tilde{D}^{-1}))$ in Step 1, there is a unique monic zeroth order operator $S\in K^{\oplus p}((\tilde{D}^{-1}))$ such that $S^{-1}(\hat{\Phi}(\tilde{Q}))S=L$. 

Indeed, we know that $S=\tilde{S}\bar{S}$, and $L$ is a sum of the term $\Gamma \tilde{D}^{-p}/p= x \tilde{D}^{-p+1}/p$ and a series with constant coefficients (which commutes with $\bar{S}$). From formula \eqref{E:gamma-congugation} it follows that $\bar{S}$ commutes with the first term only if $\bar{S}=1$. 
\end{rem}

{\it Step} 2. {\it Let's prove that there exists a unique monic operator $S\in E_p$ of order zero satisfying condition $A_p(0)$ such that $T:=\tilde{S}S$, where $\tilde{S}$ is the unique operator constructed above in Step 1., is a monic zeroth order operator such that all its homogeneous components don't depend on $A_i$ and $B_j$ and $Sdeg_A(T_{-i})<i$ for all $i>0$\footnote{ (clearly, all components of $T$ will be HCPs by lemma \ref{C:HCPC(k)}, not depending on $B_j$ since components of $\tilde{S},S$ don't depend on $B_j$). Here we adopt definition \ref{D:conditionA} to our situation in the following sense: condition 2 from definition (totally freeness) is replaced by an equivalent system of equations \eqref{E:system_on_B_j} on coefficients, all other conditions are the same. Since $\hat{\Phi}$ is an injective homomorphism, any operator in $E_p$ satisfying condition $A_p(0)$ in this sense is an image of an operator satisfying condition $A_p(0)$, and vice versa}.} 

In view of corollary \ref{C:Si^(-1) no B} this claim is equivalent to the following: there is a unique monic zeroth order operator $T$ with all its homogeneous components don't depending on $A_i$ and $B_j$ and $Sdeg_A(T_{-i})<i$ for all $i>0$,  such that $\tilde{S}T$ is a monic zeroth order operator satisfying condition $A_p(0)$. 

The proof is by induction on the index of coefficients of the operator $S=\tilde{S}T$. Clearly, it is monic and all its homogeneous components are HCP. For $s_{-1}$ we have $s_{-1}=\tilde{s}_{-1}+t_{-1}$, whence $Sdeg_A(S_{-1})=0$ iff $Sdeg_A(T_{-1})=0$. So, $s_{-1}, t_{-1}\in K^{\oplus p}$ and  $s_{-1}$ is totally free of $B_j$ iff $s_{-1,0}=0$, because $s_{-1}=\Phi (\sum_{i=0}^{p-1} a_iA_i)$ and $s_{-1,0}=\sum_{i=0}^{p-1} a_i$ and the system \eqref{E:system_on_B_j} reduces to just one equation
$$
\sum_{i=0}^{p-1}a_i=0.
$$
Analogously, $t_{-1}$ is totally free of $B_j$ iff $t_{-1,0}=0$. Since $\tilde{s}_{-1,0}=0$, we get $s_{-1}$ is totally free of $B_j$ iff $t_{-1}$ is totally free of $B_j$. Since $t_{-1}$ must not contain $A_i$, there is a unique value of $t_{-1}$ with all these properties, $t_{-1}=0$. 

Now suppose we have proved that there are uniquely determined coefficients $t_{-1}, \ldots , t_{-l+1}$ such that $s_{-1}, \ldots , s_{-l+1}$ satisfy condition $A_p(0)$. Now 
\begin{equation}
\label{E:s_l}
s_{-l}=t_{-l}+\tilde{s}_{-l}+\sum_{i=1}^{l-1}\tilde{s}_{-i}\sigma^i(t_{-l+i}). 
\end{equation}
We need to show that there is a unique $t_{-l}$ with $Sdeg_A(T_{-l})<l$ and  $t_{-l}$ does not contain $A_i$ and $B_j$ such that $s_{-l}$ is totally free of $B_j$ and $Sdeg_A(S_{-l})<l$. So, $t_{-l}$ should be of shape $t_{-l}=\sum_{m=0}^{l-1}a_{m,0}\Gamma_m$. Clearly, $Sdeg_A(s_{-l})<l$ if $Sdeg_A(t_{-l})<l$, by induction hypothesis. The system \eqref{E:system_on_B_j} for the coefficients of $s_{-l}$ has the shape
$$
\sum_{m=0}^{l-1}(j-1)^m a_{m,0}=*
$$ 
for $1\le j\le l$, where $*$ means the known coefficients coming from the right hand side of \eqref{E:s_l}. For $j=1$ this system implies $a_{0,0}=0$, because $\tilde{s}_{-i,0}=0$ for all $i$; so it reduces to analogous system
$$
\sum_{m=1}^{l-1}(j-1)^m a_{m,0}=*
$$
for $2\le j\le l$. Now observe that the matrix of this linear system has a shape of the Vandermonde matrix with powers of natural numbers, i.e. it is invertible, and there is a unique solution $a_{m,0}$ for any values $*$. Hence $t_l$ is uniquely determined. 

Finally, note that $(\tilde{S}S)^{-1}=S^{-1}\tilde{S}^{-1}$ is also a monic zeroth order operator such that all its homogeneous components don't contain $A_i$ (this easily follows from lemmas \ref{L:obvious} and \ref{L:HCPC}), and $\tilde{S}^{-1}$ is an operator with the same properties as $\tilde{S}$. So, analogous statement for the product $S\tilde{S}$ holds. 

\begin{rem}
\label{R:small_remark}
If we take instead of $\tilde{S}$ any operator $\tilde{S}\bar{S}$ from Step 2., then the same arguments work, and there exists a unique monic zeroth order operator $S$ satisfying condition $A_p(0)$ such that $S\tilde{S}\bar{S}$ satisfy properties of this step. Clearly, we can just take the same $S$ as for $\tilde{S}$ (as $\bar{S}$ has constant coefficients).  So, the claim of this step holds for {\it any} operator $\tilde{S}$ from lemma \ref{L:invariant_conjugation}.  

Note that $T$ belongs to the image of the embedding $E\subset E_p$, i.e. it is a pseudo-differential operator; besides our calculations imply $1\cdot T=T(0)=1$, i.e. $T$ is normalised. Since $F'\cdot S-F'$, we get from the Sato theorem \ref{T:Sato} that $T=S_{cl}^{-1}$, where $S_{cl}$ is the classical Sato operator for the space $W=F'\cdot\tilde{S}$, i.e. $W=F'\cdot S_{cl}$. It is not difficult to see that $S_{cl}$ is also normalised. 
\end{rem}

{\it Step} 3. Put $\tilde{Q}':=\tilde{S}^{-1}\hat{\Phi}(\tilde{Q})\tilde{S}$, where $\tilde{S}$ is from Step 1.  Let $S$ be an operator from Step 4. such that all homogeneous components of $T:=S\tilde{S}$ don't contain $A_i$ and $B_j$ and $Sdeg_A(T_{-i})<i$ for all $i>0$. As it was noticed in 4., $S$ is the image of some uniquely defined operator $S'\in \hat{D}_1^{sym}$ with the same properties under the map $\hat{\Phi}$. 
{\it We claim that $P:=S'\partial^p(S')^{-1}, Q:=S'(\tilde{Q})(S')^{-1} \in D_1$ is the normalised solution from item 1. } 

Indeed, $S\hat{\Phi}(\tilde{Q})S^{-1}=(S\tilde{S})\tilde{Q}'(\tilde{S}^{-1}S^{-1})=T\tilde{Q}'T^{-1}$.  Note that all homogeneous components of $T^{-1}$  don't contain $A_i$ and $B_j$. Therefore, the product $T\tilde{Q}'T^{-1}$ satisfy the same property.  Finally, $S\hat{\Phi}(\tilde{Q})S^{-1}$ satisfy condition $A_p(0)$, because $S,S^{-1}, \tilde{Q}$ satisfy it (see example \ref{Ex:Schur_op}, lemma  \ref{L:conditionA}, proposition \ref{C:P'}). Then by theorem \ref{T: P in hat(D) is a dif_op} $S\hat{\Phi}(\tilde{Q})S^{-1}$ is the image of the differential operator $Q$ under the map $\hat{\Phi}$. It is monic, since $S$ is monic. The same arguments work for $P$; moreover, $P$ is normalized\footnote{So, $T$ is a classical (normalised) Schur operator.}. 

At last, note that the map from item 1 to item 2 and vice versa are inverse to each other. In one way because of uniqueness of the operator $S'$, in another way because of lemma \ref{L:normalisation}. 
 
\end{proof}

\begin{rem}
\label{R:one-to-one}
There is a natural question what can be said about the correspondence from theorem \ref{T:one-to-one-correspondence} if we allow conjugation by an arbitrary invertible operator. 

By propositions \ref{P:Si} and \ref{T:A.Z 7.1} any such operator $S\in \hat{D}_1^{sym}$ with the property $S^{-1}PS=\partial^p$ is a product of a monic zeroth order operator satisfying condition $A_p(0)$ and an invertible operator from the centralizer $C(\partial^p)$. If we demand the normal form $(\partial^p, \tilde{Q})$ satisfies conditions of lemma \ref{L:invariant_conjugation}, then it can be shown by arguments from the proof of \cite[Lem. 3.4]{GZ1}  that $\Ord (S)=0$ (the main reason is that the sequence $\tilde{Q}_i$ contains an operator of the order coprime with $p$). Any invertible operator of order zero from $C(\partial^p)$ is a product of an invertible homogeneous operator of order 0, say $S_0$, and a monic zeroth order operator satisfying condition $A_p(0)$. Again, conditions of lemma \ref{L:invariant_conjugation} imply that  $S_0$ should be of special form. Namely, conjugation by $S_0$ should be a scale transform $\partial \mapsto \xi_{nd}^l\partial$, where $\xi_{nd}$ is a primitive $nd$-th  root of unity. Of course, such a scale transform does not preserve the sequence of $P,Q$. 
\end{rem}

\begin{rem}
\label{R:one-to-one1}
Analogous theorem (just with the same formulation) holds also for a normalized pair of commuting operators $(P',Q')$ of orders $p,q$ and satisfying conditions from remark \ref{R:application_of_lemma_inv_conj} and theorem \ref{T:string_equation}, and normalized normal pairs of order $(p,q)$:

\begin{theorem}
\label{T:one-to-one-correspondence1}
There is a one-to-one correspondence between
\begin{enumerate}
\item
Normalized pairs $P', Q'\in D_1$ of commuting operators such that  $\ord (P)=p=nd$, $\ord (Q)=q=md$, where $(m,n)=1$, and conditions of theorem \ref{T:string_equation} and remark \ref{R:application_of_lemma_inv_conj} are satisfied;
\item
Normalized  normal  pairs $(\partial^p, \tilde{Q}')\in  \hat{D}_1^{sym}$ of order $p=nd ,q=md$ with respect to the fixed normalisation; 
\end{enumerate}
with the same sequences. This correspondence is established with the help of conjugation by a unique monic invertible Schur operator $S\in \hat{D}_1^{sym}$ of order zero satisfying condition $A_p(0)$. 
\end{theorem}

This theorem can be deduced from the classification theory of commuting ordinary differential operators like in \cite[Th. 3.3]{GZ1} or just by the same arguments as in the proof of theorem \ref{T:string_equation}. 

In both cases (normalized string pairs or normalized commutative pairs) we can choose different normalisations. Directly from the proof we see that the corresponding normalized solutions of the string equation or normalized commuting operators are uniquely determined and in fact don't depend on normalisation. 
\end{rem}

Vice versa, any pair of commuting ordinary differential operators of rank one with a specific spectral curve lead to a solution of the string equation:

\begin{theorem}[glueing the tail]
\label{T:inverse_string_equation}
Let $P',Q'\in D_1$, $[P',Q']=0$ be monic operators such that $\rk (K[P',Q'])=1$, $\ord (P')=p=nd$, $\ord (Q')=q=md$, where $(m,n)=1$. Let $f(X,Y)$ be their irreducible BC-polynomial. Assume $f$ is of distance $\le p+q-1$. Let $\tilde{Q}'=S^{-1}Q'S$ be a normal form of $Q'$ with respect to $P'$, where $S$ is a monic Schur operator (see remark \ref{R:normal_forms}). 

Then the pair $(\partial^p, \tilde{Q'}+\tilde{Q}_{-p})$, where $\tilde{Q}_{-p}$ is a "tail" defined in lemma \ref{L:Q_p}, is a normal form of some solution $(P,Q)\in D_1$ to the string equation $[Q,P]=1$, where $P,Q$ are monic and $P$ is normalized (i.e. $\tilde{Q'}+\tilde{Q}_{-p}$ is a normal form of $Q$ with respect to $P$). Moreover, this pair satisfies conditions from lemma \ref{L:invariant_conjugation}, it has the same sequence as $(P',Q')$.

In particular, $P,Q$ satisfy conditions of theorem \ref{T:string_equation}.
\end{theorem}

\begin{proof}
Since the distance of $f$ is less than $p+q$, the operators $P',Q'$ satisfy conditions of remark \ref{R:application_of_lemma_inv_conj} with $SeqGCD_I(P',Q')=1$ and $Dist(P',Q')<p+q$ (see the footnote to definition \ref{D:BC-distance}). Hence the operators $\partial^p,  \tilde{Q}'$ satisfy conditions of lemma \ref{L:invariant_conjugation} with the same sequential GCD and distance. 

Now note that the operators $\partial^p,  \tilde{Q}'+\tilde{Q}_{-p}$ also satisfy these conditions and therefore by lemma \ref{L:invariant_conjugation} there exists a monic invertible operator $\tilde{S}$ of order zero such that $\tilde{S}^{-1}\hat{\Phi}(\tilde{Q}'+\tilde{Q}_{-p})\tilde{S}\in K[x]((\tilde{D}^{-1}))\subset E_p$. 

The key observation is that the ring $K[\partial^p,  \tilde{Q}'+\tilde{Q}_{-p}]$ preserves $F=K[\partial ]$ (see section \ref{S:prelim} for notation), i.e. $F\cdot K[\partial^p,  \tilde{Q}'+\tilde{Q}_{-p}]\subseteq F$, because of special form of the "tail". Therefore the subring $A':=\tilde{S}^{-1}\hat{\Phi}(K[\partial^p,  \tilde{Q}'+\tilde{Q}_{-p}])\tilde{S}\subset E\subset E_p$ preserves the space $W':=F'\cdot \tilde{S} \subset E$. Note that $W'$ has support equal to $F'$, because $\tilde{S}$ is a monic invertible operator of order zero. Then by Sato's theorem \ref{T:Sato} there exists a classical Sato operator $S_{cl}\in E\subset E_p$ such that $W'=F'\cdot S_{cl}$. Hence the subring $G:=S_{cl}A'S_{cl}^{-1}$ stabilises  $F'$, and therefore by another Sato theorem \ref{P:Sato} $G\subset D_1\subset E_p$. The subring $D_1\subset E_p$ is characterised by the properties of theorem \ref{T: P in hat(D) is a dif_op}, and therefore it coincides with the image of the subring $D_1\subset \hat{D}_1^{sym}$ under the embedding $\hat{\Phi}$.  
In particular, 
$S_{cl}\tilde{S}^{-1}\tilde{D}^p\tilde{S}S_{cl}^{-1}$ and $S_{cl}\tilde{S}^{-1}\hat{\Phi}(\tilde{Q}'+\tilde{Q}_{-p})\tilde{S}S_{cl}^{-1}$ are the images of differential operators $P,Q$ providing a solution to the string equation. Note that by construction $P$ is a normalized operator, the orders of $P,Q$ are $p,q$ correspondingly. 

The last assertion of theorem is clear. Besides, by theorem \ref{T:one-to-one-correspondence} we showed that the normalized pair of differential operators $P,Q$ (i.e. with $P$ normalised) with $[Q,P]=0$ or $[Q,P]=1$ is uniquely determined by its normalized (string) pair (in particular, the conjugating operator $\tilde{S}S_{cl}^{-1}$ for which $P,Q$ are differential operators, is uniquely determined). So, the operator  $\tilde{Q'}+\tilde{Q}_{-p}$ is a normal form of $Q$ with respect to $P$.
\end{proof}

\begin{cor}
\label{C:one-to-one2}
There is a one-to-one correspondence between 
\begin{enumerate}
\item
Normalized solutions $P, Q\in D_1$ of the string equation such that  $\ord (P)=p=nd$, $\ord (Q)=q=md$, where $(m,n)=1$, and conditions of theorem \ref{T:string_equation} are satisfied;
\item
Normalized  normal  string pairs $(\partial^p, \tilde{Q})\in  \hat{D}_1^{sym}$ of order $p=nd ,q=md$ with respect to the fixed normalisation as in theorem \ref{T:one-to-one-correspondence};
\item
Normalized pair of commuting operators $P', Q'\in D_1$ such that  $\ord (P')=p=nd$, $\ord (Q')=q=md$, where $(m,n)=1$, and conditions of theorem \ref{T:inverse_string_equation} are satisfied;
\item
Normalized  normal  pairs $(\partial^p, \tilde{Q}')\in  \hat{D}_1^{sym}$ of order $p=nd ,q=md$ with respect to the fixed normalisation as in theorem \ref{T:one-to-one-correspondence};
\item 
Spectral datum $(C,p,\cf ,z)$ as in remark \ref{R:spectral_data_for_string}, where $C$ is the one point compactification of the affine spectral curve $C_0$ determined by an equation $f(X,Y)=0$ from definition \ref{D:BC-distance}, $\cf$ is a torsion free sheaf of rank one on $C$, $z$ is a formal local parameter at the point $p$;
\end{enumerate}
with the same sequences. Here the sequence for the spectral datum is taken for some solution $Y=z^{-nd}$, $X=c_{-md}z^{-md}+\ldots $ as in definition \ref{D:BC-distance}.
\end{cor}

The proof is clear in view of theorems \ref{T:string_equation}, \ref{T:one-to-one-correspondence}, \ref{T:inverse_string_equation} and remark \ref{R:one-to-one1}.

The last result related to the solutions of the string equation  is a correspondence for adjoint differential operators - adjoint solutions of the string equation. 

Recall that there is an anti-isomorphism of the ring $D_1$, denoted by $*: D_1 \rightarrow D_1$, $*(x):=x$, $*(\partial ):= -\partial$. If $P,Q$ is a normalized solution to the string equation, then, clearly, $*(P),-*(Q)$ is another normalized solution to the string equation. Now a natural question appears: how are the corresponding normalized pairs of commuting operators from corollary \ref{C:one-to-one2} related?

\begin{Prop}
\label{P:adjoint}
Let $P, Q\in D_1$ be a normalized solution of the string equation such that  $\ord (P)=p=nd$, $\ord (Q)=q=md$, where $(m,n)=1$, and conditions of theorem \ref{T:string_equation} are satisfied. Let $P', Q'\in D_1$ be the corresponding normalized pair of commuting operators from corollary \ref{C:one-to-one2}. 
Let $*(P),-*(Q)\in D_1$ be the adjoint normalized solution of the string equation. 

Then the corresponding normalized pair of commuting operators from corollary \ref{C:one-to-one2} is $*(P'), -*(Q')$, in other words the diagram \eqref{E:*diagram}  holds.
\end{Prop}  

\begin{proof}
The anti-isomorphism $*$ can not be extended to the whole ring $\hat{D}_1^{sym}$, but it can be extended to the ring $E_p$ as follows: $*(A_{p,i}):=A_{p,p-i}$, $*(\tilde{D})=-\tilde{D}$, $*(\Gamma )=-\partial x=-\Gamma -1$. A direct check show that such extended $*$ is indeed an anti-isomorphism of $E_p$. 

Using this extension we can prove our claim in the ring $E_p$, since $\hat{\Phi}|_{D_1}$ is the embedding of rings compatible with $*$. 

Let $(\partial^p, \tilde{Q})$ be the corresponding to $P,Q$  normalized normal string pair. 
By theorem \ref{T:one-to-one-correspondence} we know that $\tilde{D}^p=S^{-1}\hat{\Phi}(P)S$, 
$\hat{\Phi}(\tilde{Q})= S^{-1}\hat{\Phi}(Q)S$ for a monic zeroth order operator $S\in E_p$ satisfying condition $A_p(0)$. Then $*(\hat{\Phi}(\tilde{Q}))= *(S)*(\hat{\Phi}(Q))(*(S))^{-1}$ and $*(\tilde{D}^p)=*(S)*(\hat{\Phi}(P))(*(S))^{-1}$. Clearly, the pair $(*(\tilde{D}^p),- *(\hat{\Phi}(\tilde{Q})))$ is a normal string pair of order $(p,q)$ with the same sequence as the adjoint normalized solution $*(P),-*(Q)$ (by remark \ref{R:one-to-one1} the operator $*(S)$ is monic zeroth order and satisfy condition $A_p(0)$), and the pair $(*(\tilde{D}^p),- *(\hat{\Phi}(\tilde{Q}-\tilde{Q}_{-p})))$ is a normal pair of order $(p,q)$ with the same sequence as well. 

According to the proof of theorem \ref{T:one-to-one-correspondence} and remark \ref{R:one-to-one1} there is an operator $\tilde{S}\in K^{\oplus p}((\tilde{D}^{-1}))$ such that $\Omega:=\tilde{S}^{-1}\hat{\Phi}(\tilde{Q}-\tilde{Q}_{-p})\tilde{S}\in K((\tilde{D}^{-1}))$. Then also $*(\Omega )=*(\tilde{S})*(\hat{\Phi}(\tilde{Q}-\tilde{Q}_{-p}))*(\tilde{S}^{-1})\in K((\tilde{D}^{-1}))$. 

From the proof of theorem \ref{T:one-to-one-correspondence} we know that the operator $T=S_1\tilde{S}\in E\subset E_p$ (where $S_1$ is the monic zeroth order operator from Step 2 of the proof of theorem \ref{T:one-to-one-correspondence} such that $\tilde{D}^p=S_1^{-1}\hat{\Phi}(P')S_1$, $\hat{\Phi}(\tilde{Q}-\tilde{Q}_{-p})= S_1^{-1}\hat{\Phi}(Q')S_1$) is a classical Schur operator  for the pair $P',Q'$, i.e. $\hat{\Phi}(P')=T\tilde{D}^pT^{-1}$, $\hat{\Phi}(Q')=T\Omega T^{-1}$. 

Then $*(T)^{-1}=*(S_1)^{-1}*(\tilde{S})^{-1}$ is the classical Schur operator for the pair $*(P'),-*(Q')$\footnote{i.e. $*(T)^{-1}*(\tilde{D}^p)*(T)=\hat{\Phi}(*(P'))$, $\hat{\Phi}(-*(Q'))=-*(T)^{-1}*(\Omega )*(T)$.}, and the pair $(*(\tilde{D}^p),- *(\hat{\Phi}(\tilde{Q}-\tilde{Q}_{-p})))$ is a normal pair of order $(p,q)$ with the same sequence as $*(P'),-*(Q')$. Obviously, the pair $*(P'),-*(Q')$ is a normalized pair of commuting operators. So, by corollary \ref{C:one-to-one2} the pair $*(P'),-*(Q')$ is the corresponding normalized pair of commuting operators for $*(P),-*(Q)$.

\end{proof}

\begin{rem}
\label{R:adjoint}
In \cite[\S 6]{HM} it was shown that adjoint rings of commuting operators correspond to the {\it Serre dual} spectral datum, which in our case means that the pair $*(P'),-*(Q')$ corresponds to the datum $(C,p, \cf^*\otimes K_C, z)$, where $K_C$ is the canonical sheaf on $C$. 
\end{rem}

\section{Proof of theorem \ref{T:main}}
\label{S:proof}

In this section we give the proof of theorem \ref{T:main}. The scheme of the proof is as follows. Assuming that there is a DC pair $P,Q$ of subrectangular type we construct first another DC-pair 
$P_n, Q_n$ of subrectangular type with additional beautiful properties: the right vertical sides of their rectangulars are monomials, and the upper horizontal sides have specific coefficients preceding the highest coefficients. 

After that we construct another DC-pair $\Phi_{N,1}(P_n), \Phi_{N,1}(Q_n)$ by applying additional automorphism $\Phi_{N,1}$ with appropriate $N$ for this pair. Then we show (lemma \ref{L:Phi_N(P)}) that there are two cases, depending on the original DC-pair, such that in the first case the new DC pair has a sequence (of operators $Q_{N,i}$ etc.) of length one more than the length of the sequence  for the original DC pair, and the second case of the same length. In both cases we calculate sequential GCDs, distances and the orders of the last operators in the sequences. In both cases this DC-pair satisfies the termination condition for the head-chopping process. Lemma \ref{L:Phi_N(P)} relies heavily on the preparatory theorems \ref{T:DC-pairs}, \ref{T:subrectangular_sequence}. 

After that in section \ref{S:Schur_adj_DC_pairs} we prove theorem \ref{T:main} with the help of one explicit calculation. Namely, according to corollary \ref{C:one-to-one2} the pair of commuting operators $P_{N,n}', Q_{N,n}'$ corresponding to the pair $\Phi_{N,1}(P_n), \Phi_{N,1}(Q_n)$ satisfy an algebraic relation given by some polynomial $f(X,Y)$ of BC-type (see definition \ref{D:BC-distance} and remark \ref{R:classif_ODO}). Going to the normal form $\partial^{\ord P_{N,n}'}, \tilde{Q}_{N,n}'$ of the pair $P_{N,n}', Q_{N,n}'$ and conjugating the operator $\hat{\Phi}(\tilde{Q}_{N,n}')$ by the auxiliary operator $\tilde{S}$, we get a Puiseux series $X_{N,n}$ which gives a solution of the equation $f(\tilde{D}^{\ord P_{N,n}'}, X)=0$. This series has a {\it distinguished coefficient} at $\tilde{D}^{-\ord P_{N,n}'}$ which is equal to zero according to remark \ref{R:distinguished_coefficient} (this is an important property which holds for  series obtained from commuting operators generating the ring of rank one due to the Newton-Puiseux theorem combined with the Wilson theorem). Because of this property it is possible to find distinguished coefficients of the sequence $X_{N,i}=F_i(\tilde{D}^{\ord P_{N,n}'}, X_{N,n})$ -- these are coefficients of homogeneous terms corresponding to the beginning of the "tail" of operators $\tilde{Q}_{N,i}$. 

We get a contradiction by showing that, on the one hand side, these distinguished coefficients don't depend on $N$ (lemma \ref{L:distinguished coefficients}), and on the other hand side by calculating the distinguished coefficient of the series $X_{N,I}$ in Step 7 of section \ref{S:Schur_adj_DC_pairs}  (this calculation shows that it depends on $N$).

\subsection{Preliminary lemmas}
\label{S:prelim_lemmas}

Assume there exists a  DC-pair $P,Q\in A_1$ that provides a counterexample to the Dixmier conjecture and $\varphi$ is the corresponding endomorphism with $\varphi (x)=P$, $\varphi (\partial )=Q$. Then, as it was noted in section \ref{S:DC-pairs}, there exists a DC-pair $P,Q$ of subrectangular type satisfying conditions of theorem \ref{T:DC-pairs} and conditions of remark \ref{R:order_of_polynomial}.  Moreover, without loss of generality and by remark \ref{R:DC-pais} we can assume 
$$
f_{0,1}(P)=(1+\alpha x)^{ln}y^{dn}, \quad f_{0,1}(Q)=(1+\alpha x)^{lm}y^{dm},\quad 1<l<d, \quad (m,n)=1, \quad \epsilon (\varphi)\notin \dn .
$$ 
Let $f_{1,0}(P)=\tilde{h}x^{ln}$, $\tilde{h}\in \dc [y]$, then $\tilde{h}=\tilde{g}^n$ for some $\tilde{g}\in \dc [y]$ of degree $d$ and highest coefficient $\alpha^{l}$, 
and $f_{1,0}(Q)=\tilde{g}^mx^{lm}$ (cf. arguments in section \ref{S:DC-pairs}).

\begin{rem}
\label{R:another_subrectangular}
Consider  operators 
$$\hat{P}=\varphi\circ \Phi_{N,\lambda}(P),\quad \hat{Q}=\varphi\circ \Phi_{N,\lambda}(Q)$$ 
from corollary \ref{C:order_of_polynomial}. Then, they give another DC-pair, moreover, they also satisfy conditions of theorem \ref{T:DC-pairs}, as it immediately follows from the proof of lemma \ref{L:order_of_polynomial} and corollary \ref{C:order_of_polynomial}. Indeed, we need to check only the condition $\ord (\hat{P})=\Ord (\hat{P})$ and $\ord (\hat{Q})=\Ord (\hat{Q})$ (other conditions are trivial). From the proof of lemma \ref{L:order_of_polynomial} we know that 
$$HT(\hat{P})=HT(\varphi (\Phi_{N,\lambda}(\varphi (x)))=
cHT(\varphi (\partial^{nd+Nln}))=cg^{m(nd+Nln)}$$
for some $c\in K^*$ (recall that here $g=(1+\alpha x)^l$), so, $HT(\hat{P})(0)\neq 0$ and therefore $\ord (\hat{P})=\Ord (\hat{P})$ by remark \ref{R:ord=deg}. Analogous arguments work for $\hat{Q}$, since the highest terms of  $\hat{Q}$ and  $\hat{P}$ are proportional by \cite[L.2.7]{Dixmier}. 

By corollary \ref{C:order_of_polynomial} we have 
$$
EssGCD(\hat{P},\hat{Q})=EssGCD(P,Q)=:d_2>1, \quad \ord (\hat{P})=dmn(d+Nl), \quad \ord (\hat{Q})=dm^2(d+Nl)
$$ 
(so that for this pair $d$ is replaced by $dm(d+Nl)$, and $l$ is replaced by $lm(d+Nl)$). So, in particular, we can find a DC-pair with $d\gg 0$. 
\end{rem}

By theorem \ref{T:DC-pairs} there is a sequence of polynomials $Q_i=F_i(X,Y)$, $i=0, \ldots ,I$,   
such that $SeqGCD_I(P,Q)=1$ and $SeqGCD_{I-1}(P,Q)\ge d_2$. By lemma \ref{L:induction_step1} we know that 
$\ord (Q_I)= \tilde{M}_{I-1}+1$, where  
$$
\tilde{M}_{I-1}=\ord (Q_{I-1})n_{I-1}-M_{I-1}=\ord (Q_{I-1})(n_{I-1}-1)+ \ord (Q_{I-2})(n_{I-2}-1)+\ldots +\ord (Q_0)(n-1)-dn. 
$$
From Items 3.4. 7) of this lemma we also know that $f_{0,1}(Q_I)=\varepsilon (1+\alpha x)^{\tilde{M}_{I-1}l/d +1}y^{\tilde{M}_{I-1}+1}$ for some $\varepsilon\in K^*$. Then $f_{1,0}(Q_I)=H_1x^{\tilde{M}_{I-1}l/d +1}$, where $H_1\in K[y]$ is of degree $\tilde{M}_{I-1}+1$ with the highest coefficient $\varepsilon \alpha^{\tilde{M}_{I-1}l/d +1}$.

\begin{lemma}
\label{L:simple_automorphisms}
There exists an automorphism $\varphi_t\in \Aut(A_1)$ ($t\in K$): $\varphi_t(\partial)=\partial+c$, $\varphi_t (x)=x$, where $c\in K$ depends on $t$, such that 
$$
\varphi_t (\tilde{g})=\alpha^{l}y^d+ d\alpha^{l}ty^{d-1}+\ldots ,\quad \varphi_t (H_1)=\varepsilon \alpha^{\tilde{M}_{I-1}l/d +1}y^{\tilde{M}_{I-1}+1}+(\tilde{M}_{I-1}+1)\varepsilon \alpha^{\tilde{M}_{I-1}l/d +1}ty^{\tilde{M}_{I-1}}+\ldots
$$ 
where $\ldots$ means terms of lower degree.
\end{lemma}

\begin{proof}
By theorem \ref{T:subrectangular_sequence} we have two possibilities: either $\tilde{g}$ has one root, say  $\tilde{g}=(b+\tilde{\alpha} y)^d$, or $\tilde{g}$ has more than one root. 

In the first case $H_1=c(b+\tilde{\alpha} y)^{\tilde{M}_{I-1}+1}$, $c\in K$, and our claim is trivial. In the second case $d=ld_2$ and $H_1$ satisfies differential equation \eqref{E:dif_eq_H_1}. Obviously, there is an automorphism  $\varphi(\partial)=\partial+c$, $\varphi (x)=x$ such that $\varphi (\tilde{g})=\alpha^{l}y^d + c_{d-2}y^{d-2}+\ldots$. So, it is sufficient to show that $\varphi (H_1)$ also has zero coefficient at the second place (in this case we can find another automorphism which satisfies our claim). 

So, assume $\tilde{g}=\alpha^{l}y^d + c_{d-2}y^{d-2}+\ldots$, $H_1=h_0y^{\tilde{M}_{I-1}+1}+h_1y^{\tilde{M}_{I-1}}+\ldots $. From theorem \ref{T:subrectangular_sequence} we know that $\tilde{M}_{I-1}=((l(M-n)+1)n_{I-1}-1)d_2$, and the degree of polynomial $H$ from equation \eqref{E:dif_eq_H_1} is $(l(M-n)+1)d_2$. Suppose  
$H=\bar{h}_0y^{(l(M-n)+1)d_2}+\bar{h}_1y^{(l(M-n)+1)d_2-1}+\ldots $. Then the equation on the coefficient $\bar{h}_1$ from equation \eqref{E:dif_eq_H} is (here we use that $d_3=l_3d_2$, as $d=ld_2$)
$$
\bar{h}_1((l(M-n)+1)d_2-1)\alpha^{l_3}-\bar{h}_1((l(M-n)+1)d_2\alpha^{l_3}=0,
$$
whence $\bar{h}_1=0$, and the equation on the coefficient $h_1$ from equation \eqref{E:dif_eq_H_1} is
$$
h_1(((l(M-n)+1)n_{I-1}-1)d_2)\alpha^{l_3}-h_1((l(M-n)+1)n_{I-1}d_2\alpha^{l_3}=0,
$$
whence $h_1=0$, and we are done.
\end{proof}

Consider the automorphism  $\psi\in \Aut(A_1)$: $\psi (\partial)=\partial$, $\psi (x)=x-\alpha^{-1}$. 
Then, obviously, $f_{0,1}(\psi (P))=(\alpha x)^{ln}y^{dn}$, $f_{0,1}(\psi (Q))=(\alpha x)^{lm}y^{dm}$,
and $f_{1,0}(\psi (P))=f_{1,0}(P)$, $f_{1,0}(\psi (Q))=f_{1,0}(Q)$. Then from theorem \ref{T:DC-pairs} and  remark \ref{R:DC-pais} we know that  $f_{0,1}(\psi (Q_i))$ are also monomials of shape $(\alpha x)^{\tilde{M}}y^{\tilde{N}}$ for all $i=0, \ldots ,I$. 

Let $\varphi_t$ be an automorphism  from lemma \ref{L:simple_automorphisms}, and $Fo:x\mapsto \partial$, $\partial \mapsto x$ be the Fourier transform. Put 
\begin{equation}
\label{E:P_n,Q_n}
P_n:=Fo\circ \varphi_t\circ \psi (P), \quad Q_n:=Fo\circ \varphi_t\circ \psi (Q).
\end{equation} 
Then, clearly,  
$$
f_{1,0}(P_n)=(\alpha y)^{ln}x^{dn},\quad f_{1,0}(Q_n)=(\alpha y)^{lm}x^{dm}, \quad f_{0,1}(P_n)=\varphi_t(\tilde{g})(x)^ny^{ln}, \quad f_{0,1}(Q_n)=\varphi_t(\tilde{g})(x)^my^{lm}
$$ 
for any $t$. 
Note that $F_i(P_n,Q_n)=Fo\circ \varphi_t\circ \psi (F_i(P,Q))$, $i=0, \ldots ,I$ are of subrectangular type (for any $t$), because  the automorphism $Fo\circ \varphi_t\circ \psi$ preserves the subrectangular form of the Newton polygon. Moreover, $f_{1,0}(F_i(P_n,Q_n))$ are monomials for all $i=0, \ldots ,I$. Making the scale transform $\partial \mapsto c\partial$, $x\mapsto c^{-1}x$ such that $\alpha^{l(m+n)}c^{(l-d)(m+n)}=1$, and taking the operators $P_n/(\alpha^{ln}c^{(l-d)n})$, $(\alpha^{ln}c^{(l-d)n})Q_n$ instead of $P_n, Q_n$, we can assume without loss of generality that $\alpha =1$, i.e. $Hm(P_n)=x^{dn} \partial^{ln}$, $Hm(Q_n)= x^{dm} \partial^{lm}$ (for any $t$), and the highest coefficient $\tilde{\alpha}=\alpha^l$ of the polynomial $\varphi_t(\tilde{g})$ is also equal to 1. 

\begin{lemma}
\label{L:Phi_N(P)} 
In the notation above (for operators $P_n, Q_n$) assume $d=d_2 GCD(l,d)$, $l=l_2 GCD(l,d)$.

\begin{enumerate}
\item
Assume $d_2-l_2>1$ or $SeqGCD_{I-1}(P,Q)>d_2$. Let $N=(d_2-l_2)k-1$ for $k\gg 0$, $k$ divisible by $SeqGCD_{I-1}(P,Q)$. 

Then for any $t\neq 0$ the operators $\Phi_{N,1}(P_n)$, $\Phi_{N,1}(Q_n)$ satisfy conditions of definition \ref{D:distance}, more precisely, there exists a sequence of operators $Q_{N,i}$, $i=0, \ldots , I+1$, 
where $Q_{N,i}=F_i(\Phi_{N,1}(P_n),\Phi_{N,1}(Q_n))$ for $i=0, \ldots ,I$ (i.e. the polynomials $F_i$ for $i<I+1$ are the same as polynomials for $P, Q$) such that  $SeqGCD_{I+1}(\Phi_{N,1}(P_n),\Phi_{N,1}(Q_n))=1$, $SeqGCD_{I}(\Phi_{N,1}(P_n),\Phi_{N,1}(Q_n))>1$ and 
$$
Dist(\Phi_{N,1}(P_n),\Phi_{N,1}(Q_n))=\ord (\Phi_{N,1}(P_n))+\ord (\Phi_{N,1}(Q_n))-1=(Nd+l)(n+m)-1.
$$

\item 
If $d_2-l_2=1$ and $SeqGCD_{I-1}(P,Q)=d_2$,\footnote{$SeqGCD_{I-1}(P,Q)\ge d_2$ by 
theorem \ref{T:DC-pairs}.} then the same is true for any $t\in K$ and any $N$ for the sequence $Q_{N,i}$, $i=0, \ldots , I$, and in this case $SeqGCD_{I}(\Phi_{N,1}(P_n),\Phi_{N,1}(Q_n))=1$, $SeqGCD_{I-1}(\Phi_{N,1}(P_n),\Phi_{N,1}(Q_n))>1$,
$$
Dist(\Phi_{N,1}(P_n),\Phi_{N,1}(Q_n))= \ord (\Phi_{N,1}(P_n))+\ord (\Phi_{N,1}(Q_n))-1-N.
$$ 
\end{enumerate}

In both cases the sequences for $P,Q$ and for  $\Phi_{N,1}(P_n)$, $\Phi_{N,1}(Q_n)$ are similar in the following sense: if $(\ord (Q_i),\varepsilon_i)$ is the sequence for $P,Q$, then  $(\ord (Q_{N,i}),\varepsilon_i)$ for $i\le I$ is a part of the sequence for $\Phi_{N,1}(P_n)$, $\Phi_{N,1}(Q_n)$ (it is the whole sequence in the second case, and the sequence without the last element in the first case), where we moreover have the following equalities:
$$
\ord (Q_{N,i})=(d_2N+l_2)\ord (Q_i)/d_2 \quad \mbox{for $i<I$}.
$$
\end{lemma} 

\begin{proof}
{\it Step} 1. Obviously, the operators $\Phi_{N,1}(P_n)$, $\Phi_{N,1}(Q_n)$ are monic, 
$$\sigma (\Phi_{N,1}(P_n))=\partial^{dnN+ln}, \quad \ord (\Phi_{N,1}(P_n))=\Ord (\Phi_{N,1}(P_n)), \quad \ord (\Phi_{N,1}(Q_n))=\Ord (\Phi_{N,1}(Q_n)).
$$ 
Let's denote by $\hat{F}_{i}(X,Y)$ a sequence of polynomials for these operators. 
Let's show that for $i=0,\ldots , I$ we can take $\hat{F}_i(X,Y)=F_i(X,Y)$. 

The proof is by induction. For generic $i< I$ we need to show that $f_{0,1}(\Phi_{N, 1 }(F_i(P_n,Q_n)))^{n_i}=\varepsilon_i^{n_i}f_{0,1}(\Phi_{N, 1 }(P_n))^{m_i}$, where $n_i, m_i, \varepsilon_i$ are the sequences from theorem \ref{T:DC-pairs} for polynomials $F_i(X,Y)$ -- in this case we can take the polynomial $\hat{F}_{i+1}$ coinciding with $F_{i+1}$. 

From theorem \ref{T:subrectangular_sequence} we know that $F_i(P,Q)=Q_i$ are of subrectangular type,  
$$
f_{0,1}(Q_i)=\varepsilon_i(1+\alpha x)^{\tilde{M}}y^{\tilde{N}}
$$ 
and also $en_{1,0}(Q_i)$ is proportional with $en_{1,0}(P)=(ln,dn)$ for $i<I$. We also know by lemma \ref{L:induction_step1} that $f_{0,1}(Q_i)^{n_i}=\varepsilon_i^{n_i}f_{0,1}(P)^{m_i}$, so $(\tilde{M}, \tilde{N}) n_i= (ln, dn) m_i$. Then for $P_n$ and $F_i(P_n,Q_n)=Fo\circ \varphi\circ \psi (Q_i)$ we have  $en_{1,0}(P_n)=(dn,ln)$, $en_{1,0}(F_i(P_n,Q_n))=(\tilde{N},\tilde{M})$  and again $en_{1,0}(P_n)m_i=en_{1,0}(F_i(P_n,Q_n))n_i$. 

Then for $i<I$ 
$$
f_{0 ,1}(\Phi_{N, 1}(F_i(P_n,Q_n)))=\varepsilon_i y^{\tilde{M}+N\tilde{N}}, \quad f_{0 ,1}(\Phi_{N, 1}(P_n))= y^{ln+dnN}
$$ 
(as $\alpha =1$). So, $\ord (\Phi_{N, 1}(F_i(P_n,Q_n))) =\Ord (\Phi_{N, 1}(F_i(P_n,Q_n)))$ and $f_{0,1}(\Phi_{N, 1 }(F_i(P_n,Q_n)))^{n_i}=\varepsilon_i^{n_i}f_{0,1}(\Phi_{N, 1 }(P_n))^{m_i}$.

Besides, since $F_i(P_n,Q_n)$  are of subrectangular type and $f_{1,0}(F_i(P_n,Q_n))$ are monomials for $i=0, \ldots ,I$ by theorem \ref{T:DC-pairs} and remark \ref{R:DC-pais},  we have for $i=0, \ldots ,I$
\begin{equation}
\label{E:properties_of_Q_N,i}
({Q}_{N,i})_q=0 \quad \mbox{for $\Ord (Q_{N,i})-N<q<\Ord (Q_{N,i})$},
\end{equation}
and $({Q}_{N,i})_{\Ord (Q_{N,i})-N}=c\partial^{\Ord (Q_{N,i})-N}$, $c\in K$, where $c$ is a constant depending on $t$. 

{\it Step} 2. From theorem \ref{T:subrectangular_sequence} we know that for $i<I$ 
$$en_{1,0}(Q_i)=(l,d)\ord (Q_i)/d=(l_2,d_2)\ord (Q_i)/d_2 .
$$ 
So, for $i<I$ we have $en_{1,0}(F_i(P_n,Q_n))=(d_2,l_2)\ord (Q_i)/d_2$ and therefore 
\begin{equation}
\label{E:formula_for_orders}
\ord (Q_{N,i})=(d_2N+l_2)\ord (Q_i)/d_2
\end{equation} 
(by theorem \ref{T:DC-pairs} we know that $\ord (Q_i)$ is divisible by $d_2$ for $i<I$). Therefore, for $i<I$ we have 
$$
SeqGCD_i(\Phi_{N,1}(P_n),\Phi_{N,1}(Q_n))=SeqGCD_i(P,Q)(d_2N+l_2)/d_2.
$$
Now let's calculate $SeqGCD_I(\Phi_{N,1}(P_n),\Phi_{N,1}(Q_n))$. By lemma \ref{L:induction_step1} we know (see also the formulae above) that $en_{1,0}(Q_I)=(\tilde{M}_{I-1}l/d+1, \tilde{M}_{I-1}+1)$. So, 
we have $en_{1,0}(F_I(P_n,Q_n))=(\tilde{M}_{I-1}+1, \tilde{M}_{I-1}l/d+1)$ and therefore 
$$
\ord (Q_{N,I})=N(\tilde{M}_{I-1}+1)+\tilde{M}_{I-1}l/d+1.
$$
Recall the definition of $\tilde{M}_i$ from lemma \ref{L:induction_step1}: $\tilde{M}_i=\ord (Q_i)n_i-M_i$. So, 
\begin{multline}
\label{E:ordQ_N,i}
\ord (Q_{N,I})=\ord (Q_{I-1})n_{I-1}(N+\frac{l_2}{d_2})-M_{I-1}(N+\frac{l_2}{d_2})+N+1=\\
\ord (Q_{I-1})n_{I-1}(N+\frac{l_2}{d_2})-(d_2-l_2)(k(M_{I-1}-1)-\frac{M_{I-1}}{d_2})
\end{multline}
(here we put $N=(d_2-l_2)k-1$ for both cases of lemma). Therefore,
\begin{multline*}
SeqGCD_I(\Phi_{N,1}(P_n),\Phi_{N,1}(Q_n))=GCD(SeqGCD_{I-1}(\Phi_{N,1}(P_n),\Phi_{N,1}(Q_n)), \ord (Q_{N,I}))= \\
GCD(SeqGCD_{I-1}(P,Q)(d_2N+l_2)/d_2, (d_2-l_2)(k(M_{I-1}-1)-\frac{M_{I-1}}{d_2}))=\\
(d_2-l_2)GCD(SeqGCD_{I-1}(P,Q)(kd_2-1)/d_2, (k(M_{I-1}-1)-\frac{M_{I-1}}{d_2}))=\\
(d_2-l_2)SeqGCD_{I-1}(P,Q)/d_2,
\end{multline*}
where the last equality hold because $M_{I-1}/d_2$ is divisible by $SeqGCD_{I-1}(P,Q)/d_2$ by theorem \ref{T:DC-pairs} and $k$ is divisible by $SeqGCD_{I-1}(P,Q)/d_2$ by condition. Hence it is $>1$ if conditions of item 1) hold. If conditions of item 2) hold, then this GCD is $1$ for any $k$. Moreover, in this case 
\begin{multline}
\label{E:promezh_distance}
\sum_{i=0}^{I-1}(n_i\ord (Q_{N,i})-\ord (Q_{N,i+1}))=
\sum_{i=0}^{I-2}(n_i\ord (Q_{i})-\ord (Q_{i+1}))(N+\frac{l_2}{d_2})+\\
(n_{I-1}\ord (Q_{N,I-1})-\ord (Q_{N,I}))=
(N+\frac{l_2}{d_2})(Dist(P,Q)+1)-N-1=\\
(N+\frac{l_2}{d_2})(p+q)-N-1 =
\ord (\Phi_{N,1}(P_n))+\ord (\Phi_{N,1}(Q_n))-1-N, 
\end{multline} 
which proves Item 2) of lemma. Note that all differences $(n_i\ord (Q_{N,i})-\ord (Q_{N,i+1})$ are big if $k\gg 0$. 

{\it Step} 3. At last, to prove Item 1), we need to calculate the polynomial $\hat{F}_{I+1}$, the operator $Q_{N,I+1}=\Phi_{N,1}(\hat{F}_{I+1}(P_n,Q_n))$ and its order, distance and sequential GCD. Let's take for simplicity of calculations $\hat{n}_I=\ord (\Phi_{N,1}(P_n))=(ln+dnN)$, and $\hat{m}_I=\ord (Q_{N,I})$. 

Recall that from Items 3.4. 7) of  lemma \ref{L:induction_step1} we  know that $f_{0,1}(Q_I)=\varepsilon (1+\alpha x)^{\tilde{M}_{I-1}l/d +1}y^{\tilde{M}_{I-1}+1}$ for some $\varepsilon\in K^*$, $Q_I=F_I(P,Q)$. 
Since $Q_I$ is of subrectangular type, we have therefore 
$$f_{0 ,1}(\Phi_{N, 1 }(F_I(P_n,Q_n)))=\varepsilon y^{(\tilde{M}_{I-1}l/d +1)+(\tilde{M}_{I-1}+1)N}.$$
So, we can find the constant $\hat{\varepsilon}_{I}$ of $\hat{F}_{I+1}$\footnote{from the equation $f_{0,1}(Q_{N,I})^{\hat{n}_I}- \hat{\varepsilon}_{I} f_{0,1}(\Phi_{N, \lambda }(P_n))^{\ord (Q_{N,I})}=0$}: $\hat{\varepsilon}_{I}=\varepsilon^{(ln+dnN)}$, and so
$$
\hat{F}_{I+1}(X,Y)=F_I(X,Y)^{(ln+dnN)}-\varepsilon^{(ln+dnN)}  X^{(\tilde{M}_{I-1}l/d +1)+(\tilde{M}_{I-1}+1)N}. 
$$
By  remark \ref{R:subrectangular} we know that $F_I(P_n,Q_n)^{ln+dnN}$ and $P_n^{(\tilde{M}_{I-1}l/d +1)+(\tilde{M}_{I-1}+1)N}$ are operators of subrectangular type. Moreover we have
\begin{multline*}
en_{1,0}(F_I(P_n,Q_n)^{ln+dnN})=(\tilde{M}_{I-1}+1, \tilde{M}_{I-1}l/d +1)\cdot (ln+dnN)\\
en_{1,0}(P_n^{(\tilde{M}_{I-1}l/d +1)+(\tilde{M}_{I-1}+1)N})=(dn,ln)\cdot ((\tilde{M}_{I-1}l/d +1)+(\tilde{M}_{I-1}+1)N).
\end{multline*}
From this we immediately get that 
\begin{multline*}
\ord (P_n^{(\tilde{M}_{I-1}l/d +1)+(\tilde{M}_{I-1}+1)N})-\ord F_I(P_n,Q_n)^{(ln+dnN)} =(l-d)nN<0 \\
\ord_x (P_n^{(\tilde{M}_{I-1}l/d +1)+(\tilde{M}_{I-1}+1)N})-\ord_x F_I(P_n,Q_n)^{(ln+dnN)} = (d-l)n >0,
\end{multline*}
that is 
$$
\ord (\hat{F}_{I+1}(P_n,Q_n))=\ord F_I(P_n,Q_n)^{\hat{n}_I}=(\tilde{M}_{I-1}l/d +1)(ln+dnN)
$$
and the highest monomials of the operators $F_I(P_n,Q_n)^{(ln+dnN)}$ and $P_n^{(\tilde{M}_{I-1}l/d +1)+(\tilde{M}_{I-1}+1)N}$ belong to the $(N,1)$-top line of the operator $\hat{F}_{I+1}(P_n,Q_n)$, and this top line contains exactly these two vertices. Now direct calculation of the polynomial $f_{N,1}(\Phi_{N,1 }(\hat{F}_{I+1}(P_n,Q_n)))$ gives
\begin{multline*}
f_{N,1}(\Phi_{N,1 }(\hat{F}_{I+1}(P_n,Q_n)))=
(\varepsilon ((x+  y^N)^{\tilde{M}_{I-1}+1}y^{\tilde{M}_{I-1}l/d +1})^{(ln+dnN)} 
-\\
\varepsilon^{ln+dnN}  (( ( x+ y^N)^{dn}y^{ln})^{(\tilde{M}_{I-1}l/d +1)+(\tilde{M}_{I-1}+1)N}=\\
\varepsilon^{(ln+dnN)}( x+ y^N)^{(\tilde{M}_{I-1}+1)(ln+dnN)}
y^{ln((\tilde{M}_{I-1}l/d +1)+(\tilde{M}_{I-1}+1)N)}\cdot 
(y^{N(d-l)n}-( x+ y^N)^{(d-l)n}).
\end{multline*}
Therefore the $(N,1)$-top line of the operator $\Phi_{N,1 }(\hat{F}_{I+1}(P_n,Q_n))$ contains no vertex on the $y$-axis, and contains the vertex $(1, (ln+dnN)((\tilde{M}_{I-1}l/d +1)+(\tilde{M}_{I-1}+1)N)-N)$. Besides, the coefficient at this vertex is 
\begin{equation}
\label{E:**}
c:=-\varepsilon^{(ln+dnN)}(d-l)n.
\end{equation} 

{\it Step} 4. Now we claim that 
\begin{multline*}
\ord (\Phi_{N,1 }(\hat{F}_{I+1}(P_n,Q_n)))= \Ord (\Phi_{N,1 }(\hat{F}_{I+1}(P_n,Q_n)))= \\
(ln+dnN)((\tilde{M}_{I-1}l/d +1)+(\tilde{M}_{I-1}+1)N)-N= (ln+dnN)\ord (Q_{N,I}) -N.
\end{multline*}
Since $f_{1,0}(F_I(P_n,Q_n)^{(ln+dnN)})$ and $f_{1,0}(P_n^{(\tilde{M}_{I-1}l/d +1)+(\tilde{M}_{I-1}+1)N})$ are monomials, belonging to the $(N,1)$-top line above, the next monomials of these operators belonging to a lower $(N,1)$-line parallel to the top line, will be the second monomials\footnote{here we mean the next after the highest monomial of the polynomial $H_1(x)^{(ln+dnN)}$ multiplied with $y^{(\tilde{M}_{I-1}l/d +1)(ln+dnN)}$ and, respectively, the next after the highest monomial of the polynomial $\tilde{g}(x)^{n((\tilde{M}_{I-1}l/d +1)+(\tilde{M}_{I-1}+1)N)}$ multiplied with $y^{ln((\tilde{M}_{I-1}l/d +1)+(\tilde{M}_{I-1}+1)N)}$ } of 
\begin{multline*}
f_{0,1}(F_I(P_n,Q_n)^{(ln+dnN)})=H_1(x)^{(ln+dnN)}y^{(\tilde{M}_{I-1}l/d +1)(ln+dnN)}\quad \mbox{and}\quad  \\ 
f_{0,1}(P_n^{(\tilde{M}_{I-1}l/d +1)+(\tilde{M}_{I-1}+1)N})=\tilde{g}(x)^{n((\tilde{M}_{I-1}l/d +1)+(\tilde{M}_{I-1}+1)N)}y^{ln((\tilde{M}_{I-1}l/d +1)+(\tilde{M}_{I-1}+1)N)},
\end{multline*}
where $H_1, \tilde{g}$ are polynomials from lemma \ref{L:simple_automorphisms}. Direct calculation of the sum of images of these monomials under automorphism $\Phi_{N,1}$ gives 
$$
H_1(x)^{(ln+dnN)}=\varepsilon^{(ln+dnN)}(x^{(ln+dnN)(\tilde{M}_{I-1}+1)}+t(ln+dnN)(\tilde{M}_{I-1}+1)
x^{(\tilde{M}_{I-1}+1)(ln+dnN-1) +\tilde{M}_{I-1}}+\ldots ),
$$
\begin{multline*}
\tilde{g}(x)^{n((\tilde{M}_{I-1}l/d +1)+(\tilde{M}_{I-1}+1)N)}=x^{dn((\tilde{M}_{I-1}l/d +1)+(\tilde{M}_{I-1}+1)N)} +\\
td n((\tilde{M}_{I-1}l/d +1)+(\tilde{M}_{I-1}+1)N)x^{d(n((\tilde{M}_{I-1}l/d +1)+(\tilde{M}_{I-1}+1)N)-1)+d-1}+\ldots ,
\end{multline*}
whence 
\begin{multline*}
\varepsilon^{(ln+dnN)}(t(ln+dnN)(\tilde{M}_{I-1}+1)
(x+y^N)^{(\tilde{M}_{I-1}+1)(ln+dnN-1) +\tilde{M}_{I-1}}y^{(\tilde{M}_{I-1}l/d +1)(ln+dnN)}- \\
td n((\tilde{M}_{I-1}l/d +1)+(\tilde{M}_{I-1}+1)N)(x+y^N)^{d(n((\tilde{M}_{I-1}l/d +1)+(\tilde{M}_{I-1}+1)N)-1)+d-1} y^{ln((\tilde{M}_{I-1}l/d +1)+(\tilde{M}_{I-1}+1)N)})=\\
-\varepsilon^{(ln+dnN)}(d-l)nty^{(ln+dnN)((\tilde{M}_{I-1}l/d +1)+(\tilde{M}_{I-1}+1)N)-N}+ \mbox{lower terms}.
\end{multline*}

So, we get the claim and moreover we have calculated the highest symbol 
$$
\sigma (\Phi_{N,1 }(\hat{F}_{I+1}(P_n,Q_n)))=-\varepsilon^{(ln+dnN)}(d-l)nt\partial^{(ln+dnN)((\tilde{M}_{I-1}l/d +1)+ (\tilde{M}_{I-1}+1)N)-N}.
$$
The same observations show that the homogeneous components of $Q_{N,I+1}=\Phi_{N,1}(\hat{F}_{I+1}(P_n,Q_n))$ have the property 
$$
(Q_{N,I+1})_q=c_q\partial^{q}, \quad c_q\in K \quad \mbox{for $\Ord (Q_{N,I+1})-N <q<\Ord (Q_{N,I+1})-1$, }
$$
hence 
$$
Sdeg_A ((Q_{N,I+1})_{q})=0 \quad \mbox{ for $\Ord (Q_{N,I+1})-N <q<\Ord (Q_{N,I+1})-1$}
$$ 
(and $Sdeg_A((Q_{N,I+1})_{\Ord (Q_{N,I+1})-1})=1$).

{\it Step} 5. Now note that $SeqGCD_{I+1}(\Phi_{N,1}(P_n), \Phi_{N,1}(Q_n))=1$. Indeed, the number \\
$SeqGCD_{I}(\Phi_{N,1}(P_n), \Phi_{N,1}(Q_n))$ divides $(d_2-l_2)SeqGCD_{I-1}(P,Q)$, because $(kd_2-1)$ is coprime with $(k(M_{I-1}-1)-M_{I-1}/d_2)$. And also this number divides $\ord (Q_{N,I})$. Since 
$$
\ord (Q_{N,I+1})=(ln+dnN)(\ord (Q_{N,I}))-N=(ln+dnN)(\ord (Q_{N,I}))-(d_2-l_2)k+1
$$
and $k$ is divisible by $SeqGCD_{I-1}(P,Q)$, we get
$$
SeqGCD_{I+1}(\Phi_{N,1}(P_n), \Phi_{N,1}(Q_n))=GCD(SeqGCD_{I}(\Phi_{N,1}(P_n), \Phi_{N,1}(Q_n)),\ord (Q_{N,I+1}))=1. 
$$
Moreover, 
$$
\ord (Q_{N,I+1})=1 \quad \mbox{mod $SeqGCD_I(\Phi_{N,1}(P_n), \Phi_{N,1}(Q_n))$}.
$$

{\it Step} 6. At last, the distance is 
\begin{multline*}
Dist(\Phi_{N,1}(P_n), \Phi_{N,1}(Q_n))= \sum_{i=0}^{I-1}(n_i\ord (Q_{N,i})-\ord (Q_{N,i+1})) +
(\hat{n}_I\ord (Q_{N,I})-\ord (Q_{N,I+1}))= \\
(N+\frac{l_2}{d_2})(Dist(P,Q)+1)-1=\ord (\Phi_{N,1}(P_n))+\ord (\Phi_{N,1}(Q_n))-1.
\end{multline*}
 
\end{proof}

\subsection{Some explicit calculations}
\label{S:Schur_adj_DC_pairs}

In this subsection we provide  calculations of  coefficients of the Puiseux series corresponding to the sequence of commuting differential operators mentioned at the beginning of this section.

{\it Step} 1. Consider the  operators $P_n, Q_n, \Phi_{N,1}(P_n), \Phi_{N,1}(Q_n)$ from  lemma \ref{L:Phi_N(P)}.  Note that they are monic as differential operators and as operators in $\hat{D}_1^{sym}$, 
$$\sigma (\Phi_{N,1}(P_n))=\partial^{ln+dnN}, \quad \sigma (\Phi_{N,1}(Q_n))=\partial^{lm+dmN},$$  and  
$$(\Phi_{N,1}(P_n))_q=0\quad \mbox{ for \quad } ln+dnN>q>ln+ndN-N$$ 
(cf. the observations at the end of Step 1 of the proof lemma \ref{L:Phi_N(P)}), i.e. $\Phi_{N,1}(P_n)$ is normalized, and analogously 
$$
(\Phi_{N,1}(Q_n))_q=0\quad \mbox{ for \quad } lm+dmN>q>lm+dmN-N.
$$ 
Besides, $(\Phi_{N,1}(P_n))_{ln+ndN-N}=\tilde{c}_1\partial^{ln+ndN-N}$, $(\Phi_{N,1}(Q_n))_{lm+dmN-N}=\tilde{c}_2\partial^{lm+dmN-N}$, where $\tilde{c}_1, \tilde{c}_2\in K$ depend on $t$.

{\it Step} 2. Let $S$ be a Schur operator from proposition \ref{P:Si} for $\Phi_{N,1}(P_n)$: $S^{-1}\Phi_{N,1}(P_n)S=\partial^{ln+dnN}$. Note that $Sdeg_A(S_q)=0$ for $q>-N$ (because  otherwise  $(\Phi_{N,1}(P_n))_q\neq 0$). Therefore, such $S_q\in C(\partial^{ln+dnN})$ and we can find, multiplying with an appropriate operator from the centralizer, an operator $S$ with $S_q=0$ for $q=-1, \ldots ,-N+1$. 

Consider now  the operators $\tilde{Q}_{N,i}:=S^{-1}Q_{N,i}S$. Then 
$$
(\tilde{Q}_{N,i})_q=(Q_{N,i})_q\in K \quad \mbox{ for $q> \Ord (Q_{N,i})-N$ for all $i\le I+1$}.
$$ 
In view of lemma \ref{L:Phi_N(P)}, the pair $(\partial^{ln+dnN}, \tilde{Q}_{N,0})$ is a normal string pair of order $(ln+dnN, lm+dmN)$. Let
$$
(\partial^{ln+dnN},\tilde{Q}_{N,0}'):= (\partial^{ln+dnN}, \tilde{Q}_{N,0}-(\tilde{Q}_{N,0})_{-ln-dnN})
$$ 
be the corresponding normal pair (see theorem \ref{T:string_equation} and corollary \ref{C:one-to-one2}). 

{\it Step} 3.  For simplicity of further calculations let's take a sequence of polynomials $F_i(X,Y)$ for initial operators $P,Q$ from section \ref{S:prelim_lemmas} for which $n_i=p$ for all $i$ (see theorem \ref{T:DC-pairs} for notation). To fix notation, let $(\ord Q_i, \epsilon_i )$, $i=0, \ldots ,I$ be the corresponding sequence for the pair $P,Q$. Then by lemma \ref{L:Phi_N(P)} there exists similar sequence $(\ord Q_{N,i}, \epsilon_i )$ also for the pair $\Phi_{N,1}(P_n), \Phi_{N,1}(Q_n)$ (with the same $\epsilon_i$ for $i=0, \ldots ,I$). Below we'll continue to use notations from lemma \ref{L:Phi_N(P)} and assume $N>1$. 

Let $P_{N,n}', Q_{N,n}'$ be the normalised pair of commuting operators corresponding to the string pair $\Phi_{N,1}(P_n), \Phi_{N,1}(Q_n)$ according to corollary \ref{C:one-to-one2}. Denote by $Q_{N,i}':=F_i(P_{N,n}', Q_{N,n}')$ the corresponding sequence of differential operators (by corollary \ref{C:one-to-one2} this is the same sequence of polynomials $F_i$ as for $\Phi_{N,1}(P_n), \Phi_{N,1}(Q_n)$ from lemma \ref{L:Phi_N(P)}), and by $\tilde{Q}_{N,i}':=F_i(\tilde{P}_{N,n}', \tilde{Q}_{N,n}')$ the sequence of normal forms of $Q_{N,i}'$ with respect to $P_{N,n}'$. 

Denote by $X_{N,n}\in K((\tilde{D}^{-1}))$ the Puiseux series obtained from  $\tilde{Q}_{N,0}'=\tilde{Q}_{N,n}'$  by conjugation with the auxiliary operator $\tilde{S}$ from lemma \ref{L:invariant_conjugation} (so, $X_{N,n}=\tilde{S}^{-1}\hat{\Phi}(\tilde{Q}_{N,0}')\tilde{S}$ is a solution of the BC equation $f(\tilde{D}^{ln+Ndn}, X)=0$ giving the algebraic relation between $P_{N,n}', Q_{N,n}'$, cf. remark \ref{R:classif_ODO}), and by $X$ the corresponding Puiseux series obtained from  $\tilde{Q}'$ (a normal form of $Q'$ with respect to $P'$, where $P',Q'$ is the normalised pair of commuting operators corresponding to the string pair $P,Q$).  Denote by $X_{i}:=F_i(\tilde{D}^{dn},X)\in K((\tilde{D}^{-1}))$
$X_{N,i}:=F_i(\tilde{D}^{ln+Ndn},X_{N,n})\in K((\tilde{D}^{-1}))$ the corresponding sequence of series. To fix notation let
$$
X=\tilde{D}^{dm}+\sum_{j<dm} c_{j}\tilde{D}^j, \quad X_i=\epsilon_i\tilde{D}^{\ord Q_{i}}+\sum_{j<\ord Q_{i}} c_{i;j}\tilde{D}^j;
$$
$$
X_{N,n}=\tilde{D}^{dmN+lm}+\sum_{j<dmN+lm} c_{n,j}\tilde{D}^j, \quad X_{N,i}=\epsilon_i\tilde{D}^{\ord Q_{N,i}}+\sum_{j<\ord Q_{N,i}} c_{N,i;j}\tilde{D}^j. 
$$
All series $X_{N,n}, X_{N,i}$ have {\it distinguished coefficients} -- these are coefficients of homogeneous terms corresponding to the beginning of the "tail" of operators $\tilde{Q}_{N,i}$. For  $X_{N,n}$ such coefficient is $c_{n,-dnN-ln}$, and for arbitrary $X_{N,k}$ such coefficient is $c_{N,k; r(k)}$ where 
$$
r(k)=\ord Q_{N,k} -((N+\frac{l_2}{d_2})(dn+dm)-\sum_{i=0}^{k-1}(p\ord (Q_{N,i})-\ord (Q_{N,i+1})))
$$
(to explain this formula, lets note that at each step of the head-chopping process  the distance between a new head and the beginning of the tail is shortened by the amount $(p\ord (Q_{N,i})-\ord (Q_{N,i+1}))$, and the initial distance is $(N+\frac{l_2}{d_2})(dn+dm)=\ord (\Phi_{N,1}(P_n))+ \ord (\Phi_{N,1}(Q_n))$).

By remark \ref{R:distinguished_coefficient}  we know that $c_{n,-dnN-ln}=0$. From this we can immediately deduce that all distinguished coefficients $c_{N,k; r(k)}$ for $k\le I$ are polynomial expressions in $\epsilon_i$, $i<I$ with integer coefficients {\it not depending on $N$}: 

\begin{lemma}
\label{L:distinguished coefficients}
In the notations above the distinguished coefficients $c_{N,k; r(k)}$ for $k\le I$ are polynomial expressions in $\epsilon_i$, $i<I$ with integer coefficients {\it not depending on $N$} (i.e. they are the same for any $N$).
\end{lemma}

\begin{proof}
By lemma \ref{L:Phi_N(P)} (cf. formula \eqref{E:formula_for_orders})  we have 
$$
\ord Q_{N,i}=(d_2N+l_2)\ord (Q_i)/d_2 \quad \mbox{ for $i< I$}, 
$$
and $\ord Q_{N,i}=\ord Q_{N,i}'$, $\ord (Q_i)=\ord (Q_i')$ (by corollary \ref{C:one-to-one2} the sequences coincide both for $Q_{N,n}$, $Q_{N,n}'$ and for $Q$, $Q'$). 
Therefore, all non-zero coefficients $c_{n,i}$ for $i>-dnN-ln+N+1$ (cf. equation \eqref{E:promezh_distance})  must have indices $i$ divisible by $(d_2N+l_2)$ (cf. remark \ref{R:relation_between_coeff}). Moreover, 
$$
c_{n,(d_2N+l_2)i}=c_{i} \quad \mbox{for $i>-dn+1$}, 
$$
because the polynomials $F_i$, $i\le I$ are the same for both pairs of operators ($P',Q'$ and $P_{N,n}', Q_{N,n}'$) and therefore the expressions for these coefficients in terms of $\epsilon_i$ coincide.\footnote{By the same reason $c_{-dn+1}=c_{n,-dnN-ln+N+1}$, though we'll not need this equality below.} 

Now our claim can be proved by induction on $k$. For $k=0$ the distinguished coefficients $c_{N,0; r(0)}$ are zero by remark \ref{R:distinguished_coefficient}. 

For $k\le I$ the expressions for $c_{N,k; r(k)}$ in terms of coefficients $c_{n,i}$ are {\it polynomials in 
$c_{n,(d_2N+l_2)i}$ for $i>-dn+1$} and  are the same for all $N$ (and coincide with the expression for the distinguished coefficients of $X_i$ in terms of $c_i$ for $i>-dn+1$). To prove it, we need to find expression for $c_{N,k; r(k)}$ in terms of coefficients of the series $X_{N,k-1}$. By induction hypothesis, the expressions for coefficients
$c_{N,k-1;j}$  for 
\begin{equation}
\label{E:inequality_for_j}
j>  \ord Q_{N,k-1} -((N+\frac{l_2}{d_2})(dn+dm)-\sum_{i=0}^{k-2}(p\ord (Q_{N,i})-\ord (Q_{N,i+1})))+N+1
\end{equation}
in terms of coefficients $c_{n,(d_2N+l_2)i}$ for $i>-dn+1$ are the same for all $N$ (the explanation of this formula is analogous to the formula for $r(k)$ above; in view of equation \eqref{E:promezh_distance} the distance between the distinguished coefficient $c_{N,I, r(I)}=0$ and the head of $Q_{N,I}$ is $N+1$, so we need to take into account only  coefficients of bigger distances from the distinguished coefficients in all series $X_{N,i}$). 

The expression for $c_{N,k; r(k)}$ in terms of coefficients of the series $X_{N,k-1}$ is the homogeneous component of the series $X_{N,k-1}^p$ of order $r(k)$ -- the sum of products of homogeneous components of the series $X_{N,k-1}$, let's call such products as monomials for short. If $k<I$,  in view of formula \eqref{E:formula_for_orders}, the distance between the head of $X_{N,k-1}$ and the next non-zero homogeneous component is 
$$
p\ord Q_{N,k-1}-\ord Q_{N,k}= (d_2 N+l_2)(p\ord Q_{k-1}-\ord Q_{k})> N+1,
$$
therefore any non-zero monomial of the series $X_{N,k-1}^p$ of order $r(k)$ will consist only of products of homogeneous components of orders $j$ satisfying inequality \eqref{E:inequality_for_j}. 

If $k=I$, by Step 2 of the proof of lemma \ref{L:Phi_N(P)} (formula \eqref{E:formula_for_orders}) we have 
$$
p\ord Q_{N,k-1}-\ord Q_{N,k}= (d_2-l_2)(k(M_{I-1}-1)-\frac{M_{I-1}}{d_2}),
$$
and this number is less than $N+1=(d_2-l_2)k$ only if $M_{I-1}=d_2=2$. In this case $l_2=1$, an so the case 2) of lemma \ref{L:Phi_N(P)} holds, and 
$$
p\ord Q_{N,I-1}-\ord Q_{N,I}=N.
$$
In this case a non-zero monomial of the series $X_{N,I-1}^p$ of order $r(I)$ can contain a homogeneous components of order $j$ not satisfying inequality \eqref{E:inequality_for_j} only if the homogeneous component of order $r(I-1)+N$ is not equal to zero. But this is impossible: by formulae \eqref{E:promezh_distance}, \eqref{E:properties_of_Q_N,i} we have $(Q_{N,I})_{r(I)+N}=(Q_{N,I})_{\ord Q_{N,I}-1}=0$. Clearly, $r(I)+N, r(I-1)+N\notin (2N+1)\dz$ (as all numbers $r(k)$ are divisible by $d_2N+l_2$ by lemma \ref{L:Phi_N(P)}), so by the observations from remark \ref{R:relation_between_coeff} we must have $c_{N,I-1; r(I-1)+N}=0$. So, in this case again any non-zero monomial of the series $X_{N,k-1}^p$ of order $r(k)$ will consist only of products of homogeneous components of orders $j$ satisfying inequality \eqref{E:inequality_for_j}, and we are done. 
\end{proof}

{\it Step} 4. The next step in our proof is to calculate the distinguished coefficient $c_{N,I;r(I)}$. 
To provide this calculation, we need to calculate first the corresponding homogeneous component of order $r(I)$ of the operator $\tilde{Q}_{N,I}$, then calculate the body-tail decomposition for this component (this component is the beginning of the tail, so the calculation is not difficult), and finally calculate the result of conjugation by the auxiliary operator $\tilde{S}$. In fact, we will not even need to calculate $\tilde{S}$, because the result of conjugation for this component will be just the averaging of components of its vector form. We'll see that the result of direct calculation of $c_{N,I;r(I)}$ will depend on $N$, thus contradicting lemma \ref{L:distinguished coefficients}. 
 
Let's calculate the homogeneous component of order $r(I)$ of the operator $\tilde{Q}_{N,I}$ -- this is the component $(\tilde{Q}_{N,I})_{\ord Q_{N,I}-N-1}$ by lemma \ref{L:Phi_N(P)}, formula \eqref{E:promezh_distance}. To calculate it, we need to find first two non-zero homogeneous components $S_{-N}, S_{-N-1}$ of the Schur operator $S$ from Step 2. As usual, $S$ can be represented as a product of elementary operators $\bar{S}_i:=1+s_i\int^{-i}$, and here $i\le -N$. Note that conjugation by $\bar{S}_{-N}$ does not affect the homogeneous component of order $\ord Q_{N,I}-N-1$, so we need to find only the operator $\bar{S}_{-N-1}$ and the result of conjugation by it. 

Note that we have 
\begin{multline*}
(\Phi_{N,1}(P_n))_{Ndn+ln-N-1}=(dn \Gamma_1+a)\partial^{Ndn+ln-N-1}, \quad \\
(Q_{N,I})_{\Ord (Q_{N,I})-N-1}=\epsilon_I((\tilde{M}_{I-1}+1)\Gamma_1+b)\partial^{N\tilde{M}_{I-1}+\tilde{M}_{I-1}l_2/d_2}, 
\end{multline*}
where $a$ is a coefficient of the operator $P_n$  and $b$ is a coefficient of the operator $Q_{n,I}=F_I(P_n,Q_n)$ which are therefore do not depend on $N$. So, $\bar{S}_{-N-1}$ should be of the shape (cf. proposition \ref{P:Si})
$$
\bar{S}_{-N-1}=1+(\gamma_2\Gamma_2+\gamma_1\Gamma_1+u)\int^{N+1},
$$
where $\Phi (u)\in K^{\oplus Ndn+ln}$ and $\gamma_1, \gamma_2\in K$ ($u$ can be found with the help of the condition $\bar{S}_{-N-1}$ is totally free of $B_j$). From the formula above and commutation relations of lemma \ref{L:1} we get 
\begin{multline*}
\bar{S}_{-N-1}^{-1}\bar{S}_{-N}^{-1}(\Phi_{N,1}(P_n))\bar{S}_{-N}\bar{S}_{-N-1}= \partial^{Ndn+ln}+ \\ (\gamma_2(\Gamma_1+ln+Ndn)^2+\gamma_1(\Gamma_1+ln+Ndn)-\gamma_2\Gamma_2-\gamma_1\Gamma_1+dn \Gamma_1+a)\partial^{Ndn+ln-N-1}+\ldots,
\end{multline*}
where from we get 
$$
\gamma_2=-\frac{d_2}{2(Nd_2+l_2)}, \quad \gamma_1=-\frac{a}{ln+Ndn}+\frac{dn}{2}.
$$
Now we can calculate $(\tilde{Q}_{N,I})_{\ord Q_{N,I}-N-1}$. Since $\sigma (Q_{N,I})=\epsilon_I\partial^{N(\tilde{M}_{I-1}+1)+\tilde{M}_{I-1}l_2/d_2+1}$, we get after conjugation, similarly to the formula above, 
\begin{multline}
\label{E:tildeQ_N,I}
(\tilde{Q}_{N,I})_{\ord Q_{N,I}-N-1}=\epsilon_I(\gamma_2(\Gamma_1+N(\tilde{M}_{I-1}+1)+\tilde{M}_{I-1}l_2/d_2+1)^2+\gamma_1(\Gamma_1+N(\tilde{M}_{I-1}+1)+\tilde{M}_{I-1}l_2/d_2+1)+\\
\sigma^{-\ord Q_{N,I}}(u)-\gamma_2\Gamma_2-\gamma_1\Gamma_1-u+(\tilde{M}_{I-1}+1)\Gamma_1+b)\partial^{\ord Q_{N,I}-N-1} =\\
\epsilon_I(-\frac{d-l}{d N+l}\Gamma_1 +(\sigma^{-\ord Q_{N,I}}(u)-u)+A\frac{N^2}{Nd+l}+B\frac{N}{Nd+l}+C\frac{1}{Nd+l} +b),
\end{multline}
where 
$$
A=\frac{1}{2} d (\tilde{M}_{I-1}+1) (d n-\tilde{M}_{I-1}-1),  
$$
$$
B=\frac{1}{2} \left(-\frac{2 (\tilde{M}_{I-1}+1) (a+l \tilde{M}_{I-1} n)}{n}+d^2 n+d (2 \tilde{M}_{I-1} (l n-1)+l n-2)\right) ,
$$
$$
C=-\frac{(d+l \tilde{M}_{I-1}) (2 a+n (d (-l) n+d+l \tilde{M}_{I-1}))}{2 d n}.
$$
Note that $A\neq 0$, because $\tilde{M}_{I-1}=\ord (Q_{I-1})nd-M_{I-1}$, $M_{I-1}<p+q=d(n+m)$ and $\ord (Q_{I-1})\ge dm$ is a polynomial in $p$ of order $I-1$  (see lemma \ref{L:induction_step1}).
\begin{rem}
\label{R:S_-N}
Since $(\Phi_{N,1}(P_n))_{ln+ndN-N}=\tilde{c}_1\partial^{ln+ndN-N}$, where $\tilde{c}_1$ is a constant, the coefficient $s_{-N}$ of the operator $\bar{S}_{-N}$ is a linear polynomial in $\Gamma_1$, hence $Sdeg_A((\tilde{Q}_{N,I})_{\ord Q_{N,I}-N})=0$ and $(\tilde{Q}_{N,I})_{\ord Q_{N,I}-N}\in C(\partial^{ln+ndN})$. So, the component $(\tilde{Q}_{N,I})_{\ord Q_{N,I}-N-1}$ is the first component with $Sdeg_A>0$, i.e. it is the beginning of the tail for the operator $\tilde{Q}_{N,I}$. 
\end{rem}

{\it Step} 5. (body-tail decomposition) Put 
$$\tilde{d}:=SeqGCD_{I-1}(\Phi_{N,1}(P_n),\Phi_{N,1}(Q_n))=SeqGCD_{I-1}(P,Q)(d_2N+l_2)/d_2, \quad \tilde{p}:=ln+dnN.$$ 
Since $SeqGCD_{I-1}(P,Q)/d_2$ divides $GCD(l,d)$ (by theorem \ref{T:DC-pairs}), we can decompose $\tilde{p}$ as the product 
$$\tilde{p}=\tilde{d}\tilde{d}_1n,$$ 
where $GCD(l,d)=\tilde{d}_1SeqGCD_{I-1}(P,Q)/d_2$. 

We need to find a relation between operators $\tilde{Q}_{N,I}'={F}_{I}(\partial^{\tilde{p}},\tilde{Q}_{N,0} -(\tilde{Q}_{N,0})_{-\tilde{p}})$ and $\tilde{Q}_{N,I}={F}_{I}(\partial^{\tilde{p}},\tilde{Q}_{N,0})$.

 From previous step we know the shape of the operator $\tilde{Q}_{N,I}$:
$$
\tilde{Q}_{N,I}=\epsilon_I\partial^{\Ord (\tilde{Q}_{N,I})}+\tilde{q}\partial^{\Ord (\tilde{Q}_{N,I})-N}+
(\tilde{Q}_{N,I})_{\ord Q_{N,I}-N-1}+\ldots , \quad Sdeg_A(\tilde{q})=0.
$$

Let's introduce the following terminology. Each operator $\tilde{Q}_{N,i}$ can be decomposed as a sum of a "body" from the centralizer $C(\partial^{\tilde{p}})$ and a "tail", which is a sum of products of the "bodies" and the "tails" $(\tilde{Q}_{N,0})_{-\tilde{p}}$. Note that any non-zero homogeneous component of this "tail" has positive $Sdeg_A$. This can be shown by induction. For the operator $(\tilde{Q}_{N,0})$ the body is 
$(\tilde{Q}_{N,0} -(\tilde{Q}_{N,0})_{-\tilde{p}})$ and the tail is $(\tilde{Q}_{N,0})_{-\tilde{p}}$, and its form is determined by lemma \ref{L:Q_p}. Then 
$$
\tilde{Q}_{N,1}=(\tilde{Q}_{N,0})^{n_0}-\epsilon_0^{n_0}\partial^{\tilde{p}m_0}=
(\tilde{Q}_{N,0} -(\tilde{Q}_{N,0})_{-\tilde{p}})^{n_0}-\epsilon_0^{n_0}\partial^{\tilde{p}m_0} +\mbox{tail},
$$
where the body is $(\tilde{Q}_{N,0} -(\tilde{Q}_{N,0})_{-\tilde{p}})^{n_0}-\epsilon_0^{n_0}\partial^{\tilde{p}m_0}$ and the tail consists of sum of "monomials" of the shape 
$$
(\tilde{Q}_{N,0} -(\tilde{Q}_{N,0})_{-\tilde{p}})^{k_1}(\tilde{Q}_{N,0})_{-\tilde{p}}(\tilde{Q}_{N,0} -(\tilde{Q}_{N,0})_{-\tilde{p}})^{k_2}(\tilde{Q}_{N,0})_{-\tilde{p}}\ldots .
$$
From lemma \ref{L:Q_p} we know the shape of $(\tilde{Q}_{N,0})_{-\tilde{p}}$:  $(\tilde{Q}_{N,0})_{-\tilde{p}}=(-\Gamma_1/\tilde{p}+u)\int^{\tilde{p}}$, where $u\in C(\partial^{\tilde{p}})$. Then by commutation relations from lemma \ref{L:1} we can rewrite these "monomials" in the form 
$$
(\tilde{Q}_{N,0} -(\tilde{Q}_{N,0})_{-\tilde{p}})^{\sum k_i}(-\Gamma_1/\tilde{p}+u_1)\int^{\tilde{p}}\ldots (-\Gamma_1/\tilde{p}+u_r)\int^{\tilde{p}}, \quad u_i\in C(\partial^{\tilde{p}}).
$$
The product of terms containing $\Gamma_1$ has $Sdeg_A$ equal to $r$ (the highest term of this HCP is just the product of the highest terms of each factor), and therefore each non-zero homogeneous component of the expression above has $Sdeg_A$ equal to $r$. Moreover, all these non-zero homogeneous components arise from  
non-zero homogeneous components of $(\tilde{Q}_{N,0} -(\tilde{Q}_{N,0})_{-\tilde{p}})^{\sum k_i}$ -- the power of the body of the operator $(\tilde{Q}_{N,0})$. 

By induction, 
\begin{equation}
\label{E:body-tale}
\tilde{Q}_{N,i+1}=(body)_i^{n_i}-\epsilon_i^{n_i}\partial^{\tilde{p}m_i}+\mbox{tail},
\end{equation}
where the tail consist of a sum of "monomials" of $Sdeg_A>0$ (one "monomial" for each order), and the "monomial" of $Sdeg_A=r$ has the shape 
$$
(body)_i^{k_1}(body)_{i-1}^{l_1}\ldots (body)_i^{k_2} (body)_{i-1}^{l_2}\ldots (body)_{j}^{l_j}(-\Gamma_1/\tilde{p}+u_1)\int^{\tilde{p}}\ldots (-\Gamma_1/\tilde{p}+u_r)\int^{\tilde{p}}
$$
(we'll call \eqref{E:body-tale} as a "body-tail" decomposition).

{\it Step} 6. Returning to the operator $\tilde{Q}_{N,I}$, we see that the homogeneous component $(\tilde{Q}_{N,I})_{\ord Q_{N,I}-N-1}$ 
includes a summand from the tail. This  summand from the tail is the highest order homogeneous component of the "monomial" above of $Sdeg_A=1$, and it can be calculated: 

\begin{lemma}
\label{L:tail}
The tail component of the term $(\tilde{Q}_{N,I})_{\ord Q_{N,I}-N-1}$ is
$$
\epsilon_I(-dn+ln)(\Gamma_1/\tilde{p}-\tilde{u}+c_0)\partial^{\ord ({Q}_{N,I})-1-N},
$$
where 

i) $\tilde{u}\in C(\partial^{\tilde{p}})$ is a "$\tilde{d}$-symmetrization" of the corresponding term in the tail given by lemma \ref{L:Q_p}, which can be effectively written in the vector form:  
$$
\Phi(\tilde{u})=(a_0, \ldots , a_{\tilde{p}-1}), 
$$ 
where $a_i=a_{i+\tilde{d}}=a_{i+2\tilde{d}}=\ldots$, and for $i=0,\ldots ,\tilde{d}-1$ 
$$
a_i=(\frac{i}{\tilde{p}}+\frac{i+\tilde{d}}{\tilde{p}}+\ldots +\frac{\tilde{d}(n\tilde{d}_1-1)+i}{\tilde{p}})/(n\tilde{d}_1)=\frac{n\tilde{d}_1-1}{2 n\tilde{d}_1} + \frac{i}{\tilde{p}};
$$

ii) $c_0\in K$  is given by the formula 
$$
c_0=(\sum_{i=0}^{I-1} \ord (Q_i))\cdot \frac{(p-1)}{2p}
$$
(recall that we put all $n_i=p=nd$ in Step 3). 
\end{lemma}

\begin{proof}
The highest coefficient of the tail component we know from formula \eqref{E:tildeQ_N,I}. 
To find other coefficients let's use induction.  The tail component of the operator $\tilde{Q}_{N,0}$ in vector form is 
$(-\Gamma_1/\tilde{p} + (0, 1/\tilde{p}, \ldots , (\tilde{p}-1)/\tilde{p}))\tilde{D}^{-\tilde{p}}$ according to lemma \ref{L:Q_p}.  The tail component of the operator $\tilde{Q}_{N,1}$ in vector form is 
\begin{multline*}
 \sum_{i=0}^{p-1}\tilde{D}^{i\ord (Q_{N,0})}(-\Gamma_1/\tilde{p} +(0, 1/\tilde{p}, \ldots , (\tilde{p}-1)/\tilde{p})\tilde{D}^{-\tilde{p}+ (p-1-i)\ord (Q_{N,0})}= \\
 \sum_{i=0}^{p-1}(-(\Gamma_1+i\ord (Q_{N,0}))/\tilde{p} +\sigma^{-i\ord (Q_{N,0})}((0, 1/\tilde{p}, \ldots , (\tilde{p}-1)/\tilde{p})))=\\
p(-\Gamma_1/\tilde{p}-\frac{\ord (Q_{N,0})}{\tilde{p}}\cdot \frac{(p-1)}{2} +\tilde{u}_1)\tilde{D}^{-\tilde{p}+(p-1)\ord (Q_{N,0})},
\end{multline*}
where $\tilde{u}_1$ is the $GCD(\tilde{p}, \ord Q_{N,0})$-symmetrization of $(0, 1/\tilde{p}, \ldots , (\tilde{p}-1)/\tilde{p})$. By induction, the tail component of the operator $\tilde{Q}_{N,i}$ in vector form is
$$
\epsilon_1^{p-1}\cdots \epsilon_{i-1}^{p-1} p^i(-\Gamma_1/\tilde{p}-\frac{\sum_{j=0}^{i-1}\ord (Q_{N,j})}{\tilde{p}}\cdot \frac{(p-1)}{2} +\tilde{u}_i)\tilde{D}^{-\tilde{p}+(p-1)\sum_{j=0}^{i-1}\ord (Q_{N,j})},
$$
where $\tilde{u}_i$ is the $SeqGCD_{i-1}(\Phi_{N,1}(P_n),\Phi_{N,1}(Q_n))$-symmetrization of $(0, 1/\tilde{p}, \ldots , (\tilde{p}-1)/\tilde{p})$. Then the tail component of the operator $\tilde{Q}_{N,i+1}$ in vector form is 
\begin{multline*}
\epsilon_1^{p-1}\cdots \epsilon_{i-1}^{p-1}\epsilon_{i}^{p-1} p^i \sum_{k=0}^{p-1}\tilde{D}^{k\ord (Q_{N,i})}(-\Gamma_1/\tilde{p} -\frac{\sum_{j=0}^{i-1}\ord (Q_{N,j})}{\tilde{p}}\cdot \frac{(p-1)}{2}\\+\tilde{u}_i)\tilde{D}^{-\tilde{p}+ (p-1)\sum_{j=0}^{i-1}\ord (Q_{N,j}) + (p-1-k)\ord (Q_{N,i})}= \\
 \epsilon_1^{p-1}\cdots \epsilon_{i-1}^{p-1}\epsilon_{i}^{p-1} p^{i+1} (-\Gamma_1/\tilde{p}-\frac{\sum_{j=0}^{i}\ord (Q_{N,j})}{\tilde{p}}\cdot \frac{(p-1)}{2} +\tilde{u}_{i+1})\tilde{D}^{-\tilde{p}+ (p-1)\sum_{j=0}^{i}\ord (Q_{N,j})},
\end{multline*}
whence we get the claim of lemma with 
$$
 \epsilon_1^{p-1}\cdots \epsilon_{I-1}^{p-1} p^{I}=\epsilon_I(dn-ln) 
$$
by formula \eqref{E:tildeQ_N,I}, and the formula for $c_0$ holds because $\ord (Q_{N,j})=(d_2N+l_2)\ord (Q_j)/d_2$.
\end{proof}

So, the "body-tail" decomposition of the operator $\tilde{Q}_{N,I}$ looks like 
\begin{multline*}
\tilde{Q}_{N,I}=(\epsilon_I\partial^{\Ord (\tilde{Q}_{N,I})}+\tilde{q}\partial^{\Ord (\tilde{Q}_{N,I})-N}+
\epsilon_I((-dn+ln)(\tilde{u}-c_0) +(\sigma^{-\ord Q_{N,I}}(u)-u)+\\
A\frac{N^2}{Nd+l}+B\frac{N}{Nd+l}+C\frac{1}{Nd+l} +b)\partial^{\Ord (\tilde{Q}_{N,I})-1-N}+\ldots )+ \\
\epsilon_I(-dn+ln)(\Gamma_1/\tilde{p}-\tilde{u}+c_0)\partial^{\ord ({Q}_{N,I})-1-N}+\ldots 
\end{multline*}
Hence, first terms of the operator $\tilde{Q}_{N,I}'$ are 
\begin{multline*}
\tilde{Q}_{N,I}'= \epsilon_I\partial^{\Ord (\tilde{Q}_{N,I})}+\tilde{q}\partial^{\Ord (\tilde{Q}_{N,I})-N}+
\epsilon_I((-dn+ln)(\tilde{u}-c_0) +(\sigma^{-\ord Q_{N,I}}(u)-u)+\\
A\frac{N^2}{Nd+l}+B\frac{N}{Nd+l}+C\frac{1}{Nd+l} +b)\partial^{\Ord (\tilde{Q}_{N,I})-1-N}+\ldots 
\end{multline*}

{\it Step} 7. Now we can calculate the distinguished coefficient $c_{N,I;r(I)}$. To find it, we need to find the result of conjugation of $\hat{\Phi}(\tilde{Q}_{N,I}')$ by the auxiliary operator $\tilde{S}$ from lemma \ref{L:invariant_conjugation}. According to the proof of this lemma, $\tilde{S}$ can be represented as a product of elementary operators $\tilde{S}_i=1+s_i\tilde{D}^{-i}$. In our case obviously 
$i\ge N$. Note that conjugation by $\bar{S}_{N}$ does not affect the homogeneous component of the operator $\hat{\Phi}(\tilde{Q}_{N,I}')$ of order $\Ord \tilde{Q}_{N,I}-N-1$, so we need to find only to find the result of conjugation by the auxiliary operator $\bar{S}_{N+1}$. This is just the averaging of components of its vector components. 

The averaging of the vector components of the term $(\sigma^{-\ord Q_{N,I}}(u)-u)$ is obviously zero. So, we need only to calculate the averaging of $\tilde{u}$ from lemma \ref{L:tail}. It is equal to
$$
\frac{1}{\tilde{d}}(\sum_{i=0}^{\tilde{d}-1}a_i)=\frac{n\tilde{d}_1-1}{2n\tilde{d}_1}+\frac{(\tilde{d}-1)}{2\tilde{p}}
$$
Combining all terms together, we get
$$
c_{N,I;r(I)}=\epsilon_I((-dn+ln)(\frac{1}{2}-\frac{1}{2n (Nd+l)} -(\sum_{i=0}^{I-1} \ord (Q_i))\cdot \frac{(p-1)}{2p})+A\frac{N^2}{Nd+l}+B\frac{N}{Nd+l}+C\frac{1}{Nd+l} +b).
$$
It remains to note that this expression non-trivially depends on $N$ (as $A\neq 0$) -- a contradiction with lemma \ref{L:distinguished coefficients}.

So, there are no DC-pairs, and $\End (A_1)\backslash 0=\Aut (A_1)$.

\vspace{0.5cm}

\noindent A. Zheglov,  Lomonosov Moscow State  University, Faculty
of Mechanics and Mathematics, Department of differential geometry
and applications, Leninskie gory, GSP, Moscow, \nopagebreak 119899,
Russia
\\ \noindent e-mail
 $alexander.zheglov@math.msu.ru$, $abzv24@mail.ru$

\end{document}